\newcommand{\embed}{\hookrightarrow }
\newcommand{\lb}{\langle}
\newcommand{\rb}{\rangle}
\numberwithin{equation}{section}
\newcommand{\me}{\mathbb{E}}
\newcommand\del[1]{}
\newcommand\dela[1]{}
\newcommand{\rA}{\mathrm{A}}
\newcommand{\rrA}{\mathrm{A}_1}
\newcommand{\hrrA}{\hat{\mathrm{A}}_1}
\newcommand{\trrA}{\tilde{\mathrm{A}}_1}
\newcommand{\rK}{\mathrm{K}}
\newcommand{\divv}{\mathrm{div }\;}
\newcommand{\err}{\mathbb{R}}
\newcommand{\ku}{\mathrm{K}}
\newcommand\red[1]{{\color{red}#1}}
\newcommand\toup{\nearrow}
\newcommand{\ps}{\Psi^\prime}
\newcommand{\pp}{\Psi^{\prime\prime}}
\newcommand{\fp}{f^\prime}
\newcommand{\fpp}{f^{\prime\prime}}
\renewcommand{\v}{\bv}
\newcommand{\W}{W_1}
\renewcommand{\a}{\Delta}
\newcommand{\db}{\bar{\d}}
\newcommand{\A}{\mathbf{A}}
\newcommand{\f}{\mathbf{F}}
\newcommand{\y}{\mathbf{y}}
\renewcommand{\d}{\mathbf{n}}
\newcommand{\MO}{\mathcal{O}}
\DeclareMathOperator{\tr}{tr}
\newcommand{\w}{\mathbf{w}}
\newcommand{\vm}{\mathbf{v}_m}
\newcommand{\vmt}{\vm(t)}
\newcommand{\bv}{\mathbf{v}}
\newcommand{\dm}{\mathbf{n}_m}
\newcommand{\dmt}{\dm(t)}
\newcommand{\bd}{\mathbf{n}}
\newcommand{\prm}{\pi_m}
\newcommand{\qm}{\hat{\pi}_m}
\newcommand{\el}{\mathbb{L}}
\newcommand{\ve}{\mathbb{V}}
\newcommand{\h}{\mathbb{H}}
\newcommand{\mo}{\mathcal{O}}
\newcommand{\bvt}{\bv(t)}
\newcommand{\bvs}{\bv(s)}
\newcommand{\bdt}{\bd(t)}
\newcommand{\bds}{\bd(s)}
\newcommand{\bn}{\boldsymbol{\nu}}
\newcommand{\bu}{\mathbf{u}}
\newcommand{\ma}{\mathrm{A}}
\newcommand{\eps}{\varepsilon}
\newcommand{\rve}{\rVert}
\newcommand{\lve}{\lVert}
\newcommand{\bvm}{\bar{\bv}_m}
\newcommand{\bdm}{\bar{\bd}_m}
\newcommand{\bvmk}{\bar{\bv}_{m_k}}
\newcommand{\bdmk}{\bar{\bd}_{m_k}}
\newcommand{\bwi}{\bar{W}_{1}^{\mk}}
\newcommand{\bwd}{\bar{W}_{2}^{\mk}}
\newcommand{\wi}{\bar{W}_{1}}
\newcommand{\wt}{\bar{W}_{2}}
\newcommand{\bx}{\mathbb{X}}
\newcommand{\EE}{\mathbb{E}^\prime}
\newcommand{\E}{\mathbb{E}}
\newcommand{\mk}{m_k}
\DeclareMathOperator{\lsp}{linspan} 
\theoremstyle{plain}
\newtheorem{condition}{Condition}[section]
\newtheorem{assum}[condition]{Assumption}
\newtheorem{lem}{Lemma}[section]
\newtheorem{thm}[lem]{Theorem}
\newtheorem{prop}[lem]{Proposition}
\newtheorem{cor}[lem]{Corollary}
\theoremstyle{definition}
\newtheorem{Def}[lem]{Definition}
\newtheorem{Rem}[lem]{Remark}
\title[Nematic liquid crystals driven by Wiener noise]{Some results on the penalised nematic liquid crystals driven by multiplicative noise}
\author[Z.~Brze{\'z}niak]{Zdzislaw Brze{\'z}niak}
\address{Department of Mathematics\\
The University of York\\
Heslington, York YO10 5DD, UK} \email{zdzislaw.brzezniak@york.ac.uk}
\author[E. Hausenblas]{Erika Hausenblas}
\address{Department of Mathematics and Information Technology, Montanuniversit\"at Leoben,
Fr. Josefstr. 18, 8700 Leoben, Austria} \email{erika.hausenblas@unileoben.ac.at}
\author[P. Razafimandimby]{Paul Razafimandimby}
\address{Department of Mathematics and Applied Mathematics, University of Pretoria, Corner of Lynnwood Road and Roper Street
Hatfield, Pretoria
0083, South Africa}
\email{paul.razafimandimby@up.ac.za}
\date{\today}
\begin{document}
\begin{abstract}
In this paper we prove several results related to the existence and uniqueness of solution to coupled highly  nonlinear stochastic partial differential equations (PDEs). These equations are motivated by the dynamics of  nematic liquid crystals under the influence of stochastic external forces.  Firstly, we prove the existence  of global weak solution (in sense of both stochastic analysis and PDEs). We show the pathwise uniqueness of the solution in 2D domain. Secondly, we establish the existence and uniqueness of local maximal solution which is strong in sense of both PDEs and stochastic analysis. In the 2D case, we show that this solution is global.
In contrast to several works in the deterministic setting we replace the Ginzburg-Landau function
	 $\mathds{1}_{\lvert \d\rvert \le 1}(\lvert \d\rvert^2-1)\d$ by a general polynomial $f(\d)$ and we give sufficient conditions on the polynomial $f$ for the aforementioned results to hold. \\	As a by-product of our investigation we present a general method based on fixed point argument to establish the existence and uniqueness of a local maximal solution of an abstract stochastic evolution equations with coefficients satisfying local Lipschitz condition involving the norms of two different Banach spaces. This general method can be used to treat several  stochastic hydrodynamical models such as Navier-Stokes, Magnetohydrodynamic (MHD) equations, and the $\alpha$-models of Navier-Stokes equations and their MHD counterparts.
\end{abstract}
\maketitle
\section{Introduction}
Nematic liquid crystal is a state of matter  that has
properties between amorphous liquid and crystalline solid.
Molecules of nematic liquid crystals are long and thin, and they
tend to align along a common axis. This preferred axis indicates
the orientations of the crystalline molecules, hence it is useful
to characterize its orientation with a vector field
$\mathbf{n}$ which is called the \textbf{director}. Since its
magnitude has no significance, we shall take $\mathbf{n}$ as a
unit vector. We refer to \cite{Chandrasekhar} and \cite{Gennes}
for a comprehensive treatment of the physics of liquid
crystals. To
model the dynamics of nematic liquid crystals most scientists use
the continuum theory developed by Ericksen \cite{Ericksen} and
Leslie \cite{Leslie}. From this theory F. Lin and C. Liu
\cite{Lin-Liu} derived the most basic and simplest form of
dynamical system describing the motion of nematic liquid crystals
filling a bounded region $\MO\subset \mathbb{R}^d, d=2,3$. This
system is given by
\begin{align}
 \bv_t+(\bv\cdot\nabla)\bv-\mu \Delta \bv+\nabla p=&-\lambda \nabla\cdot(\nabla \bd \odot  \nabla \bd), \text{ in } (0,T]\times \MO \label{erick-leslie1}\\
\divv \bv=&0,\text{ in } (0,T]\times \MO\\
\bd_t+(\bv\cdot\nabla)\bd=&\gamma(\Delta \bd+|\nabla \bd|^2\bd), \text{ in } (0,T]\times \MO\label{erick-leslie2}\\
\bd(0)=&\bd_0, \text{ and } \bv(0)= \bv_0 \text{ in } \MO
\\
|\bd|^2=&1,\text{ on } (0,T]\times \MO. \label{erick-leslie4}
\end{align}
  Here $p$ represents the pressure of the fluid and $\bv$ its velocity. By the symbol $\nabla \bd \odot
\nabla \bd$ we mean  a  $d\times d$-matrix with entries
defined by
\begin{equation*}
[\nabla \bd \odot \nabla \bd]_{i,j}=\sum_{k=1}^n \frac{\partial
\bd^k}{\partial x_i}\frac{\partial \bd^k}{\partial x_j},\;\;
\mbox{ } i,j=1,\dots, d.
\end{equation*}
 In the present work we assume that the
boundary of $\MO$ is smooth and that
\begin{equation}\label{BC}
\bv=0 \text{ and } \frac{\partial \bd}{\partial \bn}=0 \text{ on }
\partial \MO,
\end{equation}
where the vector field $\bn$  is the unit outward unit  normal to $\partial \MO$, i.e.  at each
point $x$ of $\MO$, $\bn(x)$ is perpendicular to the tangent space $T_x\partial \MO$, of length $1$ and facing outside of $\MO$.

Although the system \eqref{erick-leslie1}-\eqref{BC} is the most
basic and simplest form of equations from the Ericksen-Leslie
continuum theory, it retains the most physical significance of the
nematic liquid crystals. Moreover it offers many interesting
mathematical problems. In fact, on one hand, two of the  main mathematical difficulties related to the system \eqref{erick-leslie1}-\eqref{BC} are non-parabolicity of equation \eqref{erick-leslie2}  and high nonlinearity of the term $\divv\sigma^E= -\divv( \nabla \bd \odot \nabla \bd) $. The non-parabolicity  follows from the fact that 
 	\begin{equation} 
 \Delta \bd+|\nabla \bd|^2\bd=\bd \times (\Delta \bd\times \bd),
 	\end{equation}
 	so that the linear term $\Delta \bd$ in \eqref{erick-leslie2} is only a tangential part of the full Laplacian.
 The term $\divv( \nabla \bd \odot \nabla \bd) $ makes the problem \eqref{erick-leslie1}-\eqref{BC} a fully nonlinear and constrained system of PDEs coupled via a quadratic gradient nonlinearity. In addition to the above difficulties, we will certainly have to deal with new challenging and difficult problems due to the introduction of stochastic perturbations. On the other hand, a number of
 	 challenging questions
 	 about the solutions to Navier-Stokes equations (NSEs) and Geometric Heat equation (GHE) are still open.

 In 1995, F. Lin and C. Liu
\cite{Lin-Liu} proposed an approximation of the system
\eqref{erick-leslie1}-\eqref{BC} to relax the constraint
$|\bd|^2=1$ and  the gradient nonlinearity $|\nabla \bd |^2\bd$.
More precisely, they studied the following system of equations
\begin{align}
 \bv_t+(\bv\cdot\nabla)\bv-\mu\Delta \bv+\nabla p=&-\lambda \nabla\cdot(\nabla \bd \odot \nabla \bd), \text{ in } (0,T]\times \MO \label{ginz-leslie1}\\
\divv \bv=&0, \text{ in } [0,T]\times \MO \\
\bd(0)=&\bd_0 \text{ and }\bv(0)=\bv_0 \text{ in } \MO,\\
\bd_t+(\bv\cdot\nabla)\bd=&\gamma\left(\Delta \bd-\frac1{\eps^2}
(|\bd|^2-1)\bd\right)\text{ in } (0,T]\times \MO,\label{ginz-leslie3}
\end{align}
where $\eps>0$ is an arbitrary constant.

Problem \eqref{ginz-leslie1}-\eqref{ginz-leslie3} with boundary conditions \eqref{BC}
is much simpler than \eqref{erick-leslie1}-\eqref{erick-leslie4}
with \eqref{BC}, but it is still a difficult and interesting
problem. Since the pioneering work \cite{Lin-Liu} the systems
\eqref{ginz-leslie1}-\eqref{ginz-leslie3} and
\eqref{erick-leslie1}-\eqref{erick-leslie4} have been the subject
of intensive mathematical studies. We refer, among others, to
\cite{Rojas-Medar, Hong,Lin-Liu,
Lin-Liu2,Lin-Wang, Lin-Lin-Wang, Shkoller,Dai+Schonbeck_2014} and
references therein for the relevant results.

In this paper we are interested in the mathematical analysis of a
stochastic version of problem \eqref{ginz-leslie1}-\eqref{ginz-leslie3}.
Basically,
 we will investigate a system of stochastic evolution equations which is obtained by introducing
 appropriate noise term in \eqref{erick-leslie1}-\eqref{erick-leslie4}. In contrast to the unpublished manuscript \cite{BHP13} we replace the bounded Ginzburg-Landau function
 $\mathds{1}_{\lvert \d\rvert \le 1}(\lvert \d\rvert^2-1)\d$ in the coupled system by a general polynomial function $f(\d)$. More precisely, we set $\mu=\lambda=\gamma=1$ and  we consider cylindrical Wiener processes $W_1$ on a separable  Hilbert space $\rK_1$  and
 a standard real-valued Brownian motion $W_2$. We assume that $W_1$ and $W_2$ are independent.
 We consider the problem
 \begin{align}
 d\bvt+\biggl[(\bvt\cdot\nabla)\bvt-\Delta \bvt+\nabla p\biggr]dt= & -\nabla\cdot(\nabla \bdt \odot \nabla \bdt)dt+ S(\bvt) dW_1(t),\label{eqn-SLQE-v} \\
\divv \bvt= & 0,\label{eqn-SLQE-div} \\
 d\bdt+(\bvt\cdot\nabla)\bdt dt = & \biggl[\Delta
\bdt-f(\d)\biggr]+(\bdt\times h)\circ
dW_2(t), \label{eqn-SLQE-d}
\end{align}
where $(\bdt\times h)\circ dW_2(t)$ is understood in
the Stratonovich sense and $f$ is a polynomial function and the above system holds in $\MO_T:=(0,T]\times \MO$. We equip the system with the boundary condition  \eqref{BC}
and the initial condition
\begin{equation*}
 \bv(0)=\bv_0 \text{ and } \bd(0)=\bd_0 ,
\end{equation*}
where $\bv_0$ and $\bd_0$ are given mappings defined on $\MO$ . We will give more details about the polynomial $f$ later on.

Our work is motivated by the importance of external perturbation
on the dynamics of the director field $\bd$. Indeed, an essential
property of nematic liquid crystals is that its director field
$\mathbf{n}$ can be easily distorted. However, it can also be
 aligned to form a specific pattern under some external perturbations.
 This pattern formation occurs when a threshold value of the external perturbations is attained; this is the so called Fr\'eedericksz transition.
 Random external perturbations change a little bit the threshold value for the Fr\'eedericksz transition.
 For example, it has been found that with the fluctuation of the magnetic field the relaxation time of an unstable state diminishes, \textit{i.e.},  the time for a noisy system to leave an unstable state is much shorter than the unperturbed system. For these results we refer, among others, to \cite{Horsthemke+Lefever-1984, San
Miguel-1985,FS+MSanM} and references therein. In all of these works the
effect of the hydrodynamic flow has been neglected. However, it is
pointed out in \cite[Chapter 5]{Gennes}
 that the fluid flow disturbs the alignment and conversely a change in the alignment will induce a
 flow in the nematic liquid crystal.
Hence for a full understanding of the effect of
 fluctuating magnetic field on
 the behavior of the liquid crystals one needs to take into account the dynamics of $\d$ and $\v$.
 To initiate this kind of investigation we propose a mathematical study of \eqref{eqn-SLQE-v}-\eqref{eqn-SLQE-d} which basically
 describes an approximation of the
 system governing the nematic liquid crystals under the influence of fluctuating external forces.

 In the present paper we prove several results that are basically the stochastic counterparts of the ones obtained by Lin and Liu in \cite{Lin-Liu}. Our results can be described as follows.
 \begin{enumerate}[(i)]
\item \label{i} In section \ref{SLC-sect3} we establish  the existence of global martingale solutions (weak in the PDEs sense).
To prove this result we first find a suitable  finite dimensional Galerkin approximation   of system \eqref{eqn-SLQE-v}-\eqref{eqn-SLQE-d}
 which can be solved locally in time. Our choice of the approximation yields the global existence of the  approximating solutions $(\vm,\dm)$.  For this purpose we derive several important global a priori estimates in higher order Sobolev spaces involving the following two energy functionals
 \begin{equation*}
  \mathcal{E}_1(\d,t):= \lve \bd(t)\rve^q+q \int_0^t \lve \bds \rve^{q-2} \lve \nabla \bds\rve^2 ds
  + q \int_0^t \lve \bds \rve^{q-2} \lve \bds\rve^{2N+2}_{\mathbb{L}^{2N+2}} ds
 \end{equation*}
 and
 \begin{equation*}
\begin{split}
\mathcal{E}_2(\v,\d,t):= \lve\bv(t)\rve^{2}+\ell \lve \d(t) \rve^2+\lve \nabla \bd(t)\rve^{2}+\int_\MO F(\d(t,x) dx \\+ \left(\int_0^{t}
\lve \nabla
\bv(s)\rve^2+ \lve \Delta \bd(s)-f(\bd(s))\rve^2\right)ds.
\end{split}
\end{equation*}
 Here $F(\cdot)$ is the antiderivative of $f$ such that $F(0)=0$.
These global a priori estimates, the proofs of which are non-trivial and require long and tedious calculation, are very crucial for the
 proof of the tightness of the distributions of the Galerkin solution $(\vm,\dm)$ in appropriate topological spaces such as
 $L^2(0,T;\h)\times L^2(0,T;\h^1(\MO))$.
 This
  compactness result will enable us to construct a new probability space on which  we also find a new sequence  $(\bvm,\bdm, \bar{W}_1^m,\bar{W}_2^m)$
   of solutions of the Galerkin
  equations. This new sequence is proved to converge to a system
  $(\v,\d,\bar{W}_1, \bar{W}_2)$ which along with the new
  probability space will form our martingale solution. To close the first part of our results we show that the weak solution is pathwise unique in the 2-D case.

 \item We establish several results about the strong solution to the problem \eqref{eqn-SLQE-v}-\eqref{eqn-SLQE-d} in Section \ref{SLC-Sect4}. We mainly prove the existence of a local and maximal strong solution which is
 understood in the sense of stochastic calculus and PDEs.  In the case $d=2$ we prove the non-explosion of the maximal solution by a method based on a choice of an appropriate energy functional
  (or Lyapunov function) and an idea introduced by Schmalfu\ss in \cite{Bjorn} to prove the pathwise uniqueness of solution of stochastic Navier-Stokes equations. The estimates for the energy functional are of the form
  \begin{eqnarray*}
\mathcal{E}_3&=& \E \biggl[\Phi(t\wedge \tau_k)\left(\lve \nabla
\v(t\wedge \tau_k)\rve^2+\Psi(\d(t\wedge \tau_k))\right)\biggr]\\
&+& \E \int_0^{t\wedge \tau_k}
\Phi(s)\Big(2\lve \Delta\v(s)
\rve^2+ \lve \nabla (\Delta \d(s) -f(\d(s)) )\rve^2 \Big)ds,
\end{eqnarray*}
where $\Phi(\cdot)$ is a certain positive function to be defined later and $\{\tau_k; k=1,2, \ldots\}$ is a sequence of stopping times, see Section \ref{SLC-Sect4} for more details.
Schmalfu\ss\, trick has so far been used to prove pathwise uniqueness of solution of stochastic PDEs, see for instance \cite{Bjorn}, \cite{ZB+EM}, \cite{ZB+EH+JZ} and references therein. Our novelty is that we extend the use of this idea to prove the global existence of strong solution of the Ginzburg-Landau approximation of nematic liquid crystals. In particular, our proof gives another proof of the global existence of 2D stochastic Navier-Stokes equations with multiplicative for initial data with finite enstrophy. Thus, our paper can be seen also as a generalization of the results for the existence and uniqueness of local, maximal and global solutions of strong solutions of stochastic Navier-Stokes proved in \cite{Nathan1}, \cite{Nathan2} and \cite{Mikulevicius}.

 \item In Section \ref{ABST-STRONG} we use a fixed point argument and prove several general results on an abstract nonlinear stochastic evolution
      equations (SEE) with coefficients satisfying local Lipschitz condition involving the norms of two different Banach spaces. We mainly establish the existence  and uniqueness of a local and a
          maximal solution to this SEE and derive a lower estimate for the life-span of the solution. These general results are of independent interest and  that is why they are the subject of a separate section. The existence and uniqueness result in Section \ref{SLC-Sect4} are corollaries of the general results in the last section. In fact, in Section \ref{SLC-Sect4} we consider some functional
      spaces with more regularity than in the case of weak and
      martingale solution and show that on these spaces the problem \eqref{eqn-SLQE-v}-\eqref{eqn-SLQE-d} with \eqref{BC} falls within the general framework in Section \ref{ABST-STRONG}.  We are also strongly convinced that with these general results it is possible, although has not been done in detail, to prove the existence of strong solution of several stochastic hydrodynamical models such as the NSEs, Magnetohydrodynamic (MHD) equations, $\alpha$-models for Navier-Stokes (Lagrangian Navier-Stokes $\alpha$-models, Leray-$\alpha$ models) and the MHD equations associated to them.

      \item We prove a maximum principle type theorem in Section \ref{MAX-PRIN-SEC}. More precisely, if we  consider $f(\d)=\mathds{1}_{\lvert d\rvert \le 1}(\lvert \d\rvert^2-1)\d$ instead and if the initial condition $\d_0$ satisfies $\lvert \d_0\rvert\le 1$ almost everywhere, then the solution $\d$ also remains in the unit ball almost everywhere. In contrast to the deterministic case, this result does not follow in straightforward way from well-known results. Here the method of proofs are based on the blending of ideas from \cite{Rojas-Medar2} and \cite{Matoussi}.
  \end{enumerate}
  To the best of our knowledge our work is the first mathematical work
 which studies the existence and uniqueness of weak martingale of the \eqref{eqn-SLQE-v}-\eqref{eqn-SLQE-d}.
 Under the assumption that
 $f(\cdot)$ is a bounded function, the authors proved in the unpublished manuscript \cite{BHP13} that
 the system \eqref{eqn-SLQE-v}-\eqref{eqn-SLQE-d}  has a maximal strong solution which is global for the 2D case. Therefore, the present article is a generalization of
  \cite{BHP13} in the sense that we allow $f(\cdot)$ to be a general polynomial function.  Note that some of the arguments from the last part of the present paper have already been used in the paper \cite{BHR-2014} which studied the strong solution of some stochastic hydrodynamic equations driven by L\'evy noise.

 The organization of the present article is as follows.  In Section \ref{sec-spaces} we introduce the notations that are frequently used throughout this
 paper. In the same section we also state and prove some useful lemmata. By using the scheme we outlined in item \eqref{i} above we show in  Section \ref{SLC-sect3} that
 \eqref{eqn-SLQE-v}-\eqref{eqn-SLQE-d} admits a weak martingale solution which is pathwise unique in the two dimensional case.
 In Section \ref{SLC-Sect4} we prove the existence of a unique maximal strong
 solution to \eqref{eqn-SLQE-v}-\eqref{eqn-SLQE-d} which turns out to be a global solution when the space dimension is two. The existence and uniqueness
 of a maximal solution to the stochastic system \eqref{eqn-SLQE-v}-\eqref{eqn-SLQE-d} is a corollary of a general theorem given in  Section \ref{ABST-STRONG}.
 This result is obtained by making use of some fixed point argument and truncation method. In Section \ref{MAX-PRIN-SEC} a maximum principle type
    theorem is proved when $f(\d)=\mathds{1}_{\lvert \bd\rvert \le 1}(\lvert \d\rvert^2-1)\d$. In Appendix we prove several crucial
estimates about the solution
 of the stochastic equation for nematic liquid crystals. 

 \section{Functional spaces and Preparatory lemma}\label{sec-spaces}

 Let $d\in \{2,3\}$ and assume that $\MO \subset \mathbb{R}^d$ is a bounded domain with boundary $\partial \MO$ of class $\mathcal{C}^\infty$.
 For any $p\in [1,\infty)$ and $k\in \mathbb{N}$,  $\el^p(\MO)$ and
$\mathbb{W}^{k,p}(\MO)$ are the well-known Lebesgue and Sobolev
spaces, respectively, of $\mathbb{R}^d$-valued functions. The
corresponding spaces of scalar functions we will denote by
standard letter, e.g. ${W}^{k,p}(\MO)$.
 For $p=2$ we denote $\mathbb{W}^{k,2}(\MO)=\h^k$ and its norm are denoted by $\lve \bu
 \rve_k$. By $\h^1_0$ we mean the space of functions in $\h^1$
 that vanish on the boundary on $\MO$; $\h^1_0$ is a Hilbert space when endowed with the scalar product induced by that of $\h^1$.
The usual scalar product on $\el^2$ is denoted by $\langle
u,v\rangle$ for $u,v\in \el^2$ and its associated norm is denoted by $\lVert
u\rVert$, $u\in \el^2$. We also introduce the following spaces
 \begin{align*}
\mathcal{V}& =\left\{ \bu\in \mathcal{C}_{c}^{\infty }(\MO,\mathbb{R}^d)\,\,\text{such that}%
\,\,\nabla \cdot \bu=0\right\} \\
\mathbb{V}& =\,\,\text{closure of $\mathcal{V}$ in }\,\,\mathbb{H}_0^{1}(\MO) \\
\mathbb{H}& =\,\,\text{closure of $\mathcal{V}$ in
}\,\,\mathbb{L}^{2}(\MO).
\end{align*}
We endow $\h$ with the scalar product and norm of $\el^2$. As
usual we equip the space $\ve$ with the the scalar product
$\langle \nabla \bu, \nabla \bv\rangle$ which, owing to the Poincare\'e inequality, is equivalent to the
$\h^1(\MO)$-scalar product.

Let $\Pi: \el^2 \rightarrow \h$ be the Helmholtz-Leray projection
from $\el^2$ onto $\h$. We denote by $\rA=-\Pi\Delta$ the
Stokes operator with domain $D(\rA)$.  It is well-known (see for e.g. \cite[Chapter I, Section 2.6]{Temam}) that there
exists an orthonormal basis $\{\varphi_i; i=1, 2, 3, \ldots\}$  of $\h$ consisting of the
eigenfunctions of the Stokes operator $\rA$.

From \cite[Proposition 1.24]{Ouhabaz} we can define a self-adjoint
operator $A_1: \h^1\rightarrow \left(\h^1\right)^\ast$ by
\begin{align}\label{ST0}
\langle \rA_1 \bu, \w \rangle =&a(\bu, \w)=\int_\MO \nabla \bu \cdot \nabla \w \,dx, \;\;\; \bu, \,\, \w\in \h^1.
\end{align}
 The Neumann Laplacian acting on $\mathbb{R}^n$-valued
function will be  denoted by $\rrA$, that is,
\begin{equation}
\begin{split}
D(\rrA)&:=\biggl\{\bu\in \h^2: \frac{\partial \bu}{\partial \bn}=0 \text{ on } \partial \MO\biggr\},\\
\rrA\bu&:=-\sum_{i=1}^n \frac{\partial^2 \bu}{\partial
x_i^2},\;\;\; \bu \in D(\rrA).
\end{split}
\end{equation}
It can be shown that $D(\rrA)=\{ \bu \in \h^1 :
\rA_1 \bu \in \el^2\}$ and $\rrA \bu =A_1\bu$ for  $\bu\in
D(\rrA)$.
It can also be shown, see e.g. \cite[Theorem 5.31]{Haroske}, that $\hrrA=I+\rrA$ is a
definite positive and self-adjoint operator in the Hilbert space
$\mathbb{L}^2= \mathbb{L}^2(\MO)$ with compact resolvent. In particular, there exists an ONB $(\phi_k)_{k=1}^\infty$ of $\mathbb{L}^2$ and  an
a  increasing sequence  $\big(\lambda_k\big)_{k=1}^\infty $ with $\lambda_1=0$ and
$\lambda_k\toup \infty$ as $k\toup \infty$ (the
eigenvalues of the Neumann Laplacian $\rrA$)  such that $\rrA\phi_j=\lambda_j \phi_j$ for any $i\in\mathbb{N}$.

 For any  $\alpha \in [-\frac12,\infty)$ we
denote by  $\bx_\alpha=D(\hrrA^{\frac12+\alpha})$,  the domain
of the fractional power operator $\hrrA^{\frac12+\alpha}$. We have the
following characterization of the spaces $\bx_\alpha$,
\begin{equation}\label{frac-space}
\bx_{\alpha}=\{\bu= \sum_{k\in \mathbb{N}} u_k \phi_k: \sum_k
(1+\lambda_k)^{1+2\alpha} \lvert u_k\rvert^2<\infty\}.
\end{equation}
It can be shown that  $\bx_\alpha\subset \h^{1+2\alpha}$, for all $\alpha \geq 0$  and
$\bx:=\bx_{0}=\h^1$.

Similarly, for $\beta \in [0,\infty)$,  we denote by $\ve_\beta$ the Hilbert space $D(\rA^\beta)$ endowed with the graph inner product.
The Hilbert space
$\ve_\beta=D(\rA^\beta)$ for $\beta\in (-\infty, 0)$ can be defined by standard extrapolation methods.  In particular, the space $D(\rA^{-\beta})$ is the dual
of $\ve_\beta$ for
$\beta\ge0$. Moreover,  for every $\beta,\delta \in \mathbb{R}$ the map $\rA^\delta$ is a linear isomorphism between $\ve_\beta$ and $\ve_{\beta-\delta}$. It is also well-known that $\ve_\frac12 = \ve.$

Throughout this paper $\mathbf{B}^\ast$ denotes the dual space of
a Banach space $\mathbf{B}$. We denote by $\langle \Psi,
\mathbf{b}\rangle$ the value of $\Psi\in \mathbf{B}^\ast$ on
$\mathbf{b}\in \mathbf{B}$.

  Given two Hilbert spaces $K$ and $H$,  we denote by $\mathcal{L}(K,H)$ and $\mathcal{T}_2(K,H)$ the space of bounded linear operators and the Hilbert space
 of all Hilbert-Schmidt operators from $K$ to $H$, respectively. For $K=H$ we just write $\mathcal{L}(K)$ instead of $\mathcal{L}(K,K)$.


Now, we introduce a trilinear form
\begin{equation*}
b(\bu,\bv,\w)=\sum_{i,j=1}^d \int_\mathcal{O}\bu^{(i)}\frac{\partial
\bv^{(j)}}{\partial x_i}\w^{(j)} dx,\,\, \bu\in \el^p ,\bv\in
\mathbb{W}^{1,q}, \text{ and } \w\in \el^r,
\end{equation*}
with numbers $p,q,r \in [1,\infty]$ satisfying $$\frac 1p +\frac 1q +\frac 1r\le 1.$$

The map $b$ is the trilinear form used in the mathematical
analysis of the Navier-Stokes equations, see for instance
\cite[Chapter II, Section 1.2]{Temam}. It is well known (see for e.g. \cite[Chapter II, Section 1.2]{Temam}) that
 one can define a bilinear map $\tilde{B}$
defined on $\h^1 \times \h^1$ with values in $(\h^1)^\ast$
 \del{(is the dual of $\h^1$)}
 such that
 \begin{equation}\label{DEF-B2}
 \langle
\tilde{B}(\bu,\bv),\w\rangle=b(\bu,\bv,\w)\,\, \text{ for any }
\bu, \,\, \bv, \,\,\w\in \h^1.
 \end{equation}
We can also a define bilinear  map $B$ from $\ve \times \ve$ with
values in $\ve^\ast$ such that
\begin{equation}\label{DEF-B1}
\langle
B(\bu,\bv),\w\rangle=b(\bu,\bv,\w)\text{ for }  \w\in \ve,\text{ and } \bu, \bv\in \h^1.
\end{equation}
Well-known properties of $B$ and $\tilde{B}$ will be given in Appendix.

 Let
$\mathfrak{m}$ be the trilinear form defined by
\begin{equation}\label{INT-md}
 \mathfrak{m}(\d_1, \d_2,\bu)= -\sum_{i,j,k=1}^d \int_\MO
\partial_{x_i}\bd_1^{(k)} \partial_{x_j}\bd_2^{(k)} \partial_{x_j}\bu^{(i)} \,dx
\end{equation}
for any $\d_1\in \W^{1,p}$, $\d_2\in \W^{1,q}$ and $\bu\in
\W^{1,r}$ with $r,\,p,\, q\in(1,\infty)$ satisfying  $$ \frac 1p
+\frac 1q+\frac 1r\le 1.$$ \del{where the RHS in the above
identity should be understood as
\begin{equation}
\langle \Pi(\nabla \cdot[\nabla \bd_1 \odot \nabla \bd_2]), \bu
\rangle,\del{ M_{\bd_1, \d_2}(\bu)}=.
\end{equation}}
Here $\partial_{x_i}=\frac{\partial}{\partial x_i} $ and $\phi^i$
is the $i$-th entry of any vector-valued $\phi$. Since $n\le 4$,
the integral in \eqref{INT-md} is well defined for  When $\bd_1,
\d_2 \in \h^2$ and $\bu\in \ve$.\del{  When $\d_1=\d_2=\d$ we just
write $$M_{\d,\d}=M_\d.$$ } We have the following lemma.
\begin{lem}\label{DET-EST-M}
There exist a constant $C>0$ such that
\begin{equation}\label{IM3-0}
\lvert \mathfrak{m}(\bd_1, \d_2, \bu)\rvert \le C \lve \nabla \d_1
\rve^{1-\frac d4} \lve \Delta \d_1\rve^{\frac d4} \lve \nabla
\d_2\rve^{1-\frac d4} \lve \Delta \d_2\lve^{\frac d4} \lve \nabla
\bu\rve,
\end{equation}
for any $\bd_1, \d_2\in \h^2$ and $\bu\in \ve$.
\end{lem}

\begin{proof}[Proof of Lemma \ref{DET-EST-M}]
\del{ Let $\phi \in \ve$. Since $\phi$ is divergence-free and zero
at the boundary, we get by integration-by-parts that
 \begin{equation*}
  \begin{split}
   \langle M(\bd), \phi\rangle=\langle \Pi\circ \nabla\cdot (\nabla \bd \odot \nabla \bd), \phi\rangle,\\
   =\langle  (\nabla \bd \odot \nabla \bd), \nabla \phi\rangle.
  \end{split}
 \end{equation*}}
From \eqref{INT-md} and H\"older's inequality we derive that
\begin{equation*}
 \begin{split}
  \lvert \mathfrak{m}(\bd_1,\d_2,\bu)\rvert \le \int_Q |\nabla \bd_1||\nabla \bd_2| |\nabla \bu| dx.
 \end{split}
\end{equation*}
The above integral is well-defined since \del{for $\d_i,
\,i=1,\,2$} $\nabla \d_i\in \el^{\frac{2d}{d-2}}, \,i=1,\, 2$,
$\nabla \bu \in \el^2 $ and  $ \frac{d-2}{d}+\frac 12 \le 1$ for $
d\le 4.$ When $d=2$ we replace $2d/(d-2)$ by any $q\in
[4,\infty)$. Note that for $d\le 4$ we have $|\nabla \bd_i| \in
\el^{4}$, $i=1,2$. Hence
\begin{equation*}
 \lvert \mathfrak{m}(\bd_1, \d_2, \bu)\rvert \le C \lve \nabla \bd_1\rve_{\el^4} \lve \nabla \bd_2\rve_{\el^4} \lve \nabla \bu\rve.
\end{equation*}
This last estimate and Gagliardo-Nirenberg's inequality
\eqref{GAG-l4} \del{in the form
$$\lve \psi \rve_{\el^4}\le c \lve \psi \rve^{1-a} \lve \nabla \psi\rve^a, \text{   } a=\frac d4 ,$$
} lead us to
\begin{equation}\label{Gagl-Nir}
 \lvert \mathfrak{m}(\d_1,\d_2, \bu)\rvert\le C \lve \nabla \d_1
\rve^{1-\frac d 4} \lve \Delta \d_1\rve^{\frac d4} \lve \nabla
\d_2\rve^{1-\frac d4} \lve \Delta \d_2\lve^{\frac d4} \lve \nabla
\bu\rve.
\end{equation}
This concludes the proof of our claim.
\end{proof}
The above result tells us that, for any $\bu\in \mathbb{V}$,
$\mathfrak{m}(\bd_1, \d_2,\bu)$ is an element of $\mathcal{L}(\ve,
\mathbb{R})$ whenever $\bd_1, \d_2\in \h^2$. Now, we state and prove the following proposition.
 \begin{prop}\label{LEM-M}
There exists a bilinear operator $M$ defined on $\h^2\times \h^2$
taking values on $\ve^\ast$ \del{
\[M: \h^2 \times \h^2 \ni (\bd_1,\bd_2) \mapsto M(\d_1,\d_2)\in \ve^\ast, \del{
:= \Pi(\nabla \cdot[\nabla \bd_1 \odot \nabla \bd_2]) \in  \ve
^\ast}\]} such that for any $\d_1, \, \d_2 \in \h^2$
\begin{align}\label{def-Md}
\langle M(\bd_1,\d_2), \bu\rangle= \mathfrak{m}(\d_1,\d_2,\bu)
\del{-\sum_{i,j,k}\int_\MO
\partial_{x_i}\bd_1^k \partial_{x_j}\bd_2^k \partial_{x_j}\bu^i dx,} \;\; \bu \in \ve.
\end{align}
Furthermore, there exists a constant $C>0$ such that
\begin{equation}\label{IM3-1}
  \lve M(\bd_1,\d_2)\rve_{\ve^\ast} \le C \lve \nabla \d_1
\rve^{1-\frac d4} \lve \Delta \d_1\rve^{\frac d4} \lve \nabla
\d_2\rve^{1-\frac d4} \lve \Delta \d_2\lve^{\frac d4},
 \end{equation}
for any $\bd_1, \d_2\in \h^2$.
We also have the following identity
\begin{equation}
\langle \tilde{B}(\bv,\bd), \Delta \bd\rangle=\langle
M(\bd,\d), \bv\rangle, \text{ for any } \bv\in \ve, \bd \in \h^2.\label{G1-eq-Md}
\end{equation}

\end{prop}
\begin{proof}
The first part and \eqref{IM3-1} follow from Lemma \ref{DET-EST-M}.

To prove \eqref{G1-eq-Md} we first note that $\langle \tilde{B}(\bv,\bd_2), \Delta
\bd_1\rangle=b(\bv,\bd_2,\Delta\bd_1)$ is well-defined for any
$\bv\in \ve, \bd_1, \d_2 \in \h^2.$  Thus, taking into account
that $\bv$ is divergence free and vanishes on the boundary we can perform an
integration-by-parts and deduce that
\begin{align*}
 \langle B(\v,\d),\Delta \d)=& \int_\MO \v^{(i)} \frac{\partial \d^{(k)}}{\partial x_i}\frac{\partial^2 \d^{(k)}}{\partial x_l \partial x_l} dx\\
 =& -\int_\MO \frac{\partial \v^{(i)}}{\partial x_l} \frac{\partial \d^{(k)}}{\partial x_i} \frac{\partial \d^{(k)}}{\partial x_l} dx- \int_\MO \v^{(i)}
 \frac{\partial^2 \d^{(k)}}{\partial x_i\partial x_l} \frac{\partial \d^{(k)}}{\partial x_l} dx\\
 =&- \int_\MO \frac{\partial \v^{(i)}}{\partial x_l} \frac{\partial \d^{(k)}}{\partial x_i} \frac{\partial \d^{(k)}}{\partial x_l} dx -
 \frac12 \int_\MO \v^{(i)} \frac{\partial \lvert \nabla \d \rvert^2}{\partial x_i}dx\\
 =& -\int_\MO \frac{\partial \v^{(i)}}{\partial x_l} \frac{\partial \d^{(k)}}{\partial x_i} \frac{\partial \d^{(k)}}{\partial x_l} dx\\
 =& \mathfrak{m}(\d,\d,\v)=\langle M(\d,\d),\v\rangle.
\end{align*}
In the above chain of equalities summation over repeated indexes are enforced.
\end{proof}
\begin{Rem}
\begin{enumerate}
\item We have
$\nabla \cdot (\nabla \mathbf{f} \odot \nabla \mathbf{g})\in
\el^2$ for any $\mathbf{f}, \mathbf{g}\in \bx_{\frac 12}\cap\bx_{1}$. Thus, we infer that for any $\mathbf{f}, \mathbf{g}\in \bx_{\frac 12}\cap\bx_{1}$
$$M(\mathbf{f}, \mathbf{g})= \Pi[\nabla \cdot (\nabla \mathbf{f}
\odot \nabla \mathbf{g})].$$
\item In some places in this manuscript we use the following shorthand notation:
$$ B(\bu):= B(\bu,\bu) \text{ and } M(\bd):= M(\bd,\bd),$$ for any $\bu$ and $\bd$ such that the above quantities are meaningful.
\end{enumerate}

\end{Rem}

We assume that  $h\in\el^\infty$  is fixed. We can
define a linear bounded operator $G$ from $\el^2$ into itself by
$$ G: \el^2 \ni \bd \mapsto \bd\times h\in \el^2.$$ It is straightforward to check that  satisfies
$$ \lve G(\bd)\rve \le \lve h\rve_{\el^\infty} \lve \bd \rve.$$
 Let $(\Omega,
\mathcal{F}, \mathbb{P})$ be a complete probability space equipped
with a filtration $\mathbb{F}=\{\mathcal{F}_t: t\geq 0\}$
satisfying the usual condition.  Let
$W_2=(W_2(t))_{t\geq 0}$ be a standard
$\mathbb{R}$-valued Wiener process on $(\Omega,
\mathcal{F},\mathbb{F}, \mathbb{P})$. Let us also assume that $\rK_1$ is a separable Hilbert space and $W_1=(W_1(t))_{t\geq 0}$ is a $\rK_1$-cylindrical
Wiener process on $(\Omega,
\mathcal{F},\mathbb{F}, \mathbb{P})$.
  Throughout we assume
that $W_2$ and $W_1$ are  independent. Thus we can assume that $W=(W_1(t),W_2(t))$ is $\rK$-cylindrical
Wiener process on $(\Omega,
\mathcal{F},\mathbb{F}, \mathbb{P})$, where
\[ \rK=\rK_1\times \mathbb{R}.\]
\begin{Rem}\label{Wiener}
If $\ku_2$ is a Hilbert space such that the embedding $\ku_1\subset \ku_2$ is Hilbert-Schmidt, then $W_1$ can be viewed as an $\ku_2$-valued Wiener process. Moreover, there exists a trace class symmetric nonnegative operator $Q\in \mathcal{L}(\ku_2)$ such that $W_1$ has  covariance $Q$. This $\ku_2$-valued $\ku_1$-cylindrical  Wiener process is characterised by, for all $t\geq 0$,
\[
\mathbb{E} e^{i \;\fourIdx{}{\ku_2^\ast}{}{\ku_2} {\lb x^\ast, W(t)\rb}  }= e^{-\frac{t}2 \vert x^\ast\vert_{\ku_1}^2}, \;\; x^\ast \in \ku_2^\ast,
\]
where $\ku_2^\ast$ is the dual space to $\ku_2$ such that identifying $\ku_1^\ast$ with $\ku_1$ we have
\[
\ku_2^\ast \embed \ku_1^\ast=\ku_1 \embed \ku_2.\]
\end{Rem}

We have the following relation between
Stratonovich and It\^o's integrals
\begin{equation*}
G(\bd)\circ dW_2= \frac 12 G^2(\bd) \,dt +
G(\bd)\,dW_2,
\end{equation*}
where $G^2=G\circ G$ and defined by
\begin{align*}
G^2(\bd)=&G\circ G(\d)= (\bd \times h)\times h, \text{ for any } \bd \in \el^2.
\end{align*}


 Now, we  rewrite the
problem \eqref{eqn-SLQE-v}-\eqref{eqn-SLQE-d} in the following abstract
form \del{\begin{equation} ,\label{ABS-D1}
\end{equation}}
\begin{align}
d\bv(t)+\biggl(\rA\bv(t)+B(\bv(t),
\bv(t))+M(\bd(t))\biggr)dt=S(\bv(t))dW_1,\label{ABS-v1}
\end{align}
\begin{equation}
d\bd(t)+\biggl(\rA_1\bd(t)+ \tilde{B}(\bv(t),\bd(t))+
f(\bd(t))-\frac 12 G^2(\bd(t))\biggr)dt=G(\bd(t))dW_2.
\label{ABt-d1}
\end{equation}

Throughout this paper we impose that the function $f(\cdot)$ satisfies the following set of conditions.
\begin{assum}\label{eqn-f}
Let $I_d$ be the set defined by
 \begin{equation}\label{DEG-POL}
  I_d=\begin{cases}
       \mathbb{N}:=\{1, 2, 3, \ldots\} \text{ if } d=2,\\
       \{1\}, \text{ if } d=3.
      \end{cases}
 \end{equation}
Throughout this paper we fix $N\in I_d
$  and a family of numbers $a_k$, $k=0,\ldots, N$,
with $a_N>0$.  We define a function $\tilde{f}:[0,\infty) \rightarrow \mathbb{R}$ by
$$ \tilde{f}(r)=\sum_{k=0}^N a_k r^k, \text{ for any } r\in \err_+.$$
We define a map $f:\mathbb{R}^n\rightarrow \mathbb{R}^n$ by $f(\d)=\tilde{f}(\vert \d\vert^2)\d$ where $\tilde{f}$ is as above.

Let $F: \mathbb{R}^d \rightarrow \mathbb{R}$ be a Fr\'echet differentiable map such that for any $\d\in \mathbb{R}^d$ and $\mathbf{g}\in \mathbb{R}^d$ $$ F^\prime(\d)[\mathbf{g}]= f(\d)\cdot \mathbf{g}
.$$ Let also $\tilde{F}$ be an antiderivative of $\tilde{f}$ such that $\tilde{F}(0)=0$. We have
$$ \tilde{F}(r)=a_{N+1}r^{N+1}+U(r),$$
where $U$ is a polynomial function of at most degree $N$ and $a_{N+1}>0$.
\end{assum}
\begin{Rem}
For any $r\in [0,\infty) $ let $\tilde{f}(r):=r-1$. Since $1\in I_d$ then the maps $f$ and $F$ defined on $\mathbb{R}^d$ by $f(\d):=\tilde{f}(\vert \d \vert^2)\d$ and $F(\d):=\frac14 [\tilde{F}(\vert \d \vert^2) ]^2$
for any $\d\in \mathbb{R}^d$ satisfy the above set of assumptions.
\end{Rem}

\begin{Rem}
 We have the following
facts
\begin{align}
\lvert \tilde{f}(r)\rvert\le \ell_1 (1+r^N), \,\, r>0,\label{ST6-B-0}\\
\lvert \tilde{f}^\prime (r) \rvert\le \ell_2 (1+r^{N_1}), \,\, r>0.\label{ST6-B-1}
\end{align}
\end{Rem}

 \begin{Rem}\label{REM-H2}
 Let $f$ be defined in Assumption \ref{eqn-f}. Then, there exist two positive
constants $c>0$ and $\tilde{c}>0$ such that
 $$ \lvert f(\bd)\rvert \le c (1+\vert \d\vert^{2N+1}) \text{ and } \lvert f^{\prime}(\d)\rvert\le
 \tilde{c}(1+\vert \bd \vert^{2N}) \text{ for any } \bd\in \err^n.$$
 Also, by performing elementary calculation we can check that \mbox{   for any } $\d\in \h^2$
  \begin{align}
\lve \Delta \d\rve^2=&\lve \Delta \d -f(\d)+f(\d)\rve^2
\le  2 \lve \Delta \d -f(\d)\rve^2+2 \lve f(\d)\rve^2,\nonumber \\
\le & 2 \lve \Delta \d -f(\d)\rve^2+C \lve \d\rve^{\tilde{q}}_{\mathbb{L}^{\tilde{q}}}+C
,\label{bigdandel}
  \end{align}
  where $\tilde{q}=4N+2$.

 Hence there exists a constant $C>0$
  such that
  \begin{equation}\label{bigdanh2}
\lve \d\rve^2_2\le C (\lve \Delta \d -f(\d)\rve^2
+\lve \d\rve^{\tilde{q}}_{\mathbb{L}^{\tilde{q}}}+1), \mbox{   for any }\d\in \h^2(\MO).
  \end{equation}

  \end{Rem}
\section{Existence and uniquness of weak martingale solution}\label{SLC-sect3}
In this section we are going to establish the existence of weak martingale solution of the system
\eqref{ABS-v1}-\eqref{ABt-d1}. For this purpose we use the Galerkin approximation to reduce the original
system to a system of finite dimensional ordinary stochastic differential equations (SDEs for short).
We establish several crucial uniform a priori estimates which will be used to prove the tightness of the family of laws of the sequence of solutions of the system of SDEs on appropriate
topological spaces. But before we proceed further we define what we mean by weak martingale solution.
\begin{Def}\label{WEAK-MART}
Let $\mathrm{K}_1$ be as in Remark \ref{Wiener}\dela{and $Q\in \mathcal{L}(\mathrm{K})$ be a nonnegative and symmetric with $\tr
Q<\infty$}. By a weak martingale solution to
\eqref{ABS-v1}-\eqref{ABt-d1} we mean a system consisting of a
complete and filtered probability space \[(\Omega^\prime,
\mathcal{F}^\prime, \mathbb{P}^\prime,
\mathbb{F}^\prime),\] with the filtration $\mathbb{F}^\prime=(\mathcal{F}_t^\prime)_{t\in [0,T]}$
satisfying the usual condition,  and progressively measurable
stochastic processes \[(\bv(t), \bd(t), \wi(t),
\wt(t))_{t\in[0,T]}\] such that:
\begin{enumerate}
\item $(\wi(t))_{t\in [0,T]}$ (resp. $(\wt(t))_{t\in[0,T]}$) is {$\mathbb{F}$-progressively measurable} and {$\mathrm{K}_1$-cylindrical} (resp. real-valued) Wiener process,
\item $(\bv, \bd): [0,T]\times
\Omega^\prime\rightarrow \ve\times\h^2$ and
$\mathbb{P}^\prime$-a.e. $\omega^\prime\in \Omega^\prime$
\begin{align}
(\bv, \bd)(\cdot, \omega^\prime) \in C([0,T]; \ve^{-\beta})\times
C([0,T];\bx_{\beta-\frac12}), \text{ for
any } \beta \in (0,\frac 12),\\
\mathbb{E}^\prime \sup_{0\le s\le
T}\left[\lve\bv(s,\omega^\prime)\rve+\lve\nabla
\bd(s,\omega^\prime)\rve \right]+ \mathbb{E}^\prime\int_0^T
\left(\lve \nabla \bv(s,\omega^\prime)\rve^2+ \lve \Delta
\bd(s,\omega^\prime)\rve^2\right)ds<\infty,
\end{align}
\item for each $(\Phi, \Psi)\in \ve\times \el^2$ we have, for all
$t\in [0,T]$, \,\, $\mathbb{P}^\prime$-a.s..

\begin{align}
\nonumber \langle \bv(t)-\bv_0, \Phi\rangle&+\int_0^t
\biggl\langle \rA \bv(s)+B(\bv(s), \bv(s))+M(\bd(s)),
\Phi\biggr\rangle ds\\ &= \int_0^t \langle S(\bv(s)),\Phi\rangle
dW_1(s),
\end{align}
and
\begin{align}
\nonumber
 \langle
\bd(t)-\bd_0,\Psi\rangle &+\int_0^t \biggl\langle \rA_1\bd(s)+
\tilde{B}(\bv(s),\bd(s))+f(\bd(s))-\frac 12 G^2(\bd(s)), \Psi\biggr\rangle ds\\
&=\int_0^t \langle G(\bd(s)), \Psi\rangle dW_2(s).
\label{ABS-D1}
\end{align}
\end{enumerate}
\end{Def}
The function $S$ is defined in the next set of hypotheses.
\begin{assum}\label{HYPO-ST-weak}

We assume that $S: \h\to \mathcal{T}_2(\rK_1,\h)$ is a globally Lipschitz map. In particular,
there exists $\ell_3\geq 0$
such that
\begin{equation}\label{LIN-GROW}
\lve  S(\bu)\rve^2_{\mathcal{T}_2}:=\lve  S(\bu)\rve^2_{\mathcal{T}_2(\rK_1,\h)}\leq \ell_3 (1+\lve \bu \rve^2),\;\; \mbox{ for any } \bu \in \h.
\end{equation}
\end{assum}
Now we can state our first result in the following theorem.
\begin{thm}\label{thm-main-weak-mart}
 If Assumption \ref{HYPO-ST-weak}  and \ref{eqn-f} are satisfied,  $h\in \mathbb{W}^{1,3}\cap \el^\infty$, $\bv_0\in \h$,
 $\d_0\in \h^1$, and $d=2,3$. Then, the system \eqref{ABS-v1}-\eqref{ABt-d1} has a weak
martingale solution in the sense of Definition \ref{WEAK-MART}.
\end{thm}
\begin{proof}
	The proof relies on Galerkin approximation and compactness method and is the subject of the subsections \ref{GALERKIN}-\ref{Proof-of-EXIST}.
\end{proof}
Before we state the uniqueness of the weak martingale solution we should make the following remark.
\begin{Rem}
 We should note that the existence of weak martingale solution stated in Theorem \ref{thm-main-weak-mart} still holds if we assume that the map $S(\cdot)$ is only continuous and satisfies a linear growth condition of the form
 \eqref{LIN-GROW}.
\end{Rem}
To close this subsection we assume that $d=2$ and we  state the following uniqueness result. 
 \begin{thm}\label{UNIQUENESS}
 	Let $d=2$ and assume that $(\v_i,\d_i)$,\,\, $i=1,2$ are two
 	solution of \eqref{ABS-v1} and \eqref{ABt-d1} defined on the same stochastic system $(\Omega, \mathcal{F}, \mathcal{F}_t,\mathbb{P},W_1,
 	W_2)$ and with the same
 	initial condition $(\bv_0,\d_0)\in \h\times \h^1$. Then, for any $t\in (0,T]$, $\mathbb{P}$-a.s. $$(\v_1(t),\d_1(t))=(\v_2(t),\d_2(t) ).$$
 \end{thm}
 \begin{proof}
 	The proof of this result will be carried out in Subsection \ref{Proof-of-Uniq}.
 \end{proof}
\subsection{Galerkin approximation and a priori uniform estimates}\label{GALERKIN}
As we mentioned earlier the proof of the existence of weak martingale solution relies on the Galerkin and compactness methods. This subsection will be devoted to the construction of the approximating solutions and the proofs of crucial estimates satisfied by these solutions.

Recall that there exists an orthonormal basis $\{\varphi_i; i=1, 2, 3, \ldots\}$  of $\h$ consisting of the
eigenvectors of the Stokes operator $\rA$. Recall also that there exists an orthonormal basis $\{\phi_i; i=1,2, 3,\ldots\}$
of $\el^2$ consisting of the eigenvectors of the Neumann Laplacian $\rrA$. For any $m\in \mathbb{N}$ let us define the following finite-dimensional spaces
 \begin{align*}
  \h_m:=\lsp\{\varphi_1, \ldots, \varphi_m\},\\
  \el_m:=\lsp\{\phi_1, \ldots, \phi_m\}.
 \end{align*}
 In this subsection we introduce the
finite-dimensional approximation of the system
\eqref{ABS-v1}-\eqref{ABt-d1}, justify the existence of solution of such approximation and We are mainly interested in
derive uniform estimates for the sequence of approximating solutions.
 For this we
purpose, we denote by $\prm$ (resp. $\qm$) the projection from $\h$
(resp. $\el^2$) onto $\h_m$ (resp. $\el_m$). These operators are
self-adjoint. We also consider several mappings:
\begin{align*}
&{B}_{m}: \h_m \ni \bu \mapsto \prm B(\bu,\bu) \in \h_m,\\
&S_m: \h_m \ni \bu \mapsto \prm \circ S(\bu)\in \mathcal{T}_2(\rK_1, \h_m),\\
&M_m: \el_m \ni \bd \mapsto  \prm M(\bd) \in \h_m,\\
&f_m: \el_m \ni \bd \mapsto  \qm f(\bd) \in \el_m,\\
&G_m: \el_m \ni \bd \mapsto \qm G(\bd)\in \el_m,\\
&G^2_m:\el_m \ni \bd \mapsto \qm G^2(\bd) \in \el_m,\\
&\tilde{B}_{m}:\h_m\times \el_m \ni (\bu,\bd)\mapsto \qm
\tilde{B}(\bv,\bd) \in \el_m.
\end{align*}
The proof of the following lemma is easy and we omit it.
\begin{lem}\label{LEM-LOC}
The above mappings are locally Lipschitz.
\end{lem}
Let $\d_{0m}=\qm \d_0$ and $\v_{0m}=\prm \v_0$. The Galerkin
approximation to \eqref{ABS-v1}-\eqref{ABt-d1} is
\begin{align}
&d\vmt+\big[\ma\vmt+{B}_{m}(\vmt)+M_m(\dmt)\big]dt=S_m(\vmt)dW_1,\label{GAL-v1}\\
&d\dmt+\big[\rA_1\dmt+\tilde{B}_{m}(\vmt,\dmt)+f_m(\dmt)\big]dt=\frac 12
G^2_m(\dmt)+G_m(\dmt)dW_2.\label{GAL-d1}
\end{align}
The equations \eqref{GAL-v1}-\eqref{GAL-d1} with initial condition
$\vm(0)=\v_{0m} \text{ and } \dm(0)=\d_{0m}$ form a system of
stochastic ordinary differential equations with locally Lipschitz
coefficients hence owing to \cite[Theorem 38, p. 303]{Protter} it
has a unique local maximal solution$(\vm;\dm; T_m)$ where $T_m$ is a stopping time.
\begin{Rem}
 In case we assume that $S(\cdot)$ is only continuous and satisfies \eqref{LIN-GROW}, Lemma \ref{LEM-LOC} is no longer true. In fact the mappings defined just above Lemma \ref{LEM-LOC}
  are only continuous and locally bounded. However, with this fact we can still justify the existence, possibly non-unique, of a weak local martingale solution to \eqref{GAL-v1}-\eqref{GAL-d1} by using results in \cite[Chapter IV, Section 2, pp
167-177]{IW-89}.
\end{Rem}

\begin{prop}\label{zero-est}
If all the assumptions of Theorem \ref{thm-main-weak-mart} are satisfied, then for any $p\ge2$ there exists a positive constant $C_p$ such that
 \begin{equation}\label{EST-Nm}
 \begin{split}
 \sup_{m\in \mathbb{N}}\biggl(\mathbb{E} \sup_{t\in [0,T_m]}\lve \dm(t)\rve^p+p \int_0^{t\wedge T_m} \lve \dm(s) \rve^{p-2} \lve \nabla \dm(s)\rve^2 ds
 \\ + p \int_0^{t\wedge T_m} \lve \dm(s) \rve^{p-2} \lve \dm(s)\rve^{2N+2}_{\mathbb{L}^{2N+2}} ds\biggr)\le \mathbb{E}\mathfrak{G}_0(T,p).
 \end{split}
 \end{equation}
 where
 \begin{equation}\label{G_0}
 \mathfrak{G}_0(T,p):=\lve \d_0\rve^p (C_p+C_p e^{C_p T}).
 \end{equation}

\end{prop}
\begin{proof}
Note that by the self-adjointness of $\tilde{\pi}_m$
\begin{align*}
\langle G_m(\dm),\dm \rangle= &\langle \qm(\dm\times h),
\dm\rangle,\\
=& \langle \dm\times h, \dm\rangle,\\
=&0.
\end{align*}
Since $\vmt$ is a divergence free function it follows from  \eqref{tild-b-0} that
$$\langle \qm \tilde{B}(\vmt,\dmt),\dmt\rangle=\langle \tilde{B}(\vmt,\dmt),\dmt\rangle=0. $$
Now we can prove the proposition by using the same ideas as in the proof of the estimate \eqref{EST-N}.
\end{proof}
We also have the following estimates.
\begin{prop}\label{first-est}
If all the assumptions of Theorem \ref{thm-main-weak-mart} are satisfied, then there exists $\ell>0$ such that  for any $p \in [1,\infty)$  the pair
$(\vm, \dm)$ satisfies
\begin{equation}
\begin{split}
\mathbb{E} \biggl[\sup_{0\le s\le
T_m}\left(\lve\vm(s)\rve^{2}+\lve \nabla \dm(s)\rve^{2}+\ell\Vert \dm(s)\Vert^2 +\int_\MO F(\dm(s,x)) dx \right)^p\\+
\left(\int_0^{T_m}
\left(\lve\nabla
\vm(s)\rve^2+ \lve \Delta \dm(s)-f(\dm(s))\rve^2\right)\right)^p\biggr]
\le \mathfrak{G}_1(T, p),\label{EST-VD-A-Gal}
\end{split}
\end{equation}
and
\begin{equation}\label{EST-D-A-Gal}
  \me \biggl[\int_0^{T_m} \lve \Delta \dm(s)\rve^2 ds\biggr]^p \le \mathfrak{G}_1(T, p\cdot(2N+1)),
 \end{equation}
 where
 \begin{equation}\label{G_1}
  \begin{split}
   \mathfrak{G}_1(T, p):= \biggl[\left(\Vert \v_0\Vert^2 +\Vert \d_0\Vert^2+\Vert \nabla \d_0\Vert^2+\int_\MO F(\d_0(x)) dx\right)^p+ \kappa T+\kappa\mathfrak{G}_0(T,p)\biggr]\\ \times
 \left[1+ \kappa T(T+1) e^{\kappa(T+1)T }\right],
  \end{split}
 \end{equation}
$\mathfrak{G}_0$ is defined in \eqref{G_0} and $\kappa$ is a positive constant depending only on $p$.
%
\end{prop}
\begin{proof}
The estimates \eqref{EST-VD-A-Gal} follows from Proposition \ref{EST1} by taking $\tau_R=T_m$.
\end{proof}
We state the following corollary.
\begin{cor}
 The estimates \eqref{EST-VD-A-Gal} and \eqref{EST-D-A-Gal} remains true with $T_m$ replaced by $T$.
\end{cor}
\begin{proof}
 Since the constants on the right hand sides of \eqref{EST-VD-A-Gal} and \eqref{EST-D-A-Gal} does not depend on $m$, we can use the same argument as in
\cite{Albeverio+al} to show that for any $m\in \mathbb{N}$
\begin{equation*}
T_m =T \quad \mathbb{P}-\text{a.s.}.
\end{equation*}
This shows that the solution of our Galerkin approximation is
globally defined, that is $T_m=T$ and the estimates
\eqref{EST-VD-A-Gal} and \eqref{EST-D-A-Gal} remain valid with $T_m$ replaced by $T$.
\end{proof}

In the next proposition we prove two uniform estimates for $\vm$
and $\dm$ which are very crucial for our purpose.
\begin{prop}\label{far-est}
In addition to the assumptions of Theorem  \ref{thm-main-weak-mart}, let $\alpha \in (0,\frac 12)$ and $p\in [2,\infty)$ such
that $1-\frac{d}{4}\ge \alpha-\frac 1p$. Then, there exist positive constants $\bar{\kappa}_5$ and $\bar{\kappa}_6$ such that
 \begin{equation}\label{FSOB-est-vm}
 \sup_{m\in \mathbb{N}} \mathbb{E}\lve \vm\rve^2_{W^{\alpha,p}(0,T; \ve^\ast)}\le \bar{\kappa}_5,
\end{equation}
 and
 \begin{equation}\label{FSOB-est-dm}
 \sup_{m\in \mathbb{N}}\mathbb{E} \lve \dm
\rve^2_{W^{\alpha,p}(0,T;\el^{r})}\le \bar{\kappa}_6,
\end{equation}
where $r=\frac{2N+2}{2N+1}\in (1,2)$.
\end{prop}
\begin{proof}
We rewrite the  equation for $\vm$ as
\begin{equation*}
 \begin{split}
  \vmt=&\v_{0m}-\int_0^t \rA \vm(s)ds -\int_0^t {B}_{m}(\vm(s),\vm(s))ds -\int_0^t M_m(\dm(s)) ds\\
  & \quad \quad \quad+\int_0^t S_m(\vm(s))dW_1,\\
  =& \v_{0m}+\sum_{i=1}^4 I_m^i(t).
 \end{split}
\end{equation*}
Since $\rA \in \mathcal{L}(\ve,\ve^\ast)$ we easily check that there exists a
certain constant $C>0$ such that
\begin{equation}\label{IM1}
\sup_{m\in \mathbb{N}}\mathbb{E} \lve I^1_m\rve_{W^{1,2}(0,T;
\ve^\ast)}^2\le C.
\end{equation}
We recall that for any $\alpha \in
(0,\frac 12)$ and $p\in [2,\infty)$
\begin{equation}\label{est-INTSTO}
 \mathbb{E}\biggl\lve \int_0^{.} \xi(s) dW(s)\biggr\rve_{W^{\alpha, p}(0,T;\h)}^p\le c \mathbb{E}\int_0^T \lve \xi(s)\rve^p_\h ds;
\end{equation}
see, for instance, \cite{Flandoli+Gatarek}.
From the above fact we infer that
 \begin{equation*}
\begin{split}
 \mathbb{E}\lve I^4_m\rve^p_{W^{\alpha, p}(0,T;\h)}\le & \mathbb{E}\int_0^T \lve S_m(\vmt)\rve_{\mathcal{T}_2}^p dt,\\
 \le & c \mathbb{E}\int_0^T(\ell_1+\ell_2 \lve \vmt\rve^p)ds.
\end{split}
\end{equation*}
Thanks to \eqref{EST-VD-A} we derive that
\begin{equation}\label{IM4}
\sup_{m\in \mathbb{N}} \mathbb{E}\lve I^4_m\rve^p_{W^{\alpha,
p}(0,T;\h)}\le C.
\end{equation}
Now we treat the term $I^3_m(t)$. As a consequence of Lemma
\ref{DET-EST-M}, we have
\begin{equation*}
 \mathbb{E} \lve M_m(\dm)\rve^2_{\el^{\frac d4}(0,T; \ve^\ast) }\le
 \biggl[\mathbb{E}\biggl(\int_0^T \lve \Delta \dmt \rve^2 dt \biggr)^d \biggr]^\frac 12,
\end{equation*}
from which altogether with \eqref{EST-VD-A} we infer that
\begin{equation}\label{IM3-2}
 \mathbb{E} \lve I^3_m\rve^2_{W^{1,\frac d4}(0,T; \ve^\ast)}\le C.
\end{equation}

Using \eqref{B4} and an argument similar to the proof of the estimate for  $I_m^3$ we
conclude that
\begin{equation}\label{IM2}
 \sup_{m\in \mathbb{N}}\mathbb{E} \lve I^2_m(t)\rve^2_{W^{1,\frac d4}(0,T; \ve^\ast) }\le C.
\end{equation}
By \cite[Section 11, Corollary 19]{Simon-90}  we have the
continuous imbedding
\begin{equation}\label{Imdedd}
W^{1,\frac d4}(0,T;\ve^\ast) \subset W^{\alpha, p}(0,T; \ve^\ast),
\end{equation}
 for $\alpha \in (0,\frac 12)$ and $p\in [2,\infty)$ such that
$1-\frac{d}{4}\ge \alpha-\frac 1p$.  Owing to Eqs. \eqref{IM1},
\eqref{IM3-2}, \eqref{IM4} and \eqref{IM2} and this continuous
embedding we infer that \eqref{FSOB-est-vm} holds.

The second equations for the Galerkin approximation is written as
\begin{equation*}
 \begin{split}
  \dmt=&\d_{0m}-\int_0^t \rA_1\dm(s) ds -\int_0^t\qm[ \tilde{B}_{m}(\vm(s),\dm(s))] ds -\int_0^t f_m(\dm(s)) ds\\
  & \quad \quad +\frac 12 \int_0^t G^2_m(\dm(s)) ds +\int_0^t G_m(\dm(s)) dW_2,\\
  = &\d_{0m}+\sum_{j=1}^5 J^j_m(t).
 \end{split}
\end{equation*}
From \eqref{EST-D-A-Gal} it is clear that \begin{equation}\label{JM1}
                  \sup_{m\in \mathbb{N}}\mathbb{E} \lve J^1_m\rve^q_{W^{1,2}(0,T;\el^2)}\le C.
                 \end{equation}
For $J^2_m$ we have
\begin{equation*}
 \lve\qm[ \tilde{B}_{m}(\vm(s),\dm(s))] \rve\le c \lve \vmt\rve_{\el^4}\lve \nabla \dmt
 \rve_{\el^4}, \text{ for } t\in [0,T].
\end{equation*}
Thanks to Gagliardo-Nirenberg's inequality we have
\begin{equation*}
 \lve\qm[ \tilde{B}_{m}(\vm(s),\dm(s))] \rve\le c \left(\lve \vmt \rve\lve \nabla \dmt \rve\right)^{\frac{4-d}{4}}\left(\lve \nabla \vmt\rve \lve \Delta \dmt\rve\right)^{\frac d4}.
\end{equation*}
Thus
\begin{equation*}
\begin{split}
 \lve\qm[ \tilde{B}_{m}(\vm(s),\dm(s))] \rve_{\el^\frac d4(0,T;\el^2)}^2\le & c \sup_{0\le t\le T}\left(\lve \vmt \rve\lve \nabla \dmt \rve\right)^{\frac{4-d}{2}}
 \biggl[\int_0^T \lve \nabla \vmt \rve^2 dt \biggr]^{\frac d4}\\
 & \times \biggl[\int_0^T\lve \Delta \dmt \rve^2 dt \biggr]^{\frac{d}{4}}.
 \end{split}
\end{equation*}
Taking the mathematical expectation and using H\"older's
inequality lead to
\begin{equation*}\begin{split}
 \sup_{m\in \mathbb{N}}\mathbb{E}\lve\qm[ \tilde{B}_{m}(\vm(s),\dm(s))] \rve_{\el^\frac d4(0,T;\el^2)}^2\le & c \biggl[\mathbb{E}\sup_{0\le t\le T}\lve \vmt\rve^{2(4-d)} \biggr]^\frac 14
  \biggl[\mathbb{E}\sup_{0\le t\le T}\lve \nabla \dmt\rve^{2(4-d)} \biggr]^\frac 14 \\ &
 \times \biggl[\mathbb{E}\left(\int_0^T \lve \nabla \vmt \rve^2  dt\right)^d
                 \end{split}
\end{equation*}

Combining this with \eqref{EST-VD-A-Gal} and \eqref{EST-VD-A-Gal} yield
\begin{equation}\label{JM2}
 \sup_{m\in \mathbb{N}}\mathbb{E}\lve J^2_m\rve^2_{W^{1,\frac d4}(0,T;\el^2)}\le C.
\end{equation}
\del{Thanks to the boundedness of $f(\cdot)$ we easily check that
\begin{equation}\label{JM3}
 \sup_{m\in \mathbb{N}}\mathbb{E}\lve J^3_m(t)\rve^2_{W^{1,\infty}(0,T; \el^2)}\le C.
\end{equation}}

\del{\red{\textsc{Remark:} If $f$ is polynomial then we should
argue as follows. Owing to \eqref{est-dm-POL} there exists a
positive constant $C(\d_0, \MO, T)$ such that
 \begin{equation*}
 \begin{split}
 \sup_{m\in \mathbb{N}}\lve f_m(\dm)\rve_{\el^{q^\ast}([0,T]\times\MO)}\le
 \biggl[\int_{[0,T]\times \MO}\Big\lvert(\lvert \dm(t,x) \rvert^{k-1}-1) \dm(t,x)\Big\rvert^{\frac{2k}{2k-1}}dxdt
 \biggr]^{\frac{2k-1}{2k}},\\
\le \lve \dm\rve^{\frac{2k-1}{2k}}_{\el^{2k}([0,T]\times\MO)},\\
\le C(\d_0, \MO, T) \,\,\, \mathbb{P}-\text{a.s.}.
 \end{split}
\end{equation*}
}}

For any $m$ and $t\in [0,T]$
\begin{equation*}
\begin{split}
 \lve G^2_m(\dmt)\rve\le & \lve h\rve_{\el^\infty }\lve\dmt\rve_{\el^\infty}\lve \dmt \rve,\\
 \le & \lve h\rve_{\el^\infty } \left(\lve \dmt\rve^2 + \lve \dmt \rve \lve \nabla \dmt \rve+ \lve \dmt \rve \lve \Delta \dmt\rve \right),
\end{split}
\end{equation*}
which along with  \eqref{EST-VD-A-Gal} and \eqref{EST-D-A-Gal} yields
\begin{equation}\label{JM4}
 \sup_{m\in \mathbb{N}}\mathbb{E}\lve J^4_m\rve^2_{W^{1,2}(0,T;\el^2) }\le C.
\end{equation}
For any $h\in \el^\infty(\MO)$ we have
\begin{equation}\label{est-G}
\lve h\times \dmt\rve^p\le \lve h\rve_{\el^\infty}^p \lve
\dmt\rve^p,
\end{equation}
from which with \eqref{est-INTSTO} and \eqref{est-G}, we  derive that
for any $\alpha\in (0,\frac 12)$ and $p\in [2,\infty)$
\begin{equation}\label{JM5}
 \sup_{m\in \mathbb{N}}\mathbb{E} \lve J^5_m\rve^p_{W^{\alpha,p}(0,T; \el^2)}\le C.
\end{equation}
For the polynomial nonlinearity $f$ we have
 \begin{align}
 \sup_{m\in \mathbb{N}}\mathbb{E} \lve J^3_m\rve^p_{W^{\alpha,p}(0,T; \el^r)} \le & \me \int_0^T \int_\MO \vert f(\dm(x,s))\vert^r dx ds
 \nonumber \\ \le & C + C \me \int_0^T \int_\MO\vert \dm(x,s)\vert^{2N+2} dxds\nonumber \\
  \le& C + C_2,\label{Jm3}
 \end{align}
 where we have used \eqref{EST-Nm}.
 Combining all these estimates complete the proof of our proposition.
\end{proof}
\subsection{Tightness and Compactness results}
This subsection is devoted to the study of the tightness of the Galerkin solutions and derive several weak convergence results. The estimates from the previous subsection play an important role in this part of the paper.

Let $p\in [2,\infty)$, $\alpha\in (0,\frac12)$ and $r$ be as in Proposition \ref{far-est}. Let us consider the spaces
\begin{align*}
\mathfrak{X}_1=& L^2(0,T;
\ve)\cap W^{\alpha, p}(0,T;\ve^\ast),\\
\mathfrak{Y}_1=& L^2(0,T;\h^2)\cap W^{\alpha,
	p}(0,T;\el^{r})
\end{align*}
 Recall that
$\ve_\beta$, $\beta \in \mathbb{R}$,  is the domain of the of the
fractional power operator $\rA^\beta$. Similarly, $\mathbb{X}_\beta$ is the domain of $(I+\rrA)^{\frac12+\beta}$. The embedding
$\ve_\gamma\subset \ve_\beta$ (resp. $\mathbb{X}_\gamma \subset
\mathbb{X}_\beta$) is compact if that $\gamma > \beta$.   We
set
\begin{align*}
 \mathfrak{X}_2=L^\infty(0,T;\h)\cap W^{\alpha, p}(0,T; \ve^\ast),\\
 \mathfrak{Y}_2=L^\infty(0,T;\h^1)\cap W^{\alpha, p}(0,T; \el^2),
\end{align*}
and
\begin{align*}
 \mathfrak{S}_1=& L^2(0,T;\h)\cap C([0,T];\ve_{-\beta}),\\
 \mathfrak{S}_2=& L^2(0,T;\h^1)\cap C([0,T];\bx_{\beta-\frac12}).\\
\end{align*}
We shall prove the following important result.
 \begin{thm}\label{THM-COM}
Let $p\in [2,\infty)$ and $\alpha\in (0,\frac12)$ be as in Proposition \ref{far-est} and  $\beta\in (0,\frac 12)$ such that
$p\beta>1$. The family of laws $\{\mathcal{L}(\vm, \dm), m\in
\mathbb{N}\}$ is tight on the Polish space $\mathfrak{S}_1\times
\mathfrak{S}_2$.
 \end{thm}
 \begin{proof}
We firstly prove that $\{\mathcal{L}(\vm), m\in \mathbb{N}\}$ is
tight on $L^2(0,T;\h)$. For this aim, we fix an arbitrary number $R>0$  and prove that
\begin{equation*}
\begin{split}
\mathbb{P}\left(\lve \vm \rve_{\mathfrak{X}_1}> R\right)\le
\mathbb{P}\left(\lve \vm \rve_{L^2(0,T;\ve)}>\frac
R2\right)+\mathbb{P}\left(\lve \vm
\rve_{W^{\alpha,p}(0,T;\ve^\ast)}>\frac
R2\right),\\
\le \frac{4}{R^2} \mathbb{E}\left(\lve \vm
\rve^2_{L^2(0,T;\ve)}+\lve \vm
\rve_{W^{\alpha,p}(0,T;\ve^\ast)}\right).
\end{split}
\end{equation*}
Thanks to \eqref{EST-VD-A}, \eqref{FSOB-est-vm}, and
\eqref{FSOB-est-dm} we obtain from the last estimate that
\begin{equation}\label{tight-1}
\sup_{m\in \mathbb{N}}\mathbb{P}\left( \lve \vm
\rve_{\mathfrak{X}_1}> R\right)\le \frac{4C}{R^2}.
\end{equation}
Since $\mathfrak{X}_1$ is compactly embedded into $L^2(0,T;\h)$,
we conclude that the laws of $\vm$ form a family of probability
measures which is tight on $L^2(0,T;\h)$. Secondly, the same
argument is used to prove that the laws of $\dm$ are tight on
$L^2(0,T;\el^r)$. Next, let $\beta\in (0,\frac 12)$ and $p\in
[2,\infty)$ be satisfying $p\beta>1$. By {\cite[Corollary 5 of
Section 8]{Simon-87}} the spaces $\mathfrak{X}_2$ and
$\mathfrak{Y}_2$ are compactly imbedded in $C([0,T];\ve_{-\beta})$
and $C([0,T];\bx_{\beta-\frac12})$, respectively. Hence the same argument as
above provides us with the tightness of $\mathcal{L}(\vm)$ and
$\mathcal{L}(\dm)$ on $C([0,T];\ve_{-\beta})$ and
$C([0,T];\bx_{\beta-\frac12})$.  Now we can easily conclude the proof of the
theorem.
 \end{proof}
Throughout the remaining part of this paper we assume that $\alpha$, $p$ and $\beta$ are as in Theorem \ref{THM-COM}.
We also use notation from Remark \ref{Wiener}.

\begin{prop}\label{Proh-lem}
  Let $\mathfrak{S}=\mathfrak{S}_1\times\mathfrak{S}_2\times C([0,T];\ku_2)\times
 C([0,T];\mathbb{R})$. There exist a Borel probability measure $\mu$ on $\mathfrak{S}$ and a subsequence of $(\vm,\dm, W_1,
 W_2)$ such that its laws weakly converge to $\mu$.
 \end{prop}
 \begin{proof}
Thanks to the above lemma the laws of $(\vm,\dm, W_1,
 W_2)$ form a tight family on $\mathfrak{S}$.  Since $\mathfrak{S}$ is a Polish space, we get the result from the application of Prohorov's theorem.
 \end{proof}
 The following result relates the above convergence in law to
 almost sure convergence.
 \begin{prop}\label{SKO-Lem}
There exist a complete probability space $(\Omega^\prime,
\mathcal{F}^\prime, \mathbb{P}^\prime)$ and a sequence
 of $\mathfrak{S}$-valued random
variables, denoted by $(\bvm,\bdm, W_1^m, W_2^m)$, defined on
$(\Omega^\prime, \mathcal{F}^\prime, \mathbb{P}^\prime)$ such that
its law is equal to the law of $(\vm,\dm, W_1, W_2)$ on
$\mathfrak{S}$. Also, there exists an $\mathfrak{S}$-random
variable $(\bv, \bd, \bar{W}_1, \bar{W}_2)$ defined on
$(\Omega^\prime, \mathcal{F}^\prime, \mathbb{P}^\prime)$ such that
\begin{align}
&\mathcal{L}(\bv, \bd, \wi, \wt)=\mu \text{ on }
\mathfrak{S},\label{id-law}\\
&\bvm \rightarrow \bv \text{ in } L^2(0,T;\h) \,\,
\mathbb{P}^\prime-a.s.,\label{SKO-1}\\
&\bvm \rightarrow \bv \text{ in } C([0,T];\ve_{-\beta}) \,\,
\mathbb{P}^\prime-a.s.,\label{SKO-2}\\
&\bdm \rightarrow \bd \text{ in } L^2(0,T;\h^1) \,\,
\mathbb{P}^\prime-a.s.,\label{SKO-3}\\
&\bdm \rightarrow \bd \text{ in } C([0,T];\bx_{\beta-\frac12}) \,\,
\mathbb{P}^\prime-a.s.,\label{SKO-4}\\
&\bar{W }_1^m \rightarrow \wi \text{ in } C([0,T];\ku_2) \,\,
\mathbb{P}^\prime-a.s.,\label{SKO-5}\\
&\bar{W}_2^m \rightarrow \wt \text{ in } C([0,T];\mathbb{R}) \,\,
\mathbb{P}^\prime-a.s.\label{SKO-6}
\end{align}
 \end{prop}
 \begin{proof}
This is just a consequence of Proposition \ref{Proh-lem} and
Skorokhod's Theorem.
 \end{proof}
 Let $\mathfrak{X}_3=L^\infty(0,T;\h)\cap L^2(0,T; \ve)$ and
$\mathfrak{Y}_3= L^\infty(0,T;\h^1)\cap L^2(0,T; \h^2)$
 \begin{prop}\label{BOUND-OP}
If all the assumptions of Theorem \ref{thm-main-weak-mart} are verified, then  for any $p\ge 2$ $(\bvm, \bdm)$ satisfies the following estimates on the
new probability space $(\Omega^\prime, \mathcal{F}^\prime,
\mathbb{P}^\prime)$:
\begin{align}
& \sup_{m\in \mathbb{N}}\biggl(\mathbb{E}\biggl[ \sup_{t\in [0,T]}\lve \bdm(t)\rve^p+p \int_0^T \lve \bdm(s) \rve^{p-2} \lve \nabla \bdm(s)\rve^2 ds
 \nonumber \\ & \qquad \qquad \qquad + p \int_0^t \lve \bdm(s) \rve^{p-2} \lve \bdm(s)\rve^{2N+2}_{\mathbb{L}^{2N+2}} ds\biggr]\biggr)\le \mathfrak{G}_0(T,p), \label{EST-VD-2-B-A}\\
& \mathbb{E}\biggl[ \sup_{0\le s\le
 	T}\left(\lve\bvm(s)\rve^{2}+\ell \lve \bdm(s) \rve^2+\lve \nabla \bdm(s)\rve^{2}+\int_\MO F(\bdm(s,x)) dx \right)^p\nonumber \\
&\qquad \qquad  + \int_0^T
 \left(\lve \nabla
 \bvm(s)\rve^2+ \lve \Delta \bdm(s)-f(\dm(s))\rve^2\right)\biggr]^p
 \le \mathfrak{G}_1(T,p),\label{EST-VD-2-B}\\
 & \me \biggl[\int_0^T \lve \Delta \dm(s)\rve^2 ds\biggr]^p \le \mathfrak{G}_1(T,p\cdot (2N+1)), \label{EST-VD-2-B-C}
\end{align}
 where $\mathfrak{G}_0(T,p)$, $\ell$ and $\mathfrak{G}_1(T,p)$ are defined as in Proposition \ref{zero-est} and Proposition \ref{first-est}, respectively.
Furthermore, there exists a constant $C>0$ such that
\begin{align}
& \sup_{m \in \mathbb{N}}\mathbb{E}^\prime \biggl[\int_0^T \lve {B}_{m}(\bvm(t),\bvm(t) \rve_{\ve^\ast}^{\frac4d} dt\biggr]^\frac d2\le C,\label{EST-VD-3}\\
& \sup_{m\in \mathbb{N}}\mathbb{E}^\prime \biggl[\int_0^T \lve M_m(\bdm(t))\rve_{\ve^\ast}^{\frac d4} dt\biggr]^\frac d2 \le C, \label{EST-VD-4}\\
& \sup_{m\in \mathbb{N}}\mathbb{E}^\prime \biggl[\int_0^T \lve \tilde{B}_{m}(\bvm(t), \bdm(t)) \rve_{\el^2}^\frac d4 dt\biggr]^\frac d2\le C, \label{EST-VD-5}\\
& \sup_{m\in \mathbb{N}} \me^\prime \int_0^T \lve f_m(\bdm(t))\rve^r_{\el^{r}} dt\le C,\label{EST-VD-6}
\end{align}
where $r$ is defined in Proposition \ref{far-est}.
 \end{prop}

 \del{\red{Because of these estimates we limit ourself to the case
 $n<4$!!!!!!!!!}}
 \begin{proof}
Consider the function $\Phi(\bu, \mathbf{e})$ on
$\mathfrak{X}_3\times \mathfrak{Y}_3\subset \mathfrak{S}_1\times
\mathfrak{S}_2$
 defined by
\begin{equation*}
\Phi(\bu,\mathbf{e})=\sup_{0\le s\le
T}\left[\lve\bu(s)\rve^{2p}+\lve\nabla
\mathbf{e}(s)\rve^{2p}\right]+ \tilde{\kappa}_0 \biggl[\int_0^T
\left(\lve \nabla \bu(s)\rve^2+ \lve \Delta
\mathbf{e}(s)\rve^2\right)ds\biggr]^p
\end{equation*}
It is a continuous function, thus Borel measurable, on $\mathfrak{S}_1\times
\mathfrak{S}_2$. Thanks to  \eqref{id-law}  the processes
$(\vm,\dm)$ and $(\bvm, \bdm)$ are identical in law. \del{ Since
Borel sets of $\mathfrak{X}_3\times \mathfrak{Y}_3$  are Borel
sets of $\mathfrak{S}_1\times \mathfrak{S}_2$} Therefore, we derive
that
\begin{equation*}
\mathbb{E} \Phi(\vm,\dm)= \mathbb{E}^\prime \Phi(\bvm, \bdm).
\end{equation*}
This identity and the estimate \eqref{EST-VD-A} implies
\eqref{EST-VD-2-B}. The estimates \eqref{EST-VD-3}, \eqref{EST-VD-4}
and \eqref{EST-VD-5} can be prove using similar idea to the proof of
\eqref{IM2}, \eqref{IM3-2}, \eqref{JM2} and \eqref{Jm3}, respectively.
 \end{proof}
We prove several convergences which are of the essence for the proof of our existence result. 
\begin{prop}\label{CONV-lem}
 We can extract a subsequence $(\bvmk,\bdmk)$ from $(\bvm, \bdm)$ such that
 \begin{align}
&  \bvmk \rightarrow \bv \,\,\text{strongly in}\,\,L^{2}(\Omega^\prime
\times [0,T];\h\, ),\label{conv-v-h}\\
& \bvmk \rightarrow \bv \,\,\text{strongly in}\,\,L^4(\Omega^\prime; C([0,T];\ve_{-\beta})\, ), \label{conv-v-beta}\\
& \bdmk \rightarrow \bd \,\,\text{strongly in}\,\,L^{2}(\Omega^\prime
\times [0,T];\h^1\, ), \label{conv-d-h1}\\
& \bdmk \rightarrow \bd \,\,\text{strongly
in}\,\,L^4(\Omega^\prime;C([0,T];\bx_{\beta-\frac12})\, )
\label{conv-d-beta}\\
& \bdmk \rightarrow \bd \text{ strongly in } \mathfrak{S}_2 \,\, \mathbb{P}^\prime-a.s.,\label{CONV-AE-0}\\
& \bdmk \rightarrow \bd \text{ for almost everywhere } (x,t) \text{ and }  \mathbb{P}^\prime-a.s..\label{CONV-AE}
\end{align}
\end{prop}
\begin{proof}
From \eqref{EST-VD-2-B} and Banach-Alaoglu's theorem we infer that
there  exist a subsequence of $\bvmk$ satisfying
\begin{equation}\label{weak-conv-H}
\bvmk \rightarrow \v \text{ weakly in }
L^{2p}(\Omega^\prime;L^2(0,T;\h)),
\end{equation}
for any $p\in [2,\infty)$.  Now let us consider the positive
nondecreasing function $\varphi(x)=x^{2p}$, $ p\in [2,\infty)$,
defined on $\mathbb{R}_+$. The function $\varphi$ obviously
satisfies
\begin{equation}
\lim_{x\rightarrow \infty}\frac{\varphi(x)}{x}=\infty.
\end{equation}
Thanks to the estimate $\mathbb{E}^\prime \sup_{t\in [0,T]}\lve
\bvmk\rve^{2p}\le C$ (see  \eqref{EST-VD-2-B}), we have
\begin{equation}
\sup_{m\ge 1}\mathbb{E}^\prime (\varphi(\lve\bvmk
\rve_{L^2(0,T;\h)}))<\infty,
\end{equation}
which along with the uniform integrability criteria in \cite[Chapter 3, Exercice 6]{Kallenberg} implies that the family $%
\{\lve \bvmk \rve_{L^2(0,T;\h)}:m\in\mathbb{N}\}$ is uniform
integrable with respect to the probability measure.
Thus, we can deduce from Vitali's Convergence Theorem (see, for
instance, \cite[Chapter 3, Proposition 3.2]{Kallenberg}) and
\eqref{SKO-1} that
\begin{equation*}
\me^\prime \lve \bvmk \rve^2_{L^2(0,T;\h)} \rightarrow \me^\prime
\lve \v \rve^2_{L^2(0,T;\h)}.
\end{equation*}
Form this and \eqref{weak-conv-H} we derive that
\begin{equation}
\bvmk \rightarrow \bv \,\,\text{strongly in}\,\,L^{2}(\Omega^\prime
\times [0,T];\h\, ).
\end{equation}%

Thanks to \eqref{SKO-3}-\eqref{SKO-6} in Proposition \ref{SKO-Lem}
and \eqref{EST-VD-2-B} we can use the same argument to show the
convergence \eqref{conv-v-beta}, \eqref{conv-d-h1} and
\eqref{conv-d-beta}.
Hence by the tightness of the laws of $\bdm$ on $\mathfrak{S}_2$ we can extract a subsequence still denoted by $\bdmk$ such that
 \eqref{CONV-AE-0} and \eqref{CONV-AE} hold.
\end{proof}
  The stochastic processes $\mathbf{v}$ and $\bd$ satisfies
the following properties.
\begin{prop}
We have
\begin{equation}\label{weakstinh}
\mathbb{E}^\prime\sup_{t\in \lbrack 0, T]}\lve
\mathbf{v}(t)\rve^p<\infty,
\end{equation}
\begin{equation}\label{weakstinh-2}
\mathbb{E}^\prime\sup_{t\in \lbrack 0, T]}\lve
\mathbf{d}(t)\rve^p_{\h^1}<\infty,
\end{equation}
for any $p\in [2, \infty)$.
\end{prop}
\begin{proof}
One can
argue exactly as in \cite[Proof of (4.12), page 20]{ZB+al-2012}, so we omit the details.
\end{proof}
\begin{prop}\label{WEAK-OP}
Let $d\in \{2,3\}$ and $T\ge 0$. There exist four
processes $\mathfrak{B}_1, \mathfrak{M}\in L^2(\Omega^\prime;L^\frac
4d(0,T;\ve^\ast) )$, $\mathfrak{B}_2\in
L^2(\Omega^\prime;L^\frac d4(0,T;\el^2) )$ and $\mathfrak{f} \in L^{\frac{2N+2}{2N+1}}(\Omega^\prime\times [0,T]\times \MO)$ such that
\begin{align}
& B_{m_k}(\bvmk, \bvmk) \rightarrow \mathfrak{B}_1, \text{ weakly in }L^2(\Omega^\prime;L^\frac d4(0,T;\ve^\ast) ),\label{w-conv-B}\\
& M_{m_k}(\bdmk) \rightarrow \mathfrak{M}, \text{ weakly in }L^2(\Omega^\prime;L^\frac d4(0,T;\ve^\ast) ),\label{w-conv-M}\\
& \tilde{B}_{\mk}(\bvmk,\bdmk) \rightarrow \mathfrak{B}_2, \text{ weakly
in }L^2(\Omega^\prime;L^\frac d4(0,T;\el^2) )\label{w-conv-G1},\\
& f_{\mk}(\bdmk) \rightarrow \mathfrak{f},\text{ weakly in } L^{\frac{2N+2}{2N+1}}(\Omega^\prime\times [0,T]\times \MO).\label{w-conv-f}
\end{align}
\end{prop}
\begin{proof}
Note that Proposition \ref{BOUND-OP} remains valid with $\bdm$ replaced by $\bdmk$. Thus, Proposition \ref{WEAK-OP} follows from  Eqs.
\eqref{EST-VD-3}-\eqref{EST-VD-6} and application of
Banach-Alaoglu's theorem.
\end{proof}
\subsection{Passage to the limit and the end of proof of
Theorem \ref{thm-main-weak-mart} } \label{Proof-of-EXIST}

In
this subsection we prove several convergences which will enable us to
conclude that the limiting objects that we found in Proposition
\ref{SKO-Lem} are in fact a weak martingale solution
 to our problem.

Proposition \ref{CONV-lem} will be used to prove the following
result.
\begin{prop}
 For any process $\Psi\in L^2(\Omega^\prime;L^\frac{4}{4-d} (0,T;\ve))$, the following identity holds
 \begin{equation}\label{ident-B}
  \begin{split}
   \lim_{m\rightarrow \infty}\mathbb{E}^\prime \int_0^T {_{\ve^\ast}}\langle {B}_{\mk}(\bvmk(t), \bvmk(t)), \Psi(t) \rangle_{\ve} dt
   =&\mathbb{E}^\prime\int_0^T {_{\ve^\ast}}\langle \mathfrak{B}_1(t), \Psi(t) \rangle_{\ve} dt,\\
   =&\mathbb{E}^\prime\int_0^T {_{\ve^\ast}}\langle B(\bv(t), \bv(t)), \Psi(t)\rangle_{\ve} dt.
  \end{split}
 \end{equation}
\end{prop}
\begin{proof}
 Let
$$\mathbb{D}=\{\Phi=\sum_{i=1}^k \mathds{1}_{D_i}\mathds{1}_{J_i} \psi_i: D_i\subset \Omega, J_i\subset [0,T] \text{ is measurable}, \psi_i \in \mathcal{V}\}.$$
Owing to \cite[Proposition 21.23]{Zeidler-2A} and the density of
$\mathbb{D}$ in $L^2(\Omega, \mathbb{P};L^\frac{4}{4-d}(0,T;\ve))$
(see, for instance, \cite[Theorem 3.2.6]{Ruston}), in order to
show that the identity \eqref{ident-B} holds it is enough to check
that
\begin{equation*}
\lim_{m\rightarrow \infty} \mathbb{E}^\prime \int_0^T
\mathds{1}_J(t)\mathds{1}_D {_{\ve^\ast}}\langle {B}_{\mk}(\bvmk(t),
\bvmk(t))-B(\bv(t), \bv(t) ), \psi\rangle_{\ve} dt= 0,
\end{equation*}
for any $\Phi=\mathds{1}_D 1_J \psi\in \mathbb{D}$. For this
purpose we first note that
\begin{equation*}
\begin{split}
 {_{\ve^\ast}}\langle {B}_{\mk}(\bvmk(t),\bvmk(t))-B(\bv(t), \bv(t)), \psi\rangle_{\ve}= & {_{\ve^\ast}}\langle \tilde{B}_{\mk}(\bvmk(t)-\bv(t), \bvmk(t)),\psi\rangle_{\ve}\\
 &+ {_{\ve^\ast}}\langle \tilde{B}_{\mk}(\bv(t), \bvmk(t)-\bv(t)),\psi\rangle_{\ve},\\
 =& I_1(t)+I_2(t).
\end{split}
\end{equation*}
The mapping $\langle {B}_{\mk}(\bu, \cdot ),\psi\rangle_{\ve^\ast,\ve}$
from $L^2(\Omega^\prime; L^2(0,T;\ve)$ into
$L^2(\Omega^\prime;L^\frac4d(0,T;\mathbb{R}))$ is linear and
continuous. Therefore if $\bvmk $ converges to $\bv$ weakly in
$L^2(\Omega^\prime; L^2(0,T;\ve)$ then $I_2$ converges to 0 weakly
in $L^2(\Omega^\prime;L^\frac4d(0,T;\mathbb{R}))$. To deal
with $I_1$ we recall that
\begin{equation*}
\begin{split}
 \biggl\lvert\mathbb{E}^\prime \int_0^T  \mathds{1}_J\mathds{1}_D (\omega^\prime,t){_{\ve^\ast}}\langle {B}_{\mk}(\bvmk(t)-\bv(t), \bvmk(t)),\psi\rangle_{\ve} dt \biggr\rvert\le \lve \psi\rve_{\el^\infty}
 \biggl[\mathbb{E}^\prime \int_0^T\lve \nabla \bvmk(t)\rve^2 dt \biggr]^\frac 12  \\ \times \biggl[\mathbb{E}^\prime \int_0^T\lve \bvmk(t)-\bv(t)\rve^2 dt \biggr]^\frac 12.
\end{split}
\end{equation*}
Thanks to \eqref{EST-VD-2-B} and the convergence \eqref{conv-v-h} we
see that the right-hand side of last inequality converges to 0 as
$\mk$ goes to infinity. Hence $I_1$ converges to 0 weakly in
$L^2(\Omega^\prime;L^\frac4d(0,T;\mathbb{R}))$. This ends the
proof of our proposition.
\end{proof}
In the  next proposition we will prove that $\mathfrak{M}$
coincide with $M(\d)$.
\begin{prop}\label{IDENT-M}
Assume that $n<4$. For any process $\Psi\in
L^2(\Omega^\prime;L^\frac{4}{4-d} (0,T;\ve))$, the following
identity holds
 \begin{equation}\label{ident-M}
  \begin{split}
  \mathbb{E}^\prime\int_0^T {_{\ve^\ast}}\langle \mathfrak{M}(t), \Psi(t) \rangle_{\ve} dt
   =\mathbb{E}^\prime\int_0^T {_{\ve^\ast}}\langle M(\d(t)), \Psi(t)\rangle_{\ve} dt.
  \end{split}
 \end{equation}
\end{prop}

\begin{proof}
Since $\prm$ strongly converges to the identity operetor $Id$ in
$L^2(\Omega^\prime;L^\frac d4(0,T;\ve^\ast) )$, it is enough to
show that \eqref{ident-M} is true for $M(\bdmk(t)):=M(\d)$ in place
of $M_{\mk}(\bdmk(t))$.
  By the relation \eqref{INT-md} we have
 \begin{equation}\label{BIL-MD}
 \begin{split}
\langle M(\bdmk(t))-M(\bd(t)), \psi\rangle=\sum_{i,j,k}\int_\MO
\partial_{x_j} \psi^i \partial_{x_i} \bdmk^k(t)
\left(\partial_{x_j}\bdmk^k(t)-\partial_{x_i}\bd^k(t)  \right)dx\\
+\sum_{i,j,k}\int_\MO \partial_{x_j}\psi^i \partial_{x_j}\bd^k(t)
\left(\partial_{x_i}\bdmk^k(t)-\partial_{x_i}\bd^k(t) \right)dx,
 \end{split}
 \end{equation}
for any $\psi\in \mathcal{V}$. From this inequality we infer that
\begin{equation}
 \begin{split}
  \biggl\lvert\mathbb{E}^\prime \int_0^T \mathds{1}_J(\omega^\prime,t){_{\ve^\ast}}\langle M(\bdmk(t))-M(\bd(t)), \psi\rangle_{\ve}dt\le
  C \lve \nabla \psi\rve \biggl[\mathbb{E}^\prime\int_0^T \lve \nabla (\bdmk(t)-\bd(t))\rve^2 dt \biggr]^\frac 12 \\
  \times \biggl(\biggl[\mathbb{E}^\prime \sup_{0\le \le T}\lve \nabla \bdmk(t)\rve^2\biggr]^\frac 12 + \biggl[\mathbb{E}^\prime \sup_{0\le \le T}\lve \nabla \bd(t)\rve^2\biggr]^\frac 12
  \biggr)
 \end{split}
\end{equation}
Owing to the estimate \eqref{EST-VD-2-B} and the convergence
\eqref{conv-d-h1} we infer that the left hand side of the last
inequality converges to 0 as $\mk$ goes to infinity. Now we easily
conclude the proof.
\end{proof}

\begin{prop}\label{conv-G1}
 Let $d\in\{2,3\}$. Then, $$\mathfrak{B}_2= \tilde{B}(\bv, \bd) \text{ in } L^2(\Omega^\prime;L^\frac d4(0,T;\el^2)
 ).$$
\end{prop}
\begin{proof}
 The statement in the proposition is equivalent to say that $\tilde{B}_{\mk}(\bvmk(t), \bdmk(t))$ converges to $\tilde{B}(\bv(t),\bd(t))$ weakly in  $L^2(\Omega^\prime;L^\frac d4(0,T;\el^2) )$.
 To prove this we argue as above, but we consider the set
 $$\mathbb{D}=\{\Phi=\mathds{1}_J\mathds{1}_D \mathds{1}_K: J\subset \Omega^\prime, D\subset[0,T], K\subset \MO \text{ is measurable}\}.$$
For any $\Phi\in \mathbb{D}$ we have
\begin{equation}
\begin{split}
 \biggl\lvert\mathbb{E}^\prime \int_{[0,T]\times Q} \tilde{B}_{\mk}(\bvmk(t),\bdmk(t))-\tilde{B}(\bv(t), \bd(t))\Phi(\omega^\prime,t,x)dxdt\biggr\rvert\\ \le
 \biggl[\mathbb{E}^\prime \int_0^T \lve \bvmk(t)-\bv(t)\rve^2dt \biggr]^\frac 12 \biggl[\mathbb{E}^\prime\int_0^T \lve \nabla \bdmk(t)\rve^2 dt\biggr]^\frac12\\
 +\biggl[\mathbb{E}^\prime \int_0^T \lve \bv(t)\rve^2 dt\biggr]^\frac12 \biggl[\mathbb{E}^\prime \int_0^T \lve \nabla\left(\bvmk(t)-\bv(t)\right)\rve^2dt \biggr]^\frac 12
\end{split}
\end{equation}
Thanks to \eqref{EST-VD-2-B} and \eqref{conv-d-h1} we deduce that
the left hand side of the last inequality converges to 0 as $\mk$ goes
to infinity. This proves our claim.
\end{proof}
The following convergences are also important.

\begin{prop}
Let $r$ be as in Proposition \ref{far-est}, that is $r=\frac{2N+2}{2N+1}\in (1,2)$. Then,
 \begin{equation}
 \mathfrak{f} = f(\d) \text{ in } L^r(\Omega^\prime\times \MO\times [0,T])\label{conv-fm}.
 \end{equation}

\end{prop}
\begin{proof}
To prove \eqref{conv-fm}, first remark that by definition the embedding $\mathbb{X}_{\beta-\frac12} \subset \mathbb{L}^r$ is continuous for any $\beta \in (0,\frac12)$.
The convergence \eqref{CONV-AE} implies that for any $k=0,\ldots,N$
\begin{equation}
 \vert \bdmk \vert^{2k} \bdmk \rightarrow \vert \bd \vert^{2k} \bd \text{ for almost everywhere } (x,t) \text{ and }
 \mathbb{P}^\prime-a.s..\label{CONV-AE-2}
\end{equation}
Since $f(\bdmk)$ is bounded in $L^r(\Omega^\prime\times \MO\times [0,T])$ we can infer from \cite[Lemma 1.3, pp. 12]{LIONS} and the convergence
\eqref{CONV-AE-2} that
$$ f(\bdmk)\rightarrow f(\d) \text{ weakly in } L^r(\Omega^\prime\times \MO\times [0,T]) ,$$
which with the uniqueness of weak limit implies the sought result.
\end{proof}

To simplify notation let us define the processes $\mathcal{M}^1_{k}(t)$
and $\mathcal{M}^2_{k}(t)$ by
\begin{align*}
 \mathcal{M}_{m_k}^1(t)=\bvmk(t)-\bvmk(0)+\int_0^t\biggl(A_1\bvmk(s) +\tilde{B}_{\mk}(\bvmk(s),\bvmk(s))-M_{\mk}(\bdmk(s)) \biggr)ds,\\
\end{align*}
and
\begin{equation*}
 \begin{split}
  \mathcal{M}^2_{\mk}(t)=\bdmk(t)-\bdmk(0)+\int_0^t \biggl(A\bdmk(s)+\tilde{B}_{\mk}(\bvmk(s),\bdmk(s))-f_{\mk}(\bdmk(s))\biggr)ds\\-\int_0^tG^2_{\mk}(\bdmk(s))ds.
 \end{split}
\end{equation*}
\begin{prop}\label{CONV-MART}
Let  $\mathcal{M}_1(t)$ and $\mathcal{M}_{2}(t)$ be defined by
 \begin{align}
& \mathcal{M}^1(t)=\bv(t)-\bv_0+\int_0^t\biggl(A_1\bv(s)+B(\bv(s),\bv(s))-M(\bd(s))\biggr)ds,\label{conv-mart-1}\\
 &  \mathcal{M}^2(t)=\bd(t)-\d_0+\int_0^t\biggl(A\bd(s)+\tilde{B}(\bv(s),\bd(s))-f(\bd(s))\biggr)-\int_0^t G^2(\bd(s)) ds,
   \label{conv-mart-2}
 \end{align}
 for any $t\in (0,T]$. Then, for any  $t\in (0,T]$
 \begin{align*}
  \mathcal{M}_{\mk}^1(t) \text{ converges weakly in } L^2(\Omega^\prime;\ve^\ast)
  \text{ to } \mathcal{M}^1(t),\\
   \mathcal{M}_{\mk}^2(t) \text{ converges weakly in } L^2(\Omega^\prime;\el^2)
   \text{ to } \mathcal{M}^2(t),\\
 \end{align*}
%
%
as $m_k \to \infty$.
\end{prop}
\begin{proof}
 Let $t\in (0,T]$, we first prove that $\mathcal{M}^1_{\mk}(t)\rightarrow \mathcal{M}^1(t)$ weakly in $L^2(\Omega^\prime;\ve^\ast)$ as $m$ goes to infinity. To this end we take an arbitrary $\xi\in L^2(\Omega^\prime;\ve)$. We  have
 \begin{equation*}
  \begin{split}
   \mathbb{E}^\prime\biggl[\langle \mathcal{M}^1_{\mk}(t), \xi\rangle \biggr]= \mathbb{E}^\prime \biggl[\langle \bvmk(t)-\bvmk(0), \xi\rangle-
   \int_0^t \langle \nabla\bvmk(s),\nabla\xi\rangle ds-\int_0^t \langle M_{\mk}(\bdmk(s)),\xi\rangle ds\biggr]\\
   +\mathbb{E}^\prime\biggl[\int_0^t \langle B_{\mk}(\bvmk(s), \bvmk(s)), \rangle ds \biggr]
  \end{split}
 \end{equation*}
 Thanks to the pointwise convergence in $C([0,T];\ve^{-\beta})$, thus in $C([0,T];\ve^\ast)$, and the convergences \eqref{w-conv-B}, \eqref{ident-B}, \eqref{w-conv-M} and
 \eqref{ident-M} we obtain
 \begin{equation*}
  \begin{split}
   \lim_{m\rightarrow \infty} \mathbb{E}^\prime\biggl[\langle \mathcal{M}^1_{\mk}(t), \xi\biggr]= \mathbb{E}^\prime \biggl[\langle \bv(t)-\bv_0, \xi\rangle-
   \int_0^t \langle \nabla\bv(s),\nabla\xi\rangle ds-\int_0^t \langle M(\bd(s)),\xi\rangle ds\biggr]\\
   +\mathbb{E}^\prime\biggl[\int_0^t \langle B(\bv(s), \bv(s)), \xi\rangle ds \biggr],
  \end{split}
 \end{equation*}
which proves the sought convergence.

 Secondly, we prove that for any
$t\in (0,T]$ $\mathcal{M}^2_{\mk}(t)\rightarrow \mathcal{M}^2(t)$
weakly in $L^2(\Omega^\prime;\el^2)$ as $m$ tends to infinity. For this purpose, observe that
 $G^2_{\mk}(\cdot))$ is a linear mapping from
$L^2(\Omega^\prime; C([0,T];\el^2))$ into itself and it satisfies
\begin{equation}
 \mathbb{E}^\prime \lve G^2(\bd(t))\rve^p_{C([0,T];\el^2)}\le c \lve h\rve^2_{\el^\infty} \mathbb{E}^\prime \lve\bd(t)\rve^p_{C([0,T];\el^2)},
\end{equation}
for any $p\in [2,\infty)$.  So it is not difficult to show that
\begin{equation}\label{conv-stG}
 G_{\mk}(\bdmk)\rightarrow G(\bd) \text{ strongly in } L^2(\Omega^\prime;C([0,T];\el^2)).
\end{equation}
 Thanks to this observation, the convergences \eqref{conv-d-beta}, \eqref{conv-fm}, \eqref{w-conv-G1} and Proposition \eqref{conv-G1} we can use the same argument as above
 to show that
 \begin{equation}
  \lim_{m\rightarrow \infty} \langle \mathcal{M}^2_{\mk}(t), \xi\rangle=\langle\mathcal{M}^2(t),\xi\rangle,
 \end{equation}
for any $t\in (0,T]$ and $\xi \in L^2(\Omega^\prime;\el^{2})$.
This completes the proof of Proposition \ref{CONV-MART}.
\end{proof}

Let $\mathcal{N}$ be the set of null sets $\mathcal{F}$ and for
any  $t\ge 0$ and $k\in \mathbb{N}$, let
\begin{align}
& \hat{\mathcal{F}}^{m_k}_t:=\sigma\biggl(\sigma\biggl((\bvmk(s), \bdmk(s), \bar{W}_1^{m_k}(s), \bar{W}_2^{m_k}(s)); s\le t\biggr)\cup \mathcal{N}\biggr),\nonumber \\
& {\mathcal{F}}^\prime_t:=\sigma\biggl(\sigma\Big((\mathbf{v}(s),\d(s), \bar{W}_1(s),\bar{W}_2(s)); s\le t\Big)\cup \mathcal{N}\biggr).\label{filtration}
\end{align}
Let us also define the stochastic processes $\mathfrak{M}^1_{\mk}$ and
$\mathfrak{M}^2_{\mk}$ by
\begin{align*}
& \mathfrak{M}_{\mk}^1(t)= \int_0^t S_{\mk}(\bvmk(s)) d\bwi(s)\\ 
& \mathfrak{M}^2_{\mk}(t)=\int_0^t G_{\mk}(\bdmk(s)) d\bwd(s), 
\end{align*}
for any $t\in [0,T]$.

\noindent
From Proposition \ref{CONV-MART} we see that $(\bv, \bd)$ is a
solution to our problem if we can show that the processes $\wi$
and $\wt$ defined in Proposition \ref{SKO-Lem} are Wiener
processes and $\mathcal{M}^1(t)$, $\mathcal{M}^2(t)$ are
stochastic integrals with respect to $\wi$ and $\wt$ with
integrands $S(\bv(t))$ and $G(\bd(t))$, respectively.
These will be the subjects of the following two propositions.
\begin{prop}\label{CONV-MART-2-A}
	We have the following facts:
	\begin{enumerate}
		\item \label{P1} the stochastic process $\left(\wi(t)\right)_{t\in [0,T]}$ (resp. $\left(\wt(t)\right)_{t\in [0,T]}$) is a  {$\mathrm{K}_1$-cylindrical $\mathrm{K}_2$-valued }Wiener process
		(resp. $\mathbb{R}$-valued standard Brownian motion) on $(\Omega^\prime, \mathcal{F}^\prime, \mathbb{P}^\prime)$.
		\item\label{P2}  For any $s$ and $t$ such that $0\le s<t\le T$, the
		increments $\wi(t)-\wi(s)$ and $\wt(t)-\wt(s)$ are independent of the $\sigma$-algebra generated by $\bv(r), \,\,\bd(r), \,\, \wi(r),\,\, \wt(r) $ for $r\in[0,s]$.
		\item \label{P3} Finally, $\wi$ and
		$\wt$ are mutually independent.
	\end{enumerate}

\end{prop}
\begin{proof}
	We will just establish the proposition for $\wi$, the same method applies to $\wt$. To this end we closely follow \cite{ZB+al-2012}, but see also \cite[Lemma 9.9]{Ondrejat} for an alternative proof.
	
\noindent \textit{Proof of item \eqref{P1}.}	By Proposition \ref{SKO-Lem} the laws of $(\v_{\mk}, \d_{\mk}, W_1, W_2)$ are equal to those of the stochastic process $(\bvmk,\bdmk, \bwi,\bwd)$ on $\mathfrak{S}$.
 Hence it is easy to
 check that $\bwi$ (resp. $\bwd$) form a sequence of {  $\mathrm{K}_1$-cylindrical $\ku_2$-valued } Wiener process (resp. $\mathbb{R}$-valued Wiener process).
  Moreover, for $0\le s<t\le T$ the increments
 $\bwi(t)-\bwi(s)$ (resp. $\bwd(t)-\bwd(s)$) are independent of the $\sigma$-algebra generated by the stochastic process $\left(\bvmk(r),\bdmk(r), \bwi(r), \bwd(r)\right)$,
 for $r\in [0,s]$.

\noindent  Now, we will  check that $\wi$ is a {$\mathrm{K}_1$-cylindrical $\ku_2$-valued} Wiener
 process by showing that the characteristic function of its
 finite dimensional distributions is equal to the characteristic function of a Gaussian random variable. For this purpose let  $k\in \mathbb{N}$ and $s_0=0<s_1<\dots<s_k\le T$ be a partition of $[0,T]$. For each $\bu\in \ku_2^\ast$ we have
 \begin{equation*}
  \mathbb{E}^\prime\biggl[e^{i\sum_{j=1}^k  \fourIdx{}{\ku_2^\ast}{}{\ku_2} {\lb  \,\bu, \bwi(s_j)-\bwi(s_{j-1})\rb}
  }\biggr]
  =e^{-\frac12 \sum_{j=1}^k \,(s_j-s_{j-1})
  \vert \bu\vert^2_{\ku_1} },
 \end{equation*}
where $i^2=-1$. Thanks to \eqref{SKO-5} and the Lebesgue Dominated
Convergence Theorem, we have
\begin{equation*}
\begin{split}
 \lim_{m\rightarrow \infty}\mathbb{E}^\prime\biggl[e^{i\sum_{j=1}^k 
 \fourIdx{}{\ku_2^\ast}{}{\ku_2} {\lb  \,\bu, \bwi(s_j)-\bwi(s_{j-1})\rb}
  }\biggr]= &
 \mathbb{E}^\prime\biggl[e^{i\sum_{j=1}^k 
 \fourIdx{}{\ku_2^\ast}{}{\ku_2} {\lb  \,\bu, \wi(s_j)-\wi(s_{j-1})\rb}
 }\biggr]\\
 = & e^{-\frac12 \sum_{j=1}^k (s_j-s_{j-1})\vert \bu\vert^2_{\ku_1}
 }
\end{split}
\end{equation*}
from which we infer that the finite dimensional distributions of
$\wi$ follow a Gaussian distribution. The same idea can be carried out to prove that
the finite dimensional distributions of $\wt$ are Gaussian.

\noindent \textit{Proof of item \eqref{P2}.} Next,
we prove that the increments $\wi(t)-\wi(s)$ and $\wt(t)-\wt(s)$,
$0\le s<t\le T$ are independent of the $\sigma$-algebra generated
by $\left(\bv(r), \,\,\bd(r), \,\, \wi(r),\,\, \wt(r)\right)$ for
$r\in[0,s]$. To this end, let us consider $\{\phi_j:
j=1,\dots,k\}\subset C_b(\ve^{-\beta}\times \h^1)$ and $\{\psi_j:
j=1,\dots, k\}\subset C_b(\ku_2\times \mathbb{R})$, where for any Banach space $\mathbf{B}$ the space $C_b(\mathbf{B})$ is defined
\begin{equation*}
 C_b(\mathbf{B})=\{\phi: \mathbf{B}\rightarrow \mathbb{R}, \phi \text{ is continuous and bounded}\}.
\end{equation*}
 Also, let $0\le r_1<\dots<r_k\le
s<t\le T$, $\psi\in C_b(\ku_2)$, and $\zeta\in C_b(\mathbb{R})$. For
each $m\in \mathbb{M}$, there holds
\begin{equation*}
\begin{split}
 & \mathbb{E}^\prime\biggl[\biggl(\prod_{j=1}^k \phi_j(\bvmk(r_j),\bdmk(r_j))\prod_{j=1}\psi_j(\bwi(r_j), \bwd(r_j) ) \biggr)\\ & \qquad \times
 \psi(\bwi(t)-\bwi(s))\zeta(\bwd(t)-\bwd(s))  \biggr]\\
 =& \mathbb{E}^\prime\biggl[\prod_{j=1}^k \phi_j(\bvmk(r_i), \bdmk(r_j))\prod_{j=1}\psi_j(\bwi(r_j), \bwd(r_j))\biggr]\\ & \qquad \times
 \mathbb{E}^\prime\left(\zeta(\bwi(t)-\bwi(s))\right)
 \mathbb{E}^\prime\left(\psi(\bwd(t)-\bwd(s))\right).
 \end{split}
\end{equation*}
Thanks to \eqref{SKO-2}, \eqref{SKO-4}, \eqref{SKO-5},
\eqref{SKO-6} and the Lebesgue Dominated Convergence Theorem,
 the same identity is true with $(\bv, \bd, \wi, \wt)$ in place of
 $(\bvmk, \bdmk, \bwi, \bwd)$.
 This completes the proof of the second item of the proposition.

 \noindent \textit{Proof of item \eqref{P3}.}
 One can argue as in the proof of the first part of the proposition, thus we omit the details.

\end{proof}
\begin{prop}\label{CONV-MART-2-B}
	 For each $t\in (0,T]$ we have
		\begin{align}
	&	\mathcal{M}^1(t)=\int_0^t S(\bv(s))d\wi(s) \text{ in } L^2(\Omega^\prime, \ve^\ast),\label{ident-mart-1}\\
	&	\mathcal{M}^2(t)=\int_0^t (\bd(s)\times h) d\wt(s) \text{ in } L^2(\Omega, \bx_{\beta-\frac12}).\label{ident-mart-2}
		\end{align}
\end{prop}
\begin{proof}
	We will only prove the identity \eqref{ident-mart-1} since the same argument can be used to establish \eqref{ident-mart-2}, but see also  \cite{ZB+al-2012} for an alternative proof.
	
Let us fixed $t\in (0,T]$ and for any  $\eps>0$ let $\eta_\eps:\mathbb{R} \to \mathbb{R}$ a standard mollifier with support in $(0,t)$. For $R\in \{S, S_{\mk}\}$, $\bu \in \{\bvmk, \bv\}$	and $s\in (0,t]$ let us set
\begin{align*}
R^\eps(\bu(s))=& (\eta_\eps \star R(\bu(\cdot)))(s)\\
=& \int_{-\infty}^{\infty} \eta_\eps(s-r) R(\bu(r)) dr.
\end{align*}
We recall that, since $R$ is Lipschitz, $R^\eps$ is Lipschitz. We also have the following two important facts, see, for instance, \cite[Section 1.3]{Arendt et al}:
 \begin{enumerate}[(a)]
 	\item for any $p \in [1,\infty)$ there exists a constant $C>0$ such that for any $\eps>0$ we have
 	\begin{equation}\label{Mollif-a}
 	\int_0^t \Vert R^\eps(\bu(s)) \Vert^p_{\mathcal{T}_2(\ku_1,\h)} ds \le C \int_0^t \Vert R(\bu(s)) \Vert^p_{\mathcal{T}_2(\ku_1,\h)} ds.
 	\end{equation}
 	\item For any $p \in [1,\infty)$, we have
 	\begin{equation}\label{Mollif-b}
 \lim_{\eps \to 0}	\int_0^t \Vert R^\eps(\bu(s))-R(\bu(s)) \Vert^p_{\mathcal{T}_2(\ku_1,\h)} ds=0.
 	\end{equation}
 \end{enumerate}
 Now, let $\mathcal{M}^\eps_{\mk}$ and $\mathcal{M}^\eps$ be respectively defined by
 \begin{align*}
 &\mathcal{M}^\eps_{\mk}(t)= \int_0^t S^\eps_{\mk}(\bvmk(s)) d\bwi(s),\\
 &\mathcal{M}^\eps(t)=\int_0^t S^\eps(\bv(s))d\wi(s),
 \end{align*}
	From the It\^o isometry, \eqref{Mollif-a} and some elementary calculations we infer that there exists a constant $C>0$ such that for any $\eps>0$ and $\mk \in \mathbb{N}$
	\begin{align}
	\mathbb{E}^\prime \Vert \mathcal{M}_{\mk}(t)-\mathcal{M}^\eps_{\mk}(t)\Vert^2=& \mathbb{E}^\prime \int_0^t \Vert S(\bvmk(s))-S_{\mk}^\eps(\bvmk(s)) \Vert^2_{\mathcal{T}_2(\ku_1,\h)} ds,\nonumber \\
	 \le & C \mathbb{E}^\prime \int_0^t \Vert S(\bvmk(s))-S(\bv(s)) \Vert^2_{\mathcal{T}_2(\ku_1,\h)} ds\\
	 &\quad \quad \quad + C\mathbb{E}^\prime \int_0^t \Vert S(\bv(s))-S^\eps(\bv(s)) \Vert^2_{\mathcal{T}_2(\ku_1,\h)} ds.\label{epsEq2}
	\end{align}
	From Assumption \ref{HYPO-ST-weak} and \eqref{conv-v-h} we derive that the first term of in the right hand side of the last estimate converges to 0 as $\mk \to \infty$. Owing to \eqref{Mollif-a} and \eqref{weakstinh} the sequence in the second term of    \eqref{epsEq2} is uniformly integrable with respect to the probability measure $\mathbb{P}^\prime$. Thus, from \eqref{Mollif-b} and the Vitali Convergence Theorem we infer that
	$$\lim_{\eps\to 0 }\mathbb{E}^\prime \int_0^t \Vert S(\bv(s))-S^\eps(\bv(s)) \Vert^2_{\mathcal{T}_2(\ku_1,\h)} ds=0 .$$ Hence, for any $t\in (0,T]$
\begin{equation}\label{ILA1}
\lim_{\eps \to 0}\lim_{\mk \to \infty}	\mathbb{E}^\prime \Vert \mathcal{M}_{\mk}(t)-\mathcal{M}^\eps_{\mk}(t)\Vert^2=0.
\end{equation}
In a similar way, we can prove that
\begin{equation} \label{ILA2}
\lim_{\eps \to 0}\lim_{\mk \to \infty}	\mathbb{E}^\prime \Vert \mathcal{M}^2 (t)-\mathcal{M}^\eps (t)\Vert^2=0.
\end{equation}
Next, we will prove that
\begin{equation}
\lim_{\eps \to 0}\lim_{\mk \to \infty}	\mathbb{E}^\prime \Vert \mathcal{M}^\eps_{\mk} (t)-\mathcal{M}^\eps (t)\Vert^2=0.
\end{equation}
To this end, we first observe that
\begin{equation*}
\begin{split}
\mathcal{M}^\eps_{\mk} (t)-\mathcal{M}^\eps (t)= & \int_0^t S^\eps_{\mk}(\bvmk(s)) \bwi(s)- \int_0^t S^\eps_{\mk}(\bv(s)) d\wi(s) \\
& \qquad + \int_0^t S^\eps_{\mk}(\bv(s)) d\wi(s) -\int_0^t S^\eps(\bv(s)) d\wi(s)\\
= & I^{\eps}_{\mk,1} + I^\eps_{\mk,2}.
\end{split}
\end{equation*}
Second,  by integration by parts we derive that
\begin{equation*}
\begin{split}
I^{\eps}_{\mk,1}= & \int_0^t [\eta_\eps^\prime \star S_{\mk}(\bv(\cdot)) ](s) \wi(s) ds-\int_0^t [\eta_\eps^\prime \star S_{\mk}(\bvmk(\cdot))](s) \bwi(s) ds\\
= & \int_0^t [\eta_\eps^\prime \star S_{\mk}(\bvmk(\cdot))](s)[\bwi(s)-\wi(s)] ds + \int_0^t [S^\eps_{\mk}(\bvmk(s))-S^\eps_{\mk}(\bv(s))]d\wi(s)\\
= & J^\eps_{\mk,1} + J^\eps_{\mk,2}.
\end{split}
\end{equation*}
On one hand, by Proposition \ref{CONV-MART-2-A} the processes $\bwi$ and $\wi$ are both {$\mathrm{K}_1$-cylindrical $\ku_2$-valued Wiener processes}, thus, for any integer $p\ge 4$ there exists a constant $C>0$ such that
$$\sup_{\mk \in \mathbb{N}} \mathbb{E}^\prime \sup_{s \in [0,T]} (\Vert \bwi (s)\Vert^p_{\mathrm{K}_2} + \Vert \wi(s) \Vert^p_{\mathrm{K}_2}) \le C Q T^\frac{p}2.$$ Hence, the sequence $\int_0^t \Vert \bwi(s) -\wi(s)\Vert^2_{\mathrm{K}_2} ds$ is uniformly integrable with respect to the probability measure $\mathbb{P}^\prime$, and from \eqref{SKO-5} and the Vitali Convergence Theorem we infer that
$$\lim_{\mk \to \infty} \mathbb{E}^\prime \int_0^t \Vert \bwi(s) -\wi(s)\Vert^2_{\mathrm{K}_2} ds=0.$$
 On the other hand, for any $\eps>0$ there exists a constant $C(\eps)$ such that
 \begin{equation*}
 \begin{split}
 \mathbb{E}^\prime \int_0^t \Vert [\eta^\prime_\eps \star S_{\mk}(\bvmk(\cdot))](s)\Vert^2_{\mathcal{T}_2(\ku_1,\h)} ds \le C(\eps) T \EE \sup_{t\in [0,T]} \Vert S_{\mk}(\bvmk(t))  \Vert^2_{\mathcal{T}_2(\ku_1,\h)},
 \end{split}
 \end{equation*}
 from which along with Assumption \ref{HYPO-ST-weak} and \eqref{EST-VD-2-B-A} we infer that for any $\eps >0$ there exists a constant $C>0$ such that for any $\mk \in \mathbb{N}$ we have
 \begin{equation*}
 \begin{split}
 \mathbb{E}^\prime \int_0^t \Vert [\eta^\prime_\eps \star S_{\mk}(\bvmk(\cdot))](s)\Vert^2_{\mathcal{T}_2(\ku_1,\h)} ds \le C(\eps) T .
 \end{split}
 \end{equation*}
Thus, from these two observation we derive that
$$ \lim_{\eps \to 0}\lim_{\mk \to \infty } \mathbb{E}^\prime \Vert   J^\eps_{\mk,1}  \Vert^2 = 0.$$
In a straightforward way we can also  show that
$$ \lim_{\eps \to 0}\lim_{\mk \to \infty }\left(  \mathbb{E}^\prime \Vert   J^\eps_{\mk,2}  \Vert^2 + \mathbb{E}^\prime \Vert I^\eps_{\mk,2} \Vert^2  \right)=0.$$
Therefore, we have just proved that
\begin{equation}\label{ILA3}
\lim_{\eps \to 0}\lim_{\mk \to \infty } \mathbb{E}^\prime \Vert \mathcal{M}^\eps_{\mk} (t)-\mathcal{M}^\eps (t) \Vert^2=0.
\end{equation}
The identities \eqref{ILA1}, \eqref{ILA2} and \eqref{ILA3} imply that for any $t\in (0,T]$
\begin{equation}\label{ILA3-A}
\lim_{\mk \to \infty}\mathbb{E}^\prime \Vert \mathcal{M}^1_{\mk}(t) -\mathcal{M}^1(t) \Vert^2=0.
\end{equation}
To conclude the proof of the proposition we need to show that $\mathbb{P}^\prime$-a.s.
\begin{equation}\label{ILA4}
\mathcal{M}^1_{\mk}(t) -\int_0^t S_{\mk}(\bvmk(s))d\bwi(s)=0,
\end{equation}
for any $t\in (0,T]$. To this end, let $\mathcal{M}^1_m$ and $\mathcal{M}^\eps_m$ be the analog of $\mathcal{M}^1_{\mk}$ and $\mathcal{M}^\eps_{\mk}$ with $\mk$ and $\bvmk$ replaced by $m$ and $\bvm$, respectively. For any $\bu \in L^2(0,T; \ve^{\ast})$ we set
$$ \varphi(\bu) =\frac{\int_0^T \Vert \bu(s) \Vert^2_{\ve^\ast} ds }{1+ \int_0^T \Vert \bu(s) \Vert^2_{\ve^\ast}ds  }.$$
Since $(\bvmk, \bdmk, \bwi)$ and $(\bvm, \bdm, W_1)$ have the same law and $\varphi(\cdot)$ is continuous as a map from $\mathfrak{S}_1\times \mathfrak{S}_2\times C([0,T];\rK_1)$ into $\mathbb{R}$, we infer that
$$\mathbb{E} \varphi(\mathcal{M}^1_m - \mathcal{M}^\eps_m)=\mathbb{E}^\prime \varphi(\mathcal{M}^1_{\mk} - \mathcal{M}^\eps_{\mk}).$$
Note that arguing as above we can show that  as $\eps \to 0$ we have
$$\mathbb{E} \varphi(\mathcal{M}^1_m - \mathfrak{M}^1_m)=\mathbb{E}^\prime \varphi(\mathcal{M}^1_{\mk} - \mathfrak{M}^1_{\mk}),$$
where  $$\mathfrak{M}^1_m(\cdot) =\int_0^{\cdot} S_m(\bvm(s))dW_1(s).$$ Since $\bvm$ and $\bdm$ are the solution of the Galerkin approximation, we have $\mathbb{P}$-a.s. $\varphi(\mathcal{M}^1_m - \mathfrak{M}^1_m)=0$, from which we infer that
 $$ \mathbb{E}^\prime \varphi(\mathcal{M}^1_{\mk} - \mathcal{M}_{\mk})=0.$$ This last identity implies that $\mathbb{P}^\prime$-a.s.
  $ \mathcal{M}^1_{\mk}(t) - \mathcal{M}_{\mk}(t)=0 $ for almost everywhere $t\in (0,T]$. Since the maps $ \mathcal{M}^1_{\mk}(\cdot)$ $ \mathcal{M}_{\mk}(\cdot)$ are continuous in $\ve^\ast$ and agree for almost everywhere $t \in (0,T]$, necessarily they agree for all $t\in (0,T]$. Thus, we have proved the identity \eqref{ILA4} which along with \eqref{ILA3-A} implies the desired equality \eqref{conv-mart-1}.
\end{proof}
Now we give the promised proof of the existence of weak martingale solution.
\begin{proof}[Proof of Theorem Theorem \ref{thm-main-weak-mart}]
Endowing the complete probability space $(\Omega^\prime, \mathcal{F}^\prime, \mathbb{P}^\prime)$ with the filtration $\mathbb{F}^\prime=(\mathcal{F}_t^\prime)_{t\ge 0}$ which satisfies the usual condition, and combining Propositions \ref{CONV-MART},
\ref{CONV-MART-2-A}  and \ref{CONV-MART-2-B} we have just constructed a complete filtered probability space and stochastic  processes $\bv(t), \bd(t),$ $\wi(t), \wt(t)$ which satisfy all the items of Definition \ref{WEAK-MART}.
\end{proof}

\subsection{Proof of the pathwise uniqueness of the weak solution in the 2-D case} \label{Proof-of-Uniq}
This subsection is devoted to the proof of the uniqueness stated in Theorem \ref{UNIQUENESS}. Before proceeding to the the actual proof of this pathwise uniqueness, we state and prove the following lemma.
 \begin{lem}\label{POLY-UNI}
 	For any $\alpha_8>0$ and $\alpha_9>0$ there exist $C(\alpha_8)>0$, $C_1(\alpha_9)>0$ and $C_2(\alpha_9)>0$ such that
 	\begin{align}\label{UNI-f1}
 	\lvert \langle f(\d_1)-f(\d_2), \d_1-\d_2\rangle \rvert\le & \alpha_8 \Vert \nabla \d_1-\nabla \d_2\Vert^2 + C(\alpha_8) \Vert \d_1-\d_2\Vert^2 \varphi(\d_1,\d_2),
 	\end{align}
 	\begin{equation}\label{UNI-f2}
 	\begin{split}
 	\lvert \langle f(\d_1)-f(\d_2), \Delta \d_1-\Delta \d_2\rangle \rvert\le \alpha_9 \Vert \Delta \d_1-\Delta \d_2\Vert^2 + C_1(\alpha_9)
 	\Vert \nabla \d_1-\nabla \d_2\Vert^2 \varphi(\d_1,\d_2) \\ +C_2(\alpha_9) \Vert \d_1-\d_2\Vert^2 \varphi(\d_1,\d_2),
 	\end{split}
 	\end{equation}
 	
 	where $$ \varphi(\d_1,\d_2):= C\left(1+ \Vert \d_1\Vert^{2N}_{\el^{4N+2}} +\Vert \d_2\Vert^{2N}_{\el^{4N+2}}\right)^2.$$
 \end{lem}
 \begin{proof}[Proof of Lemma \ref{POLY-UNI}]
 It is enough to prove the estimate \eqref{UNI-f1} for the particular case $f(\bd):=a_N  \lvert \bd \rvert^{2N} \bd$.
 For this purpose we recall that
 $$\lvert \bd_1\rvert^{2N}\bd_1-\lvert \bd_2\rvert^{2N}\bd_2= \vert \bd_1\vert^{2N}(\bd_1-\bd_2)+
  \bd_2(\vert\bd_1\vert-\vert\bd_2\vert)(\sum_{k=0}^{2N-1}\vert \bd_1\vert^{2N-k-1}\vert \bd_2\rvert^{k} ),$$
  from which we easily deduce that
 	\begin{equation*}
 	\lvert \langle f(\d_1)-f(\d_2), \d_1-\d_2\rangle \rvert\le C \int_\MO (1+\vert \d_1\vert^{2N}+\vert \d_2\vert^{2N} ) \lvert \d_1-\d_2\rvert^2 dx,
 	\end{equation*}
 	for any $\bd_1, \bd_2\in\el^{2N+2}(\mo)$. Now, invoking the H\"older, Gagliardo-Nirenberg and Young inequalities we infer that
 	\begin{align*}
 	\lvert \langle f(\d_1)-f(\d_2), \d_1-\d_2\rangle \rvert\le C \Vert \d_1-\d_2\Vert_{\el^4}^2(1+ \Vert \d_1\Vert_{\el^{4N+2}}^{2N}+\lVert \d_2\Vert_{\el^{4N+2}}^{2N} )\\
 	\le C \Vert \d_1-\d_2\Vert \Vert \nabla\left(\d_1-\d_2\right) \Vert  (1+ \Vert \d_1\Vert_{\el^{4N+2}}^{2N}+\lVert \d_2\Vert_{\el^{4N+2}}^{2N} )\\
 	\le \alpha_8 \Vert \nabla \left(\d_1-\d_2\right)\Vert^2 +C(\alpha_8)\Vert \d_1-\d_2 \Vert^2  (1+ \Vert \d_1\Vert_{\el^{4N+2}}^{2N}+\lVert \d_2\Vert_{\el^{4N+2}}^{2N} )^2.
 	\end{align*}
 	The last line of the above chain of inequalities implies \eqref{UNI-f1}.
 	
 	Using the fact that $\h^1\subset \el^{4N+2}$ for any $N\in \mathbb{N}$ and the same argument as in the proof of \eqref{UNI-f1} we derive that
 	\begin{align*}
 	\lvert \langle f(\d_1)-f(\d_2), \Delta\d_1-\Delta \d_2\rangle \rvert\le C \int_\MO (1+\vert \d_1\vert^{2N}+\vert \d_2\vert^{2N} )
 	\lvert \d_1-\d_2\rvert \vert \Delta\left(\d_1-\d_2\right)\vert dx\\
 	\le C  \Vert \d_1-\d_2\Vert_{\el^{4N+2}}\Vert \Delta[\d_1-\d_2]\Vert(1+ \Vert \d_1\Vert_{\el^{4N+2}}^{2N}+\lVert \d_2\Vert_{\el^{4N+2}}^{2N} )\\
 	\le C \Vert \d_1-\d_2\Vert_{\h^1} \Vert \Delta\left[\d_1-\d_2\right] \Vert  (1+ \Vert \d_1\Vert_{\el^{4N+2}}^{2N}+\lVert \d_2\Vert_{\el^{4N+2}}^{2N} )\\
 	\le \alpha_9 \Vert \Delta\left[\d_1-\d_2\right]\Vert^2 +C(\alpha_9)\Vert \d_1-\d_2 \Vert_{\h^1}^2
 	(1+ \Vert \d_1\Vert_{\el^{4N+2}}^{2N}+\lVert \d_2\Vert_{\el^{4N+2}}^{2N} )^2.
 	\end{align*}
 	From the last line we easily deduce the proof of \eqref{UNI-f2}.
 \end{proof}
Now, we give  the promised proof of the uniqueness of our solution.
 \begin{proof}[Proof of Theorem \ref{UNIQUENESS}]
 Let $\v=\v_1-\v_2$ and $\d=\d_1-\d_2$. These processes
satisfy $(\v(0),\d(0))=(0,0)$ and the stochastic equations
\begin{equation*}
\begin{split}
d\v(t)+\biggl(\rA
\v(t)+B(\v(t),\v_1(t))+B(\v_2(t),\v(t))\biggr)dt=-\biggl(M(\d(t),\d_1(t))+M(\d_2,\d)\biggr)dt\\+[S(\v_1(t))-S_2(\bv_2(t))]d\W(t),\\
\end{split}
\end{equation*}
and
\begin{equation*}
\begin{split}
d\d(t)+\biggl(\rA_1\d(t)+\tilde{B}(\v(t),\d_1(t))+\tilde{B}(\v_2(t),\d(t))\biggr)dt=
-[f(\d_2(t))-f(\d_1(t))]dt \\+\frac12
G^2(\d(t))dt+G(\d(t))dW_2(t).
\end{split}
\end{equation*}
Firstly, recall that from Young's inequality,  \eqref{B4},
\eqref{IM3-1} and  \eqref{EST-G1} we can derive that for any
$\alpha_1>0 $, $\alpha_2>0$, $\alpha_3>0$, $\alpha_4>0$, $\alpha_5>0$ and $\alpha_7>0$ there exist
$C(\alpha_1)>0$, $C(\alpha_2,\alpha_3)>0$, $C(\alpha_7,\alpha_4)>0$ and $C(\alpha_5)>0$ such that
\begin{align}
\lvert \langle B(\v,\v_1),\v\rangle\rvert\le& \alpha_1 \lve \nabla
\v\rve^2+C(\alpha_1)\lve \v_1\rve^2 \lve \nabla \v_1\rve^2 \lve
\v\rve^2,\label{UNIQ-1}\\
\lvert \langle M(\d_2,\d),\v\rangle\rvert\le &\alpha_2 \lve \nabla \v
\rve^2+\alpha_3 \lve \Delta \d\rve^2+C(\alpha_2,\alpha_3)\lve
\nabla \d_2\rve^2 \lve \Delta \d_2\rve^2 \lve
\nabla \d\rve^2,\nonumber \\
\lvert \langle M(\d,\d_1),\v\rangle\rvert\le &\alpha_7 \lve \nabla \v
\rve^2+\alpha_4 \lve \Delta \d\rve^2+C(\alpha_7,\alpha_4)\lve
\nabla \d_1\rve^2 \lve \Delta \d_1\rve^2 \lve
\nabla \d\rve^2,\nonumber \\
\lvert \langle \tilde{B}(\v_2,\d), \Delta \d\rangle \rvert\le &
\alpha_5 \lve \Delta \d\rve^2+C(\alpha_5)\lve \v_2\rve^2 \lve \nabla
\v_2\rve^2 \lve \nabla \d\rve^2.\nonumber
\end{align}
From H\"older's inequality, Gagliardo-Nirenberg's inequality
\eqref{GAG-l4} and the Sobolev embedding $\h^2\subset \el^\infty$
we infer that for any $\alpha_6>0$ there exists $C(\alpha_6)>0$
such that
\begin{align*}
\lvert \langle \tilde{B}(\v,\d_1),\d\rangle \rvert \le & \lve
\v\rve
\lve \nabla \d_1\rve \lve \d\rve_{\el^\infty},\\
\le &\alpha_6 \lve \Delta \d \rve^2+ C(\alpha_6) \lve \v\rve^2\lve
\nabla \d_1\rve^2.
\end{align*}
 From the proof of Proposition \ref{EST1} we see that there exists
 a constant $C(h)>0$ such that
 \begin{align*}
\lve \nabla G(\d)\rve^2\le & C(h)(\lve \nabla \d\rve^2+\lve
\d\rve^2),\\
 \lve \nabla G^2(\d)\rve^2\le & C(h)(\lve \nabla \d\rve^2+\lve
\d\rve^2),\\ 
\lvert \langle \nabla G^2(\d), \nabla \d\rangle \rvert\le &
C(h)(\lve \nabla \d\rve^2+\lve \d\rve^2).\nonumber
 \end{align*}
 Owing to the Lispschitz property of $S$ we have
 \begin{align*}
 \lve S(\v_1)-S(\v_2)\rve^2_{\mathcal{T}_2}\le C \lve \v\rve^2.
 \end{align*}
Now, let $\varphi(\d_1,\d_2)$ be as in Lemma \ref{POLY-UNI} and $$\Psi(t)=e^{-\int_0^t (\psi_1(s)+\psi_2(s)+\psi_3(s) )ds}, \text{ for any } t>0,$$ where
\begin{align*}
\psi_1(s):= & C(\alpha_1)\lve \v_1(s)\rve^2 \lve \nabla \v_1(s)\rve^2+
C(\alpha_6)\lve \nabla \d_1(s) \rve^2,\\
\psi_3(s):= &[C(\alpha_8) +C_2(\alpha_9) ] \varphi(\d_1(s),\d_2(s)),
\end{align*}
and
\begin{equation*}
 \begin{split}
\psi_2(s):=C(\alpha_2,\alpha_3)\lve \nabla \d_2(s)\rve^2\lve \Delta
\d_2(s)\rve^2+C(\alpha_7,\alpha_4)\lve \nabla \d_1(s)\rve^2 \lve \Delta \d_1(s) \rve^2\\ +C(\alpha_5) \lve \v_2(s)\rve^2\lve \nabla \v_2(s)\rve^2
+C_1(\alpha_9) \varphi(\d_1(s),\d_2(s)).
 \end{split}
\end{equation*}
Now applying It\^o's formula to $\lve \d(t)\rve^2$ (as in proof
of Proposition \ref{EST0}) and $\Psi(t)\lve \d(t)\rve^2$ yield
\begin{equation*}
\begin{split}
d[\Psi(t)\lve \bdt\rve^2]=-2\Psi(t)\lve \nabla \bdt\rve^2dt-2 \Psi(t)
\langle \tilde{B}(\bvt,\bd_1(t)),\bdt\rangle\\-2 \langle
f(\bd_2(t))-f(\bd_2(t)),\bdt\rangle dt+\Psi^\prime(t) \lve \bdt\rve^2.
\end{split}
\end{equation*}
Using the same argument we can show that $\Psi(t)\lve \nabla
\bdt\rve^2$ and $\Psi(t)\lve \bvt\rve^2$ satisfy
\begin{equation*}
\begin{split}
d[\Psi(t)\lve \nabla \bdt\rve^2]=\Psi(t)\biggl( -\lve \Delta
\bdt\rve^2+\langle \tilde{B}(\bvt,\bd_1(t))+\tilde{B}(\bv_2(t),\bdt), \Delta \bdt\rangle\biggl)dt\\
\quad +\Psi(t)\biggl(2  \langle
f(\bd_2(t))-f(\bd_1(t)),\Delta \bdt\rangle +\langle \nabla G^2(\bdt),\nabla
\bdt\rangle \biggr)dt\\+\lve G(\d(t))\rve^2 dt+ \Psi^\prime(t)\lve \nabla \bdt\rve^2 dt
+2\Psi(t)\langle \nabla G(\bdt),\nabla \bdt\rangle dW_2(t),
\end{split}
\end{equation*}
 and
 \begin{equation*}
 \begin{split}
 d[\Psi(t)\lve \bvt\rve^2]=-2\Psi(t)\biggl(\lve\nabla \bvt\rve^2
 +\langle B(\bvt,\bv_1(t))+M(\bdt,\bd_1(t))\biggr)dt\\
 -2\Psi(t)M(\bd_2(t),\bdt),\bvt\rangle dt+\Psi(t)\lve S(\bv_1(t))-S(\bv_2(t))\rve^2_{\mathcal{T}_2} dt+\Psi^\prime(t) \lve
 \bvt\rve^2 dt \\+2\Psi(t)\langle \bvt,[S(\bv_1(t))-S(\bv_2(t))]\rangle d\W(t).
\end{split}
 \end{equation*}
 Summing up these last three equalities side by side and using the inequalities \eqref{UNIQ-1}-\eqref{UNI-f2} imply
  \begin{equation*}
\begin{split}
d[\Psi(t)\left(\lve \bvt\rve^2+\lve \bdt\rve^2+\lve \nabla
\bdt\rve^2\right)]+2\Psi(t)\Big[\lve \nabla \bvt\rve^2+\lve \nabla
\bdt\rve^2+\lve \Delta \bdt\rve^2\Big]dt\\
\le
2\Psi(t)\biggl(C \biggl[\lve \bvt\rve^2+\lve \bdt\rve^2+\lve
\nabla \bdt\rve^2 \biggr]dt+\langle \nabla G(\bdt),\nabla \bdt\rangle
dW_2(t)\biggr)\\ +2\Psi(t)\biggl(\langle
\bvt,[S(\bv_1(t))-S(\bv_2(t))]\rangle d\W(t)
+\big[\alpha_9+ \sum_{j=3}^6 \alpha_j\big]\lve \Delta
\bdt\rve^2\biggr)\\
+\Psi(t)\Big[\psi_2(t) \lve \nabla \bdt\rve^2
+\psi_1(t) \lve \bvt \rve^2 + \psi_3(t) \lve \bdt \rve^2 \Big] dt\\
+\Psi^\prime(t)\biggl(\lve \bvt \rve^2+\lve \bdt\rve^2+\lve \nabla
\bdt\rve^2\biggr)dt\\
+(\alpha_1+\alpha_2+\alpha_7)\lve \nabla \bvt\rve^2+\alpha_8 \lve \nabla \bdt\rve^2  dt.
\end{split}
 \end{equation*}
 Notice
 that by the choice of $\Psi$ we have
 \begin{equation*}
\begin{split}
\Psi(t)\Big[\psi_2(t) \lve \nabla \bdt\rve^2
+\psi_1(t) \lve \bvt \rve^2 + \psi_3(t) \lve \bdt \rve^2 \Big]
+\Psi^\prime(t)\biggl(\lve \bvt \rve^2+\lve \bdt\rve^2+\lve \nabla
\bdt\rve^2\biggr) \le 0.
\end{split}
 \end{equation*}
Hence by choosing $\alpha_j=\alpha_9=\frac1{10}$, $j=3,\ldots,6,$ $\alpha_i=\alpha_7=\frac16, i=2,3$ and $\alpha_8=\frac12$ we see that
\begin{equation*}
\begin{split}
d[\Psi(t)\left(\lve \bvt\rve^2+\lve \bdt\rve^2+\lve \nabla
\bdt\rve^2\right)]+\Psi(t)\Big[\lve \nabla \bvt\rve^2+\lve \Delta
\bdt\rve^2+\lve \nabla \bdt\rve^2\Big]dt\\
\le
2\Psi(t)\biggl(C \biggl[\lve \bvt\rve^2+\lve \bdt\rve^2+\lve
\nabla \bdt\rve^2 \biggr]dt+\langle \nabla G(\bdt),\nabla \bdt\rangle
dW_2(t)\\
+\langle \bvt,[S(\bv_1(t))-S(\bv_2(t))]\rangle d\W(t) \biggr)
\end{split}
\end{equation*}
Next integrating and taking the mathematical expectation yield
\begin{equation*}
\begin{split}
\mathbb{E}\biggl[\Psi(t)\left(\lve \bvt\rve^2+\lve \bdt\rve^2+\lve
\nabla \bdt\rve^2\right)\biggr]+\mathbb{E} \int_0^t \Psi(s)\Big[\lve
\nabla \bvs\rve^2+\lve \Delta \bds\rve^2+2\lve \nabla
\bds\rve^2\Big]ds\\\le C \int_0^t
\mathbb{E}\biggl[\Psi(s)\left(\lve \bvs\rve^2+\lve \bds\rve^2+\lve
\nabla \bds\rve^2 \right)\biggr]ds.
\end{split}
\end{equation*}
Now we can easily conclude the proof by making use of Gronwall's lemma.
 \end{proof}

\section{Existence and uniqueness of local and global strong Solution}\label{SLC-Sect4}
In this section we are interested in the strong solution to the system \eqref{ABS-v1}-\eqref{ABt-d1}. Here strong solution is both in PDEs and Stochastic Calculus sense.
One of the main reult, the existence of a local maximal solution, of this section is a corollary of a general framework that will be introduced in Section \ref{ABST-STRONG}.
In this section we will check that the system \eqref{ABS-v1}-\eqref{ABt-d1} fits into this general framework and hence establish the existence of local maximal
solution of the stochastic nematic liquid crystal. We also prove that the maximal solution turns out to be a global one in the two dimensional case.
For these ends we will introduce some additional notations and prove several key inequalities related to the nonlinear terms of the stochastic system
\eqref{ABS-v1}-\eqref{ABt-d1}.

Hereafter, we put
 \begin{equation}\label{eqn-spaces}
 H=\h\times \bx_{0}, \;\; V=\ve_{\frac12}\times \bx_{\frac12} \mbox{ and } E=\ve_1 \times \bx_{1}.
 \end{equation}
Next we denote by $\{\mathbb{S}_1(t)\}_{t\geq 0}$ the analytic semigroup
generated by
 $-\rA$ on $\h$ where $\rA$ is  the Stokes operator.
The operator $-\rrA$ is the generator of an analytic
semigroup $\{\mathbb{T}(t)\}_{t\geq 0}$ on $\el^2$  satisfying
\begin{equation}\label{sem-rep}
\mathbb{T}(t) \bu=\sum_{k\in \mathbb{N}} e^{-\lambda_k t}u_k \phi_k,\;\;\; \bu=\sum_{k\in \mathbb{N}}u_k \phi_k \in \el^2.
\end{equation}
 By using the representation
\eqref{frac-space} we can show without any difficulty that
the space $\bx_{0}$ is invariant with respect to this semigroup. The restriction of $\{\mathbb{T}(t)\}_{t\geq 0}$ to $\bx_{0}$ is also an analytic semigroup which will be denoted in the sequel by $\{\mathbb{S}_2(t)\}_{t\geq 0}$.
 The minus infinitesimal
generator $\trrA$ of $\{\mathbb{S}_2(t)\}_{t\geq 0}$ is the part
of $\rrA$ on $\bx_{0}$, that is,
\begin{align*}
D(\trrA)=\{\bu \in D(\rrA): \rrA\bu\in \bx_{0}\},\;\;
\trrA\bu=\rrA\bu \text{ for any } \bu \in D(\trrA).
\end{align*}
Note that $\bx_{1}\subset D(\trrA).$
With all the above notation, the
 stochastic equations for nematic
 liquid crystal \eqref{eqn-SLQE-v}-\eqref{eqn-SLQE-d} can be rewritten as the following stochastic evolution equation in the space $H$,
\begin{equation}\label{ABSTRACT-LC}
d\y(t) +\mathbf{A}\y(t) dt+\mathbf{F}(\y(t))
dt+\mathbf{L}(\y(t))dt=\mathbf{G}(\y(t)) d{W}(t),
\end{equation}
where, for  $\y=(\bv, \d)\in E$ and $k=(k_1,k_2)\in \rK$,

\begin{equation}\label{eqn-def-A-F}
\A\y=\begin{pmatrix} \rA & 0\\
0 & \rrA
\end{pmatrix}\begin{pmatrix}
\bv\\ \bd
\end{pmatrix},\;\;
\f(\y)=\begin{pmatrix} B(\bv,\bv)+M(\bd)\\
\tilde{B}(\bv,\bd)+f(\bd)
\end{pmatrix},
 \end{equation}

  \begin{equation}\label{eqn-def-L-G}
\mathbf{L}(\y)=\begin{pmatrix} 0\\ -\frac 1 2 G^2(\bd)
\end{pmatrix}, \mathbf{G}(\y)k=\begin{pmatrix}S(\bu)k_1\\
G(\bd)k_2 \end{pmatrix}.
 \end{equation}

Below we will also use the $C_0$ analytic semigroup  $\{\mathbb{S}(t)\}_{t\geq 0}$ on  $H=\h\times \bx_{0}$
defined by
\begin{equation*}
\mathbb{S}(t)\begin{pmatrix}\bv\\\d
\end{pmatrix}=\begin{pmatrix}\mathbb{S}_1(t)\bv\\
\mathbb{S}_2(t)\d
\end{pmatrix}, \;\; (\bv,\d)\in H.
\end{equation*}
 Its
infinitesimal generator  is  $-\mathbf{A}$, where $\mathbf{A}$ is
defined in  \eqref{eqn-def-A-F}.
Some properties of $\{\mathbb{S}(t): t\ge 0\}$ will be given in Lemmata \ref{SEM-1}-\ref{SEM-3}.

%
\begin{assum}\label{HYPO-ST}
We assume that $S: \h\to \mathcal{T}_2(\rK_1,\ve)$ is a globally Lipschitz map. In particular,
there exists $\ell_5\geq 0$
such that
\begin{equation*}
\lve  S(\bu)\rve^2_{\mathcal{T}_2(\rK_1,\ve)}\leq \ell_5 (1+\lve \bu \rve^2),\;\; \mbox{ for any } \bu \in \h.
\end{equation*}
\end{assum}
Let us recall the following notations/definition which are borrowed
from \cite{Kunita-90} (see also \cite{Brz+Elw_2000}).

 \begin{Def}
 For a probability  space  $(\Omega,\mathcal{F}, \mathbb{
P})$      with  given right-continuous filtration $
\mathbb{F}=\big(\mathcal{F}_t\big)_{t\ge  0}$,   a  stopping time
$\tau$ is called accessible iff there exists an  increasing
sequence of stopping times  $\tau_n$ such that a.s. $\tau_n <
\tau$ and $\lim_{n\to \infty} \tau_n =\tau$, see \cite{Kunita-90}.
\end{Def}
\textbf{Notation}.  For  a  stopping  time  $\tau$  we  set
\[ \Omega_t(\tau) =
\{ \omega \in \Omega : t < \tau(\omega)\},
\]
\[
 [0,\tau)\times \Omega =
\{ (t,\omega) \in [0,\infty)\times \Omega: 0\le t < \tau(\omega)
\}.
\]

\begin{Def}
A process $\eta : [0,\tau) \times \Omega \to X$ (we will also write $ \eta(t)$, $t< \tau$), where $X$ is    a
metric space,     is
admissible iff
\begin{trivlist}
\item[(i) ] it is adapted, i.e.  $\eta|_{\Omega_t(\tau)}: \Omega_t(\tau) \to
X$ is $\mathcal{F}_t$ measurable, for any $t\ge 0$; \item[(ii)]
for almost all $\omega \in \Omega$, the function $[0,
\tau(\omega))\ni t \mapsto \eta(t, \omega) \in X$ is continuous.
\end{trivlist}
A process $\eta : [0,\tau) \times \Omega \to X$ is  progressively
measurable  iff, for any $t> 0$,  the  map $$[0,t\wedge \tau)
{
\times}  \Omega \ni  (s,\omega) \mapsto  \eta(s,\omega) \in  X$$ is
$\mathcal{B}_{t\wedge \tau} \times \mathcal{F}_{t\wedge \tau}$ measurable.\\
Two  processes $\eta_i: [0,\tau_i)   \times \Omega  \to X$,
$i=1,2$ are called equivalent (we will write $(\eta_1,\tau_1) \sim
(\eta_2,\tau_2)$)   iff $\tau_1=\tau_2$  a.s. and  for any  $t>0$
the  following holds
\[
\eta_1(\cdot,\omega)= \eta_2(\cdot,\omega) \mbox{ on } [0,t]
\]
for a.a. $\omega \in \Omega_t(\tau_1)\cap \Omega_t(\tau_2)$.
\\
Note that if processes  $\eta_i  :  [0,\tau_i)  \times  \Omega \to
X$,  $i=1,2$ are admissible and for any $t>0$
$\eta_1(t)|_{\Omega_t(\tau_1)}= \eta_2(t)|_{\Omega_t(\tau_2)}$
a.s. then they are also equivalent.
\end{Def}

We now define some concepts of solution to  \eqref{ABS-SPDE}, see \cite[Def. 4.2]{Brz+Millet_2012} or \cite[Def.
2.1]{Mikulevicius}.

\begin{Def}\label{def-local solution} Assume that a $V$-valued  $\mathcal{F}_0$ measurable random variable  $\y_0$ with $\mathbb{E} \Vert \y_0\Vert^2<\infty$ is given. A local mild
solution to problem \eqref{ABS-SPDE-strong} (with the initial time
$0$) is a pair $(\y,\tau)$ such that
\begin{enumerate}
\item $\tau$ is an accessible $\mathbb{F}$-stopping time, \item
$\y: [0,\tau)\times \Omega \to V$ is an admissible process, \item there
exists an approximating sequence  $(\tau_m)_{m\in \mathbb{N}}$ of
 finite $\mathbb{F}$-stopping times  such that $\tau_m \toup \tau$
a.s. and, for every $m\in \mathbb{N}$ and $t\ge 0$,
 we have
\begin{eqnarray}\label{eq-locsol_01}
&&\hspace{-3truecm}\lefteqn{\mathbb{E}\Big(  \sup_{s\in [0,t\wedge \tau_m]} \Vert
\y(s)\Vert^2 +\int_0^{t\wedge \tau_m} \vert \y(s)\vert_E^2 \,
ds\Big)<\infty,}
\\
\label{eq-locsol_01-b} \y(t\wedge \tau_m)&=& \mathbb{S}(t\wedge
\tau_m)\y_0-\int_0^{t\wedge \tau_m}
\mathbb{S}(t\wedge\tau_m-s)[ \mathbf{F}(\y(s))+\mathbf{L}(\y(s)] ds\\
\nonumber &+&\int_0^\infty
\mathds{1}_{[0,t\wedge\tau_m)}\mathbb{S}(t\wedge\tau_m-s)\mathbf{G}(\y(s))\,
d{W}(s).
\end{eqnarray}
\end{enumerate}
Along the lines of the paper \cite{Brz+Elw_2000}, we say that a
local solution $\y(t)$, $t < \tau$ is  global iff
$\tau=\infty$ a.s.
\end{Def}

 We will check that the nonlinear map $\f$ satisfies the assumption
 of Proposition \ref{prop-global Lipschitz-F} with $H=\h\times \bx_{0}$, $V=\ve\times D(\rrA)$ and $E=D(\rA)\times \bx_{1}$.  For
 this purpose we will prove several estimates.
 \begin{lem}\label{Local-LIP-Lem}
There exist some positive constants $c_1$ and $c_2$ such that for
any $(\bv_i, \bd_i)\in E$, i$=1,2$ we have, with $a=\frac d4$,
\begin{equation}\label{local-Lip-F-1}
\begin{split}
\lve B(\bv_1,\bv_1)-B(\bv_2,\bv_2)\rve \le c_1 \biggl(&\lve
\nabla(\bv_1-\bv_2)\rve \lve
\nabla\bv_1\rve^{1-a}\lve
\Delta\bv_1\rve^a\\ &+\lve \nabla
(\bv_1-\bv_2)\lve^{1-a}\lve \Delta (\bv_1-\bv_2)\lve^{a}\lve
\nabla\bv_2\lve\biggr)
\end{split}
\end{equation}
and
\begin{equation}\label{local-Lip-F-2}
\begin{split}
\lve M(\bd_1)-M(\bd_2)\rve \le c_2\biggl(&\lve \d_1-\d_2\rve_2
\lve \d_1\rve_2^{1-a} \lve \d_1\rve_3^a\\ &+ \lve
\d_1-\d_2\rve^{1-a}_2 \lve \d_1-\d_2\rve_3^a \lve \d_2\rve_2
\biggr).
\end{split}
\end{equation}
Note that we used the shorthand notation $M(\d):=M(\d,\d)$ and $B(\bv):=B(\bv,\bv)$.
 \end{lem}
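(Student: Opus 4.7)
The plan is to reduce both \eqref{local-Lip-F-1} and \eqref{local-Lip-F-2} to a common scheme: bilinearly decompose the difference, apply H\"older's inequality with paired exponents summing to $\tfrac12$, and then invoke Gagliardo--Nirenberg interpolation on the high-regularity factor. For the Navier--Stokes nonlinearity I use
\[
B_1(\bv_1,\bv_1)-B_1(\bv_2,\bv_2)=B_1(\bv_1-\bv_2,\bv_1)+B_1(\bv_2,\bv_1-\bv_2),
\]
while for the coupling nonlinearity, setting $\eta=\bd_1-\bd_2$, the algebraic identity
\[
\nabla\bd_1\otimes\nabla\bd_1-\nabla\bd_2\otimes\nabla\bd_2=\nabla\eta\otimes\nabla\bd_1+\nabla\bd_2\otimes\nabla\eta
\]
produces the analogous splitting of $M(\bd_1)-M(\bd_2)$ into two divergence pieces. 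In each summand there is a ``first-slot'' factor to be kept at its natural $V$-scale norm and a ``second-slot'' factor to be interpolated between the $V$-scale and the $E$-scale with exponent $a=\tfrac{n}{4}$.

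For each $B_1$-piece I apply $\lve (\bu\cdot\nabla)\bv\rve\le \lve\bu\rve_{\el^4}\lve\nabla\bv\rve_{\el^4}$. The first factor is handled by the Sobolev embedding $\h^1\hookrightarrow\el^4$ (valid for $n\le 4$), so $\lve\bu\rve_{\el^4}\le c\lve\rA^{1/2}\bu\rve$; the second factor by Gagliardo--Nirenberg,
\[
\lve\nabla\bv\rve_{\el^4}\le c\lve\nabla\bv\rve^{1-n/4}\lve\nabla^2\bv\rve^{n/4}\le c\lve\rA^{1/2}\bv\rve^{1-a}\lve\rA\bv\rve^{a},
\]
using the equivalences $\lve\rA^{1/2}\bv\rve\simeq\lve\nabla\bv\rve$ on $\ve$ and $\lve\rA\bv\rve\simeq\lve\bv\rve_2$ on $D(\rA)$ (Cattabriga--Temam regularity). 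The boundedness of the Leray projector $\Pi$ on $\el^2$ completes the passage to $B_1$ itself, yielding \eqref{local-Lip-F-1} term by term.

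For each $M$-piece I expand componentwise
\[
[\nabla\cdot(\nabla a\otimes\nabla b)]_j=\sum_{k}(\Delta a^k)(\partial_j b^k)+\sum_{i,k}(\partial_i a^k)(\partial_i\partial_j b^k),
\]
which splits the divergence into a ``$\Delta a\cdot\nabla b$'' piece and a ``$\nabla a\cdot D^2 b$'' piece. The cross piece is treated exactly as in the $B_1$ case, via H\"older $\el^4\cdot\el^4$ together with Gagliardo--Nirenberg applied to $\lve D^2 b\rve_{\el^4}$, which interpolates $\h^2$ and $\h^3$ with the same exponent $n/4$. The Laplacian piece is instead estimated by H\"older $\el^2\cdot\el^\infty$: the factor $\lve\Delta a\rve$ absorbs into $\lve a\rve_2$, while
\[
\lve\nabla b\rve_{\el^\infty}\le c\lve b\rve_{2+n/4}\le c\lve b\rve_2^{1-a}\lve b\rve_3^{a}
\]
follows from the embedding $\h^{1+n/4}\hookrightarrow\el^\infty$ (which has a strictly positive margin since $1+\tfrac{n}{4}>\tfrac{n}{2}$ for $n\in\{2,3\}$) composed with the standard interpolation $\lve b\rve_{2+a}\le\lve b\rve_2^{1-a}\lve b\rve_3^a$. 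Applying this with $(a,b)=(\eta,\bd_1)$ and then $(a,b)=(\bd_2,\eta)$ produces \eqref{local-Lip-F-2}.

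The principal technical subtlety is the simultaneous verification that $a=n/4$ is exactly the value making all three interpolation steps (Gagliardo--Nirenberg for $\lve\nabla u\rve_{\el^4}$, Gagliardo--Nirenberg for $\lve D^2 u\rve_{\el^4}$, and the $\h^{1+n/4}\hookrightarrow\el^\infty$ embedding) hold with the same exponent in both dimensions $n=2$ and $n=3$. Once this unifying exponent has been identified, no further structural difficulty arises, and the rest reduces to H\"older/Sobolev bookkeeping modulo the standard graph-norm equivalences for $\rA$ and $\hrrA$.
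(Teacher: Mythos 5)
Your proposal is correct and follows essentially the same route as the paper: the same bilinear decompositions $B_1(\bv_1,\bv_1)-B_1(\bv_2,\bv_2)=B_1(\w,\bv_1)+B_1(\bv_2,\w)$ and $M(\d_1)-M(\d_2)=M(\db,\d_1)+M(\d_2,\db)$, the same $\el^4\!\cdot\!\el^4$ H\"older pairing on the convective and cross pieces, and the same $\el^2\!\cdot\!\el^\infty$ pairing on the $\Delta a\cdot\nabla b$ piece, all closed by Gagliardo--Nirenberg with exponent $a=n/4$. The only cosmetic deviations are that you write out the componentwise expansion of $\nabla\cdot(\nabla a\otimes\nabla b)$ explicitly and bound $\lve\nabla b\rve_{\el^\infty}$ via $\h^{1+n/4}\hookrightarrow\el^\infty$ plus interpolation rather than via the paper's combination \eqref{GAG-LInf} followed by \eqref{SOB-EM}, which are equivalent paths to the same estimate.
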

 \begin{proof}
Set $(\w, \db)=(\bv_1-\bv_2, \d_1-\d_2)$. We start with the proof
of the estimate \eqref{local-Lip-F-1}. Notice that the
left-hand-side of \eqref{local-Lip-F-1} is equal to
\begin{equation*}
\lve B(\w,\bv_1)+B(\bv_2, \w)\rve.
\end{equation*}
Now we estimate the last identity as follows
\begin{equation*}
\lve B(\w,\bv_1)+B(\bv_2, \w)\rve\le C  \lve\w\rve_{\el^4}\lve
\nabla \bv_1\rve_{\el^4} + \lve \bv_2\rve_{\el^4}\lve \nabla \w
\rve_{\el^4},
\end{equation*}
from which along with \eqref{GAG-l4} and the embedding
\eqref{SOB-EM} we easily derive the estimate
\eqref{local-Lip-F-1}.

Next we show that \eqref{local-Lip-F-2} holds. From elementary
calculi we infer the existence of a constant $C>0$ such that
\begin{equation*}
\lve M(\mathbf{f}, \mathbf{g})\rve \le C \lve D^2 \mathbf{f}\rve
\lve \nabla \mathbf{g}\rve_{\el^\infty}+\lve \nabla \mathbf{f}
\rve_{\el^4}\lve D^2 \mathbf{g}\rve_{\el^4}.
\end{equation*}
Owing to the embedding \eqref{SOB-EM} it is not difficult to check
that
\begin{equation*}
\lve M(\mathbf{f}, \mathbf{g})\rve \le C \lve
\mathbf{f}\rve_2\biggl(\lve \nabla \mathbf{g}\rve_{\el^\infty}+
\lve D^2 \mathbf{g}\rve_{\el^4}\biggr).
\end{equation*}
Owing to  \eqref{GAG-l4},  \eqref{GAG-LInf-2} and the
embedding \eqref{SOB-EM} we obtain that
\begin{equation}\label{EST-MF}
\lve M(\mathbf{f}, \mathbf{g})\rve \le C \lve \mathbf{f}\rve_2
\lve \mathbf{g} \rve^{1-a}_2 \lve \mathbf{g} \rve^a_3, \quad
a=\frac d 4.
\end{equation}
Note that
\begin{equation*}
 M(\d_1)-M(\d_2)=M(\d_1-\d_2,\d_1)+M(\d_2,\d_1-\d_2).
\end{equation*}
From this last identity and  \eqref{EST-MF} we easily deduce
the inequality \eqref{local-Lip-F-2}. This ends the proof of Lemma
\ref{Local-LIP-Lem}.
 \end{proof}
 \begin{lem}\label{Local-LIP-Lem-2}
There exist a constant $c_3>0$ such that for any $(\bv_i,
\bd_i)\in  E$, i$=1,2$ we have, with $a=\frac d4$,
\begin{equation}\label{local-Lip-F-3}
\begin{split}
\lve \tilde{B}(\bv_1,\bd_1)-\tilde{B}(\bv_2,\bd_2)\rve_1 \le c_3
\biggl(&\lve \nabla(\bv_1-\bv_2)\rve \lve
\bd_1\rve_2^{1-a}\lve \d_1\rve_3^a\\
&+\lve(\bd_1-\bd_2)\lve^{1-a}_ 2 \lve(\bd_1-\bd_2)\lve^{a}_3\lve
\nabla\bv_2\lve\biggr)
\end{split}
\end{equation}
 \end{lem}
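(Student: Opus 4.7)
The plan is to reduce the estimate to standard H\"older, Sobolev, and Gagliardo--Nirenberg bookkeeping, exactly as in the proof of \eqref{EST-MF} from the previous lemma, but now for the transport term instead of the stress term. Since the inequality is stated in the $\lve\cdot\rve_1=H^1$ norm (which is the natural norm on the second factor $\bx_{0}$ of the state space $H=\h\times\bx_{0}$), the starting point is the bilinear decomposition
\begin{equation*}
B_2(\bv_1,\bd_1)-B_2(\bv_2,\bd_2)=B_2(\w,\bd_1)+B_2(\bv_2,\db),\qquad \w=\bv_1-\bv_2,\ \db=\bd_1-\db_2,
\end{equation*}
so that by the triangle inequality the two summands on the right of \eqref{local-Lip-F-3} arise separately.

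To bound $\lve B_2(\w,\bd_1)\rve_1$, I would use that $B_2(\w,\bd_1)=(\w\cdot\nabla)\bd_1$ and control its $L^2$ norm together with that of its gradient $(\nabla\w\cdot\nabla)\bd_1+(\w\cdot\nabla)\nabla\bd_1$. Each of the three products is then split by H\"older with a $\el^4\times\el^4$ pairing, except for the mixed term $(\nabla\w)(\nabla\bd_1)$ for which only $\el^2\times\el^\infty$ is available. The velocity factor is absorbed into $\lve\rA^{\frac12}\w\rve$ via Poincar\'e and the Sobolev embedding $\ve\hookrightarrow\el^4$, while for the director factor the key point is to apply the Gagliardo--Nirenberg inequality \eqref{GAG-l4} to $D^2\bd_1$ rather than to $\bd_1$; this yields
\begin{equation*}
\lve D^2\bd_1\rve_{\el^4}\le c\,\lve D^2\bd_1\rve^{1-a}\lve D^3\bd_1\rve^{a}\le c\,\lve\bd_1\rve_2^{1-a}\lve\bd_1\rve_3^{a},\qquad a=\tfrac{n}{4}.
\end{equation*}
The $L^\infty$-bound needed for $(\nabla\w)(\nabla\bd_1)$ is handled by \eqref{GAG-LInf-2} applied to $\nabla\bd_1$, which after using $\lve\bd\rve_1\le\lve\bd\rve_2$ gives exactly $\lve\bd_1\rve_2^{1-a}\lve\bd_1\rve_3^{a}$. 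The lower-order piece $\lve(\w\cdot\nabla)\bd_1\rve$ is then majorised by the same expression since $\lve\bd_1\rve_2\le\lve\bd_1\rve_2^{1-a}\lve\bd_1\rve_3^{a}$.

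The second summand $\lve B_2(\bv_2,\db)\rve_1$ is treated identically, simply exchanging the roles of $\w$ and $\db$; this produces the second term on the right of \eqref{local-Lip-F-3}, with $\lve\rA^{\frac12}\bv_2\rve$ in place of $\lve\rA^{\frac12}\w\rve$ and the $H^2,H^3$ norms of $\db$ in place of those of $\bd_1$.

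The only delicate point, and the one which dictates the particular exponents appearing in \eqref{local-Lip-F-3}, is matching the Gagliardo--Nirenberg exponent $a=n/4$: one must avoid applying GN to $\bd$ itself (which would give $H^1,H^2$ exponents and be insufficient for local Lipschitz in $E=D(\rA)\times\bx_1$) and instead apply it to $D^2\bd$, exactly as in \eqref{EST-MF}. Once this observation is in place, the remainder of the proof is purely mechanical, using only embeddings \eqref{GAG-l4}, \eqref{GAG-LInf-2}, \eqref{SOB-EM} that are already at our disposal.
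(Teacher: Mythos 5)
Your proposal follows exactly the paper's own argument: the same bilinear decomposition $B_2(\w,\bd_1)+B_2(\bv_2,\db)$, the same focus on the gradient term with H\"older splittings into $\el^4\times\el^4$ and $\el^2\times\el^\infty$ pairings, and the same application of Gagliardo--Nirenberg to $D^2\bd$ rather than $\bd$ to land on $\lve\bd\rve_2^{1-a}\lve\bd\rve_3^a$ with $a=n/4$. The only cosmetic difference is that you cite \eqref{GAG-LInf-2} where the paper cites \eqref{GAG-LInf} together with \eqref{SOB-EM}, which amounts to the same thing.
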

 \begin{proof}
As in the proof of Lemma \ref{Local-LIP-Lem} we set $(\w,
\db)=(\bv_1-\bv_2, \d_1-\d_2)$ and notice that the left hand side
of  \eqref{local-Lip-F-3} is equal to
\begin{equation*}
\tilde{B}(\w,\d_1)+\tilde{B}(\d_2, \w):=J_1+J_2.
\end{equation*}
Now we want to estimate $\lve J_i\rve_1=\sqrt{\lve J_i\rve^2 +\lve
\nabla J_i \rve^2}$ for $i=1,2.$ Since estimating $\lve J_i\rve$
is easy we will just focus on the term $\lve \nabla J_i \rve$.
There exists a constant $C>0$ such that
\begin{align*}
\lve \nabla J_1\rve\le & C \biggl(\lve \nabla \w \nabla
\d_1\rve+\lve\Delta \d_1 \w\rve\biggr),\\
\le & C \biggl(\lve \nabla \w\rve \lve \nabla
\d_1\rve_{\el^\infty}+\lve \w\rve_{\el^4} \lve \Delta
\d_1\rve_{\el^4}\biggr).
\end{align*}
Invoking  \eqref{GAG-l4},  \eqref{GAG-LInf} and the
embedding \eqref{SOB-EM} we infer that with $a=\frac d4$,
\begin{equation*}
\lve \nabla J_1\rve\le C \lve \nabla
\w\rve\biggl(\lve \nabla \d_1\rve^{1-a}_1 \lve \Delta \d_1\rve^a_1 +
\lve \Delta \d_1\rve^{1-a} \lve \d_1\rve_{3}^a \biggr).
\end{equation*}
This last inequality implies that there exits $\tilde{c}>0$ such
that with $a=\frac d4$,
\begin{equation*}
\lve \nabla J_1\rve\le \tilde{c} \lve \nabla
\w\rve \lve \d_1\rve^{1-a}_2 \lve \d_1\rve^a_3.
\end{equation*}
Using similar argument we can also prove that (again with $a=\frac d4$)
\begin{equation*}
\lve \nabla J_2\rve\le \tilde{c} \lve \nabla
\bv_2 \rve \lve \d_1-\d_2\rve^{1-a}_2 \lve \d_1-\d_2\rve^a_3.
\end{equation*}
The inequality \eqref{local-Lip-F-3} easily follows from these
last two estimates.
 \end{proof}

 \begin{lem}\label{Local-LIP-Lem-3}
If Assumption \ref{eqn-f} is verified, then  there exists $c_4>0$
 such that for any $\bd_i\in \bx_{\frac 12}\cap \bx_1$ with $i=1,2$.
 \begin{equation}\label{local-Lip-F-4}
\begin{split}
\lve f(\bd_1)-f(\bd_2)\rve_1 \le c_4\Big[1+\lve \bd_1\rve_2^{2N}+\lve \bd_2\rve_2^{2N}\Big]\lve \bd_1-\bd_2\rve_2.
\end{split}
\end{equation}
 \end{lem}
 \begin{proof}
 Since $\h^2 $ is and algebra we have $\lvert \bd_1 \rvert^{2k} \bd_2$ for any $\bd_1, \bd_2\in \h^2$ and $k\ge 0$ and
 there exists $C>0$ such that
 \begin{equation}\label{POL-EST-0}
  \lve\lvert \bd_1 \rvert^{2k} \bd_2\rve_2\le C  \lve \bd_1 \rve_2^{2k} \lve \bd_2\rve_2.
 \end{equation}
By Young's inequality, there exists $C>0$ such that
\begin{equation}\label{POL-EST}
\lve \bd_1 \rve_2^{2k} \lve \bd_2 \rve_2\le C \lve \bd_2\rve_2(1+\lve \bd_1 \lve_2^{2N} ) ,\text{
 for $k=0,\ldots, N$.}
\end{equation}
 Thus it is sufficient to prove the lemma for the leading term $a_N \lvert \bd \rvert^{2N}\bd$.\\
 We have $$\lvert \bd_1\rvert^{2N}\bd_1-\lvert \bd_2\rvert^{2N}\bd_2= \vert \bd_1\vert^{2N}(\bd_1-\bd_2)+
 \bd_2(\vert\bd_1\vert-\vert\bd_2\vert)(\sum_{k=0}^{2N-1}\vert \bd_1\vert^{2N-1-k}\vert \bd_2\rvert^{k} ),$$
 from which along with \eqref{POL-EST-0} and \eqref{POL-EST} we easily infer that the lemma is true from the leading term  $a_N \lvert \bd \rvert^{2N}\bd$.
 \end{proof}
For two Banach spaces $(B_i, \lve \cdot \rve_{B_i})$ with $i=1, 2$
we endow the product space $B_1\times B_2$ with the norm $$
|(b_1,b_2)|=\sqrt{\lve b_1\rve_{B_1}^2 +\lve b_2\rve_{{B}_2}^2}.$$
\begin{prop}\label{SLC-ST}
If Assumption \ref{eqn-f} is satisfied, then there exists a
constant $C_0>0$ such that for any $\y_i=(\bv_i,\d_i)$, \,\,
$i=1,2$, with  $\alpha=\frac d4$,  we have
\begin{equation*}
\begin{split}
\lve \f(\y_1) -\f(\y_2)\rve_H \le C_0 \Vert \y_1-\y_2\Vert_V^{{1-\alpha}}
 \Big[{\Vert \y_1-\y_2\Vert^{\alpha}_V}  \Vert \y_1\Vert^{1-\alpha}_V \Vert \y_1\Vert_E^\alpha + \Vert
\y_1-\y_2\Vert_E^\alpha  \Vert
\y_2\Vert_V\Big]\\
+c_4\lve \y_1-\y_2\rve_V \Big[1+\lve \y_1\rve_V^{2N}+\lve \y_2\rve_V^{2N}\Big].
\end{split}
\end{equation*}
\end{prop}
\begin{proof}
 The proposition is a consequence of Lemma \ref{Local-LIP-Lem}, Lemma
 \ref{Local-LIP-Lem-2} and Lemma \ref{Local-LIP-Lem-3}. Its proof
 is easy and we omit it.
 \end{proof}

For
any integer $k>1$ let
\begin{equation}\label{STOP}
\tau_k=\inf\{t\ge 0: \lve \nabla \v(t)\rve^2 +\lve \Delta \d(t)\rve^2 >
k^2\}.
\end{equation}
 Hereafter, for a
vector-valued function $\bu:[0,t\wedge \tau_k]\rightarrow \mathbf{B}$ we will
write  $\int_0^{t\wedge \tau_k} \bu\,\, ds :=\int_0^{t\wedge \tau_k} \bu(s) ds$ for any
$t>0.$

 Our first main result is contained in the following theorem. It
 is basically a corollary of two general theorems (see Theorem
 \ref{thm_local} and Theorem \ref{thm_maximal-abstract}) that we will state
 and prove in Section \ref{ABST-STRONG}.
\begin{thm}\label{LC-Local-Sol}
If Assumptions \ref{HYPO-ST} and \ref{eqn-f} are satisfied and $(\v_0,\d_0)\in \el^2(\Omega;D(\rA^\frac12)\times \bx_{\frac 12})$ is $\mathcal{F}_0$-measurable, $h\in \h^2$ and $d=2,3$,   then the
stochastic equation \eqref{ABSTRACT-LC} for the liquid crystals has a local-maximal strong solution $((\bv;\bd), \tilde{\tau}_\infty)$ satisfying the following properties:
\begin{enumerate}
\item\label{item-1} given $R>0$ and  $\varepsilon >0$ there exists
$\tau(\varepsilon,R)>0$,  such that for every
$\mathcal{F}_0$-measurable $D(\rA^\frac12)\times \bx_{\frac12}$-valued random variable $(\v_0,\d_0)$
satisfying  $\mathbb{E}\Vert (\v_0,\d_0) \Vert^{2}_{D(\rA^\frac12)\times \bx_{\frac12}} \leq R^{2}$, one has
\[{\mathbb P}\big(\tilde{\tau}_\infty \geq \tau(\varepsilon,R)\big) \geq
1-\varepsilon.\]
\item \label{THM-ii} We also have
\begin{align}
\mathbb{P}\left(\{ \tilde{\tau}_\infty <\infty \}\cap \{ \lve \nabla \bv(t) \rve + \lve \Delta \bd(t) \rve<\infty  \} \right)=0,\label{MAX-P1}\\
\limsup_{t\toup\tilde{\tau}_\infty } \lve \nabla \bv(t) \rve + \lve \Delta \bd(t) \rve=\infty \text{ a.s. on } \{\tilde{\tau}_\infty<\infty \}\label{MAX-P2}.
\end{align}
\end{enumerate}

\end{thm}
\begin{proof}
Lemma \ref{SEM-1}-\ref{SEM-3} show that  $\{\mathbb{S}(t)\}_{t\geq 0}$ on
$H=\h\times \bx_{0}$ satisfies Assumption \ref{assum-01}.
Thanks to Proposition \ref{SLC-ST} we can infer from Theorem
\ref{thm_local} and Theorem \ref{thm_maximal-abstract} that problem
\eqref{ABSTRACT-LC} has a local and maximal strong solution satisfying items \eqref{item-1} and \eqref{THM-ii}. This concludes the proof of the theorem.
\end{proof}
The second result is about global resolvability of the stochastic
equation for two dimensional nematic liquid crystal. This result relies on the Khashminskii test for non-explosions (see \cite[Theorem
III.4.1]{Kh_1980} for the finite-dimensional case) and some arguments from \cite{ZB et al 2005}.
\begin{thm}\label{GLOBAL-ST}
If in addition to Assumptions \ref{HYPO-ST} and  \ref{HYPO-ST-weak}, we also suppose that $d=2$, $h \in \h^2$ and $(\v_0, \d_0) \in \el^2(\Omega; D(\rA^\frac12)\times  \bx_{\frac 12})$ such that
\begin{equation}\label{E_0}
\mathbb{E}(\mathscr{E}_0^{4N+2}):=\mathbb{E} \biggl(\Vert \bv_0\Vert^2 + \Vert \bd_0\Vert^2+\Vert \nabla \bd_0\Vert^2 + \int_{\mo}F(\bd_0(x)) dx \biggr)^{4N+2}<\infty,
\end{equation}
then the
stochastic equation \eqref{ABSTRACT-LC} for nematic liquid
crystals has a global strong solution.
\end{thm}
To prove this theorem we need several auxiliary results some of whose proofs will be postponed till the appendix.
Let us start with the following 3 lemmata.
\begin{lem}\label{LEMMA_1}
	For any $h\in \mathbb{H}^2$, there exists two constants $\kappa_1>0$ and $\kappa_2>0$, which depends only on the $\mathbb{H}^2$-norm of $h$, such that for any $\bd\in \h^1 $ satisfying $ \Delta \bd- f(\bd) \in \el^2$we have
	\begin{equation}\label{ST10-b}
	\lvert\langle\Delta \bd -f(\bd), \Delta \bd \times h \rangle + \langle\Delta \bd -f(\bd), \Delta \bd \times \Delta h +(\bd \times h)\times \Delta h \rangle \rvert \le \kappa_1 (\Vert \Delta \bd -f(\d) \Vert^2 + \Vert \bd \Vert^{4N+2}_{\h^1} +1).
	\end{equation}
	\begin{equation}\label{ST12}
	\Vert \Delta (\bd \times h)-f^\prime(\bd)(\bd \times h)\Vert^2 + \vert \langle \Delta \bd -f(\bd), f^{\prime \prime}(\bd)[G(\bd), G(\bd)]\rangle \vert \le \kappa_2  (\Vert \Delta \bd -f(\d) \Vert^2 + \Vert \bd \Vert^{4N+2}_{\h^1} +1).
	\end{equation}
\end{lem}
\begin{proof}
	The proof of the lemma will be given in Appendix \ref{AppB}.
\end{proof}
\begin{lem}\label{LEMMA_3}
	For  $N\in I_2$, there exists a constant $\kappa_3>0$ such that for any $\bn\in \h^1 $ satisfying $ \Delta \bd- f(\bd) \in \el^2$ we have
	\begin{equation} \label{ST6-B}
	\vert \langle f^\prime(\bd) (\Delta \bd -f(\bd)), \Delta \bd -f(\bd)\rangle \vert \le \frac14 + \kappa_3 (\ell_1 + \ell_2 \Vert \bd \Vert^{2N}_{\h^1} )^2 \Vert \Delta \bd -f(\bd) \Vert^2,
	\end{equation}
	where the constants $\ell_1$ and $\ell_2$ are defined in \eqref{ST6-B-0} and \eqref{ST6-B-1}, respectively.
\end{lem}
\begin{proof}
	We will carry out the proof of this lemma in Appendix \ref{AppB}.
\end{proof}
\begin{lem}\label{LEMMA_4}
	There exists a constant $\kappa_4$ such that
	\begin{equation}\label{ST120}
	\begin{split}
	2 \lvert\langle \nabla \bd (\Delta \bd -f(\bd)), \rA \bv \rangle -\langle \nabla \bd (\Delta \bv ), \Delta \bd -f(\bd) \rangle \rvert\le
	\frac14 \Vert \rA \bv \Vert^2 + \frac14 \Vert \nabla(\Delta \bd -f(\bd)) \Vert^2 \\ + \kappa_4 \Vert \nabla \bd \Vert^2 \Vert \Delta \bd \Vert^2 \Vert \Delta \bd -f(\bd) \Vert^2,
	\end{split}
	\end{equation}
	for any $\bv \in D(\rA)$, $\bd \in \h^2$.
\end{lem}
\begin{proof}
Let us call $LHS$ the right-hand side of the inequality in the statement of the lemma.	From the Cauchy-Schwarz inequality we infer that there exists a constant $C>0$ such that
	\begin{equation*}
	LHS\le 4 C \Vert \rA \bv \Vert \Vert \nabla \bd \Vert_{\el^4} \Vert \Delta \bd -f(\bd) \Vert_{\el^4},
	\end{equation*}
	which, along with the Gagliardo-Nirenberg and the Cauchy inequalities, implies
	\begin{equation*}
	LHS\le \frac14 \Vert \rA \bv \Vert^2 + 16 C^2 \Vert \nabla \bd \Vert \Vert \Delta \bd \Vert \Vert \Delta \bd -f(\bd)\Vert \Vert \nabla (\Delta \bd -f(\bd))\Vert.
	\end{equation*}
	Invoking again the  Cauchy  inequality yields
		\begin{equation*}
		LHS\le \frac14 \Vert \rA \bv \Vert^2 + \frac14 \Vert \nabla (\Delta \bd -f(\bd))\Vert^2 +16^2 C^4 \Vert \nabla \bd \Vert^2 \Vert \Delta \bd \Vert^2 \Vert \Delta \bd -f(\bd)\Vert^2,
		\end{equation*}
		which completes the proof of the lemma.
\end{proof}
We also need the following lemma.
\begin{lem}\label{LEMMA_5}
	There exists a constant $\kappa_5$ such that for any $\bv \in D(\rA)$ we have
	\begin{equation}\label{MOST-DIFF}
	\vert (B(\bv,\bv), \rA \bv)\vert \le \frac14 \Vert \rA \bv \Vert^2 + \kappa_5 \Vert \bv \Vert^2 \Vert \nabla \bv \Vert^4.
	\end{equation}
\end{lem}
\begin{proof}[Proof of Lemma \ref{LEMMA_5}]
	 First, by Cauchy-Schwarz inequality we obtain
	 \begin{equation}\label{Est-max}
	 \lvert \langle B(\v,\v), \rA\v\rangle\rvert\le \lve
	 B(\v,\v)\rve \lve \rA\v \rve.
	 \end{equation}
	From  the H\"older and the Gagliardo-Nirenberg inequalities, we derive  that there exists a constant $C_1>0$
	 such that
	 \begin{equation*}
	 \lve B(\bv, \bv)\lve \le C_1  \lve \bv\rve^{1-\frac d4}\rve
	 \nabla \bv\rve^\frac d4 \lve \nabla\bv\rve^{1-\frac d4} \lve\Delta
	 \bv \rve^{\frac d4}, \text{ for any } \bv \in \ve, \bv \in D(\rA),
	 \end{equation*}
	 from which along with  \eqref{Est-max} we derive that
	 \begin{align}
	 \lvert \langle B(\v,\v), \rA\v\rangle\rvert\le& c \lve
	 \v\rve^{1-\frac d4}\lve \nabla \v\rve^\frac d4 \lve
	 \nabla \v
	 \rve^{1-\frac d4}\lve \Delta \v\rve^{\frac d4}\lve \Delta \v\rve,\nonumber\\
	 \le&c \lve \v\rve^{1-\frac d4}\lve \nabla \v\rve \lve \a
	 \v\rve^{1+\frac d4}.\label{ST6}
	 \end{align}
	 Let us recall  Young's inequality  $ab\le C(\alpha,p,q)a^p+\alpha
	 b^q$ for $p^{-1}+q^{-1}=1$ and arbitrary  $\alpha>0$. Let us choose $p=\frac{8}{d+4}$ and
	 $q=\frac{8}{4-d}$.
	 Applying Young's inequality with the above $p$ and $q$ in
	 \eqref{ST6} we obtain
	 \begin{equation*}
	 \lvert \langle B(\v,\v), \rA\v\rangle\rvert\le \alpha \lve \rA \v
	 \rve^2 + C(\alpha,p,q)\lve \v\rve^{2} \lve \nabla \v
	 \rve^{2+\frac{2d}{4-d}}.
	 \end{equation*}
	 We easily conclude the proof of the lemma by setting $d=2$ and $\alpha=\frac14$ in the last estimate.
\end{proof}
Now, we proceed to the statement of  proposition which is very crucial for the proof of Theorem \ref{GLOBAL-ST}. But before continuing further, let us fix some notations.
Let  $\ell_1$ and $\ell_2$ be the positive constants defined in \eqref{ST6-B-0} and \eqref{ST6-B-1}, respectively.
Let $\left((\bv;\bd),\tilde{\tau}_\infty\right)$ be the maximal solution of the problem \eqref{ABSTRACT-LC} obtained in Theorem
\ref{LC-Local-Sol}, and let us set
\begin{align*}
\phi(s)=&\kappa_3 (\ell_1 + \ell_2\sup_{r\in [0,s]}\lve \d(r) \rve^{2N}_{\h^1})^2+\kappa_4 \Vert \nabla\bd (s) \Vert^2 \Vert \Delta \bd(s) \Vert^2   + \kappa_5 \lve \v(s)\rve^{2} \lve \nabla \v(s)
\rve^{2},\; s\ge 0,\\
\Phi(s)=&e^{-\int_0^{s} \phi(r) dr}, \; s\ge 0,
\end{align*}
where the constants $\kappa_i$, $i=3,\ldots,5$, are given in Lemma \ref{LEMMA_3}-\ref{LEMMA_5}. Now, we are ready to state the aforementioned proposition.
\begin{prop}\label{STRONGER-NORM}
Let all the assumptions of Theorem \ref{GLOBAL-ST} be satisfied and	 $(\v,\d)$ be the local strong solution of \eqref{ABSTRACT-LC} obtained in Theorem
	\ref{LC-Local-Sol}. Let $$\Psi(\bd)=\lve \Delta \d-
	f(\d)\rve^2.$$
Then, there exist 2 increasing functions $\kappa_0,\kappa_1:[0,T] \to (0,\infty)$ such that
	for any $t>0$
	\begin{equation}\label{ST15}
	\begin{split}
	\E \biggl[\Phi(t\wedge \tau_k)\left(\lve \nabla
	\v(t\wedge \tau_k)\rve^2+\Psi(\d(t\wedge \tau_k))\right)\biggr] + \E \int_0^{t\wedge \tau_k}
	\Phi(s)\Big(2\lve \Delta\v(s)
	\rve^2+ \lve \nabla (\Delta \d(s) -f(\d(s)) )\rve^2 \Big)ds\\
	\le \kappa_0(t)\mathbb{E}(\rve \nabla \v_0\rve^2+\Psi(\d_0))+ \kappa_1(t) \mathbb{E}(\mathscr{E}_0^{2N+1}),
	\end{split}
	\end{equation}
	where $\Phi(\cdot)$ is defined as above and $\mathscr{E}_0$ is as in \eqref{E_0}.
\end{prop}
\begin{proof}
	The proof will be given in Appendix \ref{AppB}.
\end{proof}
After all these preparations we proceed to the promised proof of Theorem \ref{GLOBAL-ST}.
\begin{proof}[Proof of Theorem \ref{GLOBAL-ST}]
Let $\{ \tau_k; k \in \mathbb{N}\}$ be the sequence of stopping times defined in \eqref{STOP}. By the item \eqref{THM-ii} of Theorem \ref{LC-Local-Sol} the solution is global if we are able to show that   $$ \mathbb{P}\Big(\tilde{\tau}_\infty<\infty\Big)=0 .$$
For this aim, we first derive the following chain of inequalities
\begin{equation*}
\begin{split}
\mathbb{P}\left(\tau_k<t\right)&\le  \mathbb{E}\left(1_{\{\tau_k<t\}} 1_{\{e^{\int_0^{t\wedge \tau_k}\phi(r)dr}\le k\} } \right)+  \mathbb{E}\left(1_{\{\tau_k<t\}} 1_{\{e^{\int_0^{t\wedge \tau_k}\phi(r) dr}> k\} } \right),\\
&\le \frac1k \mathbb{E}\biggl(1_{\{\tau_k<t\}} k^2 e^{-\frac 12
\int_0^{t\wedge \tau_k}\phi(s)ds }\biggr)  +\mathbb{E}\biggl(1_{\{\tau_k<t\}} 1_{\{\int_0^{t\wedge \tau_k} \phi(s) ds
> \log{k}\} } \biggr),\\
\le & \mathrm{I}+\mathrm{II}.
\end{split}
\end{equation*}
Now, we will estimate $\mathrm{I}$ and $\mathrm{II}$ separately.
First, from the definition of $\tau_k$ and $\Phi$ we have
\begin{align*}
\mathrm{I}\le \frac1k \mathbb{E} \left(1_{\{\tau_k < t\}} \Phi(t\wedge \tau_k)\left( \lve \rA \bv(t\wedge \tau_k) \rve^2 + \lve \Delta \bd(t\wedge \tau_k) \rve^2\right) \right)\\
\le  \frac1k \mathbb{E} \left(\Phi(t\wedge \tau_k)\left( \lve \rA \bv(t\wedge \tau_k) \rve^2 + \lve \Delta \bd(t\wedge \tau_k) \rve^2\right) \right)
\end{align*}
From Proposition \ref{EST1}, Remark \ref{REM-H2},
\eqref{ST15} and  \eqref{bigdanh2}, we infer that
\begin{equation*}
\mathrm{I}\le \frac1k\left(\kappa_0(t)\mathbb{E}(\rve \nabla \v_0\rve^2+\Psi(\d_0))+ \kappa_1(t) \mathbb{E}(\mathscr{E}_0^{2N+1})\right),
\end{equation*}
for any $k\in \mathbb{N}$.
Second, we estimate $\mathrm{II}$ as follows
\begin{align*}
\mathrm{II}=& \mathbb{E}\left(1_{\{\tau_k < t\}} 1_{\{\int_0^{t\wedge \tau_k} \phi(r) dr > \log{k} \} }\right)\\
\le & \int_{\{\tau_k<t\} \cap \{\int_0^{t\wedge \tau_k} \phi(r) dr>\log{k} \}}\frac{ \int_0^{t\wedge \tau_k} \phi(r) dr}{\log{k}} d\mathbb{P}\\
\le & \frac1{\log{k}} \int_{\{\tau_k < t\}} \int_0^{t\wedge \tau_k} \phi(r) dr\;\;\; d\mathbb{P} \\
\le & \frac{1}{\log{k}} \mathbb{E} \int_0^{t\wedge \tau_k} \phi(r) dr.
\end{align*}
Now, from Proposition \ref{EST1} we infer that there exists an increasing function $\tilde{\kappa}:[0,\infty) \to (0,\infty)$ such that
\begin{align*}
&\E\int_0^{t\wedge \tau_k} \lve \v(s)\rve^2\lve \nabla \v(s)\rve^2 ds
\le \E\sup_{0\le s\le t\wedge \tau_k}\lve \v(s)\rve^4 + \E \biggl[\int_0^{t\wedge \tau_k}
\lve \nabla \v(s)\rve^2 ds
\biggr]^2\le \tilde{\kappa}(t)(1+\mathfrak{G}_1(t,4N+2)),\\
& \E\int_0^{t\wedge \tau_k}(\ell_1+\ell_2\lve \d(s)\rve^{2N}_{\h^1})^2 ds\le\tilde{\kappa}(t)(1+\mathfrak{G}_1(t,4N+2)),\\
& \E \int_0^{t\wedge \tau_k} \Vert \nabla \bd(s) \Vert^2 \Vert \Delta \bd (s) \Vert^2 ds \le \E \sup_{0\le s\le t\wedge \tau_k} \Vert \nabla \bd(s)\Vert^4 + \E \biggl[\int_0^{t\wedge \tau_k} \Vert \Delta \bd(s) \Vert^2 ds \biggr]^2\le \tilde{\kappa}(t)(1+\mathfrak{G}_1(t,4N+2)),
\end{align*}
where $\mathfrak{G}_1(t,4N+2)$ is the constant in \eqref{G_1} with $T=t$ and $p=4N+2$.
Thus, there exists a constant $C>0$ such that for any $k\in \mathbb{N}$
\begin{equation*}
\E \int_0^{t\wedge \tau_k} \phi(s)ds\le C \tilde{\kappa}(t)(1+\mathfrak{G}_1(t,2))
\end{equation*}
and
\begin{equation*}
\mathrm{II} \le \frac1{\log{k}} C \tilde{\kappa}(t)(1+\mathfrak{G}_1(t,2)).
\end{equation*}
Collecting the information about $\mathrm{I}$ and $\mathrm{II}$ together, we infer that there exists an increasing function $C: [0,\infty) \to (0,\infty)$ such that for any $k \in \mathbb{N}$
\begin{equation*} 
 \begin{split}
  \mathbb{P}\left(\tau_k<t\right)&\le  C(t) [\frac 1 k  +\frac{1}{\log{k}}],
 \end{split}
\end{equation*}
which implies that

$$\lim_{k\rightarrow \infty}\mathbb{P}\left(\tau_k<t\right)=0.$$
 Now, we easily deduce from the last identity and Theorem \ref{LC-Local-Sol}\eqref{THM-ii} that $\mathbb{P}\Big(\tilde{\tau}_\infty<\infty\Big)=0 $ which implies the global resolvability of our system for the case $d=2$. 
\end{proof}
\section{Strong solution for an abstract stochastic equation}\label{ABST-STRONG}
The goal of this section is to prove a general result about the existence of local and maximal solution to an
abstract stochastic partial differential equations with locally
Lipschitz continuous coefficients. This is achieved by using some
truncation and fixed point methods.
\subsection{Notations and Preliminary}\label{abstract-framework}
Let $V$,  $E$ and $H$ be separable Banach spaces such that
$E\subset V$ continuously. We denote the norm in $V$ by $\Vert
\cdot \Vert$ and we put
\begin{equation}\label{eqn-X_T}
 X_T:= C([0,T];V) \cap L^2(0,T;E)
 \end{equation}
with the norm
\begin{equation}\label{eqn-X_T-norm}
 \vert u\vert_{X_T}^2= \sup_{s \in [0,T]} \Vert u(s)\Vert^2+\int_0^T \vert u(s) \vert_E^2\, ds.
 \end{equation}
Let $F$ and $G$ be two nonlinear mappings satisfying the following
sets of conditions.
\begin{assum}\label{assum-F}
 Suppose that $F: E \to H$ is such that
$F(0)=0$ and there exist $p ,q \geq 1$, $\alpha, \gamma \in [0,1)$ and $C>0$
such that
\begin{equation}\label{eqn-local Lipschitz-F}
\begin{split}
\vert F(y)-F(x) \vert_H \leq C \Big[ \Vert y-x\Vert \Vert
y\Vert^{p-\alpha} \vert y\vert_E^\alpha + \vert y-x\vert_E^\alpha
\Vert y-x\Vert^{1-\alpha} \Vert x\Vert^p\Big]\\
+ C \Big[ \Vert y-x\Vert \Vert
y\Vert^{q-\gamma} \vert y\vert_E^\gamma + \vert y-x\vert_E^\gamma
\Vert y-x\Vert^{1-\gamma} \Vert x\Vert^q\Big],
\end{split}
\end{equation}
for any $x, y\in E$.
\end{assum}
\begin{assum}\label{assum-G}
Assume that $G: E \to V$  such that $G(0)=0$ and there exists $k
\geq 1$, $\beta \in [0,1)$ and $C_G>0$ such that
\begin{equation}\label{eqn-local Lipschitz-G}
\Vert G(y)-G(x) \Vert \leq C_G \Big[ \Vert y-x\Vert \Vert
y\Vert^{k-\beta} \vert y\vert_E^\beta + \vert y-x\vert_E^\beta
\Vert y-x\Vert^{1-\beta} \Vert x\Vert^k\Big],
\end{equation}
for any $x, y\in E$.
\end{assum}
 Let $(\Omega,
\mathcal{F}, \mathbb{P})$ be a complete probability space equipped
with a filtration $\mathbb{F}=\{\mathcal{F}_t: t\geq 0\}$
satisfying the usual condition.   By $M^2(X_T)$ we denote
the space of all progressively measurable $E$-values processes
whose trajectories belong to $X_T$ almost surely endowed with a
norm
\begin{equation}\label{eqn-M^2X_T}
\vert u \vert_{M^2(X_T)}^2 = \mathbb{E}\Big[ \sup_{s \in [0,T]}
\Vert u(s)\Vert^2+\int_0^T \vert u(s) \vert_E^2\, ds\Big].
\end{equation}
Let us also formulate the following assumptions.
\begin{assum}\label{assum-01}
Suppose that the embeddings $E\subset V \subset H$ are continuous. Consider (for
simplicity) a one-dimensional Wiener process $W(t)$.\\
Assume that $S(t)$, $t\in [0,\infty)$, is a family of bounded linear operators on the space $H$ such that there exists two positive constants $C_1$ and $C_2$ with the following properties . \\
(i) For every $T>0$ and every  $f\in L^2(0,T;H)$ a function
$u=S\ast f$ defined by
\[ u(t)=\int_0^T S(t-r) f(r)\, dt, \;\; t \in [0,T]\]
belongs to $X_T$ and
\begin{equation}\label{ineq-dc}
\vert u \vert _{X_T}\leq C_1 \vert f \vert _{L^2(0,T;H)}.
\end{equation}
(ii) For every $T>0$ and every process  $\xi\in M^2(0,T;V)$ a
process $u=S \diamond \xi$ defined by
\[ u(t)=\int_0^T S(t-r) \xi(r)\, dW(r), \;\; t \in [0,T]\]
belongs to $M^2(X_T)$ and
\begin{equation*}
\vert u \vert _{M^2(X_T)}\leq C_2 \vert \xi \vert _{M^2(0,T;V)}.
\end{equation*}
(iii) For every $T>0$ and every $u_0\in V$, a function $u=Su_0$
defined by
\[ u(t)= S(t)u_0,  \;\; t \in [0,T]\]
belongs to $X_T$. Moreover, for every $T_0>0$ there exist $C_0>0$
such that for all $T\in (0,T_0]$,
\begin{equation}\label{ineq-dc-2}
\vert u \vert _{X_T}\leq C_0 \Vert u_0 \Vert.
\end{equation}
\end{assum}
Now let us consider a semigroup $S(t)$, $t\in [0,\infty)$ as above
and the abstract SPDEs
\begin{equation}\label{ABS-SPDE-1}
u(t)=S(t)u_0+\int_0^t S(t-s) F(u(s)) ds+\int_0^t S(t-s) G(u(s))
dW(s),\;\; \mbox{ for any }t>0
\end{equation}

which is a mild version of the problem

\begin{equation}\label{ABS-SPDE-strong}
\left\{\begin{array}{rl} du(t)&= Au(t)\,dt+  F\big(u(t)\big)\, dt+
G\big(u(t)\big)
dW(t),\;\;t>0,\\
u(0)&=u_0.
\end{array}
\right.
\end{equation}
\begin{Def}\label{def-local solution-2}
Assume that a $V$-valued  $\mathcal{F}_0$ measurable random variable  $u_0$ is given. A local mild
solution to problem \eqref{ABS-SPDE-strong} (with the initial time
$0$) is a pair $(u,\tau)$ such that
\begin{enumerate}
\item $\tau$ is an accessible $\mathbb{F}$-stopping time, \item
$u: [0,\tau)\times \Omega \to V$ is an admissible\footnote{This
also follows from  condition (3) below.} process, \item there
exists an  approximating sequence  $(\tau_m)_{m\in \mathbb{N}}$ of
$\mathbb{F}$ finite stopping times  such that $\tau_m \toup \tau$
a.s. and, for every $m\in \mathbb{N}$ and $t\ge 0$,
 we have
\begin{eqnarray}\label{eq-locsol_00}
&&\mathbb{E}\Big(  \sup_{s\in [0,t\wedge \tau_m]} \Vert
u(s)\Vert^2 +\int_0^{t\wedge \tau_m} \vert u(s)\vert_E^2 \,
ds\Big)<\infty,
\\
\label{eq-locsol_00-b} u(t\wedge \tau_m)&=&S(t\wedge
\tau_m)u_0+\int_0^{t\wedge \tau_m}
S(t\wedge\tau_m-s) F(u(s)) ds\\
\nonumber &+&\int_0^\infty
\mathds{1}_{[0,t\wedge\tau_m)}S(t\wedge\tau_m-s)G(u(s))\,  dW(s).
\end{eqnarray}
\end{enumerate}
Along the lines of the paper \cite{Brz+Elw_2000}, we said that a
local solution $u(t)$, $t < \tau$ is called global iff
$\tau=\infty$ a.s.
\end{Def}
\begin{Rem}\label{Rem-independence}
{\rm  The  Definition \ref{def-local solution-2} of  a local
solution  is independent  of the choice of the  sequence
$\big(\tau_n\big)$.  A proof  of this fact follows from the
continuity of trajectories of the process $u$ (what is a consequence
of admissibility of $u$) and is based on the following three
principles.
\begin{trivlist}
\item[{ \bf  (i)}] If $\tau$ is  an accessible stopping time, then
there exist an increasing sequence $\tau_n$ of {\em discrete}
stopping times such that a.s. $\tau_n < \tau$ and $\tau_n \toup
\tau$; \item[{ \bf  (ii)}] if $\tau$  is an accessible  stopping
time and $\sigma \le \tau$ is a stopping time, then $\sigma$ is
also accessible. \item[{  \bf (ii)}]  if a pair $(u,\tau)$  is  a
local solution to \eqref{ABS-SPDE-1}, then \eqref{eq-locsol_00-b} holds with $t$ being any
discrete stopping time.
\end{trivlist}
It follows that the following is an equivalent definition of
a local solution. \\} A pair  $(u,\tau)$,  where $\tau$ be an
accessible stopping  time and $u:[0,\tau)\times \Omega\to V$ is an
admissible   process, is a local  mild solution to  equation
\eqref{ABS-SPDE-strong}  iff  for every accessible stopping time
$\sigma$ such that $\sigma < \tau$, for every $t \ge 0$,   a.s.
\begin{eqnarray}
\label{eq-locsol-2} u(t\wedge \sigma)&=&S(t\wedge
\sigma)u_0+\int_0^{t\wedge \sigma}
S(t\wedge\sigma-s) F(u(s)) ds\\
&+&\int_0^\infty
\mathds{1}_{[0,t\wedge\sigma]}S(t\wedge\sigma-s)G(u(s)) dW(s).
\nonumber\end{eqnarray}
\end{Rem}
Let us first formulate the following useful result.
\begin{prop}
\label{prop-local solution} Assume that a pair $(u,\tau)$ is a
local mild solution to problem \eqref{ABS-SPDE-strong}, where
$u_0$ is an   $V$-valued  $\mathcal{F}_0$ measurable random
variable. Then for
every finite stopping time $\sigma$, a pair $(u_{\vert [0, \tau
\wedge \sigma)\times \Omega}, \tau \wedge \sigma)$ is also a local
mild solution to problem \eqref{ABS-SPDE-strong}.
\end{prop}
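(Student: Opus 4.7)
The plan is to verify the three conditions of Definition~\ref{def-local solution-2} for the candidate pair $(u\vert_{[0,\tau\wedge\sigma)\times\Omega},\, \tau\wedge\sigma)$. Let $(\tau_m)_{m\in\mathbb{N}}$ be the approximating sequence of finite stopping times witnessing accessibility of $\tau$ for the given local solution, and set $\rho_m:=\tau_m\wedge\sigma$. Each $\rho_m$ is a finite stopping time (minimum of two finite stopping times), the sequence is nondecreasing, $\rho_m\toup \tau\wedge\sigma$ as $m\to\infty$, and the inequality $\rho_m\le\tau_m<\tau$ shows that $(\rho_m)$ provides an approximating sequence for $\tau\wedge\sigma$. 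Admissibility of $u\vert_{[0,\tau\wedge\sigma)\times\Omega}$ is then automatic: adaptedness and almost sure continuity of trajectories on $[0,\tau\wedge\sigma(\omega))$ are inherited from the corresponding properties of $u$ on the larger set $[0,\tau(\omega))$, since $\tau\wedge\sigma\le\tau$.

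For the integrability condition~\eqref{eq-locsol_00}, I would simply exploit $t\wedge\rho_m\le t\wedge\tau_m$ together with monotonicity of the supremum and nonnegativity of the integrand to obtain
\[
  \E\Big(\sup_{s\in[0,t\wedge\rho_m]}\Vert u(s)\Vert^2+\int_0^{t\wedge\rho_m}\vert u(s)\vert_E^2\,ds\Big)
  \le \E\Big(\sup_{s\in[0,t\wedge\tau_m]}\Vert u(s)\Vert^2+\int_0^{t\wedge\tau_m}\vert u(s)\vert_E^2\,ds\Big)<\infty,
\]
the last quantity being finite by condition (3) applied to the original local solution $(u,\tau)$.

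The substantive step is the mild identity~\eqref{eq-locsol_00-b} at the stopping time $t\wedge\rho_m=(t\wedge\sigma)\wedge\tau_m$. My plan is to start from the identity
\[
  u(t\wedge\tau_m)=S(t\wedge\tau_m)u_0+\int_0^{t\wedge\tau_m}S(t\wedge\tau_m-s)F(u(s))\,ds+\int_0^\infty \mathds{1}_{[0,t\wedge\tau_m)}S(t\wedge\tau_m-s)G(u(s))\,dW(s),
\]
which holds a.s.\ for each fixed $t\ge0$, and to upgrade it to an identity that holds \emph{simultaneously} in $t$ on a single event of full probability. Both sides are continuous functions of $t$: the left-hand side because admissibility of $u$ yields continuous trajectories of $u(\cdot\wedge\tau_m)$; the deterministic convolution by Assumption~\ref{assum-01}(i) and strong continuity of $S$; and the stochastic convolution by Assumption~\ref{assum-01}(ii) and standard continuity of the stochastic integral. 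Once the identity holds for all $t$ on one event of full measure, I would substitute pathwise $t\mapsto t\wedge\sigma(\omega)$. Using $(t\wedge\sigma)\wedge\tau_m=t\wedge\rho_m$ inside the argument of $S$, inside the upper limits of the integrals, and inside the indicator of the stochastic integral, the resulting identity is exactly \eqref{eq-locsol_00-b} for the pair $(u\vert_{[0,\tau\wedge\sigma)\times\Omega},\tau\wedge\sigma)$ with approximating sequence $(\rho_m)$.

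The main obstacle I expect is this continuity-in-$t$ argument for the stopped stochastic convolution: one needs a continuous modification and then a measure-theoretic step to pass from the fixed-$t$ almost sure identity to a common exceptional null set independent of $t$, so that pathwise evaluation at the random time $t\wedge\sigma$ is legitimate. Once this point is granted, the remaining work is purely algebraic rearrangement of the minima of $t$, $\sigma$ and $\tau_m$, and no genuine additional probabilistic difficulty arises.
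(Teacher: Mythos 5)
The paper states this Proposition without proof, so there is no official argument to compare against; the intended tools appear in Remark~\ref{Rem-independence}. Your outline is the right one: shrink the approximating sequence to $\rho_m=\tau_m\wedge\sigma$, observe $\rho_m\le\tau_m<\tau$ so that $u(t\wedge\rho_m)$ is well defined, inherit the integrability estimate~\eqref{eq-locsol_00} by monotonicity, upgrade the fixed-$t$ identity~\eqref{eq-locsol_00-b} to an identity holding for all $t$ on a single full-measure event (using continuity of trajectories, Assumption~\ref{assum-01}, and a dense countable set of $t$), and then evaluate pathwise at $t\wedge\sigma(\omega)$, using $(t\wedge\sigma)\wedge\tau_m=t\wedge\rho_m$. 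This is exactly what principles (i)--(iii) of Remark~\ref{Rem-independence} are designed to justify, so your route is not genuinely different from the paper's intended one.

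There is one concrete gap: you never verify condition (1) of Definition~\ref{def-local solution-2}, namely that $\tau\wedge\sigma$ is an \emph{accessible} stopping time. Your sentence suggesting that $(\rho_m)$ ``provides an approximating sequence for $\tau\wedge\sigma$'' cannot be used to witness accessibility, because $\rho_m<\tau\wedge\sigma$ fails in general: on the event $\{\sigma\le\tau_m\}$ one has $\rho_m=\sigma=\tau\wedge\sigma$. What the inequality $\rho_m\le\tau_m<\tau$ does give is that $(\rho_m)$ is a valid sequence for condition (3), since that condition requires only $\rho_m\toup\tau\wedge\sigma$ and that $u$ is defined up to time $\rho_m$. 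Accessibility of $\tau\wedge\sigma$ has to be supplied separately; in the paper's framework this is exactly the content of principle (ii) of Remark~\ref{Rem-independence} (if $\tau$ is accessible and $\sigma\le\tau$ is a stopping time, then $\sigma$ is accessible), applied to $\tau\wedge\sigma\le\tau$. Once you invoke that principle explicitly, your argument is complete. A secondary remark: in the last step, after substituting $t\mapsto t\wedge\sigma(\omega)$ you obtain the pathwise evaluation of the stochastic-convolution process at the random time $t\wedge\rho_m$; this is precisely the way the right-hand side of~\eqref{eq-locsol_00-b} must be read (since $\tau_m$ already appears there inside the integrand), so no further identification is required, but it is worth saying so rather than presenting the remaining work as ``purely algebraic.''
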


Secondly, let us recall the following result,  see \cite[Lemmata III 6A and
6B]{Elw_1982}.
\begin{lem} \label{L4.2} \textbf{(The Amalgamation
Lemma) } Let $\Delta$ be a family of accessible stopping
times with values in $[0, \infty ]$. Then a function
\[ \tau := \sup \{ \alpha : \alpha \in \Delta \}\]
is an accessible stopping time with values in $[0, \infty ]$ and
there exists an $\Delta $-valued  increasing sequence $ \{
\alpha_{n} \}_{ n= 1 }^\infty $ such that
$ \tau $ is  the poitwise limit of $\alpha_n$.\\
Assume also that for each  $ \alpha \in \Delta $, $  I_{
\alpha } : [ 0, \alpha ) \times \Omega \to V $ is an admissible
process such that for all $ \alpha , \beta \in \Delta $ and
every $t>0$,
 \begin{equation} \label{eqn-amalgamation_01} I_{ \alpha }(t)= I_{ \beta } (t) \mbox{ a.s. on } \Omega_t( \alpha \wedge \beta) .
\end{equation}
Then, there exists an admissible process $ \mathbf{I}: [0, \tau )
\times \Omega \to V $,
 such that every $t>0$,
\begin{equation} \label{eqn-amalgamation_02}
\mathbf{I}(t)= I_{ \alpha } (t) \; \mbox{a.s. on} \; \Omega_t( \alpha ).
\end{equation}
Moreover, 
if ${\tilde I} :[0,\tau) \times \Omega \to X$ is any process
satisfying \eqref{eqn-amalgamation_02} then the process $\tilde I$
is a version of the process $I$, i.e. for any $t\in [0,\infty) $
\begin{equation} \label{eqn-amalgamation_03}
\mathbb{ P}\left(\left\{\omega \in \Omega: t < \tau(\omega) , \;
I(t,\omega)\not=  {\tilde I}(t,\omega) \right\} \right) =0 .
\end{equation}
In particular, if in addition ${\tilde I}$ is an admissible
process,  then
\begin{equation} \label{eqn-amalgamation_03'}
\mathbf{I}=  {\tilde I}.
\end{equation}


\end{lem}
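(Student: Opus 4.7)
The plan is to carry out the standard amalgamation construction in three stages: first extract a countable increasing cofinal sequence from $\rA$ and check that its pointwise supremum $\tau$ is an accessible stopping time; second, glue the $I_\alpha$ on the sets $\Omega_t(\alpha)$ to define a candidate process $\mathbf{I}$; third, verify admissibility and the uniqueness assertions.

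For the first step, I would begin by observing that for any $\alpha,\beta\in\rA$ the maximum $\alpha\vee\beta$ is again an accessible stopping time, so one may without loss of generality assume $\rA$ is directed upward, and then extract (by a separability/Lindel\"of-type argument applied to the set of possible sup values) an increasing sequence $(\alpha_n)_{n\in\mathbb{N}}\subset\rA$ with $\tau=\sup_n\alpha_n$ pointwise on $\Omega$. Since $\tau$ is a pointwise increasing limit of stopping times, it is itself a stopping time. To verify accessibility, for each $n$ choose an increasing sequence $(\alpha_n^k)_{k\in\mathbb{N}}$ of stopping times with $\alpha_n^k<\alpha_n$ a.s.\ and $\alpha_n^k\toup\alpha_n$; then set $\sigma_n=\max\{\alpha_k^k:k\le n\}\wedge(\tau-\tfrac{1}{n})^+$, suitably truncated on $\{\tau=\infty\}$. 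One checks $\sigma_n$ is a stopping time, $\sigma_n<\tau$ a.s., and $\sigma_n\toup\tau$ a.s., which yields accessibility of $\tau$ and simultaneously the desired approximating sequence.

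For the second step, observe that $\Omega_t(\tau)=\bigcup_n\Omega_t(\alpha_n)$ with each $\Omega_t(\alpha_n)\in\mathcal{F}_t$. For $\omega\in\Omega_t(\tau)$ pick any $n$ with $\omega\in\Omega_t(\alpha_n)$ and set $\mathbf{I}(t,\omega):=I_{\alpha_n}(t,\omega)$. The compatibility hypothesis \eqref{eqn-amalgamation_01} applied pairwise to the countable family $(\alpha_n)$ furnishes a single null set outside of which the definition is independent of the choice of $n$, so $\mathbf{I}(t)|_{\Omega_t(\tau)}$ is well-defined and $\mathcal{F}_t$-measurable as the pasting of countably many $\mathcal{F}_t$-measurable pieces. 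Equation \eqref{eqn-amalgamation_02} is then immediate for each $\alpha=\alpha_n$, and for a general $\alpha\in\rA$ it follows by combining \eqref{eqn-amalgamation_01} between $\alpha$ and $\alpha_n$ with the fact that $\Omega_t(\alpha)\subset\Omega_t(\tau)=\bigcup_n\Omega_t(\alpha_n)$. For continuity of trajectories, fix $\omega$ with $\tau(\omega)<\infty$ and $t_0\in[0,\tau(\omega))$; choose $n$ with $t_0<\alpha_n(\omega)$. On $[0,\alpha_n(\omega))$ the path of $\mathbf{I}(\cdot,\omega)$ coincides with that of $I_{\alpha_n}(\cdot,\omega)$, which is continuous by admissibility of $I_{\alpha_n}$, so $\mathbf{I}$ is admissible on $[0,\tau)\times\Omega$.

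For the uniqueness assertions, if $\tilde I$ satisfies \eqref{eqn-amalgamation_02} then $\tilde I(t)=I_{\alpha_n}(t)=\mathbf{I}(t)$ a.s.\ on each $\Omega_t(\alpha_n)$, and unioning the countably many null sets gives $\mathbb{P}\{t<\tau,\ \tilde I(t)\ne\mathbf{I}(t)\}=0$, which is \eqref{eqn-amalgamation_03}. If in addition $\tilde I$ is admissible, both processes have continuous trajectories on $[0,\tau)$ agreeing for each rational $t$ outside a single null set, hence they are indistinguishable on $[0,\tau)\times\Omega$, giving \eqref{eqn-amalgamation_03'}. The main obstacle is the very first step: reducing from an arbitrary (possibly uncountable) family $\rA$ to a countable increasing sequence whose pointwise supremum coincides with $\tau$; once the directedness of $\rA$ is secured and the diagonal construction of $\sigma_n$ is executed carefully near $\{\tau=\infty\}$, the remainder is essentially bookkeeping.
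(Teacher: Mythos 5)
The paper does not prove this lemma: it simply recalls it from Elworthy's book \cite[Lemmata III 6A and 6B]{Elw_1982}, so there is no in-paper proof to compare against. Your reconstruction follows the standard amalgamation route, and Steps 2 and 3 (pasting the $I_{\alpha_n}$ over the increasing sets $\Omega_t(\alpha_n)$, checking $\mathcal{F}_t$-measurability and continuity of paths, and deriving \eqref{eqn-amalgamation_03}--\eqref{eqn-amalgamation_03'}) are essentially sound. The real problem is the construction of the approximating sequence $\sigma_n$ in Step 1, and it fails on two counts. First, $(\tau-\tfrac1n)^+$ is not in general a stopping time: $\{(\tau-\tfrac1n)^+\le t\}=\{\tau\le t+\tfrac1n\}$ belongs to $\mathcal{F}_{t+1/n}$ but need not belong to $\mathcal{F}_t$; so $\max_{k\le n}\alpha_k^k\wedge(\tau-\tfrac1n)^+$ is not clearly a stopping time, and since you take a minimum, $\sigma_n$ will not tend to $\tau$ unless the other term already does. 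Second, the single-index diagonal $\max_{k\le n}\alpha_k^k$ need not converge to $\tau$. For instance take $\alpha_n\equiv1$ (so $\tau=1$) and $\alpha_n^k=\tfrac12$ for $k\le n$, $\alpha_n^k=1-\tfrac1k$ for $k>n$; then $\alpha_n^k\toup\alpha_n$ and $\alpha_n^k<\alpha_n$, yet $\alpha_n^n=\tfrac12$ for every $n$, so $\max_{k\le n}\alpha_k^k=\tfrac12\not\toup 1$.

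Both defects are repaired at once, and the $(\tau-\tfrac1n)^+$ truncation becomes unnecessary, if you use the full double-indexed maximum $\sigma_n:=\max\{\alpha_m^k:\ m,k\le n\}$. Each $\alpha_m^k<\alpha_m\le\tau$ a.s., so outside a single null set $\sigma_n<\tau$ for all $n$; $\sigma_n$ is a finite maximum of stopping times and hence a stopping time; $(\sigma_n)$ is increasing; and $\sup_n\sigma_n=\sup_m\sup_k\alpha_m^k=\sup_m\alpha_m=\tau$. Two further points worth making explicit rather than hiding in ``a Lindel\"of-type argument.'' What is actually used to extract the countable increasing sequence $(\alpha_n)\subset\rA$ is the essential-supremum lemma for an upward-directed family of $[0,\infty]$-valued random variables (directedness being secured, as you note, by replacing $\rA$ with its closure under pairwise maxima, which preserves accessibility). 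This yields $\alpha_n\toup\tau$ a.s., not pointwise on all of $\Omega$; identifying this essential supremum with the literal pointwise supremum in the statement uses completeness of the filtration and a modification on a null set, and the measurability of the raw pointwise supremum over a possibly uncountable family is not automatic. Finally, in Step 2 the compatibility null set a priori depends on $t$; to obtain a single exceptional set you should enforce \eqref{eqn-amalgamation_01} simultaneously for all rational $t$ and then use path-continuity of the $I_{\alpha_n}$ to extend, rather than treating $t$ as fixed throughout.
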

\begin{Rem}\label{rem-amalgamation-equiv}
Let us note that because both processes $ \mathbf{I}: [0, \tau )
\times \Omega \to V $ and $ {I}_\alpha: [0, \alpha ) \times \Omega
\to V $ are admissible (and hence with almost sure continuous
trajectories), and since $\alpha \leq \tau$,  condition
\eqref{eqn-amalgamation_02} is equivalent to the following one:
\begin{equation} \label{eqn-amalgamation_02'}
\mathbf{I}_{\vert [0,\alpha)\times \Omega }= I_{ \alpha } .
\end{equation}
Similarly,  condition  \eqref{eqn-amalgamation_01} is equivalent
to the following one
 \begin{equation} \label{eqn-amalgamation_01'} {I_{ \alpha }}_{\vert [0,\alpha \wedge \beta )\times \Omega } = {I_{ \beta }}_{\vert [0,\alpha \wedge \beta )\times \Omega }.
\end{equation}

\end{Rem}

\begin{Def}\label{Def-maxsol}
Consider  a family  $\mathcal{ LS}$ of all local  mild solution
$(u,\tau)$ to  the problem \eqref{ABS-SPDE-strong}. For two
elements $(u,\tau), (v,\sigma) \in \mathcal{ LS} $ we write that
$(u,\tau)\preceq (v,\sigma)$ iff $\tau \leq \sigma$ a.s. and
$v_{\vert [0,\tau)\times \Omega} \sim u$. Note that if
$(u,\tau)\preceq (v,\sigma)$ and $(v,\sigma)\preceq (u,\tau)$,
then  $(u,\tau)\sim (v,\sigma)$. We write $(u,\tau)\prec
(v,\sigma)$ iff $(u,\tau)\preceq (v,\sigma)$ and $(u,\tau)\not\sim
(v,\sigma)$. Then, the pair $(\mathcal{ LS},\preceq)$ is a
partially ordered set in which, according to the  Amalgamation
Lemma \ref{L4.2},   every non-empty chain has an upper bound.
 \\
Each such a maximal element $(u,\tau)$ in the set $(\mathcal{
LS},\preceq)$
is called a maximal local  mild solution    to  the problem \eqref{ABS-SPDE-strong}. \\
If $(u, \tau)$ is a  maximal local mild solution to equation
\eqref{ABS-SPDE-strong}, the stopping time $\tau$ is called its
lifetime.

\end{Def}
A priori, there may be many maximal elements in $(\mathcal{
LS},\preceq)$ and hence many maximal local  mild solutions    to
the problem \eqref{ABS-SPDE-strong}. However, as we will see
later, if  the uniqueness of local solutions holds,  then the
uniqueness of the maximal local  mild solution will follow.

\begin{Rem} The following is an equivalent version of Definition \ref{Def-maxsol}.
 For  a local mild solution $(u,\tau)$ the following
conditions are equivalent.
\begin{trivlist}
\item[(nm1)] The pair  $(u,\tau)$ is not a maximal  local mild
solution to problem \eqref{ABS-SPDE-strong}. \item[(nm2)] There
exists a local mild solution $(v,\sigma)$  to problem
\eqref{ABS-SPDE-strong} such that $(u, \tau) \prec (v,\sigma)$.
\item[(nm3)] There exists a local mild solution $(v,\sigma)$  to
problem \eqref{ABS-SPDE-strong} such that $\tau \leq \sigma$ a.s.,
$v_{\vert [0,\tau)\times \Omega} \sim u$ and $\mathbb{P}(\tau <
\sigma)>0$. \item[(nm4)] Every  local mild solution $(v,\sigma)$
to problem \eqref{ABS-SPDE-strong} such that $(u, \tau) \not\sim
(v,\sigma)$ satisfies $(u, \tau) \not\prec (v,\sigma)$.
\end{trivlist}
\end{Rem}

\begin{Def}\label{Def-uniq} A local solution $(u,\tau)$
to problem \eqref{ABS-SPDE-strong}  is unique iff for any other
local solution $(v,\sigma)$  to \eqref{ABS-SPDE-strong} the
restricted processes $u_{[0, \tau\wedge \sigma)\times \Omega}$ and
$v_{[0, \tau\wedge\sigma)\times \Omega}$ are equivalent.
\end{Def}

\begin{prop}\label{prop-loc-implies-max}
Suppose that $u_0$ is a $V$-valued random variable and $\mathcal{F}_0$-measurable. Suppose that there
exist at least one local solution $(u^0,\tau^0)$ to problem
\eqref{ABS-SPDE-strong} and that for any two local solutions
$(u^1,\tau^2)$ and $(u^1,\tau^2)$, and every $t>0$,
 \begin{equation} \label{eqn-amalgamation_04} u^1(t)= u^2 (t) \mbox{ a.s. on }
\{ t< \tau^1 \wedge \tau^2\} .
\end{equation}
Then there exists a maximal  local mild solutions to the same
problem.
\end{prop}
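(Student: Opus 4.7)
The plan is to apply Zorn's lemma to the partially ordered set $(\mathcal{LS},\preceq)$ defined in Definition~\ref{Def-maxsol}. By hypothesis $\mathcal{LS}\neq\emptyset$, since $(u^0,\tau^0)\in\mathcal{LS}$. To invoke Zorn's lemma it suffices to verify that every non-empty chain in $\mathcal{LS}$ has an upper bound; the desired maximal local mild solution is then any maximal element.

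Fix a non-empty chain $\mathcal{C}=\{(u^\alpha,\tau^\alpha):\alpha\in\mathcal{I}\}\subset\mathcal{LS}$. First I would consider the family $\rA=\{\tau^\alpha:\alpha\in\mathcal{I}\}$ of accessible stopping times and set $\tau:=\sup\{\tau^\alpha:\alpha\in\mathcal{I}\}$. By the first part of the Amalgamation Lemma~\ref{L4.2}, $\tau$ is an accessible stopping time and there is an increasing sequence $(\alpha_n)\subset\mathcal{I}$ with $\tau^{\alpha_n}\toup\tau$ pointwise. The compatibility condition \eqref{eqn-amalgamation_01'} needed to apply the second part of the Amalgamation Lemma to the admissible processes $u^\alpha:[0,\tau^\alpha)\times\Omega\to V$ is exactly the uniqueness hypothesis \eqref{eqn-amalgamation_04}: indeed, if $(u^\alpha,\tau^\alpha),(u^\beta,\tau^\beta)\in\mathcal{C}$, then both are local mild solutions so \eqref{eqn-amalgamation_04} gives $u^\alpha(t)=u^\beta(t)$ a.s.\ on $\{t<\tau^\alpha\wedge\tau^\beta\}$, and by the admissibility (and hence a.s.\ continuity of trajectories) of $u^\alpha,u^\beta$ together with Remark~\ref{rem-amalgamation-equiv}, this is equivalent to the restrictions agreeing on $[0,\tau^\alpha\wedge\tau^\beta)\times\Omega$.

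The Amalgamation Lemma then yields an admissible process $\mathbf{u}:[0,\tau)\times\Omega\to V$ with $\mathbf{u}_{|[0,\tau^\alpha)\times\Omega}=u^\alpha$ for every $\alpha\in\mathcal{I}$. Next I would verify that the pair $(\mathbf{u},\tau)$ is itself a local mild solution to \eqref{ABS-SPDE-strong}. For this, use the increasing sequence $(\alpha_n)$ above and, for each $n$, let $(\tau_{n,m})_{m\in\mathbb{N}}$ be an approximating sequence of finite stopping times for $(u^{\alpha_n},\tau^{\alpha_n})$, with $\tau_{n,m}\toup\tau^{\alpha_n}$ a.s. A diagonal procedure produces a sequence $\sigma_m:=\tau_{m,m}\wedge m$ of finite stopping times with $\sigma_m\toup\tau$ a.s. Since $\sigma_m\leq\tau^{\alpha_m}$, on $[0,\sigma_m]$ one has $\mathbf{u}=u^{\alpha_m}$, so the integrability estimate \eqref{eq-locsol_00} and the mild identity \eqref{eq-locsol_00-b} for $(u^{\alpha_m},\tau^{\alpha_m})$ transfer verbatim to $(\mathbf{u},\sigma_m)$. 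Thus $(\mathbf{u},\tau)\in\mathcal{LS}$.

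Finally, for every $\alpha\in\mathcal{I}$ we have $\tau^\alpha\leq\tau$ a.s.\ and $\mathbf{u}_{|[0,\tau^\alpha)\times\Omega}=u^\alpha\sim u^\alpha$, hence $(u^\alpha,\tau^\alpha)\preceq(\mathbf{u},\tau)$; so $(\mathbf{u},\tau)$ is an upper bound for $\mathcal{C}$ in $(\mathcal{LS},\preceq)$. Zorn's lemma then furnishes a maximal element $(u,\tau)\in\mathcal{LS}$, which by Definition~\ref{Def-maxsol} is a maximal local mild solution. The main subtlety will be the verification of the mild integral identity for the amalgamated process: one must choose the approximating stopping times so that both the progressive-measurability of $\mathbf{G}(\mathbf{u})$ under the Wiener integral and the agreement $\mathbf{u}=u^{\alpha_m}$ on $[0,\sigma_m]$ are preserved, which is why the diagonal choice $\sigma_m=\tau_{m,m}\wedge m$ and the uniqueness assumption \eqref{eqn-amalgamation_04} are both essential.
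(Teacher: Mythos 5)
Your argument follows essentially the same path as the paper: Zorn's lemma applied to $(\mathcal{LS},\preceq)$, the Amalgamation Lemma~\ref{L4.2} providing the upper bound of a chain (with the uniqueness hypothesis~\eqref{eqn-amalgamation_04} supplying the compatibility condition~\eqref{eqn-amalgamation_01'} via Remark~\ref{rem-amalgamation-equiv}), and verification that the amalgamated pair is again a local mild solution by transferring~\eqref{eq-locsol_00}--\eqref{eq-locsol_00-b} from the chain members. One small technical slip: the diagonal sequence $\sigma_m:=\tau_{m,m}\wedge m$ as written need not be increasing nor converge a.s.\ to $\tau$, since $\tau_{n,m}\toup\tau^{\alpha_n}$ as $m\to\infty$ for each \emph{fixed} $n$ gives no control along the diagonal; one should instead choose $m(n)$ adaptively (for instance so that $\mathbb{P}\big(\tau^{\alpha_n}-\tau_{n,m(n)}>2^{-n}\big)<2^{-n}$ and invoke Borel--Cantelli) and then pass to the running maximum to obtain an increasing approximating sequence --- a detail the paper itself elides with the phrase ``we can find a sequence of local times''.
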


\begin{Rem}\label{rem--loc-implies-max-equiv}
Let us note that similarly to Remark \ref{rem-amalgamation-equiv},
because both the local solutions $u^1$ and $u^2$ are admissible
(and hence with almost sure continuous trajectories),   condition
\eqref{eqn-amalgamation_04} is equivalent to the following one:
\begin{equation} \label{eqn-amalgamation_04'}
u^1_{\vert [0,\tau^1 \wedge \tau^2)\times \Omega }= u^2_{\vert
[0,\tau^1 \wedge \tau^2)\times \Omega } .
\end{equation}
\end{Rem}
\begin{proof}[Proof of Proposition \ref{prop-loc-implies-max}]
Consider  the family  $\mathcal{ LS}$ introduced in Definition
\ref{Def-maxsol}.  Note that in view of Proposition
\ref{prop-local solution}, if $(u,\tau)\in \mathcal{LS}$, then for
any $T>0$, $(u_{\vert [0,T\wedge \tau)\times \Omega},T\wedge
\tau)$ also belongs to $ \mathcal{LS}$. Let $\mathcal{ LC}$ be
any nonempty chain in $\mathcal{ LS}$ containing $(u^0,\tau^0)$.
Set
\[
\hat{\tau} := \sup\left\{ \tau: (u,\tau)\in \mathcal{ LC}\right\}
.
\]
Since  the chain $\mathcal{ LC}$ is non-empty,  from the
Amalgamation Lemma  \ref{L4.2} it follows that $\hat \tau$ is an
accessible stopping time and that there exists an admissible
$V$-valued process ${\hat u}(t)$, $t< {\hat \tau}$ such that for
all $(u,\tau) \in \mathcal{ LC}$, $(\hat{u}_{\vert [0,\tau)\times
\Omega},\tau) \sim (u,\tau)$. In view of the Kuratowski-Zorn Lemma it remains to prove that
the pair  $({\hat u},{\hat \tau})$ belongs to $\mathcal{ LS}$. To
prove this let us consider an $\mathcal{LC}$-valued sequence,
which exists in view of Lemma  \ref{L4.2},  $(u^n,\alpha_n)$ of
local mild solutions to problem \eqref{ABS-SPDE-strong} such that
$\alpha_n \toup \tau$ a.s. Moreover, by the above comment on
Proposition \ref{prop-local solution} we can assume that
$\alpha_n$ is a bounded from above stopping time. Moreover, since
each $(u^n,\alpha_n)$ is a local mild solution, we can find a
sequence $(\sigma_n)$ of local times such that $\sigma_n <
\alpha_n$, $\sigma_n \toup \tau$ a.s. and
for each $n$, the condition \eqref{eq-locsol_00} and equality  \eqref{eq-locsol_00-b} are satisfied by $u^n$. \\
Since $(u^n,\alpha_n)\in \mathcal{LC}$, we infer that
$(\hat{u}_{\vert [0,\alpha_n)\times \Omega},\alpha_n) \sim
(u^n,\alpha_n)$ and thus we infer that also for each $n$, the
condition \eqref{eq-locsol_00} and equality  \eqref{eq-locsol_00-b}
are satisfied by $\hat{u}$. This proves that
$(\hat{u},\hat{\tau})\in \mathcal{LC}$. The proof is complete.
\end{proof}

Now we shall prove a counterparts of Corollary 2.29 and Lemma 2.31
from \cite{Brz+Elw_2000}.
\begin{prop}\label{prop-t_infty}
 Assume that the Assumption \ref{assum-F}-Assumption \ref{assum-01} hold. If a pair  $(u, \tau)$ is a  maximal  local solution  then
 \begin{equation}\label{eqn-t_infty-limit}
\mathbb{ P}\Big(\{\omega \in\Omega :\tau(\omega)< \infty , \exists
\lim_{t\nearrow \tau(\omega)} u(t)(\omega)\in V\}\Big)=0
 \end{equation}
and
 \begin{equation}\label{eqn-t_infty}
\lim_{t\toup \tau}|u|_{X_t}=\infty \;\;\; \mathbb{P}-\mbox{a.s. on
} \{\tau<\infty\}.
 \end{equation}
\end{prop}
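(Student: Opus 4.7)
The plan is to prove both assertions by contradiction, exploiting a common extension mechanism: the existence of a $V$-valued limit for $u(t)$ as $t\nearrow\tau$ on a set of positive probability would permit one to restart the equation at time $\tau$, strictly extending the local solution $(u,\tau)$ and contradicting its maximality.

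For assertion \eqref{eqn-t_infty-limit}, I would suppose $\mathbb{P}(A)>0$ for $A:=\{\tau<\infty,\ \exists\lim_{t\nearrow\tau}u(t)\in V\}$. On $A$ I define the $\mathcal{F}_\tau$-measurable $V$-valued random variable $u_\tau:=\lim_{t\nearrow\tau} u(t)$ (and $u_\tau:=0$ on $A^c$), and apply the abstract local existence result of this section (Theorem \ref{thm_local}) to the shifted filtration $(\mathcal{F}_{\tau+t})_{t\ge 0}$ with initial datum $u_\tau$ and shifted Wiener process $\widetilde W(t):=W(\tau+t)-W(\tau)$. This yields a local mild solution $(v,\sigma)$ on the shifted time axis with $\mathbb{P}(\sigma>0)=1$. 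Concatenating, set $\tilde u:=u$ on $[0,\tau)$ and $\tilde u(\tau+s):=v(s)$ for $s<\sigma$ on $A$, with extended stopping time $\tilde\tau:=\tau+\sigma\mathbf{1}_A$. The semigroup property of $\{S(t)\}$ and the independence of $\widetilde W$ from $\mathcal{F}_\tau$ together ensure that $(\tilde u,\tilde\tau)$ satisfies \eqref{eq-locsol_00}--\eqref{eq-locsol_00-b}; since $\mathbb{P}(\tilde\tau>\tau)=\mathbb{P}(A)>0$, one has $(u,\tau)\prec(\tilde u,\tilde\tau)$ in the order of Definition \ref{Def-maxsol}, contradicting maximality. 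Hence $\mathbb{P}(A)=0$.

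For assertion \eqref{eqn-t_infty}, I would suppose for contradiction that on some $\mathcal{F}$-measurable $B\subset\{\tau<\infty\}$ with $\mathbb{P}(B)>0$ the monotone quantity $|u|_{X_t}$ stays bounded as $t\nearrow\tau$; intersecting with $\{\tau\le N\}$ I may also assume $\tau\le N$ on $B$. Then on $B$ the trajectory $u$ is bounded in $V$ on $[0,\tau)$ and lies in $L^2(0,\tau;E)$. From Assumption \ref{assum-F} and $F(0)=0$ one has
\[
|F(u(s))|_H\le C\,\Vert u(s)\Vert^{p-\alpha+1}\,|u(s)|_E^\alpha,
\]
so, using H\"older's inequality together with $\alpha<1$, $F(u)\in L^2(0,\tau;H)$ on $B$; Assumption \ref{assum-G} analogously yields $G(u)\in L^2(0,\tau;V)$ on $B$. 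Using the mild formulation \eqref{eq-locsol_00-b} at $t\wedge\tau_m$ and letting first $m\to\infty$ and then $t\nearrow\tau$, each of the three summands converges in $V$: the semigroup term $S(t)u_0$ by strong continuity of $S$ on $V$, the deterministic convolution by the bound \eqref{ineq-dc} applied to shrinking tail intervals, and the stochastic convolution by part (ii) of Assumption \ref{assum-01}, which delivers a.s.\ $V$-continuous trajectories up to $\tau$. Thus $\lim_{t\nearrow\tau}u(t)$ exists in $V$ on $B$, placing $B$ inside the null event $A$ from the previous step, contradicting $\mathbb{P}(B)>0$.

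The principal technical obstacle is the concatenation step of assertion \eqref{eqn-t_infty-limit}: one must verify that $\tilde\tau$ is an accessible stopping time with respect to the original filtration (constructed via the Amalgamation Lemma \ref{L4.2} by combining an approximating sequence $\alpha_n\nearrow\tau$ with one for $\sigma$), and that the mild equation for $\tilde u$ on $[0,\tilde\tau)$ decomposes correctly across $[0,\tau)$ and $[\tau,\tilde\tau)$. The latter splitting rests on the semigroup identity $S(\tilde\tau\wedge t-r)=S(\tilde\tau\wedge t-\tau)\,S(\tau-r)$ for $r<\tau<\tilde\tau\wedge t$ and on the strong-Markov-type independence of $\widetilde W$ from $\mathcal{F}_\tau$.
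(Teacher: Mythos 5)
The overall strategy is the same as the paper's: prove \eqref{eqn-t_infty-limit} by a restart-and-concatenate argument contradicting maximality, then prove \eqref{eqn-t_infty} by showing boundedness of $|u|_{X_t}$ would force the $V$-limit to exist, placing the event inside the null set of part one. For part one your argument matches the paper's; the paper simply defers the concatenation to \cite[Corollary 2.9]{Brz+Elw_2000}, while you spell out the shifted filtration and the semigroup splitting, which is the content of that citation.

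However, there is a genuine gap in your argument for \eqref{eqn-t_infty}. You establish the \emph{pathwise} bounds $F(u)\in L^2(0,\tau;H)$ and $G(u)\in L^2(0,\tau;V)$ on the event $B$, and then invoke Assumption \ref{assum-01}(ii) to conclude that the stochastic convolution has a.s.\ $V$-continuous trajectories up to $\tau$. But Assumption \ref{assum-01}(ii) requires the integrand to lie in $M^2(0,T;V)$, i.e.\ a progressively measurable process with finite \emph{mean-square} norm on a deterministic interval $[0,T]$; a pathwise $L^2$ bound holding only on a subset $B\subset\Omega$ of positive probability neither gives adaptedness of the restriction nor controls the expectation. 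The paper's proof closes precisely this gap: it introduces the stopping time $\sigma_R=\inf\{t\in[0,\tau]: |u|_{X_t}\ge R\}$ and replaces $F(u)$, $G(u)$ by the stopped processes $y(t)=\mathbf{1}_{\{t<\sigma_R\}}F(u(t))$, $x(t)=\mathbf{1}_{\{t<\sigma_R\}}G(u(t))$, which are progressively measurable on all of $\Omega\times[0,T]$. The deterministic a priori bound $|u|_{X_{\sigma_R}}\le R$ together with $F(0)=0$, $G(0)=0$ and the local Lipschitz estimates then yields the deterministic estimate $\int_0^T|y|_H^2\,dt\le C^2T^{1-\alpha}R^{2(1+\frac{p}{1-\alpha})}$, from which $\mathbb{E}\int_0^T|y|_H^2\,dt<\infty$ is immediate, and similarly for $x$. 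This $M^2$ integrability is what legitimises defining the globally continuous process $v=Su_0+S\ast y+S\diamond x$ and identifying it with $u$ on $[0,\sigma_R)$. Your argument needs the same stopping-time localisation inserted before you can appeal to Assumption \ref{assum-01}(ii); without it, the convergence of the stochastic convolution as $t\nearrow\tau$ is unjustified.

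Your Hölder computation for $F(u)$ and the treatment of the deterministic convolution (which is a pathwise statement and so does not suffer from the same issue) are fine, as is the final deduction that $B$ would be contained in the null set from \eqref{eqn-t_infty-limit}.
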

\begin{proof}[Proof of Proposition \ref{prop-t_infty} (by contradiction)]
\textbf{Part I.} Assume that there exists $\Omega_1 \subset \Omega
$ such that $\mathbb{ P}(\Omega_1)> 0$ and such that for any
$\omega\in\Omega_1$ we have $\tau(\omega) < \infty$ and
$\lim_{t\nearrow \tau(\omega)}u(t)(\omega) =\bar {v}(\omega)\in
V$.  Let us take an increasing   sequence $\{
\sigma_n\}_{n=1}^\infty$ of stopping times such that $\sigma_n <
\tau$ a.s. and $\sigma_n \toup \tau$ a.s. Since the function $f_n:
\Omega \ni (\omega) \mapsto u(\sigma_n(\omega))(\omega)\in V$, is
$\mathcal{F}_{\sigma_n}$--measurable, see e.g. \cite[Proposition
I.2.18]{Kar-Shr-96}, and hence $\mathcal{F}_{\tau}$--measurable,
the set
\[\Omega_2:=\{ \omega \in \Omega :  \tau(\omega) < \infty \; \mbox{\rm and }
 \lim_{n}f_n(\omega):=\bar
{v}(\omega) \mbox{\rm\, exists}\}\]
 belongs to the $\sigma$-algebra
  $\mathcal{F}_{\tau}$
and the function $\bar{v}$ is $\mathcal{F}_{\tau}$--measurable.
Note that obviously $\Omega_1 \subset \Omega_2$ so that $\mathbb{
P}(\Omega_2)> 0$. \\
Let   $v(t)$, $\tau (\omega)\le t < \tau_2(\omega)$,
  be the solution to \eqref{ABS-SPDE-strong} with
initial condition $v_{\tau(\omega)}(\omega)={\bar
v}(\omega)1_{\Omega_2}(\omega)$. Then,  by  \cite[Corollary 2.9]{Brz+Elw_2000}
 the process  $\tilde{u}(t)$ defined by
\[
\tilde{u}(t)(\omega)=
\begin{cases}
u(t)(\omega),& \text{if } t < \tau(\omega), \cr v(t)(\omega),&
\text{ if } \omega\in\Omega_2 \text{ and } t \in
[\tau(\omega),\tau_2(\omega))\cr
\end{cases}
\]
is a solution to \eqref{ABS-SPDE-strong}. Obviously this
contradicts the maximality of the solution $u(t)$.

\textbf{Part II.}
   Let us  assume that there exists $\eps>0$ such that \[{\mathbb P}\big( \{\tau<\infty\} \cap \{\limsup_{ t \toup \tau} \vert u\vert_{X_t}<\infty \}\big) =4\varepsilon.\]
   Since the function $t\mapsto \vert u\vert_{X_t}$ is increasing, it means that
\[ {\mathbb P}\big( \{\tau<\infty\} \cap \{\sup_{ t \in[0, \tau)} \vert u\vert_{X_t}<\infty \}\big) =4\varepsilon >0.\]
Hence there exists  $R>0$  such that
\[ {\mathbb P}\big( \{\tau <\infty\} \cap \{\sup_{ t<\tau_\infty} |u(t)|_{X_t}< R \}\big) =3\varepsilon >0.\]
Define (our definition implies that $\sigma_{R}=\tau$, possibly
$\infty$, when the set $\{ \omega\in\Omega:  |u|_{X_t}< R, t\in
[0,\tau) \}$ is empty)  \[\sigma_{R}= \inf\{ t\in [0,\tau] :
|u|_{X_t}\geq R\}. \] It is known that  $\sigma_{R}$ is a
predictable stopping time and  the  set
 $\tilde{\Omega}=\{ \sigma_{R}= \tau\} \cap\{ \tau <\infty\}$ is
 ${\mathcal F}_{\sigma_{R}}$-measurable. Note also that $\tilde{\Omega}=\{\tau <\infty\} \cap \{\sup_{ t<\tau_\infty} |u(t)|_{X_t}< R \} $ and so
 \[{\mathbb P}(\tilde{\Omega}) \geq 3\eps.\]
Next, define two processes $x$  and $y$ by
 \[
 x(t)=1_{\{t < \sigma_R\}}G(u(t)),\;\;\; y(t)=1_{\{t < \sigma_R\}}F(u(t)) \;\; t \geq 0.
 \]
Note that $x(t)=0$ or $\vert u\vert_{X_t}< R$.

Since $F(0)=0$ by assumptions, from \eqref{eqn-local Lipschitz-F}
we infer that

\begin{equation*}
\vert F(x) \vert_H \leq C   \Vert x\Vert^{p+1-\alpha} \vert
x\vert_E^\alpha+C \Vert x\Vert^{q+1-\gamma} \vert
x\vert_E^\gamma, \;\; x\in E.
\end{equation*}
Similarly, from \eqref{eqn-local Lipschitz-G}

\begin{equation*}
\Vert G(x) \Vert \leq C \Big[ 1 +\vert x\vert_E^\beta \Vert
x\Vert^{k+1-\beta} \Big], \;\; x\in E.
\end{equation*}

Let us now fix $T>0$. From elementary calculus we obtain
\begin{eqnarray*}
&&\int_0^T \vert y(t)\vert_{H}^2\, dt =  \mathbb{E} \int_0^T
1_{[0, \sigma_R)}(t) \vert F(u(t)) \vert_H^2\, dt
\leq J_{1,p}+J_{2,q},
\end{eqnarray*}
where $$J_{1,p}:=  C^2 \int_0^{T \wedge \sigma_R} \Vert u(t)\Vert^{2(p+1-\alpha)} \vert u(t)\vert_E^{2\alpha} \, dt,$$ and
$$ J_{2,q}:=c^2 \int_0^{T \wedge \sigma_R} \Vert u(t)\Vert^{2(q+1-\gamma)} \vert u(t)\vert_E^{2\gamma} \, dt.$$
We can easily check that
\begin{align*}
 J_{1,p} \leq & C^2 \Big[ \sup_{t \in [0,T \wedge \sigma_R)} \Vert
u(t)\Vert^{2(p+1-\alpha)} \int_0^{T \wedge \sigma_R}
\vert u(t)\vert_E^{2\alpha} \, dt \Big]\\
 \leq  & C^2  \Big[ \sup_{t \in [0,T \wedge \sigma_R)} \Vert
u(t)\Vert^{2(p+1-\alpha)} \big(\int_0^{T \wedge \sigma_R}
\vert u(t)\vert_E^{2} \, dt\big)^\alpha T^{1-\alpha} \Big]\\
\leq &  C^2 T^{1-\alpha}  \Big[ (1-\alpha) \sup_{t \in [0,T \wedge \sigma_R)} \Vert u(t)\Vert^{2(1+ \frac{p}{1-\alpha})} + \alpha \int_0^{T \wedge \sigma_R} \vert u(t)\vert_E^{2} \, dt  \Big]\\
\leq &  C^2 T^{1-\alpha}  \Big[ \vert u \vert_{X_{T \wedge
\sigma_R}}^{2(1+ \frac{p}{1-\alpha})} \Big] \leq  C^2 T^{1-\alpha}
R^{2(1+ \frac{p}{1-\alpha})}.
\end{align*}
With similar argument we can prove that
\begin{equation*}
 J_{2,q} \leq  C^2 T^{1-\gamma}
R^{2(1+ \frac{q}{1-\gamma})}.
\end{equation*}
In particular, we obtain that

\[ \mathbb{E} \int_0^T \vert y(t)\vert_{H}^2\, dt \leq  \mathbb{E} C^2 [T^{1-\alpha} R^{2(1+ \frac{p}{1-\alpha})}+T^{1-\gamma}
R^{2(1+ \frac{q}{1-\gamma})}]<\infty.
\]
In a very similar manner we can show that
\[ \mathbb{E} \int_0^T \Vert x(t)\Vert_{V}^2\, dt <\infty.
\]
This allows us to define a process $v$  by

\begin{eqnarray}\label{eqn-v}
v(t)&=&S(t)u_0+\int_0^{t} S(t-s) y(s) ds +\int_0^\infty
\mathds{1}_{[0,t)}S(t-s)x(s) \,  dW(s), \;\; t \geq 0.
\end{eqnarray}
The process $v$ is defined globally and it is $V$-valued
continuous.  Moreover, since $y=F(u)$ and $x=G(u)$, on
$[0,\sigma_R)$, we infer by \cite[Remark 2.27]{Brz+Elw_2000} that $u=v$ on
$[0,\sigma_R)$. In particular, there exist, a.s.
\[ \lim_{t\toup  \sigma_R} u(t) \mbox{ in } V.\]
In particular, the above limit  exists on the set
$\tilde{\Omega}$. Thus, a.s. on $\tilde{\Omega}$,
\[ \lim_{t\toup  \tau} u(t) \mbox{ in } V.\] Clearly, this contradicts the first part of the proposition.
\end{proof}

\subsection{An abstract result}
In this subsection we want to establish a general theorem of
existence of mild solution to the following abstract SPDEs
\begin{equation}\label{ABS-SPDE}
u(t)=S(t)u_0+\int_0^t S(t-s) F(u(s)) ds+\int_0^t S(t-s) G(u(s))
dW(s),\;\; \mbox{ for any }t>0,
\end{equation}
where $S(t)$, $t\in [0,\infty)$ is a semigroup, $F$ and $G$ are
nonlinear map satisfying Assumption \ref{assum-01}, Assumption
\ref{assum-F}, and Assumption \ref{assum-G}, respectively.

 Let
$\theta:\mathbb{R}_+\to [0,1]$ be a ${\mathcal C}^\infty_c$ non
increasing function
 such that
\begin{equation}\label{eqn-theta} \inf_{x\in\mathbb{R}_+}\theta^\prime(x)\geq -1, \quad \theta(x)=1\;
\mbox{\rm  iff } x\in [0,1]\quad \mbox{\rm  and } \theta(x)=0 \;
\mbox{\rm  iff } x\in [2,\infty).
\end{equation}
and for $n\geq 1$ set  $\theta_n(\cdot)=\theta(\frac{\cdot}{n})$.
Note that if $h:\mathbb{R}_+\to\mathbb{R}_+$ is a non decreasing
function, then for every $x,y\in {\mathbb R}$,
\begin{equation}\label{ineq-theta}
 \theta_n(x)h(x) \leq h(2n),\quad
\vert \theta_n(x)-\theta_n(y)\vert \leq \frac1n |x-y| . 
\end{equation}
\begin{prop}\label{prop-global Lipschitz-F}
Let $F$ be a nonlinear mapping satisfying Assumption
\ref{assum-F}. Let us consider a map
\[
\Phi_T^n=\Phi_T: X_T \ni u \mapsto \theta_n( \vert u
\vert_{X_\cdot}) F(u) \in L^2(0,T;H).
\]
Then $\Phi_T^n$ is globally Lipschitz and moreover, for any
$u_1,u_2 \in X_T$,
\begin{eqnarray}\label{eqn-global Lipschitz-F}
\vert \Phi_T^n(u_1)-\Phi_T^n(u_2)\vert_{L^2(0,T;H)} &\leq & C\Big[   2n C +1\Big]
\Big[(2n)^{p+1} T^{(1-\alpha)/2} +(2n)^{q+1} T^{(1-\gamma)/2} \Big]\vert  u_1 -u_2
\vert_{X_T}.
\end{eqnarray}
\end{prop}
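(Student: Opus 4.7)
The plan is to bound $\vert \Phi_T^n(u_1)(s) - \Phi_T^n(u_2)(s)\vert_H$ pointwise in $s$ and then integrate. Write $\phi_i(s) := \theta_n(\vert u_i\vert_{X_s})$. Two features of the cut-off are fundamental. First, on the support of $\phi_i$ one has $\vert u_i\vert_{X_s} \le 2n$, so $\Vert u_i(s)\Vert \le 2n$ and $\int_0^s \vert u_i(r)\vert_E^2 \, dr \le (2n)^2$; combined with $F(0)=0$, Assumption \ref{assum-F} yields the auxiliary estimate $\vert F(u)\vert_H \le C\Vert u\Vert^{p+1-\alpha}\vert u\vert_E^\alpha$. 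Second, since $\inf \theta' \ge -1$, the map $\theta_n$ is Lipschitz with constant $1/n$, which gives
$$\vert\phi_1(s) - \phi_2(s)\vert \le \tfrac{1}{n}\vert u_1 - u_2\vert_{X_s}\qquad\text{and}\qquad \phi_i(s) \le \tfrac{1}{n}\bigl(2n - \vert u_i\vert_{X_s}\bigr)^+ ,$$
the latter by comparing $\theta_n$ to its values at $0$ and $2n$.

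I would employ the standard algebraic decomposition
$$\phi_1 F(u_1) - \phi_2 F(u_2) = \phi_1\bigl[F(u_1) - F(u_2)\bigr] + (\phi_1 - \phi_2)F(u_2)$$
and treat it by case analysis in $s$. When both $\phi_i(s)>0$, all norms $\Vert u_i(s)\Vert$ are bounded by $2n$; Assumption \ref{assum-F} applied to the first bracket together with $\vert F(u_2)\vert_H \le C(2n)^{p+1-\alpha}\vert u_2\vert_E^\alpha$ on the second yield a pointwise bound whose coefficient is of order $(2n)^{p+1}$. The case $\phi_1(s)=\phi_2(s)=0$ is trivial. The main technical obstacle is the mixed case, say $\phi_1(s)>0$ and $\phi_2(s)=0$, where the second summand would formally require the uncontrolled quantity $\vert F(u_2(s))\vert_H$. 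The resolution is algebraic: the decomposition collapses to $\phi_1 F(u_1)$ (the $F(u_2)$ contributions cancel exactly), and because $\vert u_2\vert_{X_s}>2n\ge\vert u_1\vert_{X_s}$ one has
$$\phi_1(s) \le \tfrac{1}{n}\bigl(2n - \vert u_1\vert_{X_s}\bigr) \le \tfrac{1}{n}\bigl(\vert u_2\vert_{X_s} - \vert u_1\vert_{X_s}\bigr) \le \tfrac{1}{n}\vert u_1 - u_2\vert_{X_s} ,$$
which supplies the missing Lipschitz-in-$u$ factor and gives $\vert\phi_1 F(u_1)\vert_H \le \tfrac{C}{n}(2n)^{p+1-\alpha}\vert u_1\vert_E^\alpha\, \vert u_1-u_2\vert_{X_s}$. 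The symmetric case $\phi_1(s)=0$, $\phi_2(s)>0$ is handled analogously.

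Aggregating the four cases gives a pointwise bound of the shape
$$\vert\Phi_T^n(u_1)(s) - \Phi_T^n(u_2)(s)\vert_H \le K(n)\vert u_1 - u_2\vert_{X_T}\bigl(\vert u_1(s)\vert_E^\alpha + \vert u_2(s)\vert_E^\alpha\bigr) + K(n)\vert u_1(s) - u_2(s)\vert_E^\alpha\, \vert u_1 - u_2\vert_{X_T}^{1-\alpha},$$
with $K(n)$ of order $C(2n)^{p+1}(2nC+1)$. Squaring, integrating over $[0,T]$, and applying H\"older's inequality $\int_0^T \vert v\vert_E^{2\alpha}\,ds \le T^{1-\alpha}\bigl(\int_0^T \vert v\vert_E^2\,ds\bigr)^\alpha$ --- once with $v=u_i$ (where on the support of $\phi_i$, which is an initial segment $[0,\tau_i)$, the integral $\int_0^T \mathbf{1}_{\{s<\tau_i\}}\vert u_i\vert_E^2\,ds$ is at most $(2n)^2$) and once with $v=u_1-u_2$ (whose integral is bounded by $\vert u_1 - u_2\vert_{X_T}^2$) --- extracts the factor $T^{(1-\alpha)/2}$ and produces the linear dependence on $\vert u_1 - u_2\vert_{X_T}$ claimed in the proposition.
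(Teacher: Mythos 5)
Your proof is correct and uses the same algebraic decomposition as the paper, namely $\phi_1 F(u_1)-\phi_2 F(u_2)=\phi_1[F(u_1)-F(u_2)]+(\phi_1-\phi_2)F(u_2)$, with the same ingredients (the growth bound $\vert F(u)\vert_H\le C\Vert u\Vert^{p+1-\alpha}\vert u\vert_E^\alpha$, the $1/n$-Lipschitz property of $\theta_n$, and the H\"older extraction of $T^{(1-\alpha)/2}$). The only organizational difference is that the paper assumes WLOG $\tau_1\le\tau_2$ and restricts the two integrals to $[0,\tau_2]$ and $[0,\tau_1]$ respectively, which makes the ``mixed'' case you treat pointwise disappear; your observation that the decomposition collapses to $\phi_1 F(u_1)$ when $\phi_2=0$, combined with the bound $\phi_1(s)\le\frac1n(2n-\vert u_1\vert_{X_s})\le\frac1n\vert u_1-u_2\vert_{X_s}$, is a clean alternative way to handle this that avoids the WLOG and is slightly more symmetric. (Incidentally, your use of the Lipschitz constant $1/n$ for $\theta_n$ is the correct one, consistent with \eqref{ineq-theta}; the paper's text says ``Lipschitz constant $2n$'', which appears to be a slip and only affects the precise power of $n$ in the final constant.)
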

The proof is based on a proof from the paper
\cite{Brz+Millet_2012} which in turn was based on a proof from
papers by De Bouard and Debussche \cite{deBouard+Deb_1999,deBouard+Deb_2003}.
For simplicity, we will write $\Phi_T$ instead of $\Phi_T^n$.
\begin{proof}
Note that $\Phi_T(0)=0$. Assume that $u_1,u_2 \in X_T$. Denote,
for $i=1,2$,
\[
\tau_i= \inf\{t \in [0,T]: \vert u_i \vert_{X_t} \geq 2n\}.
\]
Note that by definition, if the set on the RHS above is empty,
then $\tau_i=T$.

Without loss of generality we can assume that $\tau_1 \leq
\tau_2$. We have the following chain of inequalities/equalities
\begin{eqnarray*}
\vert \Phi_T(u_1)-\Phi_T(u_2)\vert_{L^2(0,T;H)} &=& \Big[ \int_0^T \vert  \theta_n( \vert u_1 \vert_{X_t}) F(u_1(t))-\theta_n( \vert u_2 \vert_{X_t}) F(u_2(t))\vert_H^2\,dt\Big]^{1/2}\\
&&\mbox{ because for } i=1,2, \;\; \theta_n( \vert u_i
\vert_{X_t}) =0 \mbox{ for } t \geq \tau_2
\\
&=& \Big[ \int_0^{\tau_2} \vert  \theta_n( \vert u_1 \vert_{X_t})
F(u_1(t))-\theta_n( \vert u_2 \vert_{X_t})
F(u_2(t))\vert_H^2\,dt\Big]^{1/2}
\\
&&\hspace{-5truecm}\lefteqn{= \Big[ \int_0^{\tau_2} \vert \big[
\theta_n( \vert u_1 \vert_{X_t}) - \theta_n( \vert u_2
\vert_{X_t}) \big] F(u_2(t))+ \theta_n( \vert u_1
\vert_{X_t})\big[  F(u_1(t))- F(u_2(t)) \Big]
\vert_H^2\,dt\Big]^{1/2} }
\\
& &\hspace{-3truecm}\lefteqn{ \leq
\Big[ \int_0^{\tau_2} \vert  \big[ \theta_n( \vert u_1 \vert_{X_t}) - \theta_n( \vert u_2 \vert_{X_t}) \big] F(u_2(t))\vert_H^2 \,dt \Big]^{1/2} }\\
&&\hspace{-2truecm}\lefteqn{+ \Big[ \int_0^{\tau_2} \vert
\theta_n( \vert u_1 \vert_{X_t})\big[  F(u_1(t))- F(u_2(t)) \big]
\vert_H^2\,dt\Big]^{1/2} =:A+B}
\end{eqnarray*}
Next, since $\theta_n$ is Lipschitz with Lipschitz constant $2n$
we have
\begin{eqnarray*}
A^2&=& \int_0^{\tau_2} \vert  \big[ \theta_n( \vert u_1 \vert_{X_t}) - \theta_n( \vert u_2 \vert_{X_t}) \big] F(u_2(t))\vert^2 \,dt\\
&\leq& 4n^2 C^2 \int_0^{\tau_2} \big[ \vert
\vert u_1 \vert_{X_t} -  \vert u_2 \vert_{X_t} \vert\big]^2 \vert F(u_2(t))\vert_H^2 \,dt\\
&&\mbox{by Minkowski inequality}\\
&\leq& 4n^2 C^2 \int_0^{\tau_2}  \vert
 u_1 -u_2 \vert_{X_t}^2 \vert F(u_2(t))\vert_H^2 \,dt \leq  4n^2 C^2 \int_0^{\tau_2}  \vert
 u_1 -u_2 \vert_{X_T}^2 \vert F(u_2(t))\vert_H^2 \,dt \\
&\leq & 4n^2 C^2 \vert
 u_1 -u_2 \vert_{X_T}^2 \int_0^{\tau_2}   \vert F(u_2(t))\vert_H^2 \,dt
\end{eqnarray*}
Next, by assumptions
\begin{eqnarray*}
\int_0^{\tau_2}   \vert F(u_2(t))\vert_H^2 \,dt  &\leq & C^2  \int_0^{\tau_2}  \Vert  u(t)\Vert^{2p+2-2\alpha}  \vert  u(t)\vert_E^{2\alpha} \,dt+
C^2  \int_0^{\tau_2}  \Vert  u(t)\Vert^{2q+2-2\gamma}  \vert  u(t)\vert_E^{2\gamma} \,dt \\
&& \quad \quad := I_{1,p}+ I_{2,q}.
\end{eqnarray*}
By some elementary calculations we obtain
\begin{align*}
I_{1,p} \leq & C^2 \sup_{t \in [0,\tau_2]} \Vert u(t)\Vert^{2p+2-2\alpha}
\big( \int_0^{\tau_2} \vert u(t)\vert_E^{2} \,dt\big)^\alpha
\tau_2^{1-\alpha}
\\
\leq & C^2  \tau_2^{1-\alpha} \vert u \vert_{X_{\tau_2}}^{2p+2} \leq  C^2  \tau_2^{1-\alpha}(2n)^{2p+2},
\end{align*}
and
\begin{align*}
I_{2,q} \leq & C^2 \sup_{t \in [0,\tau_2]} \Vert u(t)\Vert^{2q+2-2\gamma}
\big( \int_0^{\tau_2} \vert u(t)\vert_E^{2} \,dt\big)^\gamma
\tau_2^{1-\gamma}
\\
\leq & C^2  \tau_2^{1-\gamma} \vert u \vert_{X_{\tau_2}}^{2q+2} \leq  C^2  \tau_2^{1-\gamma}(2n)^{2q+2}.
\end{align*}
Therefore,
\begin{eqnarray*}
A&\leq & C^2  [ \tau_2^{(1-\alpha)/2}(2n)^{p+2} + \tau_2^{(1-\gamma)/2}(2n)^{q+2} ]\vert  u_1 -u_2
\vert_{X_T} .
\end{eqnarray*}

Also, because $ \theta_n( \vert u_1 \vert_{X_t}) =0$ for  $ t \geq
\tau_1$, and $\tau_1 \leq \tau_2$, we have
\begin{eqnarray*}
B&=& \Big[ \int_0^{\tau_2} \vert   \theta_n( \vert u_1 \vert_{X_t})\big[  F(u_1(t))- F(u_2(t)) \big] \vert_H^2\,dt \Big]^{1/2}\\
&=& \Big[ \int_0^{\tau_1} \vert   \theta_n( \vert u_1 \vert_{X_t})\big[  F(u_1(t))- F(u_2(t)) \big] \vert_H^2\,dt\Big]^{1/2}\\
&&\mbox{because } \theta_n( \vert u_1 \vert_{X_t}) \leq 1 \mbox{ for } t\in [0,\tau_1) \\
&\leq& \Big[ \int_0^{\tau_1} \vert     F(u_1(t))- F(u_2(t))  \vert_H^2\,dt\Big]^{1/2}\\
&\leq & \tilde{B}_{,p}+\tilde{B}_{,q},
\end{eqnarray*}
where
\begin{equation*}
 \begin{split}
  \tilde{B}_{,p}:=
C \Big[ \int_0^{\tau_1}  \vert u_1(t)-u_2(t)\vert_E^{2\alpha} \Vert u_1(t)-u_2(t)\Vert^{2-2\alpha} \Vert u_2(t)\Vert^{2p} \,dt\Big]^{1/2}\\
+C \Big[ \int_0^{\tau_1}     \Vert u_1(t)-u_2(t)\Vert^2 \Vert u_1(t)\Vert^{2p-2\alpha} \vert u_1(t)\vert_E^{2\alpha}   \,dt\Big]^{1/2},
 \end{split}
\end{equation*}
and
\begin{equation*}
 \begin{split}
  \tilde{B}_{,q}:=
C \Big[ \int_0^{\tau_1}  \vert u_1(t)-u_2(t)\vert_E^{2\gamma} \Vert u_1(t)-u_2(t)\Vert^{2-2\gamma} \Vert u_2(t)\Vert^{2q} \,dt\Big]^{1/2}\\
+C \Big[ \int_0^{\tau_1}     \Vert u_1(t)-u_2(t)\Vert^2 \Vert u_1(t)\Vert^{2p-2\gamma} \vert u_1(t)\vert_E^{2\gamma}   \,dt\Big]^{1/2}.
 \end{split}
\end{equation*}
Now we have
\begin{eqnarray*}
\tilde{B}_{,p} &\leq& C \sup_{t\in [0,\tau_1]} \Vert u_1(t)-u_2(t)\Vert \Vert
u_1(t)\Vert^{p-\alpha} \Big[ \int_0^{\tau_1} \vert
u_1(t)\vert_E^{2\alpha}   \,dt\Big]^{1/2}\\
&+& C \sup_{t\in [0,\tau_1]} \Vert u_1(t)-u_2(t)\Vert^{1-\alpha}
\Vert u_2(t)\Vert^p \Big[ \int_0^{\tau_1} \vert
u_1(t)-u_2(t)\vert_E^{2\alpha} \,dt\Big]^{1/2}
\\
&\leq&  C \sup_{t\in [0,T]} \Vert u_1(t)-u_2(t)\Vert \sup_{t\in
[0,\tau_1]} \Vert u_1(t)\Vert^{p-\alpha} \Big[ \int_0^{\tau_1}
\vert u_1(t)\vert_E^{2}   \,dt\Big]^{\alpha/2}
\tau_1^{(1-\alpha)/2}
\\
&+& C \sup_{t\in [0,T]} \Vert u_1(t)-u_2(t)\Vert^{1-\alpha}
\sup_{t\in [0,\tau_1]}  \Vert u_2(t)\Vert^p \Big[ \int_0^{\tau_1}
\vert u_1(t)-u_2(t)\vert_E^{2}   \,dt\Big]^{\alpha/2}
\tau_1^{(1-\alpha)/2}
\\
&\leq&  C \vert u_1-u_2\vert_{X_T}  \vert u_1 \vert_{X_{\tau_1}}^p \tau_1^{(1-\alpha)/2} +C \vert u_1-u_2\vert_{X_T}  \vert u_2 \vert_{X_{\tau_1}}^p \tau_1^{(1-\alpha)/2}\\
&&\mbox{ because} \vert u_1 \vert_{X_{\tau_1}} \leq 2n \mbox{ and } \vert u_2 \vert_{X_{\tau_1}} \leq \vert u_2 \vert_{X_{\tau_2}}\leq 2n\\
&\leq&   C \tau_1^{(1-\alpha)/2} \vert u_1-u_2\vert_{X_T} \Big[
\vert u_1 \vert_{X_{\tau_1}}^p  +   \vert u_2
\vert_{X_{\tau_1}}^p\Big] \leq C (2n)^{p+1} \tau_1^{(1-\alpha)/2}
\vert u_1-u_2\vert_{X_T}.
\end{eqnarray*}
In exactly the same way,
\begin{equation*}
 \tilde{B}_{,q}\le C (2n)^{q+1} \tau_1^{(1-\gamma)/2}
\vert u_1-u_2\vert_{X_T}.
\end{equation*}

Summing up, we proved

\begin{eqnarray*}
\vert \Phi_T(u_1)-\Phi_T(u_2)\vert_{L^2(0,T;H)} &\leq & C^2[
\tau_2^{(1-\alpha)/2}(2n)^{p+2}+\tau_2^{(1-\gamma)/2}(2n)^{q+2}] \vert  u_1 -u_2 \vert_{X_T}\\
&& +C [(2n)^{p+1} \tau_1^{(1-\alpha)/2} +(2n)^{q+1} \tau_1^{(1-\gamma)/2} ]\vert u_1-u_2\vert_{X_T}\\
&=& C\Big[   2n C +1\Big] \Big[(2n)^{p+1}  \tau_2^{(1-\alpha)/2} +(2n)^{q+1}  \tau_2^{(1-\gamma)/2} \Big]\vert
u_1 -u_2 \vert_{X_T}
\end{eqnarray*}
The proof is complete.
\end{proof}
In what follows the function $\Phi_T^n$ from Proposition
\ref{prop-global Lipschitz-F} will be denoted by $\Phi_{F,T}^n$.
The following result is a special case of Proposition
\ref{prop-global Lipschitz-F} with $H=V$.
\begin{cor}\label{cor-global-lip-G}
Let $G$ be a nonlinear mapping satisfying Assumption
\ref{assum-G}. Define a map $\Phi_{G}^n$ by
\begin{equation}\label{eqn-Phi_G}
\Phi_{G}=\Phi_{G}^n=\Phi_{G,T}^n: X_T \ni u \mapsto \theta_n(
\vert u \vert_{X_\cdot}) G(u) \in L^2(0,T;V).
\end{equation}
Then $\Phi_{G,T}^n$ is globally Lipschitz and moreover, for any
$u_1,u_2 \in X_T$,
\begin{eqnarray}\label{eqn-global LipschitzG}
\vert \Phi_{G,T}^n(u_1)-\Phi_{G,T}^n(u_2)\vert_{L^2(0,T;V)} &\leq
& C_G (2n)^{k+1}\big[   2n C_G +1\big]  T^{(1-\beta)/2} \vert  u_1
-u_2 \vert_{X_T}.
\end{eqnarray}
\end{cor}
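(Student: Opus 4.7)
The corollary is essentially a verbatim restatement of Proposition~\ref{prop-global Lipschitz-F} with the target Banach space $H$ replaced by $V$ and the nonlinearity $F$ replaced by $G$, so the plan is to observe that nothing in the proof of Proposition~\ref{prop-global Lipschitz-F} used any special property of the codomain $H$ beyond its being a Banach space into which the values of $F$ lie. Consequently, setting $\tilde H := V$ and reading the proof with $F\rightsquigarrow G$, $p \rightsquigarrow k$, $\alpha\rightsquigarrow \beta$, $C\rightsquigarrow C_G$, we obtain the stated inequality.

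More concretely, I would first verify that Assumption~\ref{assum-G} has exactly the same algebraic form as Assumption~\ref{assum-F} (a bound on $\Vert G(y)-G(x)\Vert$ by a sum of two terms of the type $\Vert y-x\Vert\,\Vert y\Vert^{k-\beta}|y|_E^\beta$ and $|y-x|_E^\beta\Vert y-x\Vert^{1-\beta}\Vert x\Vert^k$), and in particular implies $\Vert G(x)\Vert \le C_G\Vert x\Vert^{k+1-\beta}|x|_E^{\beta}$ for all $x\in E$ since $G(0)=0$. Then I would define the stopping indices
\[
 \tau_i := \inf\{t\in[0,T]:\vert u_i\vert_{X_t}\ge 2n\},\qquad i=1,2,
\]
with the convention that $\tau_i=T$ if the set is empty, and assume without loss of generality $\tau_1\le\tau_2$.

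The splitting is identical: on $[0,\tau_2]$ I would write
\[
\Phi_{G,T}^n(u_1)-\Phi_{G,T}^n(u_2)
=\bigl[\theta_n(|u_1|_{X_\cdot})-\theta_n(|u_2|_{X_\cdot})\bigr]G(u_2)
+\theta_n(|u_1|_{X_\cdot})\bigl[G(u_1)-G(u_2)\bigr],
\]
and bound the $L^2(0,T;V)$-norms of the two pieces separately. For the first piece I would use the Lipschitz bound $|\theta_n(x)-\theta_n(y)|\le\frac{1}{n}|x-y|$ from \eqref{ineq-theta}, the monotonicity $|u_1|_{X_t}-|u_2|_{X_t}|\le|u_1-u_2|_{X_T}$, and the growth estimate $\Vert G(u_2(t))\Vert\le C_G\Vert u_2(t)\Vert^{k+1-\beta}|u_2(t)|_E^{\beta}$ together with H\"older in time to get a factor $\tau_2^{(1-\beta)/2}(2n)^{k+2}C_G$ times $|u_1-u_2|_{X_T}$. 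For the second piece I would use that $\theta_n(|u_1|_{X_t})=0$ for $t\ge\tau_1$, apply Assumption~\ref{assum-G} pointwise on $[0,\tau_1]$, and again use H\"older in time together with $|u_i|_{X_{\tau_1}}\le 2n$ to get the bound $C_G(2n)^{k+1}\tau_1^{(1-\beta)/2}|u_1-u_2|_{X_T}$.

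Adding the two pieces and using $\tau_1\le\tau_2\le T$ yields
\[
\vert\Phi_{G,T}^n(u_1)-\Phi_{G,T}^n(u_2)\vert_{L^2(0,T;V)}
\le C_G(2n)^{k+1}\bigl[2nC_G+1\bigr]T^{(1-\beta)/2}\vert u_1-u_2\vert_{X_T},
\]
which is precisely \eqref{eqn-global LipschitzG}. There is no real obstacle here; the only point worth flagging is that the proposition's proof never invoked completeness or topology on $H$ beyond use of its norm, so replacing $H$ by any Banach space (here $V$) is legitimate, and the parameter renaming $(p,\alpha,C)\mapsto(k,\beta,C_G)$ is cosmetic.
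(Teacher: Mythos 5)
Your proof is correct and takes exactly the route the paper does: the paper states without further ado that the corollary is the special case of Proposition~\ref{prop-global Lipschitz-F} obtained by taking the target space to be $V$ and renaming $(F,p,\alpha,C)\mapsto(G,k,\beta,C_G)$, and your proposal says and then unwinds precisely that. (One cosmetic note: the paper itself is internally inconsistent about the Lipschitz constant of $\theta_n$ — equation \eqref{ineq-theta} gives $1/n$ while the proposition's proof uses $2n$ to produce the quoted constant — and you inherit this wobble, but it does not affect the validity of the argument.)
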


\begin{prop}\label{prop-Psi_T-Lipschitz}
Let Assumption \ref{assum-F}-Assumption \ref{assum-01} hold.
Consider a map
\begin{equation}\label{eqn-Psi_T}
\Psi_T^n=\Psi_T: M^2(X_T) \ni u \mapsto  Su_0 + S \ast \Phi_{F,T}^n
(u)+ S\diamond \Phi_{G,T}^n(u) \in M^2(X_T)
\end{equation}
Then $\Psi_T$ is globally Lipschitz and moreover, for any $u_1,u_2
\in M^2(X_T)$,
\begin{eqnarray}\label{eqn-global Lipschitz}
\vert \Psi_{T}^{n}(u_1)-\Psi_{T}^{n}(u_2)\vert_{M^2(X_T)} &\leq &\hat{C}(n)\Big[T^{(1-\alpha)/2}\vee T^{(1-\gamma)/2}\vee T^{(1-\beta)/2} \Big]\vert  u_1 -u_2
\vert_{M^2(X_T)},
\end{eqnarray}
where
\[ \hat{C}(n)= C_1 C_F\big[   2n C_F +1\big] \Big[(2n)^{p+1}+(2n)^{q+1}\Big]+ C_2C_G (2n)^{k+1}\big[   2n C_G +1\big].
\]
\end{prop}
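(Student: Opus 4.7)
The plan is to write
\[
\Psi_T^n(u_1)-\Psi_T^n(u_2)= S\ast[\Phi_{F,T}^n(u_1)-\Phi_{F,T}^n(u_2)]\,+\, S\diamond[\Phi_{G,T}^n(u_1)-\Phi_{G,T}^n(u_2)],
\]
so that the initial data term $Su_0$ drops out, then handle the two pieces separately using Assumption \ref{assum-01} (i)--(ii) and the pathwise Lipschitz estimates from Proposition \ref{prop-global Lipschitz-F} and Corollary \ref{cor-global-lip-G}. By the triangle inequality in $M^2(X_T)$, it suffices to bound each piece.

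For the deterministic convolution, I would fix $\omega$ and apply Assumption \ref{assum-01}(i) to the $H$-valued trajectory $\Phi_{F,T}^n(u_1)(\omega)-\Phi_{F,T}^n(u_2)(\omega)$, obtaining
\[
|S\ast[\Phi_{F,T}^n(u_1)-\Phi_{F,T}^n(u_2)]|_{X_T}(\omega)\le C_1\,|\Phi_{F,T}^n(u_1)(\omega)-\Phi_{F,T}^n(u_2)(\omega)|_{L^2(0,T;H)}.
\]
Then I would square, take expectation, and invoke Proposition \ref{prop-global Lipschitz-F} $\omega$-wise to get a factor $C_F(2n)^{p+1}[2nC_F+1]T^{(1-\alpha)/2}|u_1-u_2|_{X_T}(\omega)$, after which taking expectation yields
\[
|S\ast[\Phi_{F,T}^n(u_1)-\Phi_{F,T}^n(u_2)]|_{M^2(X_T)}\le C_1 C_F(2n)^{p+1}[2nC_F+1]T^{(1-\alpha)/2}|u_1-u_2|_{M^2(X_T)}.
\]

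For the stochastic convolution, note first that the integrand $\Phi_{G,T}^n(u_i)$ is progressively measurable and $V$-valued (since $G$ maps into $V$ by Assumption \ref{assum-G}). Applying Assumption \ref{assum-01}(ii) directly gives
\[
|S\diamond[\Phi_{G,T}^n(u_1)-\Phi_{G,T}^n(u_2)]|_{M^2(X_T)}\le C_2\,|\Phi_{G,T}^n(u_1)-\Phi_{G,T}^n(u_2)|_{M^2(0,T;V)}.
\]
Since the $M^2(0,T;V)$ norm is exactly $(\mathbb{E}|\cdot|_{L^2(0,T;V)}^2)^{1/2}$, I can invoke Corollary \ref{cor-global-lip-G} path by path and then take expectation, getting the bound $C_2 C_G(2n)^{k+1}[2nC_G+1]T^{(1-\beta)/2}|u_1-u_2|_{M^2(X_T)}$.

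Adding the two bounds and using that, for $T$ small, $T^{(1-\alpha)/2}\vee T^{(1-\beta)/2}\le T^{[1-(\alpha\vee\beta)]/2}$ (since the smaller of $1-\alpha,1-\beta$ is $1-(\alpha\vee\beta)$), the displayed estimate with $\hat C(n)$ as stated follows. There is no real obstacle: the only subtlety is that the Lipschitz constants in Proposition \ref{prop-global Lipschitz-F} and Corollary \ref{cor-global-lip-G} are \emph{deterministic and uniform in $\omega$}, which is exactly what allows the pathwise estimates to be integrated to give $M^2(X_T)$ estimates without any additional moment assumptions on $u_1,u_2$.
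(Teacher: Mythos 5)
Your proposal is correct and follows essentially the same route as the paper: split off the common term $Su_0$, apply the triangle inequality, use Assumption 3.5(i) and 3.5(ii) to reduce to $L^2(0,T;H)$ and $M^2(0,T;V)$ norms, and then invoke Proposition 3.14 and Corollary 3.15. The only (minor) added value in your write-up is that you make explicit the pathwise-then-integrate step and the implicit restriction $T\le 1$ needed for $T^{(1-\alpha)/2}\vee T^{(1-\beta)/2}\le T^{[1-(\alpha\vee\beta)]/2}$, which the paper uses tacitly.
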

\begin{proof}For simplicity of notation we will write
$\Psi_T$ instead of $\Psi_{T}^n$. We will also write $\Psi_F$ (resp. $\Psi_G$) instead of $\Psi_{F,T}$ (resp. $\Psi_{G,T}$).

Obviously in view of Assumption \ref{assum-01} the map $\Psi_T$ is
well defined. Let us fix  $u_1,u_2 \in M^2(X_T)$. Then
\begin{eqnarray*}
\vert \Psi_T(u_1)-\Psi_T(u_2)\vert_{M^2(X_T)} &\leq & \vert S\ast \Phi_F(u_1)-S\ast \Phi_F(u_2)\vert_{M^2(X_T)}\\
&+& \vert S \diamond  \Phi_G(u_1)-S\diamond \Phi_G(u_2)\vert_{M^2(X_T)}\\
&\leq& C_1 \vert  \Phi_F(u_1)- \Phi_F(u_2)\vert_{M^2(0,T;H)}+C_2
\vert  \Phi_G(u_1)-\Phi_G(u_2)\vert_{M^2(X_T)}
\\
&\leq&\Big[ C_1 C_F \big[   2n C_F +1\big]\Big[(2n)^{p+1} T^{(1-\alpha)/2}+ (2n)^{q+1} T^{(1-\gamma)/2}\Big] \\
 &+&  C_2C_G (2n)^{k+1}\big[   2n C_G +1\big] T^{(1-\beta)/2} \Big]
\\
&\leq& \hat{C}(n)\Big[T^{(1-\alpha)/2}\vee T^{(1-\gamma)/2}\vee T^{(1-\beta)/2} \Big],
\end{eqnarray*}
where
\[ \hat{C}(n)= C_1 C_F\big[   2n C_F +1\big] \Big[(2n)^{p+1}+(2n)^{q+1}\Big]+ C_2C_G (2n)^{k+1}\big[   2n C_G +1\big].
\]
The proof is complete.
\end{proof}
 The
first main result of this subsection is given in the following
theorem.
\begin{thm}\label{thm_local} Suppose that Assumption \ref{assum-F}-Assumption \ref{assum-01} hold. Then for every
$\mathcal{F}_{0}$-measurable $V$-valued square integrable random
variable $u_0$    there exits  a  local process $u=\big(u(t),
t\in[0,T_1) \big) $  which is the
  unique local mild solution to our  problem.
Moreover,  given $R>0$ and  $\varepsilon >0$ there exists
$\tau(\varepsilon,R)>0$,  such that for every
$\mathcal{F}_0$-measurable $V$-valued random variable $u_0$
satisfying  $\mathbb{E}\Vert u_0 \Vert^{2} \leq R^{2}$, one has
\[{\mathbb P}\big(T_1\geq \tau(\varepsilon,R)\big) \geq
1-\varepsilon.\]
 \end{thm}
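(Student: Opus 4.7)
The plan is to run a Banach fixed point argument on the truncated problem of Proposition \ref{prop-Psi_T-Lipschitz}, to remove the truncation by a stopping time construction, and finally to extract the uniform lifetime estimate from the fixed-point bound via Chebyshev's inequality. For each $n \in \mathbb{N}$, I first choose $T_n > 0$ small enough that $\hat{C}(n)\, T_n^{[1-(\alpha \vee \beta)]/2} \leq \tfrac{1}{2}$, so that by Proposition \ref{prop-Psi_T-Lipschitz} the map $\Psi_{T_n}^n : M^2(X_{T_n}) \to M^2(X_{T_n})$ is a $\tfrac12$-contraction. Since $F(0)=G(0)=0$, one has $\Psi_{T_n}^n(0) = Su_0$, and Assumption \ref{assum-01}(iii) yields $\vert\Psi_{T_n}^n(0)\vert_{M^2(X_{T_n})} \leq C_0 (\mathbb{E}\Vert u_0\Vert^2)^{1/2}$. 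Banach's theorem then produces a unique fixed point $u^n \in M^2(X_{T_n})$ obeying
\[
\vert u^n\vert_{M^2(X_{T_n})} \leq 2\vert\Psi_{T_n}^n(0)\vert_{M^2(X_{T_n})} \leq 2 C_0 (\mathbb{E}\Vert u_0\Vert^2)^{1/2}.
\]

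To de-truncate, set $\sigma_n := \inf\{t \in [0, T_n] : \vert u^n\vert_{X_t} \geq n\}$ (with $\sigma_n = T_n$ if the set is empty). On $[0, \sigma_n)$ one has $\vert u^n\vert_{X_t} < n$, hence $\theta_n(\vert u^n\vert_{X_\cdot}) \equiv 1$, so $\Phi_{F,T_n}^n(u^n) = F(u^n)$ and $\Phi_{G,T_n}^n(u^n) = G(u^n)$ there. Consequently, the fixed point identity $u^n = \Psi_{T_n}^n(u^n)$ coincides with \eqref{ABS-SPDE} on $[0, \sigma_n)$, and $(u^n\vert_{[0,\sigma_n)}, \sigma_n)$ is a local mild solution in the sense of Definition \ref{def-local solution-2}; I take this pair as $(u, T_1)$.

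For uniqueness, given two local solutions $(u^1, \tau^1), (u^2, \tau^2)$ to \eqref{ABS-SPDE-strong}, I fix $n$ and a horizon $T \leq T_n$, and introduce
\[
\rho := \inf\{t : \vert u^1\vert_{X_t} \vee \vert u^2\vert_{X_t} \geq n\} \wedge \tau^1 \wedge \tau^2 \wedge T.
\]
Stopping both mild-form equations at $\rho$ and extending beyond $\rho$ appropriately, both processes are fixed points of (a suitable $\rho$-stopped variant of) $\Psi_T^n$ in $M^2(X_T)$; the Lipschitz bound \eqref{eqn-global Lipschitz} with constant $\leq \tfrac12$ then forces $u^1 = u^2$ on $[0, \rho)$. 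Letting $n \to \infty$ and $T \to \infty$ gives uniqueness in the sense of Definition \ref{Def-uniq}.

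For the uniform lifetime estimate, given $R > 0$ and $\varepsilon > 0$ I set $n(\varepsilon, R) := \lceil 2 C_0 R / \sqrt{\varepsilon}\rceil$ and $\tau(\varepsilon, R) := T_{n(\varepsilon, R)}$. For any $u_0$ with $\mathbb{E}\Vert u_0\Vert^2 \leq R^2$, the fixed-point bound combined with Chebyshev's inequality yields
\[
\mathbb{P}\big(\vert u^n\vert_{X_{T_n}} \geq n\big) \leq \frac{\vert u^n\vert^2_{M^2(X_{T_n})}}{n^2} \leq \frac{4 C_0^2 R^2}{n^2} \leq \varepsilon,
\]
and on the complementary event $\sigma_n = T_n = \tau(\varepsilon, R)$, whence $\mathbb{P}(T_1 \geq \tau(\varepsilon, R)) \geq 1 - \varepsilon$. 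The main technical obstacle is the uniqueness step: two a priori unrelated local solutions only satisfy a common truncated equation after being stopped at the level sets of their $X_\cdot$-norms, and the extension past the stopping time must be arranged so that the resulting processes lie in $M^2(X_T)$ and can be compared via \eqref{eqn-global Lipschitz}. Once uniqueness is secured, both existence and the quantitative lifetime estimate follow directly from Proposition \ref{prop-Psi_T-Lipschitz}.
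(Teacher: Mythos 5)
Your proposal follows the same architecture as the paper's proof: truncate via $\theta_n$, run a Banach fixed point using Proposition \ref{prop-Psi_T-Lipschitz}, de-truncate at the level-set stopping time, and extract the uniform lifetime via Chebyshev. Two points of genuine divergence. First, you stop at the contraction window $[0,T_n]$ and take $\sigma_n \leq T_n$ directly, whereas the paper concatenates the fixed point over the grid $t_k = k\delta$ to build a \emph{global-in-time} solution $u^n$ of the truncated equation on $[0,T]$ for arbitrary $T$, and then stops at $\tau_n$; for the statement of this theorem both suffice, but the paper's version is what feeds into Theorem \ref{thm_maximal-abstract}, where $\tau_\infty = \lim_n \tau_n$ should not be artificially capped by a deterministic time that shrinks with $n$. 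Second, your Chebyshev step is cleaner: you read the a priori bound $\vert u^n\vert_{M^2(X_{T_n})} \leq 2C_0 R$ directly off the $\tfrac12$-contraction identity $\vert u^n\vert \leq \tfrac12\vert u^n\vert + \vert \Psi(0)\vert$, while the paper runs a separate small-time estimate (the functions $K_n(T)$) to land the range of $\Psi_T^n$ in a ball of radius $(C_0+1)R$ and applies Chebyshev to that. Both give the same conclusion with the same power of $n$.

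Two small technical gaps. When you pass from the fixed-point identity $u^n = \Psi^n_{T_n}(u^n)$ to the local-mild-solution identity \eqref{eq-locsol_00-b} on $[0,\sigma_n)$, the deterministic convolution is immediate since $\theta_n \equiv 1$ there, but the stochastic convolution evaluated at the stopping time, i.e.\ the identity
\begin{equation*}
\int_0^{t\wedge\sigma_n} S(t\wedge\sigma_n - r)\,\theta_n(\vert u^n\vert_{X_r})\,G(u^n(r))\,dW(r) = \int_0^{t\wedge\sigma_n} S(t\wedge\sigma_n - r)\,G(u^n(r))\,dW(r),
\end{equation*}
is not a pathwise statement and needs a separate argument; the paper explicitly invokes \cite[Lemma A.1]{ZB et al 2005} for this. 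Your uniqueness paragraph is schematic and you flag the "$\rho$-stopped variant of $\Psi_T^n$" as the obstacle; the paper handles this by citing \cite[Theorem 4.9]{Brz+Millet_2012} rather than spelling it out, so you are on a par there, but in a self-contained write-up this step does need the careful stopped-and-extended comparison you gesture at.
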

\begin{proof}
Without loss of generality we can assume that $p\ge q\vee k$.
Owing to Proposition \ref{prop-Psi_T-Lipschitz} we can argue as in
\cite[Proposition 5.1]{Brz+Millet_2012}  to prove the first part
of the theorem. {For any $n\in \mathbb{N}$, $T> 0$ and $u_0\in
L^2(\Omega, \mathbb{P}; V)$ let $\Psi^n_{T,u_0}$ be the mapping
from $M^2(X_T)$ defined by
\begin{align*}
\Psi^n_{T,u_0}(u)=&S_t u_0+\int_0^t S_{t-r}[\theta_n
(|u|_{X_r})F(u(r))]dr+\int_0^t S_{t-r}[\theta_n
(|u|_{X_r})G(u(r))]dW(r).
\end{align*}
It follows from Proposition \ref{prop-global Lipschitz-F},
Corollary \ref{cor-global-lip-G} and Assumption
 \ref{assum-01} that $\Psi^n_{T,u_0}$ maps
$M^2(X_T)$ into itself. From
 Proposition \ref{prop-Psi_T-Lipschitz} we deduce that $\Psi^n_{T,u_0}$ is globally Lipschitz, moreover it is a strict contraction for small $T$.
 Therefore we can find $\delta>0$ such that $\Psi^n_{T,u_0}$ is $\frac 12 $-contraction. For $k\in \mathbb{N}$ let $(t\wedge \tau_k)_{k\in \mathbb{N}}$
  be a sequence of times defined by $t\wedge \tau_k=k \delta$. By the $\frac12$-contraction property of $\Psi^n_{T,u_0}$ we} {can find $u^{[n,1]}\in M^2(X_\delta)$ such that
  $u^{[n,1]}=\Psi^n_{\delta,u_0}(u^{[n,1]})$. Since $u^{[n,1]}\in M^2(X_\delta)$ it follows that $u^{[n,1]}(t)$ is $\mathcal{F}_t$-measurable  and
  $u^{[n,1]}(t)\in L^2(\Omega, \mathbb{P};V)$ for any $t\in [0,\delta]$. Thus replacing $u_0$ with $u^{[n,1]}(\delta)$ and using the same argument as
   above we can find
  $u^{[n,2]}\in M^2(X_\delta)$ such that $u^{[n,2]}=\Psi^n_{\delta, u^{[n,1]}(\delta)}(u^{[n,2]})$.
  By induction we can construct a sequence $u^{[n,k]}\subset M^2(X_\delta)$ such that $u^{[n,k]}=\Psi_{\delta, u^{[n,k-1]}}(u^{[n,k]})$.
  Now let  $u^n$ be the process defined by $u^n(t)=u^{[n,1]}(t)$, $t \in [0,\delta)$, and for $k=[\frac T \delta]+1$ and $0\le t<\delta$,
  let $u^{n}(t+k\delta)=u^{[n,k]}(t)$. By construction $u^n\in M^2(X_T)$ and $u^n= \Psi^n_{T,u_0}(u^n)$, consequently $u^n$ is a global solution to
  the truncated equation
$$u(t)=S_t u_0+\int_0^t S_{t-r}[\theta_n (|u|_{X_r})F(u(r))]dr+\int_0^t
S_{t-r}[\theta_n (|u|_{X_r})G(u(r))]dW(r).$$ Arguing exactly as in
\cite[Theorem 4.9]{Brz+Millet_2012} we can show that it is unique.
Now let $(\tau_n)_{n\in \mathbb{N}}$ be the sequence of stopping
times defined by
\begin{equation}\label{Loc-stop}
\tau_n=\inf\{t\in [0,T]: |u^n|_{X_t}\ge n\}.
\end{equation}
By definition $\theta_n(|u^n|_{X_{r}})=1$ for $r\in [0,t\wedge
\tau_n)$, hence
$$\theta_n (|u^n|_{X_r})F(u^n(r)) = F(u^n(r)), r\in [0,t\wedge
\tau_n).$$ From \cite[Lemma A.1]{ZB et al 2005} we infer that
$$\int_0^{t\wedge \tau_n} S_{t\wedge \tau_n-r}[\theta_n
(|u|_{X_r})G(u(r))]dW(r)=\int_0^{t\wedge \tau_n} S_{t\wedge \tau_n
-r} G(u^n(r)) dW(r). $$ From these remarks we easily deduce that
$u^n$ satisfies
\begin{equation*}
u^n(t\wedge \tau^n)=S_{t\wedge \tau_n}u_0+\int_0^{t\wedge \tau_n}
S_{t\wedge \tau_n-r} F(u^n(r)) dr + \int_0^{t\wedge \tau_n}
S_{t\wedge \tau_n-r} G(u^n(r))dW(r).
\end{equation*}
Since $\tau_n$ is an accessible stopping time it follows that the
process $(u^n(t), t<\tau_n)$ is a local solution to
\eqref{ABS-SPDE-strong}. This ends the proof of the first part of
the theorem.}

For the second part we will follow the lines of \cite[Theorem
5.3]{Brz+Millet_2012}. For this we fix $\varepsilon
>0$ and choose $N$
 such that $N\geq 2 \varepsilon^{-1/2}$.
 Thanks to Proposition \ref{prop-global Lipschitz-F}, Corollary \ref{cor-global-lip-G} and Assumption
 \ref{assum-01} we can deduce
 that there exist some positive constants $\tilde{C}_i, i=1,\dots,
 4$ such that for any $u\in M^2(X_T)$ we have
\begin{align*}
 \lvert S\ast \Phi_T^n(u)\rvert_{M^2(X_T)}
\le \tilde{C}_1 \tilde{C}_2(2n \tilde{C}_2+1)\Big[ (2n)^{p+2}T^{(1-\alpha)/2}+(2n)^{q+2}T^{(1-\gamma)/2}\Big],\\
\lvert S\diamond \Phi^n_G(u)\rvert_{M^2(X_T)}\le \tilde{C}_3
\tilde{C}_4 (2n)^{k+2} [2n \tilde{C}_4+1] T^{(1-\beta)/2}.
\end{align*}
Since $\alpha, \beta, \gamma \in [0,1)$ we infer from these inequalities
that there exists a sequence $(K_n(T))_n$ of numerical functions
with $\lim_{T\rightarrow 0} \sup_n  K_n(T)=0$ and
\begin{equation*}
\lvert S\ast \Phi^n_T(u)+ S\diamond
\Phi^n_G(u)\rvert_{M^2(X_T)}\le K_n(T),
\end{equation*}
for any $u\in M^2(X_T)$.
Let us put $n=N R$    for some ``large" $N$ to be chosen later
and
 choose $\delta_{1}(\varepsilon,R) >0$ such that
$K_{n}(\delta_{1}(\varepsilon,R)) \leq  R $. Let $\Psi^n_T$ be the
mapping defined by \eqref{eqn-Psi_T}. Since $\me \lve u_0\rve^2\le
R^2$, we infer by the Assumption \ref{assum-01} (namely
\eqref{ineq-dc}) that
\begin{align*}
\lvert \Psi^n_T(u)\rvert_{M^2(X_T)}\le & C_0 R+ K_n(T),\\
\le & (C_0+ 1)R,
\end{align*}
for any $T\le \delta_1(\eps,R)$. That is, for $T\le
\delta_1(\eps,R)$
 the range of $\Psi_T^{n}$ is included in the ball centered at 0 and of radius  $(C_0+1)R$ of ${M}^2(X_T)$. Furthermore, Propositions \ref{prop-Psi_T-Lipschitz}
implies that there exists $C>0$ such that for any $u_1,u_2\in
M^2(X_T)$
\begin{equation*}
\lvert \Psi^n_T(u_1)-\Psi^n_T(u_2)\rvert_{M^2(X_T)}\le C
N^{p+1}R^{p+1} (NRC+1)\Big[T^{(1-\alpha)/2} \vee T^{(1-\gamma )/2}\vee T^{(1-\beta)/2}\Big] \lvert
u_1-u_2\rvert_{M^2(X_T)}.
\end{equation*}
Hence we can find $\delta_2(\eps,R)>0$ such that $\Psi^n_T$ is a
strict contraction for any $T\le \delta_2(\eps,R)$. Thus if one
puts $\tau(\eps,R)=\delta_1(\eps,R)\wedge \delta_2(\eps,R)$, the
mapping $\Psi^n_T$ has a unique fixed point $u^n$ which satisfies
\begin{equation*}
\me \lvert u^n\rvert^2_{X_{\tau(\eps,R)}}\le (C_0+1)^2 R^2.
\end{equation*}
As in \cite[Proposition 5.1]{Brz+Millet_2012} we can show that
$(u^n(t), t\le \tau_n)$ is a local solution to problem
\eqref{ABS-SPDE}. By the definition of the stopping time $\tau_n$
the set $\{\tau_n \le \tau(\eps,R)\}$ is contained in the set $\{
\lvert u^n\rvert_{X_{\tau(\eps,R)}}\ge n \}$. By Chebychev's
inequality we have
\begin{align*}
\mathbb{P}(\tau_n \le \tau(\eps,R))\le &\mathbb{P}(\lvert
u^n\rvert^2_{X_{\tau(\eps,R)}}\ge n ),\\
\le & (C_0+1)^2 N^{-2}, \\
\le & \eps,
\end{align*}
for $N\ge (C_0+1) \eps^\frac 12$. Therefore for $N\ge (C_0+1)
\eps^\frac 12$ we have $\mathbb{P}(\tau_n \ge \tau(\eps,R))\ge
1-\eps$
 and the stopping time $T_1=\tau_{n}$ satisfies the requirements of the theorem. This  concludes the proof.
\end{proof}
Let $(\tau_n)_{n\in \mathbb{N}}$ be the sequence of stopping times defined by \eqref{Loc-stop} and
$$\tau_\infty(\omega)=\lim_{n\rightarrow \infty} \tau_n(\omega), \,\, \omega \in \Omega.$$ Let also $(u(t), t<\tau_\infty)$ be the  local process defined by $$ u(t,\omega)=u^n(t,\omega) \text{ if } t<\tau_n(\omega), \omega \in \Omega.$$
The next result is about the existence and uniqueness of a maximal solution and the characterization of its lifespan.

 \begin{thm}\label{thm_maximal-abstract}
For every      $u_0\in L^2(\Omega,\mathcal{F}_0,V)$,
 the process $u=(u(t)\, ,\, t<  \tau_\infty) $ defined above
   is the unique local maximal solution to our equation.
  Moreover,
$ {\mathbb P }\big(\{\tau_\infty <\infty\} \cap \{\sup_{
t<\tau_\infty} |u(t)|_{V}<\infty \}\big)=0$ and on $\{
\tau_\infty<\infty \}$, $\limsup_{t\to \tau_\infty} |u(t)|_{V} =
+\infty$ a.s.
\end{thm}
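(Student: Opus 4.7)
The plan is to combine the local existence and uniqueness provided by Theorem \ref{thm_local} with the abstract maximal--solution construction of Proposition \ref{prop-loc-implies-max}, and then to upgrade the blow-up criterion of Proposition \ref{prop-t_infty} from the $X_t$-norm to the $V$-norm by restarting the equation just before $\tau_\infty$.

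First, I would observe that Theorem \ref{thm_local} produces a unique local mild solution for every $\mathcal{F}_0$-measurable, square-integrable $V$-valued initial datum, and that its proof (via the fixed point argument for $\Psi_T^n$) also gives uniqueness in the sense of Definition \ref{Def-uniq}. Local uniqueness is precisely the compatibility assumption \eqref{eqn-amalgamation_04} of Proposition \ref{prop-loc-implies-max}, which in turn outputs, via the Amalgamation Lemma \ref{L4.2} and Kuratowski--Zorn, a maximal local mild solution $(u,\tau_\infty)$. Uniqueness of the maximal solution is then automatic: any other maximal solution $(v,\sigma)$ must coincide with $u$ on $[0,\tau_\infty\wedge\sigma)$ by local uniqueness, and maximality of both pairs forces $\tau_\infty=\sigma$ a.s., whence $(u,\tau_\infty)\sim(v,\sigma)$.

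Next, applying Proposition \ref{prop-t_infty} directly to $(u,\tau_\infty)$ yields
\[
\lim_{t\toup\tau_\infty}|u|_{X_t}=\infty \quad \mathbb{P}\text{-a.s. on } \{\tau_\infty<\infty\}.
\]
This is not yet the claim of the theorem, since a priori blow-up of $|u|_{X_t}^2=\sup_{s\le t}\|u(s)\|^2+\int_0^t |u(s)|_E^2\,ds$ could be driven purely by the $E$-integral while the $V$-sup stays bounded. To rule this out, I argue by contradiction: suppose the event $A=\{\tau_\infty<\infty,\ \sup_{t<\tau_\infty}\|u(t)\|<\infty\}$ has positive probability. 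Decompose $A=\bigcup_M A_M$ with $A_M:=A\cap\{\sup_{t<\tau_\infty}\|u(t)\|\le M\}$ and fix $M$ so that $\mathbb{P}(A_M)>0$.

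The main obstacle, and the crux of the proof, is the \emph{restart step}. Given the approximating sequence $(\tau_n)$ for $\tau_\infty$ provided by accessibility, pick a bounded stopping time $\sigma:=\tau_n<\tau_\infty$ and set $\hat u_0:=u(\sigma)\mathbf 1_{A_M}$, so that $\|\hat u_0\|\le M$ a.s.\ and $\mathbb{E}\|\hat u_0\|^2\le M^2$. Using the strong Markov property of $W$, view the shifted process $\tilde W(t):=W(\sigma+t)-W(\sigma)$ as a cylindrical Wiener process for the shifted filtration $\mathbb{F}^\sigma=\{\mathcal{F}_{\sigma+t}\}_{t\ge0}$; then apply the second part of Theorem \ref{thm_local} to the SPDE driven by $\tilde W$ with initial datum $\hat u_0$. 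For $\varepsilon<\mathbb{P}(A_M)/2$ and $\delta:=\tau(\varepsilon,M+1)>0$, the restarted solution lives at least up to time $\delta$ with probability at least $1-\varepsilon$. Amalgamating (via Lemma \ref{L4.2}) $u|_{[0,\sigma)}$ with the restart on $[\sigma,\sigma+\delta]$ yields a local mild solution of the original equation with lifespan $\ge\sigma+\delta$. On $A_M\cap\{\sigma>\tau_\infty-\delta\}$, which has positive probability for $n$ large (by $\tau_n\toup\tau_\infty$ a.s.\ and dominated convergence), this extension strictly exceeds $\tau_\infty$, contradicting the maximality of $(u,\tau_\infty)$. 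Hence $\mathbb{P}(A)=0$, which is the first assertion; the $\limsup$ statement is equivalent by the $V$-continuity of trajectories on $[0,\tau_\infty)$.

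The delicate point is exactly this strong-Markov restart: Theorem \ref{thm_local} is stated with initial time $0$, so one must verify that $\tilde W$ is a cylindrical Wiener process adapted to $\mathbb{F}^\sigma$ and independent of $\mathcal{F}_\sigma$, and that the restarted mild solution patches together with $u$ into a genuine mild solution of \eqref{ABS-SPDE-strong} on the glued filtration. These are standard but nontrivial measure-theoretic verifications which carry most of the technical weight of the proof.
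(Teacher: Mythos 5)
Your restart-and-glue argument is essentially the paper's own proof: on the bad event where $\tau_\infty<\infty$ but the $V$-norm stays bounded, restart from $\sigma=\tau_n$ close to $\tau_\infty$ with a truncated datum, apply the quantitative part of Theorem \ref{thm_local} to obtain a continuation whose lifespan is at least $\tau(\eps,R)$ with probability $\ge 1-\eps$, glue via the amalgamation machinery, and derive a contradiction. The only cosmetic difference is the endgame: the paper estimates $\mathbb{E}\big(|v|_{X_{\tau_\infty+\frac12\tau(\eps,R)}}\mathds{1}_{\Omega_1}\big)<\infty$ and contradicts the $X_t$-blow-up of Proposition \ref{prop-t_infty}, whereas you contradict maximality directly; these are equivalent once one uses local uniqueness to identify the glued process with $u$ on $[0,\tau_\infty)$. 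One point you share with the paper and should be aware of: your cut-off $\hat u_0 = u(\sigma)\mathds{1}_{A_M}$ (just like the paper's $y_0 = u(T_0)\mathds{1}_{\Omega_0}$) multiplies by the indicator of an event that is \emph{not} $\mathcal{F}_\sigma$-measurable, since $A_M$ and $\Omega_0$ depend on the trajectory of $u$ up to $\tau_\infty$; the standard repair is to cut off instead by $\{\|u(\sigma)\|\le R\}\in\mathcal{F}_\sigma$, which agrees with the intended datum on the event of interest and still gives $\mathbb{E}\|\hat u_0\|^2\le R^2$.
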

\begin{proof}
One can use similar argument as in \cite[Theorem 4.10]{ZB-97} to show that $(u,\tau_\infty)$ is a local solution to our problem.
Since $\tau_n\toup \tau_\infty$ we have $\tau_n\le
\tau_\infty<\infty$ on $\{\tau_\infty<\infty\}$  for any $n$. This
implies in particular that for any $n>0$ there exists a constant $\delta$ such that $\lvert u\rvert_{X_{t}}\ge \lvert u \rvert_{X_{\tau_n}}\ge  n$ whenever $t-\tau_\infty<\delta $ on $\{ \tau_\infty<\infty\}$. This yields that $\lim_{t\toup \tau_\infty}\lvert u\rvert_{X_{t}}\rightarrow \infty$ on $\{\tau_\infty<\infty \}$.  This concludes that $u=(u(t)\, ,\, t< \tau_\infty)
$ is a maximal solution.

To prove the second statement we argue by contradiction. Assume
that for some $\eps>0$,  $$ {\mathbb P }\big(\{\tau_\infty
<\infty\} \cap \{ t\in [0,\tau_\infty) |u(t)|_{V}<\infty
\}\big)=4\eps>0.$$ Let $R>0$ and assume that
\[ {\mathbb P }\big(\{\tau_\infty <\infty\} \cap \{ |u(t)|_{V}<R \mbox{ for all } t\in [0,\tau_\infty)
\}\big) \geq 3\eps.\]
 Let $\sigma_R=\inf \{t \in [0,\tau_\infty) : \lvert u(t)\rvert_{V}\ge
R\}  $ and $\tilde{\Omega}=\{\sigma_R=\tau_\infty<\infty\}.$

Note  that
\[ \tilde{\Omega}= \{\tau_\infty <\infty\} \cap \{ |u(t)|_{V}<R \mbox{ for all } t\in [0,\tau_\infty)
\} \]

With this $R$ and previous $\eps$ we can choose a number
$\tau(\eps,R)>0$ as in Theorem \ref{thm_local}. Let us now choose
$\alpha$ such that  $\alpha= \frac 12 \tau(\eps,R)$. By
construction $\tau_n\toup \tau_\infty$ almost surely and  hence
for arbitrary  $\delta>0$  there exists $n_0>0$ such that
$\mathbb{P}(\Omega_0)\geq (1-\delta)\mathbb{P}(\tilde{\Omega})$,
where $\Omega_0:=\{\omega \in \tilde{\Omega}:
\tau_\infty-\tau_{n_0}<\alpha\}$. Choosing $\delta=\frac13$ we get
$\mathbb{P}(\Omega_0)\geq 2\eps$.

Let $T_0=\tau_{n_0}$ and
\begin{equation*}
y_0=\begin{cases} u(T_0) \text{ on }
{\Omega_0},\\
0 \text{ otherwise}.
\end{cases}
\end{equation*}
Note that $\mathbb{E}(\vert y_0\vert_{V}^2 )\leq R^2$.
Then, thanks to Theorem \ref{thm_local} the problem
\eqref{ABS-SPDE} with initial condition (starting at $T_0$) $y_0$
has a unique local solution denoted by $y(t)$, $t\in
[T_0,T_0+T_1)$. Moreover, the lifespan $T_1$ of the process
$y(\cdot)$ satisfies $\mathbb{P}(T_1\geq \tau(\eps,R))>1-\eps$.
From the first part of our theorem $y$ is the unique maximal
solution of \eqref{ABS-SPDE} with initial condition $y_0$.
 Note that
$\mathbb{P}(\tau_\infty-T_0< \frac 12 \tau(\eps,R))\geq 2\eps$.
Hence, we infer that
\[\mathbb{P}\big(\Omega_1  \big) \geq \eps>0,\]
where
\[\Omega_1:= \Omega_0 \cap \{ T_1\geq \tau(\eps,R)\}.\]

Next we define a local stochastic process $v$ by
\begin{equation*}
v(t,\omega)=\begin{cases} u(t,\omega) \text{ if } \omega\in \Omega_1^c,\\
y(t,\omega) \text{ if }\omega\in \Omega_1 \mbox{ and } t> T_0,\\
u(t,\omega) \text{ if }\omega\in \Omega_1 \mbox{ and } t\in [0,
T_0],
\end{cases}
\end{equation*}
The process $v(\cdot)$ is a local solution of \eqref{ABS-SPDE}
with initial condition $u_0$. Since $$\mathbb{P}(T_0+\frac 12
\tau(\eps,R)>\tau_\infty)\ge \eps,$$ the process $v(\cdot)$
satisfies
\begin{align*}
\mathbb{E}\left(\lvert v \rvert_{X_{\tau_\infty+\frac 12
\tau(\eps,R)}}\cdot \mathds{1}_{\Omega_1 } \right)\le &
\mathbb{E}\left(\lvert v \rvert_{X_{T_0+
\tau(\eps,R)}}\cdot \mathds{1}_{\Omega_1 } \right) ,\\
\le & \mathbb{E}\biggl[\left(\lvert u \rvert_{X_{T_0}}+ \lvert
\mathds{1}_{[T_0,T_0+\tau(\eps,R)] }y \rvert_{X_{T_0+
\tau(\eps,R)}} \right)\cdot
\mathds{1}_{\Omega_1 } \biggr],\\
\leq &\mathbb{E} \left(\lvert u \rvert_{X_{T_0}}
\mathds{1}_{\Omega_1 } \right)+ \mathbb{E} \left(
\mathds{1}_{\Omega_1 }\lvert \mathds{1}_{[T_0,T_0+ \tau(\eps,R)]
}y \rvert_{X_{T_0+ \tau(\eps,R)}}\right),
\\
\leq &\mathbb{E} \left(\lvert u \rvert_{X_{T_0}}
\mathds{1}_{\Omega_0 } \right)+ \mathbb{E} \left(
\mathds{1}_{\Omega_1 }\lvert y \rvert_{X_{[T_0,T_0+ \tau(\eps,R)]}}\right),
\end{align*}
where the space $X_{[a,b]}$ and the norm $\vert
\cdot\vert_{X_{[a,b]}}$ are defined similarly to the space $X_T$
and norm $\vert \cdot\vert_{X_T}$.
Since $T_0=\inf\{t\in[0,\tau_\infty): \lvert u \rvert_{X_t} \ge
n_0\}=\tau_{n_0}$, we infer that $T_0$ is finite ($T_0<\tau_\infty$
to be precise)  on the set $\Omega_0$ and hence $\lvert u
\rvert_{X_{T_0}}=n_0$ on the set $\Omega_0$. Therefore,  the first
expected value $\mathbb{E} \left(\lvert u \rvert_{X_{T_0}}
\mathds{1}_{\Omega_0 }\right)$ is finite.
Since the solution $y(\cdot)$ is such that
$\mathds{1}_{[T_0,T_0+T_1) }y \in M^2(X_{T_0+T_1})$  and  the
lifespan $T_1$ of $y(\cdot)$ satisfies $\mathbb{P}(T_1+T_0\ge
T_0+\tau(\eps,R) )\ge \eps $   we infer that
\begin{align*}
\mathbb{E} \left(\lvert \mathds{1}_{[T_0,T_0+ \tau(\eps,R)] }y
\rvert_{X_{T_0+ \tau(\eps,R)}}\right)\le &  \mathbb{E}
\left(\lvert \mathds{1}_{[T_0,T_0+ \tau(\eps,R)] }y
\rvert^2_{X_{T_0+ T_1}}\right)<  \infty.
\end{align*}
Therefore
\begin{equation*}
\mathbb{E}\left(\lvert v \rvert_{X_{\tau_\infty+\frac 12
\tau(\eps,R)}}\cdot \mathds{1}_{\Omega_1 } \right)< \infty.
\end{equation*}
 This implies in
particular that $\lvert v(t)\rvert_{X_{\tau_\infty+\frac 12
\tau(\eps,R)}}<\infty$ on $\Omega_1\subset \{\tau_\infty<\infty\}$
which contradicts Proposition \ref{prop-t_infty}.
\end{proof}

\begin{Rem}
\begin{enumerate}[(i)]
 \item
The last result is very important since a'priori we only know that
$\vert u\vert_{X_t} \to \infty$ as $t\toup \tau_\infty$ on the set
$\{ \tau_\infty<\infty \}$.

\item
The proof of the existence of a global solution could then follow
the proof in \cite[Theorem 8.12]{Brz+Millet_2012}.

\end{enumerate}

\end{Rem}
\section{Maximum Principle type Theorem}\label{MAX-PRIN-SEC}
In this section we replace in the system \eqref{ABS-v1}-\eqref{ABt-d1} the general polynomial $f(\d)$ by the bounded Ginzburg-Landau function
$\mathds{1}_{\lvert \bd\rvert \le 1}(\lvert \d\rvert^2-1)\d$.
All our previous result remains true and the analysis are even easier. In the case $f(\bd)=\mathds{1}_{\lvert \bd\rvert \le 1}(\lvert \d\rvert^2-1)\d$, we will show that if the initial value $\bd_0$ is in
the unit ball, then so are the values of the vector director $\bd$.
That is, we must show that $\lvert \bd(t)\rvert^2 \le 1$ almost
all $(\omega, t,x)\in \Omega\times [0,T]\times \MO$.
In fact we have the following theorem.
\begin{thm}Assume that  $n \leq 3$ and that a process  $(\v,\d)=(\v(t), \d(t))$, $t\in [0,T]$, is a
	solution to problem \eqref{ABS-v1}-\eqref{ABt-d1} with  initial condition $(\v_0,\d_0)$ such that
	$\lvert \d_0\rvert^2\le 1$ for almost all  $(\omega,x)\in \Omega \times \MO$. Then
	$\lvert \bd(t)\rvert^2 \le 1$ for almost all $(\omega, t,x)\in
	\Omega\times [0,T]\times \MO$.
\end{thm}

\begin{proof}
	We
	follow the idea in \cite[Lemma 2.1]{Rojas-Medar2} and \cite[Proof
	of Theorem 4, Page 513]{Matoussi}. Let
	$\varphi:\mathbb{R}\rightarrow [0,1]$ be an increasing function of class
	$\mathcal{C}^\infty$ such that
	\begin{align*}
	\varphi(s) = 0 \text{ iff } s\in (-\infty, 1],\\
	\varphi(s)=1 \text{ iff } s\in [2,+\infty).
	\end{align*}
	Let $\{\varphi_m; m\in \mathbb{N}\}$ and $\{\phi_m, m\in
	\mathbb{N}\}$ be two sequences of smooth function from
	$\mathbb{R}^n$ defined by
	\begin{align*}
	\varphi_m(\d)=&\varphi(m(\lvert \d\rvert^2-1)),\\
	\phi_m(\d)=&(\lvert \d\rvert^2-1){\varphi_m(\d)}, \d\in
	\mathbb{R}^n.
	\end{align*}
	Define a sequence of function $\{\Psi_m, m \in \mathbb{N}\}$ by
	\begin{align*}
	\Psi_m(\d)=&\lve \phi_m(\d)\rve^2,\\
	=&\int_\MO (\lvert \d\rvert^2-1)^2 [\varphi_m(\d)]^2 dx,\,\, \d\in
	\el^4(\MO),
	\end{align*}
	for any $m\in \mathbb{N}$. It is clear that $\Psi_m:
	\h^2\rightarrow \mathbb{R}$ is twice (Fr\'echet) differentiable
	and its first and second derivatives satisfy
	\begin{equation*}
	\Psi_m(\d)(h)=4\int_\MO \left((\lvert
	\d\rvert^2-1)\varphi_m(\d) \d\cdot h  \right)dx+ 2m \int_\MO
	(\lvert \d\rvert^2-1)^2 \varphi_m(\d) (\d\cdot h)dx ,
	\end{equation*}
	and
	\begin{equation*}
	\begin{split}
	\Psi^{\prime \prime}_m(\d)(k,h)=8\int_\MO \biggl[\varphi_m(\d)
	(\d\cdot k) (\d \cdot h)\biggr]dx+4 \int_\MO \left(\varphi_m(\d)
	(\lvert \d\rvert^2-1) (k\cdot h)\right)dx\\
	+16m \int_\MO \left((\lvert \d\rvert^2-1)\varphi_m(\d) (\d\cdot k) (\d \cdot
	h)\right)dx\\
	+ 4m^2 \int_\MO \left((\lvert \d\rvert^2-1)^2 \varphi_m^{\prime\prime}(\d) (\d\cdot k) (\d \cdot
	h)\right)dx\\
	+ 2m \int_\MO (\lvert \d\rvert^2 -1)^2 \varphi_m (\d)
	(k\cdot h) dx,
	\end{split}
	\end{equation*}
	for any $\d \in \h^2$ and $h, k\in \el^2(\MO)$. In particular, for
	any $k,h$ such that $k\perp \d$ and $h\perp \d$
	\begin{align*}
	\Psi_m(\d)(h)=&0,\\
	\Psi_m^{\prime\prime}(\d)(k,h)=&4\int_\MO (\lvert
	\d\rvert^2-1)\varphi_m(\d) (k\cdot h) dx+2m \int_\MO (\lvert
	\d\rvert^2-1)^2 \varphi_m(\d) (k\cdot h)dx.
	\end{align*}
	It
	follows from It\^o's formula (see \cite[Theorem I.3.3.2, Page
	147]{Pardoux}) that
	\begin{equation*}
	\begin{split}
	d[\Psi_m(\d)]=\Psi_m (\d) \left(-\rA_1
	\d-\tilde{B}(\v,\d)-\frac{1}{\eps^2}f(\d)+\frac 12 G^2(\d)\right)
	dt +\frac 12 \Psi_m^{\prime\prime}(\d) (G(\d), G(\d)) dt.
	\end{split}
	\end{equation*}
	The integral stochastic vanishes because $G(\d)\perp\d$. Owing to
	the identity $$-\lvert \d\times h\rvert^2=\d\cdot \left((\d\times
	h)\times h\right),$$ we have
	\begin{equation*}
	\frac 1 2 \Psi_m^{\prime\prime}(G(\d), G(\d))+\frac 12
	\Psi_m^{\prime}(G^2(\d))=0.
	\end{equation*}
	Hence
	\begin{equation}\label{approx}
	d[\Psi_m(\d)]=\Psi_m (\d) \left(-\rA_1
	\d-\tilde{B}(\v,\d)-\frac{1}{\eps^2}f(\d)\right) dt
	\end{equation}
	Noticing that from the definition of $\varphi_m$ and the Lebesgue
	Dominated Convergence Theorem we have for $\d\in \h^2, h\in \el^2$
	\begin{align*}
	\lim_{m\rightarrow \infty}\Psi_m(\d)=&\lve \left(\lvert
	\d\rvert^2-1\right)_{+}\rve^2,\\
	\lim_{m\rightarrow
		\infty}\Psi_m(\d)(k)=&4 \int_\MO[ \left(\lvert
	\d\rvert^2-1\right)_{+}\d\cdot h] \,dx.
	\end{align*}
	Hence, we obtain from letting $m\rightarrow \infty$ in
	\eqref{approx} that for almost all $(\omega,t)\in
	\Omega \times [0,T]$
	\begin{equation*}
	y(t)-y(0)+4\int_0^t \biggl(\int_\MO \biggl[+\rA_1 \d+(\v\cdot
	\nabla)\d+\frac{1}{\eps^2} f(\d) \biggr]\cdot \biggl[\d
	\left(\lvert \d\rvert^2-1\right)_{+}\biggr] dx \biggr)ds=0,
	\end{equation*}
	where $y(t)=\lve \left(\lvert \d(t)\rvert^2-1\right)_{+}\rve^2$.
	Let us set $\xi=\left(\lvert \d\rvert^2-1\right)_{+}$, it follows
	from \cite[Exercise 7.1.5, p 283]{Atkinson} that $\xi\in \h^1$ if
	$\d\in \h^1$. Thus, since $\frac{\partial \d}{\partial
		\mathbf{n}}=0 $ we derive from integration-by-parts that
	\begin{align*}
	-4 \int_0^t \biggl(\int_\MO \Delta \d \cdot \d \left(\lvert
	\d\rvert^2-1\right)_{+} dx \biggr)ds= \int_0^t \biggl(\int_\MO
	\left(2 \nabla(\lvert \d\rvert^2)\cdot \nabla \xi+4 \xi \lvert
	\nabla
	\d\rvert^2 \right)dx \biggr) ds,\\
	\end{align*}
	Since $\xi\ge 0$ and $\lvert \nabla \d\rvert^2\ge 0$ a.e.
	$(t,x)\in \MO\times[0,T]$ we easily derive from the above identity
	that
	\begin{align*}
	-4 \int_0^t \biggl(\int_\MO \Delta \d \cdot \d \left(\lvert
	\d\rvert^2-1\right)_{+} dx \biggr)ds
	\ge 2 \int_0^t \biggl(\int_\MO  \nabla(\lvert
	\d\rvert^2-1)\cdot \nabla \xi dx \biggr)ds.
	\end{align*}
	Thanks to \cite[Exercise 7.1.5, p 283]{Atkinson} we have
	\begin{equation*}
	\int_0^t \biggl(\int_\MO  \nabla(\lvert
	\d\rvert^2-1)\cdot \nabla \xi dx \biggr)ds= \int_0^t \int_\MO
	\lvert \nabla (\lvert \d\rvert^2-1) \rvert^2 \mathds{1}_{\{\lvert
		\d\rvert^2>1\}}dx \,\, ds,
	\end{equation*}
	which implies that
	\begin{equation*}
	-4 \int_0^t \biggl(\int_\MO \Delta \d \cdot \d \left(\lvert
	\d\rvert^2-1\right)_{+} dx \biggr)ds\ge \int_0^t \int_\MO \lvert
	\nabla (\lvert \d\rvert^2-1) \rvert^2 \mathds{1}_{\{\lvert
		\d\rvert^2>1\}}dx \,\, ds.
	\end{equation*}
	
	We also have
	\begin{align*}
	4\int_0^t \biggl(\int_\MO [(\v\cdot \nabla)\d ]\cdot [\d
	\left(\lvert \d\rvert^2-1\right)_{+}] dx \biggr)ds=&2\int_0^t
	\biggl(\int_\MO [(\v\cdot \nabla)(\lvert\d\rvert^2) ][
	\left(\lvert \d\rvert^2-1\right)_{+}] dx \biggr)ds,\\
	=&\int_0^t \biggl(\int_\MO (\v\cdot \nabla)\xi \xi dx \biggr)ds,\\
	=&0.
	\end{align*}
	Since $f(\d)=0$ for $\lvert \d \rvert^2>1$ and $\xi f(\d)=0$ for
	$\lvert \d\rvert^2 \le 1$ we have
	\begin{equation*}
	4\int_0^t \biggl(\int_\MO \xi f(\d) \cdot \d  \,dx \biggr) ds=0.
	\end{equation*}
	Therefore we see that $y(t)$ satisfies the estimate
	\begin{equation*}
	y(t)+2\int_0^t \int_{\{\lvert \d\rvert^2>1\}}\lvert \nabla (\lvert
	\d\rvert^2-1)_{+}\rvert^2 ds \le y(0),
	\end{equation*}
	for almost all $(\omega,t)\in \Omega \times [0,T]$.
	Since the second term in the left hand side of the above
	inequality is positive and $y(0)=\lve (\lvert \d_0\rvert^2-1)_{+}\rve^2$ and by
	assumption $\lvert \d_0\rvert^2\le 1$ for almost all
	$(\omega,t,x)\in \Omega \times [0,T]\times \MO$  we
	derive that
	\begin{equation*}
	y(t)=0,
	\end{equation*}
	for almost all $(\omega, t)\in \Omega \times [0,T]$,
	$T\ge 0$. Hence we have $\lvert \d\rvert^2 \le 1$ a.e.
	$(\omega, t,x)\in \Omega \times [0,T]\times \MO$,
	$T\ge 0$.
\end{proof}
\appendix

\section{Some important estimates}
In this section we recall or establish some crucial estimates needed for the proof of our mains results.

First, let $d\in \{2,3\}$ and put $a=\frac d4$.  Then
the following estimates, valid for all $\bu\in \mathbb{W}^{1,4}$,
are special cases of Gagliardo-Nirenberg's inequalities:
\begin{align}
\lve \bu\rve_{\el^4}\le \lve \bu\rve^{1-a} \lve \nabla \bu
\rve^a,\label{GAG-l4}\\
\lve \bu \rve_{\el^\infty}\le \lve \bu \rve_{\el^4}^{1-a}\lve
\nabla \bu \rve_{\el^4}^a.\label{GAG-LInf}
\end{align}
The inequality \eqref{GAG-l4} can be written in the spirit of
the continuous embedding
\begin{equation}\label{SOB-EM}
\h^1\subset \el^4.
\end{equation}
  It follows from
\eqref{GAG-LInf} and \eqref{SOB-EM} that for $\bu\in D(A_1) $
\begin{equation}
\lve \bu \rve_{\el^\infty}\le \lve \bu \rve_1 ^{1-a}\lve \nabla
\bu \rve_{2}^a.\label{GAG-LInf-2}
\end{equation}
Next we give some properties of the bilinear form $B$ and $\tilde{B}$ defined in Section \ref{sec-spaces} (see equations \eqref{DEF-B1} and \eqref{DEF-B2} on page \pageref{DEF-B1}, respectively).
\begin{lem}\label{LEM-B}
The bilinear map $B(\cdot, \cdot)$ maps continuously  $\ve\times
\h^1$ into $\ve^\ast$ and
\begin{align}
&\langle B(\bu,\bv),\w\rangle=b(\bu,\bv,\w), \text{ for any }
\bu\in \ve, \bv\in \h^1,\w \in \ve,\label{B1}\\
&\langle B(\bu,\bv),\w\rangle=-b(\bu,\w,\bv)
\text{ for any } \bu\in \ve, \bv\in \h^1,\w \in \ve,\label{B2}\\
&\langle B(\bu,\bv),\bv\rangle=0 \,\, \text{ for any } \bu\in \ve,
\bv\in \ve,\label{B3}\\
&\lve B(\bu,\bv)\rve_{\ve^\ast}\le C_0 \lve \bu\rve^{1-\frac d4}
\lve\nabla \bu \rve^{\frac d4} \lve \bv\rve^{1-\frac d4}\rve
\nabla \bv\rve^\frac d4, \text{ for all } \bu\in \ve, \bv\in
\h^1.\label{B4}
\end{align}
\end{lem}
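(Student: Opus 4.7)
The plan is to obtain the four assertions by the standard Navier-Stokes bookkeeping: \eqref{B1} from continuity of the trilinear form $b$, \eqref{B2} by integration by parts exploiting $\divv\bu=0$ and the vanishing of $\w$ on $\partial\MO$, \eqref{B3} as an immediate corollary of \eqref{B2}, and \eqref{B4} by combining \eqref{B2} with the Gagliardo--Nirenberg bound \eqref{GAG-l4}.

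First I would verify that $b(\bu,\bv,\w)$ extends to a continuous trilinear form on $\ve\times \h^1\times \ve$. A H\"older estimate with exponents $(4,2,4)$ gives $|b(\bu,\bv,\w)|\le \lve\bu\rve_{\el^4}\lve\nabla\bv\rve\lve\w\rve_{\el^4}$, and the Sobolev embedding \eqref{SOB-EM} dominates each $\el^4$-norm by the corresponding $\h^1$-norm. Consequently, for fixed $\bu\in \ve$ and $\bv\in\h^1$ the map $\w\mapsto b(\bu,\bv,\w)$ is a continuous linear functional on $\ve$, which defines $B_1(\bu,\bv)\in\ve^\ast$ satisfying \eqref{B1}.

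For \eqref{B2} I would argue first for $\bu,\w\in\mathcal{V}$ (smooth, compactly supported, divergence-free), for which the componentwise integration by parts
\[
\int_{\MO}\bu^i(\partial_i \bv^j)\w^j\,dx = -\int_{\MO}\partial_i(\bu^i\w^j)\bv^j\,dx
\]
is legitimate; the boundary term vanishes because $\w$ is compactly supported in $\MO$, and expanding the derivative the contribution $(\partial_i\bu^i)\w^j\bv^j$ vanishes by $\divv\bu=0$, leaving exactly $-b(\bu,\w,\bv)$. Density of $\mathcal{V}$ in $\ve$, combined with the continuity estimate of the previous paragraph, extends the identity to arbitrary $\bu,\w\in\ve$ and $\bv\in\h^1$. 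Identity \eqref{B3} then follows immediately by setting $\w=\bv\in\ve$ in \eqref{B2}: the relation $\langle B_1(\bu,\bv),\bv\rangle = -b(\bu,\bv,\bv) = -\langle B_1(\bu,\bv),\bv\rangle$ forces the quantity to vanish.

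For the dual-norm estimate \eqref{B4} I would use \eqref{B2} to shift the single derivative onto the test function $\w$: for any $\w\in\ve$,
\[
|\langle B_1(\bu,\bv),\w\rangle| = |b(\bu,\w,\bv)| \le \lve\bu\rve_{\el^4}\lve\nabla\w\rve\lve\bv\rve_{\el^4}.
\]
Inserting \eqref{GAG-l4} with $a=n/4$ for both $\bu$ and $\bv$ and then taking the supremum over $\w\in\ve$ normalized so that $\lve\nabla\w\rve=1$ produces \eqref{B4}. The only mildly delicate step is the density argument in the derivation of \eqref{B2} (one must also check that the trilinear form passes to the limit on both factors in $\ve$, for which the continuity estimate established first suffices); once that is in place, \eqref{B3} and \eqref{B4} follow mechanically.
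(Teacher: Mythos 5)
The paper states Lemma \ref{LEM-B} without proof; it treats these as classical facts about the Navier--Stokes trilinear form and only points the reader to \cite{Temam} when introducing $b$. Your argument is the standard one and is correct: the $(4,2,4)$-H\"older bound plus the embedding \eqref{SOB-EM} (valid since $n\le 4$) gives boundedness of $B_1\colon\ve\times\h^1\to\ve^\ast$; on $\mathcal V\times\h^1\times\mathcal V$ the integration by parts is legitimate because $\bu^i\w^j\in C_c^\infty(\MO)$ and $\bv^j\in H^1$, and $\divv\bu=0$ kills the extra term; \eqref{B3} follows by polarizing \eqref{B1} and \eqref{B2} on the diagonal; and shifting the derivative onto $\w$ via \eqref{B2} \emph{before} applying \eqref{GAG-l4} is exactly the move that produces the interpolated factors in \eqref{B4} (the direct estimate on $b(\bu,\bv,\w)$ would leave $\lve\nabla\bv\rve$ unreduced and place the Gagliardo--Nirenberg factors on $\w$ instead, giving a bound of a different shape). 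One small point of hygiene in the density step: what must pass to the limit is both $b(\bu,\bv,\w)$ and $b(\bu,\w,\bv)$ in $(\bu,\w)\in\ve\times\ve$, so you need the H\"older bound with the gradient on $\w$ as well as the one with it on $\bv$; both come from the same exponents, but the second is not literally ``the continuity estimate established first'' and is worth writing out. With that said, the proof is complete and matches the textbook treatment the paper implicitly relies on.
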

\begin{proof}
 This lemma is well-known and we refer to \cite[Chapter II, Section 1.2]{Temam} for its proof.
\end{proof}

With an abuse of notation, we again denote by
$\tilde{B}(\cdot,\cdot)$ the restriction of
$\tilde{B}(\cdot,\cdot)$ to $\ve \times \h^2$.
\begin{lem}\label{LEM-G1}
The bilinear operator $\tilde{B}$ maps continuously $\ve\times
\h^2$ into $\el^2$ \del{such that $$\langle G^1(\bv,\bd),
\w\rangle=\langle \tilde{B}(\bv,\bd), \w\rangle \text{ for any }
\bv\in\ve, \bd\in \h^2, \w\in \el^2,$$}  and there exists $C_1>0$
such that
\begin{equation}\label{EST-G1}
\lve \tilde{B} (\bv, \bd)\lve \le C_1  \lve \bv\rve^{1-\frac
d4}\rve \nabla \bv\rve^\frac d4 \lve \nabla\bd\rve^{1-\frac d4}
\lve\Delta \bd \rve^{\frac d4}, \text{ for any } \bv \in \ve,
\bd\in \h^2.
\end{equation}
Moreover, we have
\begin{align}
\langle \tilde{B}(\bv,\bd),\bd\rangle=&0,\label{tild-b-0}\text{
for
any } \bv\in \ve, \bd \in \h^2.
\end{align}
\end{lem}
\begin{proof}
We can argue as in the proof of \eqref{B4} (see also \cite[Chapter II, Section 1.2]{Temam}) to establish the estimate \eqref{EST-G1}.
 The identity \eqref{tild-b-0}
easily follows by integration-by-parts and by taking into account
that $\nabla \cdot \v=0$ and $\v$ is zero on the boundary.
\end{proof}

Next let $\rA$ and $A_1$ the self-adjoint linear operators defined in Section \ref{sec-spaces} on page \pageref{sec-spaces}. We recall that the space
$\bx_\alpha$, $\alpha\ge 0$ are the domain of the fractional power operator $(I+\rrA)^{frac12+\alpha}$ and $\bx_\alpha \subset \h^{1+2\alpha}$ (see the identity \eqref{frac-space} on page \pageref{frac-space}). We will give some properties of the semigroups $\{\mathbb{S}_1(t): t\ge0\}$ and $\{\mathbb{S}_2(t): t\ge0\}$
 generated by  $-\rA$ on $\h$ and $-A_1$ on $\bx_\frac12$, respectively.
\begin{lem}\label{SEM-1}
Let $T\in (0,\infty)$, $g\in L^2(0,T; \bx_{0})$ and
\begin{align}
\bu(t)=\int_0^T \mathbb{S}_2(t-s) g(s) ds,= \sum_{k\in
\mathbb{N}} \int_0^T e^{-\lambda (t-s)} g_k(s) \,ds, \;\; t\in [0,T].
\label{det-conv}
\end{align} Then
$$ \bu \in C([0,T]; \bx_{\frac 12})\cap L^2(0,T; \bx_{1}),$$
and
\begin{equation*}
\lve \bu(t)\rve_{C([0,T];\bx_{\frac 12})}+\lve \bu(t)\rve_{L^2(0,T;\bx_1) }\le (1+\max(T, 1+\frac 1 \lambda_2)) \lve g(t)\rve_{L^2(0,T;
\bx_{0})}.
\end{equation*}
 \end{lem}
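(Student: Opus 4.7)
The plan is to prove this by reducing to a family of scalar ODEs via the spectral decomposition of the Neumann Laplacian $\rrA$. Writing $g(s)=\sum_{k}g_{k}(s)\phi_{k}$ with $g_k(s)=\langle g(s),\phi_k\rangle$, the representation \eqref{sem-rep} of the semigroup gives
\[
\bu(t)=\sum_{k\in\mathbb{N}}u_k(t)\phi_k,\qquad u_k(t)=\int_0^t e^{-\lambda_k(t-s)}g_k(s)\,ds,
\]
so each Fourier coefficient $u_k$ satisfies $u_k'+\lambda_k u_k=g_k$ with $u_k(0)=0$. The norms on $\bx_{0}$, $\bx_{1/2}$ and $\bx_{1}$ have the explicit weighted form from \eqref{frac-space}, namely $\sum (1+\lambda_k)^{1+2\alpha}|u_k|^2$ with $\alpha=0,\tfrac12,1$, so the proof reduces to summable one-dimensional estimates.

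The main step is to estimate $u_k$ carefully, separating the zero-eigenvalue mode $k=1$ (since $\lambda_1=0$) from the modes $k\ge 2$ (for which $\lambda_k\ge\lambda_2>0$). For $k\ge2$, I would apply the Cauchy–Schwarz inequality to the explicit convolution to get $|u_k(t)|^2\le \frac{1}{2\lambda_k}\int_0^t|g_k|^2\,ds$, and multiply the ODE by $u_k$ and integrate to obtain $\lambda_k\|u_k\|_{L^2(0,T)}^2\le\|g_k\|_{L^2(0,T)}\|u_k\|_{L^2(0,T)}$, hence $\|u_k\|_{L^2(0,T)}^2\le \lambda_k^{-2}\|g_k\|_{L^2(0,T)}^2$. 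For $k=1$, since $u_1(t)=\int_0^t g_1(s)\,ds$, one simply has $|u_1(t)|^2\le T\,\|g_1\|_{L^2(0,T)}^2$ and $\|u_1\|_{L^2(0,T)}^2\le T^2\|g_1\|_{L^2(0,T)}^2$; this is where the $T$-dependence in the bound originates.

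Summing these estimates with the appropriate weight $(1+\lambda_k)^{1+2\alpha}$ yields both desired norms. The crucial observation is that for $k\ge 2$ the weight ratios $(1+\lambda_k)^{2}/\lambda_k$ and $(1+\lambda_k)^{3}/\lambda_k^{2}$ both reduce to $(1+\lambda_k)$ times a factor $(1+1/\lambda_k)^j\le(1+1/\lambda_2)^j$, so
\[
\sup_{t\in[0,T]}\sum_{k\ge2}(1+\lambda_k)^{2}|u_k(t)|^{2}+\int_0^{T}\sum_{k\ge2}(1+\lambda_k)^{3}|u_k(t)|^{2}\,dt
\le C(1+1/\lambda_2)^{2}\|g\|^2_{L^{2}(0,T;\bx_{0})},
\]
while the $k=1$ contribution is controlled by $T^{2}\|g_1\|_{L^2}^{2}\le T^{2}\|g\|_{L^{2}(0,T;\bx_{0})}^{2}$. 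Combining and taking square roots produces the bound with the factor $1+\max(T,1+1/\lambda_2)$.

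The only point requiring additional care is the continuity $\bu\in C([0,T];\bx_{1/2})$ (rather than mere boundedness): I would obtain this by approximating $g\in L^2(0,T;\bx_0)$ by a sequence $g^{(n)}\in C^1([0,T];\bx_0)$ for which the convolution is manifestly continuous into $\bx_{1/2}$, and then passing to the limit using the uniform bound on $\|\bu\|_{C([0,T];\bx_{1/2})}$ established above. Alternatively, since $-\trrA$ generates an analytic semigroup on $\bx_0$, one may invoke de Simon's maximal regularity theorem, which gives $\bu\in H^1(0,T;\bx_0)\cap L^2(0,T;\bx_1)$ and hence $\bu\in C([0,T];\bx_{1/2})$ by the standard trace/interpolation embedding. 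The principal obstacle is just bookkeeping: isolating the zero mode and checking that the summable weights do not blow up, which is handled by the $(1+1/\lambda_2)$ factor.
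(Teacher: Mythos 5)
The paper actually supplies no proof of Lemma~\ref{SEM-1} at all; it simply remarks that the result is well known and cites Pardoux. Your proposal is therefore a self-contained proof rather than a variant of the authors' argument, so the comparison reduces to checking that your argument is correct.

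It is, in substance. The reduction via the spectral representation \eqref{sem-rep} and the characterization \eqref{frac-space} is exactly the structure the paper's notation invites; the scalar estimates $|u_k(t)|^2\le (2\lambda_k)^{-1}\|g_k\|^2_{L^2(0,T)}$ and $\|u_k\|_{L^2(0,T)}\le \lambda_k^{-1}\|g_k\|_{L^2(0,T)}$ for $k\ge 2$, together with the zero-mode bounds $|u_1(t)|^2\le T\|g_1\|^2_{L^2}$ and $\|u_1\|^2_{L^2}\le T^2\|g_1\|^2_{L^2}$, sum correctly against the weights $(1+\lambda_k)^2$ and $(1+\lambda_k)^3$ because the ratios $(1+\lambda_k)^2/\lambda_k$ and $(1+\lambda_k)^3/\lambda_k^2$ are both of the form $(1+\lambda_k)(1+1/\lambda_k)^j$, $j=1,2$, with $(1+1/\lambda_k)\le(1+1/\lambda_2)$. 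Two small caveats. First, the paper's integral $\int_0^T \mathbb{S}_2(t-s)g(s)\,ds$ is evidently a typo for $\int_0^t$; you silently adopt the correct convolution, which is fine, but worth flagging. Second, the final constant in the lemma is stated somewhat loosely in the paper, and your "combine and take square roots" step does not literally reproduce $1+\max(T,1+1/\lambda_2)$; you obtain a bound of the same qualitative form but with slightly different algebra (powers of $\max(T^2,(1+1/\lambda_2)^2)$, then square roots). Since nothing downstream uses the precise constant, this does not affect correctness, but if you want the exact displayed constant you should either track it more carefully or note that the particular form is immaterial. Your two suggested routes to continuity (density of $C^1$ data plus the uniform bound, or de Simon maximal regularity and the trace embedding $H^1(0,T;\bx_0)\cap L^2(0,T;\bx_1)\hookrightarrow C([0,T];\bx_{1/2})$) are both standard and valid; one could also argue directly from the formula and dominated convergence using the uniform-in-$t$ tail bound on $\sum_{k\ge N}(1+\lambda_k)^2|u_k(t)|^2$.
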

\begin{proof} This result is well-known. We refer the reader to \cite{Pardoux}.
\end{proof}
Similarly we have the following result, see, for instance, \cite{Vish+Furs_1980}.
 \begin{lem}\label{SEM-2}
Let $T\in (0,\infty)$, $\tilde{g}\in L^2(0,T; \h)$ and
\begin{align*}
\bv(t)=\int_0^T \mathbb{S}_1(t-s) \tilde{g}(s) ds.
\end{align*}
We have
\begin{equation*}
\lve \bv(t)\rve_{C([0,T]; D(\rA^\frac12)}+\lve \bv(t)\rve_{L^2(0,T;D(\rA))
}\le (1+\frac 1 \mu_1) \lve \tilde{g}\rve_{L^2(0,T;\h) },
\end{equation*}
where $\mu_1$ is the smallest eigenvalues of the Stokes operator
$\rA$.
 \end{lem}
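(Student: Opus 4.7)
The plan is to prove Lemma \ref{SEM-2} as the Stokes analogue of Lemma \ref{SEM-1}, since $\rA$ is self-adjoint positive on $\h$ with compact resolvent and strictly positive spectrum $0<\mu_1\le \mu_2\le\cdots$, which is exactly the structural setup exploited for the Neumann Laplacian. First I would fix an ONB $(\psi_k)_{k\ge 1}$ of $\h$ consisting of eigenfunctions of $\rA$, expand $\tilde g(s)=\sum_k \tilde g_k(s)\psi_k$ with $\tilde g_k\in L^2(0,T)$, and represent the convolution (reading $\int_0^T$ as $\int_0^t$, which is the only interpretation making the formula well-defined) as
\begin{equation*}
\bv(t)=\sum_{k\ge 1} \Bigl(\int_0^t e^{-\mu_k(t-s)}\tilde g_k(s)\,ds\Bigr)\psi_k.
\end{equation*}

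Next, the core step is the standard parabolic energy identity. The function $u=\bv$ is the mild, hence weak, solution of $u'+\rA u=\tilde g$ with $u(0)=0$; pairing this equation in $\h$ with $\rA u$ and using Cauchy--Schwarz gives
\begin{equation*}
\tfrac12\tfrac{d}{dt}\lve \rA^{\frac12} u(t)\rve^2+\lve \rA u(t)\rve^2=\langle \tilde g(t),\rA u(t)\rangle\le \tfrac12\lve \tilde g(t)\rve^2+\tfrac12\lve \rA u(t)\rve^2.
\end{equation*}
Integrating from $0$ to any $t\le T$ and using $u(0)=0$ yields, simultaneously,
\begin{equation*}
\sup_{t\in [0,T]}\lve \rA^{\frac12}\bv(t)\rve^2+\int_0^T\lve \rA \bv(s)\rve^2\,ds\le \lve \tilde g\rve_{L^2(0,T;\h)}^2.
\end{equation*}
To make the testing against $\rA u$ rigorous I would first mollify $\tilde g$ in time (or truncate the spectral expansion), carry out the computation for the smooth approximation where $\bv\in C^1([0,T];D(\rA))$ by analyticity of $\{\mathbb{S}_1(t)\}$, and then pass to the limit using the estimate itself, which also yields the continuity of $\bv$ with values in $D(\rA^{\frac12})$.

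Finally, to recover the stated constant $1+\tfrac{1}{\mu_1}$ in front of the right-hand side, I would use the Poincar\'e-type inequalities $\lve \bw\rve^2\le \mu_1^{-1}\lve \rA^{\frac12}\bw\rve^2$ for $\bw\in D(\rA^{\frac12})$ and $\lve \rA^{\frac12}\bw\rve^2\le \mu_1^{-1}\lve \rA \bw\rve^2$ for $\bw\in D(\rA)$, which translate the homogeneous bound above into bounds on the graph norms of $D(\rA^{\frac12})$ and $D(\rA)$ with the explicit constant claimed. I do not expect any real obstacle: the only points requiring care are the density/approximation argument justifying the energy identity and the bookkeeping of the difference between the homogeneous and graph norms; alternatively one can, as the authors did for Lemma \ref{SEM-1}, simply invoke the deterministic maximal regularity results in \cite{Vish+Furs_1980}.
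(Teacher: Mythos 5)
The paper does not actually supply a proof of Lemma~\ref{SEM-2}: the authors simply state the result and refer the reader to \cite{Pardoux,Vish+Furs_1980}, exactly as they do for the parallel Lemma~\ref{SEM-1}. Your proposal therefore fills in an argument the authors chose to leave implicit. Your route --- read the (mis\-typed) $\int_0^T$ as $\int_0^t$, use the spectral decomposition of the Stokes operator, test the equation $u'+\rA u=\tilde g$, $u(0)=0$, against $\rA u$, integrate and apply Young's inequality, then upgrade from homogeneous to graph norms via Poincar\'e (justifying the computation by truncating the spectral sum and using analyticity) --- is the standard self-adjoint maximal $L^2$-regularity argument, and it is correct in substance. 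What the citation approach buys the authors is brevity; what your explicit energy argument buys is a self-contained proof and visibility of where the constant comes from.

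One point of bookkeeping you should tighten if this were written out. From
\[
\tfrac12\lve \rA^{\frac12} u(t)\rve^2+\tfrac12\int_0^t\lve \rA u\rve^2\,ds\le \tfrac12\int_0^t\lve\tilde g\rve^2\,ds
\]
you obtain, separately, $\sup_{t}\lve \rA^{\frac12}\bv(t)\rve^2\le \lve\tilde g\rve_{L^2(0,T;\h)}^2$ (by evaluating at the supremizing $t$) and $\int_0^T\lve \rA\bv\rve^2\,ds\le \lve\tilde g\rve_{L^2(0,T;\h)}^2$ (by taking $t=T$). The two bounds do not hold ``simultaneously'' for a common $t$, so the sum is a priori bounded by $2\lve\tilde g\rve^2_{L^2}$, not by $\lve\tilde g\rve_{L^2}^2$ as your display claims. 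Similarly, after converting to graph norms, the constants you get are $\sqrt{1+\mu_1^{-1}}$ for the $C(0,T;D(\rA^{1/2}))$ term and $\sqrt{1+\mu_1^{-2}}$ for the $L^2(0,T;D(\rA))$ term, whose sum is not literally $1+1/\mu_1$. Since the paper evidently does not care about the sharp constant (compare the equally ad hoc $1+\max(T,1+\lambda_2^{-1})$ in Lemma~\ref{SEM-1}), this is harmless, but as stated your computation does not reproduce the claimed prefactor on the nose and should either be adjusted or accompanied by a remark that only the qualitative bound with some fixed constant is needed.
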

Let $\zeta_1$ be an element of  $M^2(0,T;\h)$ (resp.
$\zeta_2\in M^2(0,T; \bx_{\frac 12})$) and consider the stochastic convolutions
$$ \w_2(t)=\int_0^t \mathbb{S}_2(t-s)\zeta_2(s) dW_2(s),$$  and
$$\w_1(t)=\int_0^t \mathbb{S}_1(t-s)\zeta_1(s) dW_1(s).$$
\begin{lem}\label{SEM-3}
There exists $C>0$ such that \begin{equation*} \mathbb{E} \lve
\w_i(t)\rve_{C([0,T];\tilde{V})}+\mathbb{E}\lve
\w_i(t)\rve_{L^2(0,T;\tilde{E})}\le C \mathbb{E}\lve
\zeta_i(t)\rve_{L^2(0,T;\tilde{V})},
\end{equation*}
where $(\tilde{V}, \tilde{E})=(D(\rA^\frac12),D(A_1))$ if $i=1$,
and $(\tilde{V}, \tilde{E})=(D(A_1), \bx_{1})$ if $i=2$.
\end{lem}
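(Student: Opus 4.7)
Both $-\rA$ and (after a shift) the generator of $\mathbb{S}_2$ are non-negative self-adjoint operators with compact resolvent generating analytic $C_0$-semigroups on the respective Hilbert spaces, and in each case the target spaces satisfy $\tilde V=D(\mathcal A^{1/2})$ and $\tilde E=D(\mathcal A)$ for the corresponding generator $\mathcal A$. The statement is therefore an instance of the classical maximal $L^2$-regularity for stochastic convolutions driven by analytic contraction semigroups on a Hilbert space, in the spirit of Da Prato--Kwapie\'n--Zabczyk and Brze\'zniak. I would prove the two summands separately and then add the bounds.

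\textbf{Step 1 ($L^2(0,T;\tilde E)$ bound).} By It\^o's isometry and stochastic Fubini,
\begin{equation*}
\mathbb{E}\int_0^T\lve\w_i(t)\rve_{\tilde E}^2\,dt
=\mathbb{E}\int_0^T\!\!\int_s^T\lve \mathbb{S}_i(t-s)\zeta_i(s)\rve_{\tilde E}^2\,dt\,ds.
\end{equation*}
After the substitution $r=t-s$ I expand $\zeta_i(s)$ in the eigenbasis $(\phi_k)$ of $\mathcal A$ with eigenvalues $\lambda_k\ge 0$ and use self-adjoint functional calculus to compute
\begin{equation*}
\int_0^{T-s}\lve\mathcal A e^{-r\mathcal A}\zeta_i(s)\rve^2\,dr
=\sum_k\lambda_k^2\lvert\zeta_{i,k}(s)\rvert^2\int_0^{T-s}e^{-2r\lambda_k}\,dr
\le \tfrac12\sum_k\lambda_k\lvert\zeta_{i,k}(s)\rvert^2
= \tfrac12\lve\mathcal A^{1/2}\zeta_i(s)\rve^2.
\end{equation*}
Integrating in $s$ gives the required bound by $C\,\mathbb{E}\int_0^T\lve\zeta_i(s)\rve_{\tilde V}^2\,ds$. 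For the cylindrical case $i=1$ the scalar variance in the last display is replaced by a Hilbert--Schmidt trace over $\rK_1$, but the spectral calculation is unchanged.

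\textbf{Step 2 ($C(0,T;\tilde V)$ bound).} Because $\mathcal A^{1/2}$ is closed and commutes with $\mathbb S_i(t)$ on $D(\mathcal A^{1/2})$,
\begin{equation*}
\mathcal A^{1/2}\w_i(t)=\int_0^t\mathbb{S}_i(t-s)\bigl[\mathcal A^{1/2}\zeta_i(s)\bigr]\,dW_i(s),
\end{equation*}
so it is enough to control the base-space sup of a stochastic convolution whose integrand lies in $M^2(0,T;H)$, where $H=\h$ or $H=\bx_0$. For this I would apply the Da Prato--Kwapie\'n--Zabczyk factorisation: fix $\alpha\in(0,1/2)$ and write
\begin{equation*}
\mathcal A^{1/2}\w_i(t)=c_\alpha\int_0^t(t-s)^{\alpha-1}\mathbb{S}_i(t-s)Y(s)\,ds,
\qquad
Y(s):=\int_0^s(s-r)^{-\alpha}\mathbb{S}_i(s-r)\mathcal A^{1/2}\zeta_i(r)\,dW_i(r).
\end{equation*}
It\^o's isometry (together with the contractivity of $\mathbb S_i$ and integrability of $r\mapsto r^{-2\alpha}$) yields $\mathbb E\int_0^T\lve Y(s)\rve^2ds\le C\,\mathbb E\int_0^T\lve\zeta_i(r)\rve_{\tilde V}^2\,dr$, and H\"older's inequality applied to the deterministic convolution $(t-s)^{\alpha-1}\mathbb{S}_i(t-s)$ produces a uniform-in-$t$ bound $\mathbb E\sup_{t\in[0,T]}\lve\mathcal A^{1/2}\w_i(t)\rve^2\le C\,\mathbb E\int_0^T\lve Y(s)\rve^2ds$. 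Combining the two, and using $\lve\cdot\rve_{\tilde V}\asymp\lve\mathcal A^{1/2}\cdot\rve$, delivers the $C(0,T;\tilde V)$ bound.

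\textbf{Main obstacle and caveats.} The only genuine subtlety lies in the $i=2$ case, where the Neumann Laplacian has $\lambda_1=0$ so the spectral integral $\int_0^{T-s}e^{-2r\lambda_k}dr$ in Step 1 is not uniformly bounded on the constant mode. I would circumvent this by replacing $\trrA$ by $\hrrA=I+\trrA$ throughout: the semigroups differ only by the scalar factor $e^{-t}$, and the resulting factor $e^T$ can be absorbed into $C=C(T)$, which is compatible with the statement. A secondary bookkeeping issue is the cylindrical nature of $W_1$, for which the correct norm of $\zeta_1$ is the Hilbert--Schmidt norm $\mathcal J_2(\rK_1,\tilde V)$; this matches the hypothesis on $S$ in Assumption \ref{HYPO-ST} and has to be read into the statement. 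With these conventions fixed, the rest is a routine application of the standard analytic-semigroup stochastic convolution machinery.
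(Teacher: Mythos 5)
The paper offers no proof of this lemma at all; it simply cites Pardoux and Vishik--Fursikov. Your Step 1 — the $L^2(0,T;\tilde E)$ bound via It\^o isometry, stochastic Fubini and the spectral calculation $\int_0^\infty\lve\mathcal A^{1/2}e^{-r\mathcal A}u\rve^2\,dr=\tfrac12\lve u\rve^2$ — is correct, as is your handling of the $\lambda_1=0$ mode via $\hrrA=I+\rrA$ and of the cylindrical noise via Hilbert--Schmidt norms.

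Step 2, however, has a genuine gap. The factorisation method in the form you state it cannot close: you need $\alpha<1/2$ so that the singularity $(s-r)^{-2\alpha}$ in the It\^o isometry for $Y$ is integrable, but then $(t-s)^{\alpha-1}$ fails to be square-integrable near $t=s$, so H\"older with $Y\in L^2(0,T;H)$ cannot control $\sup_t\lve\int_0^t(t-s)^{\alpha-1}\mathbb S_i(t-s)Y(s)\,ds\rve$ — that step requires $Y\in L^p(0,T;H)$ for some $p>1/\alpha>2$. Bounding $\mathbb E\lve Y\rve_{L^p}^2$ then needs, via Burkholder--Davis--Gundy, control of $\mathbb E\big(\int_0^T\lve\zeta_i\rve_{\tilde V}^2\big)^{p/2}$, which is strictly stronger than the hypothesis $\mathbb E\int_0^T\lve\zeta_i\rve_{\tilde V}^2<\infty$. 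Thus your two requirements on $\alpha$ are mutually exclusive at the second-moment level, which is precisely the regime the lemma (and Assumption \ref{assum-01}(ii)) lives in. The standard remedy, and the one the paper's citation of Pardoux actually points to, bypasses factorisation: set $Z:=\mathcal A^{1/2}\w_i$, note that $Z$ is the weak solution of $dZ=-\mathcal A Z\,dt+\mathcal A^{1/2}\zeta_i\,dW_i$, $Z(0)=0$, and apply the It\^o formula for $\lve Z\rve^2$ in the Gelfand triple $D(\mathcal A^{1/2})\hookrightarrow H\hookrightarrow D(\mathcal A^{1/2})^\ast$; Burkholder--Davis--Gundy on the stochastic term then yields
\[
\mathbb E\sup_{t\le T}\lve Z(t)\rve^2+\mathbb E\int_0^T\lve\mathcal A^{1/2}Z(s)\rve^2\,ds
\le C\,\mathbb E\int_0^T\lve\mathcal A^{1/2}\zeta_i(s)\rve^2\,ds,
\]
which gives the $C(0,T;\tilde V)$ and $L^2(0,T;\tilde E)$ estimates in one stroke and so subsumes your Step 1 as well.
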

\begin{proof} This result is also well-known, see for example \cite{Pardoux,Vish+Furs_1980}.
\end{proof}
\section{Proof of the Lemma \ref{LEMMA_1} and \ref{LEMMA_3}}
In this section we give the proofs of the Lemma  Lemmata \ref{LEMMA_1}-\ref{LEMMA_3} and we shall start with that of Lemma \ref{LEMMA_1}.
\begin{proof}[Proof of Lemma \ref{LEMMA_1}]

	First, we will establish \eqref{ST10-b}. For this purpose, 	we set
	 \begin{align}
	\langle \Delta \d-f(\d), \Delta\d\times
	 h\rangle +\langle\Delta \d-f(\d),\d\times \Delta h+(\d\times
	 h)\times \Delta h\rangle -2\langle \Delta
	 \d-f(\d), \fp(\d) (\d\times h)\times
	 h\rangle\nonumber \\  =:  R_1+R_2+R_3. \nonumber
	 \end{align}
	 Owing to Cauchy-Schwarz, Cauchy inequalities, the assumption on
	 $h$ and Remark \ref{bigdanh2} we have that
	 \begin{align}
	 \lvert R_1\rvert \le& C \lve \Delta
	 \d-f(\d)\rve \lve \Delta \d \rve\lve h\rve_{\el^\infty} \nonumber\\
	 \le& C \lve \Delta \d-f(\d)\rve^2+C(h)\lve \Delta
	 \d\rve^2\nonumber\\
	 \le& C (\lve \Delta \d-f(\d)\rve^2+\lve
	 \d\rve^{\bar{q}}_{\el^{\bar{q}}}+1), \nonumber 
	 \end{align}
	 where $\bar{q}=4N+2$. In a similar way we can show that
	 \begin{align}
	 \lvert R_2\rvert\le & C \lve \Delta \d-f(\d)\rve^2+ \lve
	 \d\rve^2_{\el^\infty} (\lve \Delta h\rve^2[1+\lve
	 h\rve^2_{\el^\infty}] )\nonumber \\
	 \le & C(\lve \Delta \d-f(\d)\rve^2+\lve
	 \d\rve^{\bar{q}}_{\el^{\bar{q}}}+1).\nonumber 
	 \end{align}
	 Note that we have used the fact that $\h^2$ is continuously
	 embedded in $\el^\infty$. Next, thanks to \eqref{REM-H2} we have
	 \begin{align}
	 \lvert R_3\rvert\le & C \lve \Delta \d -f(\d)\rve^2 +C
	 \int_{\mathcal{O}}\lvert \fp(\d(x))[(\d\times h)\times h] \nonumber \\
	 \le & \lve \Delta \d-f(\d)\rve^2 + C\lve h\rve^2_{\el^\infty}
	 \int_{\mathcal{O}} (1+\lvert \d(x)\rvert^{2N})^2 \vert
	 \d(x)\vert^2 dx \nonumber \\
	 \le & C(\lve \Delta \d-f(\d)\rve^2+\lve
	 \d\rve^{\bar{q}}_{\el^{\bar{q}}}+1).\nonumber 
	 \end{align}
	 Collecting all these inequalities and using the embedding $\h^1 \subset \el^{4N+2}$ we obtain  that
	 \begin{equation}
	 \vert R_1 + R_2 +R_3\vert \le C(\lve \Delta \d-f(\d)\rve^2+\lve
	 \d\rve^{\bar{q}}_{\h^1}+1), \nonumber
	 \end{equation}
	 which completes the proof of \eqref{ST10-b}.

	 Now, we proceed to the proof of \eqref{ST12}.
	 To this end, set let us take a look at
	 \begin{align*}
	 \lve \Delta
	 G(\d)-\fp(\d)G(\d)\rve^2-\langle \Delta \d-f(\d), \fpp(\d) [G(\d),
	 G(\d)]\rangle=:J_1+J_2.
	 \end{align*}
	 By using the fact that $\rvert \fpp(\bu)\rvert\le C(1+ \lvert \bu\rvert^{N-2})$ for any $\bu \in \mathbb{R}^2$, it is easy to show that
	 \begin{align}
	 \lvert J_2\rvert &\le C \lve \Delta \d -f(\d)\rve^2+
	 C \int_{\mathcal{O}}\lvert \fpp(\d(x))[G(\d(x)), G(\d(x))]\rvert^2 dx \nonumber\\
	 &\le C \lve \Delta \d -f(\d)\rve^2+ C \lve h\rve^2_{\el^\infty}
	 \int_{\mathcal{O}} \lvert \fpp(\d(x))\rvert^2 \lvert\d(x)\rvert^4
	 dx\nonumber \\
	 \le & C(\lve \Delta \d-f(\d)\rve^2+\lve
	 \d\rve^{\bar{q}}_{\el^{\bar{q}}}+1).\label{ST11}
	 \end{align}
	 To deal with $J_1$ first remark that
	 \begin{align}
	 J_1= &\lve \Delta\d \times h +\d\times \Delta h
	 -\fp(\d)(\d\times h)\lve^2\label{J_1}\\
	 \le & \quad C(  \lve h\rve^2 \lve \Delta \d\rve^2+ \lve
	 \d\rve^2_{\el^\infty} \lve \Delta h\rve^2+ \lve \fpp(\d)(\d\times
	 h)\rve^2.\nonumber
	 \end{align}
	 Now, arguing as in the proof of  \eqref{ST10-b} we  derive that
	 \begin{equation}\label{ST11-b}
	 J_1\le C(\lve \Delta \d-f(\d)\rve^2+\lve
	 \d\rve^{\bar{q}}_{\el^{\bar{q}}}+1).
	 \end{equation}
	 Owing to \eqref{ST11}, \eqref{ST11-b} and the Sobolev embedding $\h^1\subset \el^{4N+2}$ we obtain
	 \begin{equation*}
	 \begin{split}
	 \vert J_1 + J_2\vert \le C(\lve \Delta
	 \d-f(\d)\rve^2+\lve \d\rve^{\bar{q}}_{\h^1}+1),
	 \end{split}
	 \end{equation*}
	 which completes the proof of \eqref{ST12} and Lemma \ref{LEMMA_1}.
\end{proof}
Now we shall establish Lemma \ref{LEMMA_3}.
\begin{proof}[Proof of Lemma \ref{LEMMA_3}]
	For any $\bu\in \h^1$, we have
	\begin{align*}
	\lvert \langle \fp(\d)\bu,\bu\rangle=&
	\int_{\MO}\tilde{f}(\d(x))\lvert \bu(x)\rvert dx+2
	\int_\MO\tilde{f}^\prime (\d(x))\lvert
	\d(x)\cdot \bu(x)\rvert^2 dx\nonumber \\
	\le & \quad  \biggl[\int_\MO \lvert \tilde{f}(\d(x))\rvert^2
	dx\biggr]^\frac12\biggl[ \int_\MO \rvert\bu(x)\rvert^4
	dx\biggr]^\frac12 + 2 \biggl[\int_\MO \lvert
	\tilde{f}^\prime(\d(x))\rvert^2 \lvert \d(x)\rvert^4
	dx\biggr]^\frac12\biggl[ \int_\MO \rvert\bu(x)\rvert^4
	dx\biggr]^\frac12\nonumber\\
	\end{align*}
	By Gagliardo-Nirenberg, Cauchy inequalities, \eqref{ST6-B-0} and \eqref{ST6-B-1}
	we derive that
	\begin{equation*}
	\lvert \langle \fp(\d)\bu,\bu\rangle\le \frac14 \lve \nabla
	\bu\rve^2+ (\ell_1 + \ell_2\lve \d
	\rve^{2N}_{\el^{4N}})^2 \lve \bu \rve^2.
	\end{equation*}
	Putting $\bu=\Delta \d-f(\d)$ in this last inequality and using
	the Sobolev embedding $\h^1\subset \el^{4N}$ we infer that
	\begin{equation*}
	\lvert \langle \fp(\d)[\Delta \d-f(\d)],\Delta \d-f(\d) \rangle\le
	\delta_7 \lve \nabla [\Delta \d -f(\d)]\rve^2+ C(\delta_7)(\ell_1
	+ \ell_2\lve \d \rve^{2N}_{\h^1})^2 \lve \Delta \d -f(\d)\rve^2,
	\end{equation*}
	from which we easily conclude the proof of \eqref{ST6-B} and of Lemma \ref{LEMMA_3}.
\end{proof}
\section{Basic estimates for the solution  $(\bv,\bd)$}\label{AppB}
 Throughout this section we will assume that the initial data is random and have finite moments in appropriate functional spaces. These assumptions will be made precise in each of the following four crucial propositions. Obviously, the estimates we derive in this section are still valid for non-random initial data. Throughout, $(\bv,\bd)$ will denote a weak or a strong solution to \eqref{eqn-SLQE-d}.
\subsection{Estimates for $\lVert \bdt \rVert$}
Before we proceed further we should note that the forthcoming results are proved for $p\ge 4N+2$, but they will also hold for $p\in [1, 4N+2)$.
\begin{prop}\label{EST0}
Assume that for some $p\ge 4N+2$ and $t\ge0$ $$ \mathfrak{G}_0(t, p)<\infty \text{ almost surely}, $$ where $\mathfrak{G}_0(t,p)$ is defined in \eqref{G_0}. Then,    the following estimate holds almost surely
 \begin{equation}\label{EST-N}
  \lve \bd(t)\rve^p+p \int_0^t \lve \bds \rve^{p-2} \lve \nabla \bds\rve^2 ds
  + p \int_0^t \lve \bds \rve^{p-2} \lve \bds\rve^{2N+2}_{\mathbb{L}^{2N+2}} ds\le \mathfrak{G}_0(t, p).
 \end{equation}
\end{prop}
\begin{proof}
Let $\Psi(\cdot)$ be the mapping defined by $\Psi(\d)=\frac 12 \lVert \d \rVert^{p}$ for any $\d\in \el^2$. This mapping is twice Fr\'echet differentiable with first and second derivatives defined by
\begin{align*}
 \Psi^\prime(\d)[\mathbf{h}]=p \Vert \d \Vert^{p-2} \langle \d, \mathbf{h}\rangle,\\
 \Psi^{\prime\prime}[\mathbf{h},\mathbf{k}]=p(p-2)\Vert \d\Vert^{p-4}\langle \d,\mathbf{k}\rangle \langle \d, \mathbf{h}\rangle+p \Vert \d\Vert^{p-2} \langle \mathbf{h}, \mathbf{k}\rangle.
\end{align*}
 by straightforward calculation one can check that
 if $\mathbf{h}\in \el^2$ and $\mathbf{h}\perp \d$ then $\Psi^\prime(\d)[\mathbf{h}]=0$ and $\Psi''(\d)[\mathbf{h},\mathbf{h}]=p \lve \d \rve^{p-2}\lve \mathbf{h}\rve^2.$ Moreover, if $\d\in \h^1$
 then it is possible to extend $\Psi^\prime(\d)[\cdot]$ to the dual space $(\h^1)^\ast$ of $\h^1$ by setting
 $$ \Psi(\d)[\mathbf{h}]=p \lVert \d\rVert^{p-2} {}_{(\h^1)^\ast}\langle \mathbf{h}, \d\rangle_{\h^1} \text{ for any } \mathbf{h}\in (\h^1)^\ast.$$
  Note also that if $\d\in \h^1$ then it follows from the Sobolev continuous embedding
 $\h^1\subset \el^{4n+2}$ and Remark \ref{REM-H2} that $\Psi(\d)[f(\d)]$ is well defined for any $\d\in \h^1$. From these observation we can apply It\^o's formula and derive that
\begin{equation*}
\begin{split}
\Psi(\d(t))= \Psi(\d_{0})-\int_0^t \Psi'(\d(s))[A\d(s)+\tilde{B}(\v(s),\d(s)+ f(\d(s))]\\
+\frac12 \int_0^t \Big(\Psi'(\d(s))[G^2(\d(s))]
+ \Psi''(\d(s))[G(\d(s),G(\d(s)]\Big) ds\\+ \int_0^t \Psi'(\d(s))[G(\d(s))] dW_2(s)
\end{split}
\end{equation*}
The stochastic integral vanishes because $\d\times h\perp \d$
and
\begin{align*}
\langle G(\d),\d \rangle= &\langle (\d\times h),
\d\rangle,\\
=& \langle \d\times h, \d\rangle,\\
=&0.
\end{align*}
 Since $\v$ is a divergence free function it follows from  \eqref{tild-b-0} that
$$\langle \tilde{B}(\v(t),\d(t)),\d(t)\rangle=0. $$
From the identity $$\langle (b\times a)\times a, b\rangle=-\lVert
a\times b\rVert^2, $$ we deduce that
\begin{align*}
& \Psi'(\d(s))[G^2(\d(s))]
+ \Psi''(\d(s))[G(\d(s)),G(\d(s))]\\
& \quad \quad \quad =2p\lve \d(s)\rve^{2(p-1)}\Big[\langle
G^2(\d(t)), \d(t)\rangle+\lVert G(\d(t))\rVert^2\Big]\\
& \quad \quad \quad = 0.
\end{align*}
Consequently,
\begin{equation}\label{3.4}
\begin{split}
\lVert \d(t)\rVert^{p}=\lVert \d_{0}\rVert^{p}-p \int_0^t \lve \d(s)\rve^{p-2} \lVert \nabla \d(s) \rVert^2
ds\\-p\int_0^t \lve \d(s) \rve^{p-2} \langle f(\d(s)),
\d(s)\rangle ds,
\end{split}
\end{equation}
Now, by Assumption \ref{eqn-f} that there exists a polynomial $\tilde{F}(r)=\sum_{l=1}^{N+1} b_l r^l$ with $\tilde{F}(0)=0$ and $b_{N+1}>0$ such that $$ \langle f(\d), \d\rangle = \int_{\mathcal{O}} \tilde{F}(\vert \d(x)\vert^2) dx.$$
In fact, it follows from Assumption \ref{eqn-f} that
\begin{align*}
 \langle \tilde{f}(\vert \d\vert^2)\d, \d\rangle=& \int_{\mathcal{O}} \tilde{f}(\vert \d(x)\vert^2)\vert \d(x)\vert^2 dx\\
 =&  \int_{\mathcal{O}} \sum_{k=0}^{N} a_k (\vert \d(x)\vert^2)^{k+1} dx\\
 =& \int_{\mathcal{O}} \sum_{l=1}^{N+1} a_{l-1} (\vert \d(x)\vert^2)^{l} dx.
\end{align*}
Thanks to this observation we can use \cite[Lemma 8.7]{Brz+Millet_2012} to infer that there exists $c>0$ such that
\begin{equation*}
 \frac{a_{N+1}}{2} \int_{\mathcal{O}} \lvert \d(x)\rvert^{2N+2} dx-c \int_{\mathcal{O}} \lvert \d(x)\rvert^2 dx \le \langle f(\d), \d\rangle.
\end{equation*}
From this estimate and \eqref{3.4} we deduce that
\begin{equation}\label{3.5}
\begin{split}
\lVert \d(t)\rVert^{p}+p \int_0^t \lve \d(s)\rve^{p-2} \lVert \nabla \d(s) \rVert^2
ds+ p \int_0^t \lve \bds \rve^{p-2} \lve \bds\rve^{2N+2}_{\mathbb{L}^{2N+2}} ds\le C \int_0^t \Vert \d(s) \Vert^p ds + \Vert \d_0\Vert^p,
\end{split}
\end{equation}
from which along with an application of the Gronwall lemma we complete the proof of our proposition.
\end{proof}
\subsection{Estimates for $\lVert \bv \rVert$ and $\lVert \nabla
\d \rVert$}
 \begin{prop}\label{EST1}
 Suppose that Assumption \ref{HYPO-ST-weak} is verified and that
 for $p=1$ and some $p\ge 4N+2$ and for any $t\ge0$
 $$ \mathbb{E} \mathfrak{G}_1(t,p) <\infty,$$
 where $\mathfrak{G}_1(t,p)$ is defined in \eqref{G_1}.
Let $\{\tau_R: R\ge 0\}$ be an arbitrary family of stopping times such that the stochastic process $M_{\cdot \wedge \tau_R}$
defined by
\begin{equation*}
 M_{t\wedge \tau_R}:=\int_0^{t\wedge \tau_R} \langle G(\d(s)),
f(\d(s))-\Delta \d(s)\rangle dW_2(t)
\end{equation*}
is a martingale.
Then, there exists $\ell>0$ such that for any  $t\ge 0$
\begin{equation}\label{EST-VD-A}
\begin{split}
\mathbb{E} \biggl[\sup_{0\le s\le
t\wedge \tau_R}\left(\lve\bv(s)\rve^{2}+\ell \lve \d(s) \rve^2+\lve \nabla \bd(s)\rve^{2}+\int_\MO F(\d(s,x) dx \right)^p\\+\mathbb{E} \left(\int_0^{t\wedge \tau_R}
\left(\lve \nabla
\bv(s)\rve^2+ \lve \Delta \bd(s)-f(\bd(s))\rve^2\right)ds\right)^p\biggr]
\le \mathbb{E}\mathfrak{G}_1(t,p).
\end{split}
\end{equation}

\end{prop}
\begin{proof}
In what follow we fix a time $t\in [0,T\wedge \tau^R)$. Application of It\^o's formula to
$\Phi(\bvt)=\frac 12 \lVert \bvt\rVert^2$ yields that for any $t \in [0,T\wedge \tau^R) $ and $\mathbb{P}-$a.s.
\begin{equation}\label{est-vm}
\begin{split}
d\lVert \bvt\rVert^2=-\biggl\langle\ma \bvt+B(\bvt)+M(\bdt),
\bvt\biggr\rangle dt\\ +\frac 12 \lVert S(\bvt)\rVert^2_{\mathcal{T}_2} dt
+\langle S(\bvt), \bvt) dW_1(t)\rangle.
\end{split}
\end{equation}
Now we introduce the function $\Psi$ defined by
$$\Psi(\bd)=\frac{1}{2}\lVert \nabla \bd\rVert^2+\frac12 \int_\MO F(\vert \d(x) \vert^2)\,\, dx, \bd\in \h^1.$$
Arguing as in \cite{Brz+Millet_2012} we can show that the map $\Psi(\cdot)$ is twice Fr\'echet differentiables and the first and second derivatives of $\Psi$ are given by
\begin{align*}
\Psi^\prime(\bd)\mathbf{g}=&\langle \nabla \bd, \nabla
\mathbf{g}\rangle+\langle f(\d), \mathbf{g}\rangle,\\
\Psi^{\prime\prime}(\bd)(\mathbf{g}, \mathbf{g})=&\langle \nabla
\mathbf{g}, \nabla \mathbf{g}\rangle+\int_\MO \tilde{f}(\d) \lvert \mathbf{g}\rvert^2 dx +\int_\MO [\tilde{f}^\prime(\d)
][\d \cdot \mathbf{g}]^2\,\, dx,
\end{align*}
$\text{ for all } \bd, \mathbf{g} \in \h^1.$
If $\mathbf{g}\perp \d$ then
$$ \Psi^{\prime\prime}(\bd)(\mathbf{g}, \mathbf{g})=\langle \nabla
\mathbf{g}, \nabla \mathbf{g}\rangle+\int_\MO \tilde{f}(\d) \lvert \mathbf{g}\rvert^2 dx. $$
Note also that $$ \Psi^\prime(\bd)\mathbf{g}= \langle -\Delta \bd, \mathbf{g}\rangle + \langle f(\d), \mathbf{g}\rangle,\\$$for all
$\d \in \h^2$ and $\mathbf{g}\in \h^1$.

Before proceeding further we should recall that it was proved in \cite[Lemma 8.9]{Brz+Millet_2012} that there exists $\ell >0$ such that
\begin{equation}\label{Lem-BM}
 \lve \nabla \d\rve^2+\lve \d \rve^2 \le 2 \Psi(\d)+\ell \lve \d\rve^2,
\end{equation}
for any $\d\in \h^1$.

Now, by
It\^o's formula we have
\begin{equation*}
\begin{split}
d\Psi(\bdt)=\left(-\lve \Delta \bdt -f(\bdt)\rve^2 +\frac12 \int_\MO \tilde{f}(\bdt)\lvert G(\bdt)\rvert^2\right) dt \\
+\biggl\langle \biggl(\frac 12 G^2(\bdt)-\tilde{B}(\bvt,\bdt)\biggr),f(\bdt)-\Delta \bdt\biggr\rangle dt\\
+\frac 12 \lVert \nabla G(\bdt)\rVert^2+ \langle G(\bdt),
f(\bdt)-\Delta \bdt\rangle dW_2(t),
\end{split}
\end{equation*}
 which is equivalent to
\begin{equation*}
\begin{split}
d\Psi(\bdt)=\left(-\lve \Delta \bdt -f(\bdt)\rve^2 +\int_\MO [\tilde{f}(\bdt)\lvert G(\bdt)\rvert^2\,\, dx \right) dt \\
+\left(\langle \frac 12 G^2(\bdt), f(\bdt)-\Delta \bdt\rangle+\langle \tilde{B}(\bvt,\bdt),\Delta \bdt\rangle \right)dt\\
+\frac 12 \lVert \nabla G(\bdt)\rVert^2+ \langle G(\bdt),
f(\bdt)-\Delta \bdt\rangle dW_2(t).
\end{split}
\end{equation*}
Note that we have used the fact that
\begin{align*}
 \langle \bv \cdot \nabla \d, f(\d)\rangle = &\sum_{i,j} \int_\MO \bv_i(x) \frac{\partial \d_j(x)}{\partial x_i}
 \tilde{f}(\lvert \d(x)\rvert^2) \d_j(x) dx\\
 =& \frac12 \int_\MO \bv_i(x)  \frac{\partial \tilde{F}(\lvert \d(x)\rvert^2)}{\partial x_i} dx\\
 =& \langle \bv, \nabla \tilde{F }(\vert \d\rvert^2)\rangle\\
 =& 0, \text{ because $\divv\bv=0$.}
\end{align*}
Notice also that
\begin{align*}
 \frac 12 \biggl\vert \int_\MO \tilde{f}(\vert \d(x)\vert^2) \lvert G(\d(x))\rvert^2 dx\biggr\vert \le \frac 12 \lve h\rve_{\mathbb{L}^\infty}^2
 \int_\MO \vert \tilde{f}(\vert \d(x)\vert^2)\vert \,\,\vert \d(x) \vert^2 dx.
\end{align*}
By setting $\bar{f}(r)=\sum_{k=0}^N b_k r^k $ with $b_k=\vert a_k\vert$, $k=0,\ldots, N$, we derive that there exists a polynomial $\tilde{Q}$ of degree $N$ such that $$ \bar{f}(r)r= a_N r^{N+1} + \tilde{Q}(r)
.$$ From this last identity and the former estimate we derive that
\begin{align*}
 \frac 12 \biggl\vert \int_\MO \tilde{f}(\vert \d\vert^2) \lvert G(\d)\rvert^2 dx\biggr\vert \le \frac 12 \lve h\rve_{\mathbb{L}^\infty}^2 \biggl[
 a_N\int_\MO   \vert \d(x) \vert^{2N+2} dx+ \biggl\lvert \int_{\mathcal{O}} \tilde{Q}(\lvert \d(x)\rvert^2) dx \biggr\rvert\biggr],
\end{align*}
from which along with \cite[Lemma 8.7]{Brz+Millet_2012} we deduce that
\begin{equation*}
 \frac 12 \biggl\vert \int_\MO \tilde{f}(\vert \d\vert^2) \lvert G(\d)\rvert^2 dx\biggr\vert \le C(h)\Big(
 \int_\MO F(\vert \d(x) \vert^2) dx+ \lve \d \rve^2 \Big).
\end{equation*}
Thus,
\begin{align*}
 \frac 12 \biggl\vert \int_\MO \tilde{f}(\vert \d\vert^2) \lvert G(\d)\rvert^2 dx\biggr\vert \le C(h)\Big( \Psi(\d) + \lve \d \rve^2\Big),
\end{align*}
and
\begin{equation}\label{est-grdm}
\begin{split}
d\Psi(\bdt)\le \left(-\lve \Delta \bdt -f(\bdt)\rve^2 +C(h) \Big[ \Psi(\d(t)) + \lve \d(t) \rve^2\Big] \right) dt \\
+\left(\langle \frac 12 G^2(\bdt), f(\bdt)-\Delta \bdt\rangle+\langle \tilde{B}(\bvt,\bdt),\Delta \bdt\rangle \right)dt\\
+\frac 12 \lVert \nabla G(\bdt)\rVert^2+ \langle G(\bdt),
f(\bdt)-\Delta \bdt\rangle dW_2(t).
\end{split}
\end{equation}
Thanks to \eqref{G1-eq-Md} we derive that
\begin{equation*}
\langle \tilde{B}(\v,\d), \Delta \d\rangle-\langle M(\d),
\v \rangle=0.
\end{equation*}
From Lemma \ref{LEM-B} we also derive that
\begin{equation*}
\langle B (\v(t)),\v(t)\rangle=0.
\end{equation*}
 Therefore by summing \eqref{est-vm} and \eqref{est-grdm} side by side and
using the two last identities we see that
\begin{equation}\label{3.11}
 \begin{split}
  d[\Psi(\d(t))+\frac12 \Vert \v(t)\Vert^2]+ \biggl(\Vert \nabla \v(t)\Vert^2 + \frac12 \Vert \Delta\d(t) -f(\d(t))\Vert^2-C(h) \Big[ \Psi(\d(t)) + \lve \d(t) \rve^2\Big]\biggr)dt
  \\
  \le \frac12\biggl(\Vert S(\v(t))\Vert^2_{\mathcal{T}_2} + \Vert G^2(\bdt)\Vert^2
+\lVert \nabla G(\d(t))\Vert^2 \biggr)dt\\+ \langle\v(t), S(\v(t))dW_1(t)\rangle +\langle G(\bdt),
f(\bdt)-\Delta \bdt\rangle dW_2(t).
 \end{split}
\end{equation}
Now, recall that $G(\bd)=\bd\times h$. Hence
\begin{align}
\lVert \nabla G(\d(t))\rVert^2\le& \lVert \nabla (\d(t)\times h)\rVert^2=\lVert G(\d(t))\rve^2_{\h^1},\nonumber\\
\le& \lve \nabla (\d(t)\times h)\rve^2=\lve \d(t)\times h\rve^2_{\h^1}\nonumber\\
\le& \lve \nabla(\d(t)\times h)\rve^2+\lve \d(t)\times h\rve^2\nonumber\\
\le& 2[\lve \nabla \d(t) \times h\rve^2+ \lve\d(t)\times \nabla
h\rve^2]+\lve\d(t)\times h\rve^2\nonumber\\
\le & C \lve h\rve^2_{\el^\infty}(\lve \nabla \d(t)\rve^2 + \lve \d(t)\rve^2)
+\lve\d(t)\times \nabla
h\rve^2.\label{est-grG-1}
\end{align}
From H\"older's inequality and the Sobolev embedding $\h^1\subset
\el^6$ (true for $n=2,3$!) we obtain
\begin{align}
\lve\d(t)\times \nabla h\rve^2\le& \lve \d(t)\rve^2_{\el^6}\lve \nabla h\rve^2_{\el^3},\nonumber\\
\le& c(\lve \nabla \d(t)\rve^2+\lve \d(t) \rve^2)\lve \nabla
h\rve^2_{\el^3}.\nonumber
\end{align}
By plugging this last inequality into \eqref{est-grG-1} we infer the existence of a constant $C(h)>0$ such that
\begin{equation*}
\lve \nabla G(\d(t))\rve^2 \le C(h) (\lve \nabla \d(t)\rve^2 +\lve\d(t)
\rve^2).
\end{equation*}
From the definition of
$G^2$ it is easy to see that there exists $C(h)>0$ such that
\begin{align*}
\lve G^2(\d(t))\rve^2\le& C(h) \lve \d(t)\rve^2\le C(h) (\Vert \nabla \d(t)\Vert^2+\Vert \d(t)\Vert^2).
\end{align*}
From the last two estimates, \eqref{Lem-BM} and the linear growth assumption \eqref{HYPO-ST-weak} we derive that there exist two constants $C>0$ and $C(h)>0$ such that for any $t\ge 0$
\begin{equation}\label{3.14}
 \frac12\biggl(\Vert S(\v(t))\Vert^2_{\mathcal{T}_2} + \Vert G^2(\bdt)\Vert^2
+\lVert \nabla G(\d(t))\Vert^2 \biggr)\le C\Vert \v(t)\Vert^2 +2 C(h)\Psi(\d(t))+\ell C(h) \Vert \d(t)\Vert^2.
\end{equation}
From this inequality and \eqref{3.11} we derive that there exists $C>0$ such that
\begin{equation}\label{3.15}
\begin{split}
  \me \sup_{s\in [0,t\wedge \tau_R]}[\Psi(\d(s))+\frac12 \Vert \v(s)\Vert^2]+ \me \int_0^{t\wedge \tau_R}\biggl(\Vert \nabla \v(s)\Vert^2 + \frac12 \Vert \Delta\d(s) -f(\d(s))\Vert^2\biggr)ds\\
  \le
  C \me\int_0^{t\wedge \tau_R} \Big[ \lve \v(s) \rve^2 +\Psi(\d(s))\Big]ds + \me \int_0^{t\wedge \tau_R} \lve \d(s) \rve^2 ds
  +\me \sup_{s\in [0,t\wedge \tau_R] }\biggl\vert \int_0^{s\wedge \tau_R} \langle\v(r), S(\v(r))dW_1(r) \rangle\biggr\vert\\ +\me \sup_{s\in [0,t\wedge \tau_R]}\biggl\vert \int_0^{s\wedge \tau_R} \langle G(\d(r)),
f(\d(r))-\Delta \d(r) \rangle dW_2(r)\biggr\vert + \mathbb{E}\left(\frac12 \Vert \bv_0\Vert^2+ \Psi(\d_0)\right).
 \end{split}
\end{equation}
Thanks to the Burkholder-Davis-Gundy, Cauchy-Schwarz and Cauchy inequalities we infer that
\begin{align}
 \me \sup_{s\in [0,t\wedge \tau_R]}\biggl\vert \int_0^{s\wedge \tau_R} \langle G(\d(r)),
f(\d(r))-\Delta \d(r) \rangle dW_2(r)\biggr\vert\le C \me \biggl(\int_0^{t\wedge \tau_R} [\langle G(\d(s)),\Delta \d(s)-f(\d(s))\rangle]^2 ds\biggr)^\frac12\nonumber\\
\le C \me \biggl[\sup_{s\in [0,t\wedge \tau_R]}\lve G(\d(s))\rve\biggl(\int_0^{t\wedge \tau_R} \lve \Delta \d(s)-f(\d(s))\rve^2 ds\biggr)^\frac12 \biggr]\nonumber\\
\le C \me  \sup_{s\in [0,t\wedge \tau_R]}\lve G(\d(s))\rve^2 +\frac14 \me \int_0^{t\wedge \tau_R} \lve \Delta \d(s)-f(\d(s))\rve^2 ds\nonumber\\
\le C \lve h \rve^2_{\el^\infty}\me \sup_{s\in [0,t\wedge \tau_R]}\lve \d(s)\rve^2 +\frac14 \me \int_0^{t\wedge \tau_R} \lve \Delta \d(s)-f(\d(s))\rve^2 ds.\label{BDG-n}
\end{align}
By making use of a similar argument and the linear growth assumption \eqref{HYPO-ST-weak} one can prove that
\begin{equation}\label{BDG-v}
 \me \sup_{s\in [0,t\wedge \tau_R] }\biggl\vert \int_0^{s\wedge \tau_R} \langle\v(r), S(\v(r))dW_1(r) \rangle\biggr\vert\le \frac14\me \sup_{s\in [0,t\wedge \tau_R]}\lve \v(s)\rve^2 +C \me \int_0^{t\wedge \tau_R} \lve \v(s) \rve^2 ds.
\end{equation}
Note that from \eqref{Lem-BM} we easily derive that $\lve \bv \rve^2 +\lve \d \rve^2 \le\lve \v\rve^2+ 2\Psi(\d) +\ell \lve \d \rve^2.$ Hence, using \eqref{BDG-n} and \eqref{BDG-v} in  \eqref{3.15}  we infer that
\begin{equation*}
\begin{split}
 \me \sup_{s\in [0,t\wedge \tau_R]}[\Psi(\d(s))+\frac12 \Vert \v(s)\Vert^2]+ \me \int_0^{t\wedge \tau_R}\biggl(\Vert \nabla \v(s)\Vert^2 + \frac12 \Vert \Delta\d(s) -f(\d(s))\Vert^2\biggr)ds\\
  \le C \me \int_0^{t\wedge \tau_R} \left(\lve \v(s)\rve^2 +\Psi(\d(s))\right)ds + C \varphi(t\wedge \tau_R),
\end{split}
\end{equation*}
where $\varphi(\cdot)$ is the non-decreasing function defined by $$ \varphi(t)= \me \left(\lve \v_0\rve^2+\Psi(\d_0) \right)+\me \sup_{s\in [0,t]} \lve \d(s)\rve^2+\me \int_0^{t} \lve \d(s)\rve^2 ds.$$
Now it follows from Gronwall's lemma that
\begin{equation*}
\begin{split}
  \me \sup_{s\in [0,t\wedge \tau_R]}[\Psi(\d(s))+\frac12 \Vert \v(s)\Vert^2]+ \me \int_0^{t\wedge \tau_R}\biggl(\Vert \nabla \v(s)\Vert^2 + \frac12 \Vert \Delta\d(s) -f(\d(s))\Vert^2\biggr)ds\\
  \le \varphi(t\wedge \tau_R)
  \left(1+t\wedge \tau_R e^{C t\wedge \tau_R}\right).
\end{split}
\end{equation*}
This completes the proof of our proposition for the case $ p=1$.
For the case $p\ge 4N+2$ we first observe that from \eqref{3.11}, \eqref{3.14} we easily see that
\begin{equation*}
 \begin{split}
  \Psi(\d(t))+\ell \lve \d(t)\rve^2 +\lve \v(t)\rve^2 +\int_0^{t}\biggl(2 \lve \nabla \v(s) \rve^2+ \lve \Delta \d(s)-f(\d(s))\rve^2 \biggr)ds\\
  \le \Psi(\d_0)+\ell \lve \d_0\rve^2 +\lve \v_0\rve^2 + C \int_0^{t} \left(2\Psi(\d(s))+\ell \lve \d(s)\rve^2 +\lve \v(s)\rve^2\right) ds \\
  \biggl\lvert \int_0^{t} \langle\v(s), S(\v(s))dW_1(s)\rangle\biggr\rvert + \biggl\rvert \int_0^{t} \langle G(\d(s)),
f(\d(s))-\Delta \d(s)\rangle dW_2(s)\biggr\rvert.
 \end{split}
\end{equation*}
Rising both sides of this estimate to the power $p$ and taking the supremum over $s\in [0,t\wedge \tau_R]$ and the mathematical expectation imply that
\begin{equation}\label{3.21}
\begin{split}
 \me \sup_{s\in [0,t]}[\psi(s)]^p- \me [\psi(0)]^p + \me \biggl[\int_0^{t\wedge \tau_R} \biggl(2 \lve \nabla \v(s) \rve^2+ \lve \Delta \d(s)-f(\d(s))\rve^2 \biggr)ds\biggr]^p\\
 \le C t\wedge \tau_R\me \int_0^{t\wedge \tau_R} [\psi(s)]^p ds + \me \sup_{s\in [0,t\wedge \tau_R]}\biggl\lvert \int_0^{s\wedge \tau_R} \langle\v(r), S(\v(r))dW_1\rangle\biggr\rvert^p\\ +
 \me \sup_{s\in [0,t\wedge \tau_R] }\biggl\rvert \int_0^{s\wedge \tau_R} \langle G(\d(r)),
f(\d(r))-\Delta \d(r)\rangle dW_2\biggr\rvert^p,
\end{split}
\end{equation}
where, for the sake of simplicity, we have put $\psi(t)=\Psi(\d(t))+\ell \lve \d(t)\rve^2 +\lve \v(t)\rve^2$. \\
Now by making use of the Burkholder-Davis-Gundy, Cauchy-Schwarz, Cauchy inequalities and the linear growth assumption \eqref{HYPO-ST-weak} we derive that
\begin{align}
 \me \sup_{s\in [0,t\wedge \tau_R]}\biggl\lvert \int_0^{s\wedge \tau_R} \langle\v(r), S(\v(r))dW_1\rangle\biggr\rvert^p\le & C \me \biggl(\int_0^{t\wedge \tau_R} \lve \v(s)\rve^2 \lve S(\v(s))\rve^2_{\mathcal{T}_2} ds\biggr)^\frac p2\nonumber\\
 \le & C \me \biggl[\sup_{s\in [0,t\wedge \tau_R]} [\psi(s)]^\frac p2\biggl(\int_0^{t\wedge \tau_R} (1+\lve \v(s)\rve^2)ds\biggr)^\frac p2 \biggr]\nonumber\\
 \le & \frac14 \me \sup_{s\in [0,t\wedge \tau_R]} [\psi(s)]^p+C t\wedge \tau_R +C \me \int_0^{t\wedge \tau_R} [\psi(s)]^p ds.\label{BDG-psi-1}
\end{align}
By using a similar argument we obtain
\begin{align}
  \me \sup_{s\in [0,t\wedge \tau_R] }\biggl\rvert \int_0^{s\wedge \tau_R} \langle G(\d(r)),
f(\d(r))-\Delta \d(r)\rangle dW_2\biggr\rvert^p\le& C \me \biggl(\int_0^{t\wedge \tau_R} \lve G(\d(s))\rve^2 \lve \Delta \d(s)-f(\d(s))\rve^2 ds\biggr)^\frac p2\nonumber\\
\le & C \me \sup_{s\in [0,t\wedge \tau_R]}\lve \d(s)\times h\rve^p\nonumber\\
& \quad \quad \quad + \frac 12 \me \biggl(\int_0^{t\wedge \tau_R} \lve \Delta \d(s)-f(\d(s))\rve^2 ds \biggr)^p\nonumber.
\end{align}
From the last line we easily derive that
\begin{equation}\label{BDG-psi-2}
\begin{split}
 \me \sup_{s\in [0,t\wedge \tau_R] }\biggl\rvert \int_0^{s\wedge \tau_R} \langle G(\d(r)),
f(\d(r))-\Delta \d(r)\rangle dW_2\biggr\rvert^p-C(h) \me \sup_{s\in [0,t\wedge \tau_R]} \lve \d(s)\rve^{2p}\\
\le \frac 12 \me \biggl(\int_0^{t\wedge \tau_R} \left(2 \lve \nabla \v(s)\rve^2+\lve \Delta \d(s)-f(\d(s))\rve^2\right) ds \biggr)^p.
\end{split}
\end{equation}
Inserting \eqref{BDG-psi-1} and \eqref{BDG-psi-2} in  \eqref{3.21} yields that
\begin{equation*}
 \begin{split}
   \me \sup_{s\in [0,t\wedge \tau_R]}[\psi(s)]^p+ \me \biggl[\int_0^{t\wedge \tau_R} 2 \lve \nabla \v(s) \rve^2+ \lve \Delta \d(s)-f(\d(s))\rve^2 \biggr)ds\biggr]\\
   \le 2 \me [\psi(0)]^p+ C t\wedge \tau_R +C(h) \me \sup_{s\in [0,t\wedge \tau_R]} \lve \d(s)\rve^{2p}+ C(t\wedge \tau_R+1)\me \int_0^{t\wedge \tau_R} [\psi(s)]^p ds,
 \end{split}
\end{equation*}
from which altogether with the Gronwall lemma implies that
\begin{equation*}
 \begin{split}
   \me \sup_{s\in [0,t]}[\psi(s)]^p+ \me \biggl[\int_0^{t\wedge \tau_R} 2 \lve \nabla \v(s) \rve^2+ \lve \Delta \d(s)-f(\d(s))\rve^2 \biggr)ds\biggr]\\
   \le \biggl[2 \me [\psi(0)]^p+ C t\wedge \tau_R+C(h) \me \sup_{s\in [0,t\wedge \tau_R]} \lve \d(s)\rve^{2p} \biggr]\left(1+ Ct\wedge \tau_R(t\wedge \tau_R+1) e^{C(t\wedge \tau_R+1)t\wedge \tau_R }\right).
 \end{split}
\end{equation*}
This completes the proof of the proposition.
\end{proof}
\subsection{Estimate for $\me \int_0^T \lve \Delta \bds\rve^2 ds$ }\label{AppC}
 \begin{prop}
 Suppose that Assumption  \ref{eqn-f} is satisfied and let $p\ge 4N+2$. If $\mathbb{E}\mathfrak{G}_1(T,q(2N+1))$ for some $q\in [2,\frac{p}{4N+2}] $ and $T\ge 0$, then there exists a constant $C>0$ such that
 \begin{equation*}
  \me \biggl[\int_0^T \lve \Delta \bds\rve^2 ds\biggr]^q \le \mathbb{E}\mathfrak{G}(T, q(2N+1))+C T.
 \end{equation*}
 \end{prop}
\begin{proof}
 The proof follows from \eqref{bigdandel} of Remark \ref{REM-H2} and Proposition \ref{EST1}.
 In fact for $N \in I_d$ the embedding $\h^1\subset \mathbb{L}^{4N+2}$ is continuous and thanks to \eqref{bigdandel} we derive that
  \begin{align*}
   \me \biggl[\int_0^T \lve \Delta \bds\rve^2 ds\biggr]^q \le C \me \biggl[\int_0^T \lve \Delta \bds -f(\bds) \rve^2 ds\biggr]^q
   + C \me \sup_{s\in [0,T]} \lve \nabla \bds\rve^{2q(2N+1)}+C.
  \end{align*}
Now, we easily conclude the proof of our claim by using Proposition \ref{EST1} with $p=q(2N+1)$.
\end{proof}
\subsection{Proof of Proposition \ref{STRONGER-NORM}}
In this subsection we give the proof of  Proposition \ref{STRONGER-NORM}, \textit{i.e.},  we will derive the important estimate for  $\me \sup_{s\in [0,t\wedge \tau_k]}\lve \Delta \d(s)-f(\d(s))\lve^2 $
\begin{proof}[Proof of Proposition \ref{STRONGER-NORM}]
  Since the local strong solution $(\v,\d)\in D(\rA)\times
 \bx_{1}$ almost surely we can view $\Psi(\bd)$ as
 $$\Psi(\bd)= \lve -\rA_1 \d-
 f(\d)\rve^2.$$
 The functional $\Psi(\d)$ is twice differentiable with first and second Fr\'echet derivatives defined by
$$\Psi^{\prime}(\d)(h)=2 \langle -\rA_1 \d-f(\d), -\rA_1
 h-f^\prime(\d)h\rangle,$$ and $$ \pp(\d)[h,k]=2\langle -\rA_1 k -\fp(\d)k, -\rA_1 h -\fp(\d) h\rangle-2\langle -\rA_1 \d- f(\d), \fpp(\d) h \,\,k
 \rangle,
 $$ for any $h,\, k\in \bx_{\frac 12}$.

 Let us recall that the process $(\bv(t,\d(t)))$, $t<\tilde{\tau}_\infty$ solves
 \begin{eqnarray*}
d\bv(t)+\biggl(\Delta\bv(t)+B(\bv(t), \bv(t))+M(\bd(t))\biggr)dt=S(\bv(t))dW_1(t),\\
d\bd(t)+\biggl(\rA_1\bd(t)+ \tilde{B}(\bv(t),\bd(t))+ f(\bd(t))-\frac
12 G^2(\bd(t))\biggr)dt=G(\bd(t))dW_2(t).
\end{eqnarray*}
 Since $(\v,\d)\in X_t$ for
any $t\in [0,\tau_\infty)$ we infer that
\begin{equation*}
\Delta\bv(\cdot)+B(\bv(\cdot), \bv(\cdot))+M(\bd(\cdot)) \in L^2(0,t;\h),
\end{equation*}
and
\begin{equation*}
\rA_1\bd(\cdot)+ \tilde{B}(\bv(\cdot),\bd(\cdot))+ f(\bd(\cdot))-\frac 12 G^2(\bd(\cdot))
\in L^2(0,t;\bx_{0}),
\end{equation*}
for any $t\in [0,\tilde{\tau}_\infty)$. Therefore by considering the
Gelfand triples $D(\rA)\subset D(\rA^\frac12)\subset\h$ and $\bx_{1}\subset \bx_{\frac 12}\subset \bx_{0}$ It\^o's formula for the
functional $\lve \nabla \v (t\wedge \tau_k)\rve^2$ and
$\Psi(\d(t\wedge \tau_k))$ for any integer $k>0$ are applicable to
our situation (see \cite[Theorem I.3.3.2, Page 147]{Pardoux}).
 By It\^o's formula we have
\begin{equation*}
\begin{split}
\lve \nabla \bv(t\wedge \tau_k) \rve^2-\lve \nabla \v_0\rve^2=-2\int_0^{t\wedge \tau_k}
\langle \Delta\v(s) +B(\v(s),\v(s))+\Pi\nabla \cdot(\nabla \d(s)\odot
\nabla \d(s)),\Delta \v(s) \rangle ds\\+ \int_0^{t\wedge \tau_k} \lve  S(\v(s))\rve^2_{\mathcal{T}_2(\rK_1,\ve)}
ds+2\int_0^{t\wedge \tau_k} \langle \rA^\frac 12 S(\v(s)),
\rA^\frac 12 \v(s)\rangle d\W(s),
\end{split}
\end{equation*}
where $t\wedge \tau_k:= t \wedge \tau_k.$
As in Lin and Liu \cite{Lin-Liu} we use the identity
\begin{equation*}
\begin{split}
\langle \Pi \nabla \cdot(\nabla \d\odot \nabla \d),\Delta \v
\rangle=\langle \nabla \d \Delta \d , \Delta \v\rangle+\langle
\nabla\left(\frac{ \lvert \nabla \d\rvert^2}{2}\right), \Delta \v\rangle,\\
=\langle \nabla \d \Delta \d, \Delta \v\rangle.
\end{split}
\end{equation*}
To get the second line of this identity we have used the fact that
$\Delta \v$ is divergence free. Now it is easy to see that
\begin{equation*}
 \langle \Pi \nabla \cdot(\nabla \d\odot \nabla \d),\Delta \v
\rangle=\langle \nabla \d (\Delta \d-f(\d)), \Delta \v\rangle +\langle
\nabla \d, f(\d) \Delta \v\rangle.
\end{equation*}
 As a consequence of the last identity we have
that
\begin{equation}\label{ST1}
\begin{split}
\lve \nabla \bv(t\wedge \tau_k)\rve^2-\lve \nabla \v_0\rve^2=-2\int_0^{t\wedge \tau_k} \langle
\Delta \v(s) +B(\v(s),\v(s))+\nabla \d(s) f(\d(s)), \Delta \v(s)\rangle ds\\
-2\int_0^{t\wedge \tau_k} \langle \nabla \d(s)
(\Delta \d(s)-f(\d(s))), \Delta \v(s)\rangle ds
+\int_0^{t\wedge \tau_k}
\lve S(\v(s))\rve^2_{\mathcal{T}_2(\rK_1,\ve)} ds\\+2\int_0^{t\wedge \tau_k} \langle
\nabla S(\v(s)), \nabla \v(s)\rangle d\W(s).
\end{split}
\end{equation}
 At the same time we have
\begin{equation*}
\begin{split}
\Psi(\d(t\wedge \tau_k))-\Psi(\d_0)=\int_0^{t\wedge \tau_k} \ps(\d(s))[-\v(s)\cdot \nabla \d(s)-\rA_1
\d(s)-f(\d(s))+\frac12 [G^2(\d(s))]]ds\\+\frac
12\int_0^{t\wedge \tau_k} \pp(\d(s))[G(\d(s))G(\d(s))] ds
+\int_0^{t\wedge \tau_k}
\ps(\d(s))[G(\d(s))]dW_2(s).
\end{split}
\end{equation*}
Using the definition of $\ps(\d(s))$ we see from this last equation
that
\begin{equation}\label{ST2}
\begin{split}
\Psi(\d(t\wedge \tau_k))-\Psi(\d_0)=2\int_0^{t\wedge \tau_k}\langle  -\rA_1 \d(s)-f(\d(s)),-\rA_1
\Big(-\v(s)\cdot \nabla \d(s)-\rA_1 \d(s)-f(\d(s))\Big)\rangle ds
\\-2\int_0^{t\wedge \tau_k} \langle-\rA_1
\d(s)-f(\d(s)),\fp(\d(s))\Big(-\v(s)\cdot \nabla \d(s)-\rA_1 \d(s)-f(\d(s))\Big)  \rangle
ds\\
+\frac 12
\int_0^{t\wedge \tau_k} \ps(\d(s))[G^2(\d(s))] ds +\frac 12\int_0^{t\wedge \tau_k}
\pp(\d(s))[G(\d(s)),G(\d(s))] ds\\
+\int_0^{t\wedge \tau_k} \ps(\d(s))[G(\d(s))]dW_2(s)\\
=2\sum_{i=1}^4 \int_0^{t\wedge \tau_k} I_i(s)ds+\int_0^{t\wedge \tau_k}
\ps(\d(s))[G(\d(s))]dW_2(s).
\end{split}
\end{equation}
For $(\v,\d)\in D(\rA)\times
 \bx_{1}$ almost surely we easily check that $-\v\cdot\nabla \d \in D(\rA_1)$. Therefore  $I_1$
 can be rewritten in the following form
\begin{align}
I_1(s)=\langle-\nabla \d  \Delta \v -\v\cdot \nabla \Delta \d,
\Delta\d-f(\d) \rangle+\langle  \Delta \d-f(\d), -\rA_1(\Delta
\d-f(\d))\rangle\nonumber,
\end{align}
where the first term is a product of two function in $\el^2$ and
the second term is understood as the duality pairing between an
element of $\h^1$ and $\left(\h^1\right)^\ast$. Invoking
\eqref{ST0} we derive that
\begin{equation}
\begin{split}
 I_1(s)=-\langle \v\cdot \nabla [f(\d)], \Delta
\d-f(\d)\rangle -\lve \nabla(\Delta \d-f(\d))\rve^2\\
+\langle-\nabla \d \Delta \v  -\v\cdot \nabla (\Delta \d-f(\d)),
\Delta\d-f(\d) \rangle.\label{ST3}
\end{split}
\end{equation}
We derive from  \eqref{ST1},  \eqref{ST2} and
\eqref{ST3} that
\begin{equation}\label{ST4}
\begin{split}
\lve \nabla \v(t\wedge \tau_k)\rve^2+\Psi(\d(t\wedge \tau_k))-\lve \nabla \v_0\rve^2-\Psi(\d_0)+2\int_0^{t\wedge \tau_k}\Big[\lve \Delta \v(s)\rve^2+\lve
\nabla(\Delta \d(s)-f(\d(s)))\rve^2\Big]ds\\=2 \int_0^{t\wedge \tau_k}\langle
B(\v(s),\v(s))+\nabla \d(s) f(\d(s)),\rA\v(s)\rangle
ds+2\int_0^{t\wedge \tau_k}\langle\nabla \d(s)
[\Delta \d(s) -f(\d(s))], \rA \v(s)\rangle ds\\
-2\int_0^{t\wedge \tau_k} \langle\nabla \d(s)  \Delta \v(s) , \Delta \d(s)-f(\d(s))\rangle
ds-2\int_0^{t\wedge \tau_k} \langle \v(s)\cdot \nabla [f(\d(s))], \Delta \d(s)
-f(\d(s))\rangle ds\\  -2\int_0^{t\wedge \tau_k} \langle\fp(\d(s))\Big(-\v(s)\cdot
\nabla \d(s)+\Delta \d(s)-f(\d(s))\Big), \Delta \d(s)-f(\d(s)) \rangle ds\\
-2\int_0^{t\wedge \tau_k} \langle \v(s)\cdot\nabla[\Delta \d(s)-f(\d(s))], \Delta \d(s)
-f(\d(s))\rangle ds + OT.
\end{split}
\end{equation}
where the other term $OT$ is defined by
\begin{equation*}
\begin{split}
OT=\int_0^{t\wedge \tau_k} \lve  S(\v(s))\rve^2_{\mathcal{T}_2(\rK_1,\ve)} ds +\frac 12\int_0^{t\wedge \tau_k}
\pp(\d(s))[G(\d(s)),G(\d(s))] ds\\
+2\int_0^{t\wedge \tau_k} \langle \nabla S(\v(s)), \nabla
\v(s)\rangle d\W(s)+\int_0^{t\wedge \tau_k} \ps(\d(s))[G(\d(s))]dW_2(s)\\
+\frac
12 \int_0^{t\wedge \tau_k} \ps(\d(s))[G^2(\d(s))] ds.
\end{split}
\end{equation*}
Thanks to \eqref{B3} the identity
\eqref{ST4} can be simplified as follows
\begin{equation}\label{MITOV-VE}
\begin{split}
\lve \nabla \v(t\wedge \tau_k)\rve^2+\Psi(\d(t\wedge \tau_k))-\lve \nabla \v_0\rve^2-\Psi(\d_0)+2\int_0^{t\wedge \tau_k}\Big[\lve \Delta \v(s)\rve^2+\lve
\nabla(\Delta \d(s)-f(\d(s)))\rve^2\Big]ds\\=2\int_0^{t\wedge \tau_k}\langle\nabla \d(s)
[\Delta \d(s) -f(\d(s))], \rA \v(s)\rangle ds
-2\int_0^{t\wedge \tau_k} \langle\nabla \d(s)  \Delta \v(s) , \Delta \d(s)-f(\d(s))\rangle
ds\\
-2\int_0^{t\wedge \tau_k}
\langle\fp(\d(s))\Big(-\v(s)\cdot \nabla \d(s)+\Delta \d(s)-f(\d(s))\Big), \Delta
\d(s)-f(\d(s)) \rangle ds\\
+2 \int_0^{t\wedge \tau_k}\langle
B(\v(s),\v(s))+\nabla \d(s) f(\d(s)),\rA \v(s)\rangle ds \\
-2\int_0^{t\wedge \tau_k} \langle \v(s)\cdot \nabla
[f(\d(s))], \Delta \d(s) -f(\d(s))\rangle ds+ OT.
\end{split}
\end{equation}
Now note that for $j=1,2$ we have
\begin{align*}
 [\v\cdot \nabla [f(\d)]]_j=& \v_i \cdot \frac{\partial [f(\d)]_j}{\partial x_i}\\
 =& \tilde{f}(\vert \d\vert^2)\v_i \frac{\partial \d_j}{\partial x_i}+2 \tilde{f}^\prime(\vert \d\rvert^2)\d_j \v_i \frac{\partial \d_k}{\partial x_i} \d_k\\
 =& [\tilde{f}(\vert \d\vert^2)\v\cdot \nabla \d]_j+ 2 \tilde{f}^\prime(\vert\d\vert^2)\left([\v\cdot\nabla \d]\cdot \d\right)\,\d_j ,
\end{align*}
where above the summation over repeated indexes are enforced.
On the other hand we have
\begin{align*}
 f^\prime(\d)[\v\cdot \nabla \d]=\tilde{f}(\vert\d\vert^2) \v\cdot \nabla \d+2\tilde{f}^\prime(\vert \d\vert^2)\d \left([\v\cdot\nabla \d]\cdot \d\right).
\end{align*}
Thus we just proved that
\begin{equation*}
 \v\cdot\nabla [f(\d)]=f^\prime(\d)[\v\cdot \nabla \d].
\end{equation*}
From this last identity and \eqref{MITOV-VE} \del{and the chain rule $\v\cdot \nabla
[f(\d)]=\fp(\d) \v\cdot \nabla \d $} we deduce that
\begin{equation*}
\begin{split}
\lve \nabla \v(t\wedge \tau_k)\rve^2+\Psi(\d(t\wedge \tau_k))-\lve \nabla \v_0\rve^2-\Psi(\d_0)+2\int_0^{t\wedge \tau_k}\Big[\lve \Delta \v(s)\rve^2+\lve
\nabla(\Delta \d(s)-f(\d(s)))\rve^2\Big]ds\\ =2\int_0^{t\wedge \tau_k}\langle\nabla \d(s)
[\Delta \d(s) -f(\d(s))], \rA \v(s)\rangle ds
-2\int_0^{t\wedge \tau_k} \langle\nabla \d(s)  \Delta \v(s) , \Delta \d(s)-f(\d(s))\rangle
ds\\
-2\int_0^{t\wedge \tau_k}
\langle\fp(\d(s))\Big(\Delta \d(s)-f(\d(s))\Big), \Delta \d(s)-f(\d(s)) \rangle
ds \\+2 \int_0^{t\wedge \tau_k}\langle
B(\v(s),\v(s))+\nabla \d(s) f(\d(s)),\rA \v(s)\rangle ds+ OT.
\end{split}
\end{equation*}
But since $\nabla \d f(\d)=\nabla F(\d)$ and $\rA \v$ is a divergence
free function the last identity becomes
\begin{equation}\label{DEAL}
\begin{split}
\lve \nabla \v(t\wedge \tau_k)\rve^2+\Psi(\d(t\wedge \tau_k))-\lve \nabla \v_0\rve^2-\Psi(\d_0)+2\int_0^{t\wedge \tau_k}\Big[\lve \Delta \v(s)\rve^2+\lve
\nabla(\Delta \d(s)-f(\d(s)))\rve^2\Big]ds\\=2\int_0^{t\wedge \tau_k}\langle\nabla \d(s)
[\Delta \d(s) -f(\d(s))], \rA \v(s)\rangle ds
-2\int_0^{t\wedge \tau_k} \langle\nabla \d(s)  Delta \v(s) , \Delta \d(s)-f(\d(s))\rangle
ds\\
-2\int_0^{t\wedge \tau_k}
\langle\fp(\d(s))\Big(\Delta \d(s)-f(\d(s))\Big), \Delta \d(s)-f(\d(s)) \rangle
ds \\
+2 \int_0^{t\wedge \tau_k}\langle
B(\v(s),\v(s)),\rA \v(s)\rangle ds + OT.
\end{split}
\end{equation}
To get rid of some bad terms in the right-hand side of the above identity we will apply the It\^o's formula to
the function
\begin{equation}\label{SCHM-trick}
\begin{split}
\Xi(t\wedge \tau_k,\v,\d):=&\Phi(t\wedge \tau_k)
\left(\lve \nabla \v(t\wedge \tau_k)\rve^2+\Psi(\d(t\wedge \tau_k))\right)\\
=&
 e^{-\int_0^{t\wedge \tau_k} \phi(s)
ds}\left(\lve \nabla \v(t\wedge \tau_k)\rve^2+\Psi(\d(t\wedge \tau_k))\right).
\end{split}
\end{equation}
Before implementing this idea, we shall make few remarks. Since, by assumption, $h \in \h^2$ it is not
difficult to check that
$$G^2(\d)=(\d\times h)\times h \in D(A_2), $$ and
$$ G(\d)=\d\times h\in D(A_2).$$ Next, observe that
\begin{align*}
\frac 12 \ps(\d)[G^2(\d)]=&\langle \Delta \d-f(\d), \Delta\d\times
h\rangle +\langle\Delta \d-f(\d),\d\times \Delta h+(\d\times
h)\times \Delta h\rangle\nonumber \\ &\quad -2\langle \Delta
\d-f(\d), \fp(\d) (\d\times h)\times
h\rangle,\\
\frac 1 2 \pp(\d)[G(\d), G(\d)]=&\lve \Delta
G(\d)-\fp(\d)G(\d)\rve^2-\langle \Delta \d-f(\d), \fpp(\d) [G(\d),
G(\d)]\rangle.
\end{align*}
Thus, an application of the It\^o formula
to the stochastic processes $\Phi(t)\Big(\lve
\nabla \v(t) \rve^2+\Psi(\d(t))\Big)$ and the estimates \eqref{ST10-b}, \eqref{ST12}, \eqref{ST6-B}, \eqref{ST120} and \eqref{MOST-DIFF} yields
\begin{equation}\label{ST15-a}
\begin{split}
& \E \Phi(t\wedge \tau_k)\left(\lve \nabla \v(t\wedge \tau_k)\rve^2+\Psi(\d(t\wedge \tau_k))\right)\\
 & \qquad \qquad + \E
\int_0^{t\wedge \tau_k} \Phi(s)\Big(2\lve \Delta \v(s)
\rve^2+ (1-\delta_7)\lve \nabla (\Delta \d(s) -f(\d(s)) )\rve^2 \Big)ds\\
& \qquad \qquad \qquad \le  E_1+E_2 + \lve\nabla \v_0\rve^2+\Psi(\d_0)+\E \int_0^{t\wedge \tau_k} \Phi(s) \lve
S(\v(s))\rve^2_{\mathcal{T}_2(\rK_1,\ve)} ds\\
& \qquad \qquad \qquad \qquad + C(h)\E\int_0^{t\wedge \tau_k}
\Phi(s)\lve \Delta \d(s) -f(\d(s))\rve^2 ds
\\
& \qquad \qquad \qquad \qquad +C\E\int_0^{t\wedge \tau_k} \Phi(s)(C+C(h)\lve \d(s)\rve_{\h^1}^{4N+2}) ds,
\end{split}
\end{equation}
where the terms $E_1$ and $E_2$ are defined in \eqref{DEF-E_1} and \eqref{ST14} below.
 With the help of Burkholder-Davis-Gundy's inequality we have
 \begin{equation}\label{DEF-E_1}
E_1:=2\E \sup_{0\le s\le t\wedge \tau_k}\biggl\lvert\int_0^{s} \Phi(r)
\ps(\d(r))[G(\d(r))]dW_2(r) \biggr\rvert \le C \E
\biggl(\int_0^{t\wedge \tau_k} [\Phi(s)]^2 \lvert \ps(\d(s))[G(\d(s))]\rvert^2 ds
\biggr)^\frac 12.
\end{equation}
Now invoking the definition of $\ps(\d)[ G(\d)]$, the Cauchy-Schwarz and Young
inequalities we have
\begin{equation*}
\begin{split}
& 2\E \sup_{0\le s\le t\wedge \tau_k}\biggl\lvert\int_0^{s} \Phi(r)
\ps(\d(r))[G(\d(r))]dW_2(r) \biggr\rvert \\ & \le \E\biggl[\biggl(\int_0^{s} \Phi(r) \lve \Delta\Big((\d(r)\times h) \times
h\Big)-\fp(\d(r))\Big((\d(r)\times h)\times h\Big) \rve^2 dr
\biggr)^\frac 12\\
&\qquad \qquad \qquad \times \sup_{0\le s \le
t\wedge \tau_k}\sqrt{\Phi(s)} \lve \Delta \d(s) -f(\d(s))\rve
 \biggr]\\
& \le C
\E \int_0^{t\wedge \tau_k} \Phi(s) J_1(s) ds
+\frac12 \E \sup_{0\le s\le t} \Phi(s) \lve \Delta
\d(s)-f(\d(s))\rve^2,
\end{split}
 \end{equation*}
 where the process $J_1(s)$, $s<\tilde{\tau}_\infty$ is defined as in \eqref{J_1}.
  Thanks to this last inequality and Eqs.
 \eqref{ST11-b} it is easy to deduce that
\begin{equation}\label{ST13}
\begin{split}
2\E \sup_{0\le s\le t}\biggl\lvert\int_0^{t\wedge \tau_k} \Phi(s)
\ps(\d(s))[G(\d(s))]dW_2(s) \biggr\rvert \le C(h) \E
\int_0^{t\wedge \tau_k} \Phi(s) \lve \Delta \d(s) -f(\d(s))\rve^2 ds \\+
C(h) t\wedge \tau_k \E\sup_{s[0,{t\wedge \tau_k}]} \Phi(s) \lve \d(s)
\rve^{4N+2}_{\h^1} \\ +\frac12 \E \sup_{0\le s\le t\wedge \tau_k} \Phi(s) \lve
\Delta \d(s)-f(\d(s))\rve^2.
\end{split}
\end{equation}
With similar idea we can prove that
\begin{equation}\label{ST14}
\begin{split}
E_2:=2 \E \sup_{0\le s\le t\wedge \tau_k}\biggl\lvert\int_0^{s} \Phi(r) \langle
\nabla S(\v(r)), \nabla \v(r)\rangle d\W(r)\biggr\rvert\le \frac12 \E \int_0^{t\wedge \tau_k}
\Phi(s) \lve  S(\v(s))\rve^2_{\mathcal{T}_2(\rK_1,\ve)} ds\\
+\frac12 \E
\sup_{0\le s\le t\wedge \tau_k} \Phi(s)\rve \nabla \v(s) \rve^2 .
\end{split}
\end{equation}
Plugging
\eqref{ST13} and  \eqref{ST14} in \eqref{ST15-a}, using the hypotheses on $S$ in
Assumption \ref{HYPO-ST} and invoking Proposition \ref{EST1} with $p=2N+1$ we infer that there exists a constant $\tilde{C}>0$ such that
\begin{equation*}
\begin{split}
& \E \biggl[e^{-\int_0^ {t\wedge \tau_k} \phi(r) dr}\left(\lve \nabla
\v(t\wedge \tau_k)\rve^2+\Psi(\d(t\wedge \tau_k))\right) \biggr] \\
& \qquad \qquad + \E \int_0^{t\wedge \tau_k}
\Phi(s)\Big(2\lve \Delta \v(s)
\rve^2+ \lve \nabla (\Delta \d(s) -f(\d(s)) )\rve^2 \Big)ds\\
& \qquad \qquad \qquad \le \lve\nabla \v_0\rve^2+\Psi(\d_0)+ \tilde{C} \E\int_0^{t\wedge \tau_k} \Phi(s)
\Big(\lve \nabla \v(s)\rve^2+ \lve \Delta \d(s) -f(\d(s))\rve^2
\Big)ds\\
& \qquad \qquad \qquad \quad +\tilde{C} (\mathbb{E}\mathfrak{G}_1(t,2N+1)+1),
\end{split}
\end{equation*}
which altogether with the  Gronwall inequality implies the desired estimate \eqref{ST15}.
\end{proof}

\subsection*{Acknowledgments}
Razafimandimby's research was partially supported by the FWF-Austrian Science through the Stand-Alone project P28010.
The research on this paper was initiated during the visit of Razafimandimby to the University of York in October 2012. Part of this work was also carried out when he visited the University of York
 during October 2013, February 2014.
He would like to thank the Mathematics Department at York for hospitality. Z. Brze{\' z}niak presented a
lecture on the subject of this paper at the RIMS Symposium on Mathematical Analysis of Incompressible Flow held at Kyoto in  February 2013.
He would like to thank Professor Toshiaki Hishida for the kind invitation. P. Razafimandimby is very grateful to the organizers of the conference `` Nonlinear PDEs in Micromagnetism: Analysis, Numerics and Applications'', which was held in ICMS Edinburgh, UK, for their invitation to present a talk at this meeting. He is also very grateful for the financial support he received from the International Centre for Mathematical Sciences (ICMS) Edinburgh.

\end{document}